\documentclass[a4paper,dvipsnames]{article}
\usepackage[utf8]{inputenc}
\usepackage[a4paper,top=3cm,bottom=2cm,left=3cm,right=3cm,marginparwidth=1.75cm]{geometry}

\setlength{\parindent}{15pt}
\usepackage{booktabs} 
\usepackage{setspace} 
\usepackage{cancel}   
\usepackage{enumitem} 
\setcounter{secnumdepth}{4} 
\usepackage{lipsum}   
\usepackage{etoolbox} 
\usepackage[margin=2cm]{caption} 

\usepackage[backend=biber, hyperref=true, style=numeric, sorting=nyt, doi=true, isbn=false, url=false, maxbibnames=9, sortcites=true]{biblatex}
\addbibresource{bibilography.bib} 

\renewenvironment{abstract}
  {\quotation
  {\bfseries\noindent{\abstractname:}}}
  {\endquotation}


\usepackage{authblk} 

\usepackage{bbm}
\usepackage{amssymb,amsmath,amsthm}
\allowdisplaybreaks 
\usepackage{verbatim}
\usepackage[notref, notcite, final]{showkeys}
\usepackage{mathtools}
\usepackage{enumitem}
\usepackage[colorlinks, bookmarksnumbered, bookmarks]{hyperref} 
\usepackage{xcolor}
\usepackage{txfonts}
\usepackage{lscape}
\usepackage{arydshln}
\usepackage{mathabx}

\theoremstyle{definition} 
\newtheorem{theorem}{Theorem}[section] 
\newtheorem{definition}[theorem]{Definition}
\newtheorem{lemma}[theorem]{Lemma}
\newtheorem{proposition}[theorem]{Proposition}
\newtheorem{corollary}[theorem]{Corollary}
\newtheorem{assumption}[theorem]{Assumption}
\newtheorem{rmk_temp}[theorem]{Remark}
\numberwithin{equation}{section} 

\newenvironment{remark}
  {\pushQED{\qed}\begin{rmk_temp}}
  {\popQED\end{rmk_temp}}

\newbool{draftversion}
\NewDocumentEnvironment{remarkdraft}{+b}
  {\ifbool{draftversion}{\begin{remark}{\color{red}#1}\end{remark}}{}}{}
\boolfalse{draftversion}   

\newcommand{\ysrmk}[2][red]{{\color{#1}#2}}


\newcommand{\N}{\mathbb{N}}
\newcommand{\Z}{\mathbb{Z}}

\newcommand{\R}{\mathbb{R}}
\newcommand{\C}{\mathbb{C}}
\newcommand{\A}{\mathbb{A}}

\newcommand{\simgrad}{\sym\nabla}

\newcommand{\eps}{{\varepsilon}}

\DeclareMathOperator{\sym}{sym}

\newcommand{\vect}[1]{\boldsymbol #1}

\let\oldsqrt\sqrt
\def\sqrt{\mathpalette\DHLhksqrt}
\def\DHLhksqrt#1#2{%
\setbox0=\hbox{$#1\oldsqrt{#2\,}$}\dimen0=\ht0
\advance\dimen0-0.2\ht0
\setbox2=\hbox{\vrule height\ht0 depth -\dimen0}%
{\box0\lower0.4pt\box2}}

\AtBeginDocument{
  \let\div\relax
  \DeclareMathOperator{\div}{div}
}


\title{\LARGE\MakeUppercase{\textbf{An operator-asymptotic approach to periodic homogenization for \\ equations of linearized elasticity}}}

\def\correspondingauthor{\footnote{Corresponding author:
josip.zubrinic@fer.hr}}

\author[1]{Yi-Sheng Lim}
\author[2]{Josip Žubrinić\correspondingauthor{}}

\affil[1]{Department of Mathematical Sciences, University of Bath, Claverton Down, Bath BA2 7AY, \newline United Kingdom (Email: ysl64@bath.ac.uk)}

\affil[2]{Faculty of Electrical Engineering and Computing, University of Zagreb, Unska 3, 10000 Zagreb, Croatia (Email: josip.zubrinic@fer.hr)}

\date{}

\begin{document}

\maketitle

\vspace{-0.8cm}

\begin{abstract}
    We present an operator-asymptotic approach to the problem of homogenization of periodic composite media in the setting of three-dimensional linearized elasticity. This is based on a uniform approximation with respect to the inverse wavelength $|\chi|$ for the solution to the resolvent problem when written as a superposition of elementary plane waves with wave vector (``quasimomentum") $\chi$. We develop an asymptotic procedure in powers of $|\chi|$, combined with a new uniform version of the classical Korn inequality. As a consequence, we obtain $L^2\to L^2$, $L^2\to H^1$, and higher-order $L^2\to L^2$ norm-resolvent estimates in $\mathbb{R}^3$. The $L^2 \to H^1$ and higher-order $L^2 \to L^2$ correctors emerge naturally from the asymptotic procedure, and the former is shown to coincide with the classical formulae.

    \vskip 0.5cm
    
    {\bf Keywords} Homogenization $\cdot$ Resolvent asymptotics $\cdot$ Wave propagation

    \vskip 0.5cm

    {\bf Mathematics Subject Classification (2020):}
    35P15, 35C20, 74B05, 74Q05.
\end{abstract}

\onehalfspacing
\section{Introduction}

This paper lies in the subject of homogenization, which is the study of approximating a highly heterogeneous medium with a homogeneous one. Physically, one is motivated by the desire to understand properties of composite materials, and this translates into a general mathematical problem of finding the asymptotic behaviour of solutions $\vect u_\eps$ to partial differential equations (or of minimizers to a functional), depending on a parameter $\eps>0$ which encodes the heterogenity of the medium.

In the present work, we look at the effective elastic properties of a $\eps\Z^3$-periodic composite material. We further assume that the composite fills the whole space $\R^3$, and that the material is subjected to small strain. This brings us to consider the equations of linearized elasticity on $\R^3$:
\begin{align}\label{eqn:intro_linearelas}
     (\simgrad)^* \left( \mathbb{A}\left(\tfrac{\cdot}{\eps}\right) \simgrad \vect u_\eps \right) + \vect u_\eps = \vect f, \quad \vect f \in L^2(\R^3;\R^3), \quad \eps > 0.
\end{align}
Here, the unknown $\vect u_\eps$ denotes the displacement of the medium from some reference position. $\simgrad \vect u = \frac{1}{2}(\nabla \vect u + \nabla \vect u^\top)$ is the linearized strain tensor, $(\simgrad)^\ast$ denotes the $L^2-$adjoint of the operator $\simgrad$, and $\mathbb{A}$ is a $\Z^3-$periodic fourth order tensor-valued function encoding the properties of the medium. Upon further assumptions on $\mathbb{A}$ (see Section \ref{sect:defn_key_ops}), \eqref{eqn:intro_linearelas} has a unique solution $\vect u_\eps \in L^2(\R^3;\R^3)$. 

In its most basic form, the problem of homogenization asks one to find the limit $\vect u_\text{hom}$ of $\vect u_\eps$ (in an appropriate topology), and to characterize $\vect u_\text{hom}$ as the solution to an equation of the form
\begin{align}\label{eqn:intro_linearelas_homo}
    (\simgrad)^* \left( \mathbb{A}^\text{hom} \simgrad \vect u_\text{hom} \right) + \vect u_\text{hom} = \vect f,
\end{align}
where the tensor $\mathbb{A}^{\text{hom}}$ is constant in space, representing an effective homogeneous medium. There are numerous ways to tackle this, for instance, by a two-scale expansion \cite[Chapter II]{Oleinik2012MathematicalPI}, by Tartar's oscillating test functions method \cite[Chapter~10]{cioranescu_donato}, or by an abstract framework of G-convergence \cite[Chapter 12]{zhikov}.

\subsection{Goal of the paper}
We present an approach to homogenization in the setting of three-dimensional linearized elasticity \eqref{eqn:intro_linearelas}. This method, which we will henceforth refer to as an ``operator-asymptotic" approach, is based on an asymptotic expansion of the solution $\vect u_\eps$ to the resolvent equation \eqref{eqn:intro_linearelas}, in powers of the inverse wavelength $|\chi|$, when $\vect u_\eps$ is written as a superposition of elementary plane waves with wave vector (``quasimomentum") $\chi$. This expansion is justified with operator norm error estimates, which is obtained using new $\chi$-dependent versions of the Korn's inequality on the torus. We show that this procedure gives us a natural and explicit way to define $\A^\text{hom}$ of \eqref{eqn:intro_linearelas_homo} and the correctors, which makes it possible to compare with the classical two-scale expansion.

\subsection{The operator-asymptotic approach}

We coin the name ``operator-asymptotic" approach for the following reasons: First, the approach gives error estimates in the operator norm (Theorem \ref{thm:main_thm}), such as
\begin{align}
    \| \vect u_\eps - \vect u_\text{hom} \|_{L^2} \leq C \eps \| \vect f \|_{L^2}, \quad \text{where $C>0$ is independent of $\eps$ and $\vect f$.}
\end{align}
This is a norm-resolvent estimate: If we write $\mathcal{A}_\eps = (\simgrad)^* \mathbb{A}_\eps \left(\simgrad \right)$ where $\mathbb{A}_\eps = \mathbb{A}\left(\tfrac{\cdot}{\eps}\right)$ and $\mathcal{A}^{\rm hom}  \equiv (\simgrad)^* \mathbb{A}^{\rm hom} \left(\simgrad \right)$, then the above estimate can be written in the form
\begin{align}\label{eqn:intro_operator_est}
    \| \left( \mathcal{A}_\eps + I \right)^{-1} - \left( \mathcal{A}^{\text{hom}} + I \right)^{-1} \|_{L^2 \rightarrow L^2} \leq C \eps, \quad \text{where $C>0$ is independent of $\eps$.}
\end{align}
(We will define $\mathcal{A}_\eps$ and $\mathcal{A}^{\text{hom}}$ through its form in Section \ref{sect:defn_key_ops}.)

Second, the operator-asymptotic method is a ``spectral method", in the language of \cite{zhikov_pastukhova_2016_opsurvey}. Such methods use the Floquet-Bloch-Gelfand transform $\mathcal{G}_\eps$ (Section \ref{sect:scaled_gelfand}) to decompose $\mathcal{A}_\eps$ into a family of operators $\{ \mathcal{A}_\chi \}_{\chi \in [-\pi,\pi)^3}$ on $L^2([0,1)^3;\C^3)$ (Proposition \ref{prop:pass_to_unitcell}). One then performs a spectral analysis on $\mathcal{A}_\chi$, with the goal of identifying the behaviour of $\mathcal{A}_\eps$ near the bottom of the spectrum $\sigma(\mathcal{A}_\eps)$ (Section \ref{sect:spectral_analysis}). This behaviour is then captured by the homogenized operator $\mathcal{A}^\text{hom}$. The construction of $\mathcal{A}^\text{hom}$ together with the error estimates depends on the method at hand, and the operator-asymptotic method achieves this by developing an asymptotic expansion for the resolvent of $|\chi|^{-\alpha} \mathcal{A}_\chi$, with an appropriate choice of $\alpha>0$ (Section \ref{sect:the_asymp_method}).

We mention here the key advantages of the operator-asymptotic method: (i) It is applicable to systems of PDEs. (ii) It is applicable to problems such as thin plates \cite{kirill_igor_plates} and rods \cite{kirill_igor_josip_rods}, where the leading order approximation of $\mathcal{A}_\eps$ is a differential operator of mixed order. (iii) The asymptotic expansion of $|\chi|^{-\alpha} \mathcal{A}_\chi$ provides a constructive way to define $\mathcal{A}^\text{hom}$ and associated corrector objects, which in turn allows us to compare with classical two-scale expansion formulae (Sections \ref{sect:smoothingdrop} and \ref{sect:first_corr_agree}). An extended summary of the method can be found in Section \ref{sect:method_summary}.

\subsection{Existing literature on operator norm estimates in homogenization}

Let us now provide a brief overview on existing approaches to homogenization, restricting our discussion to those that are capable of producing operator norm estimates such as \eqref{eqn:intro_operator_est}. The first operator norm estimates were obtained by Birman and Suslina in \cite{birman_suslina_2004}. This is a spectral method, relying on the Floquet-Bloch decomposition of $\mathcal{A}_\eps$. It is here where the phrase ``threshold effects" was coined, to emphasize that homogenization of periodic differential operators can be viewed as a task of producing an approximate spectral expansion for the operator, near the bottom of the spectrum. This idea is made precise with a key auxiliary operator called the ``spectral germ" \cite[Sect~1.6]{birman_suslina_2004}. Their approach has since been extended in many ways, for instance, to obtain estimates for the parabolic semigroup \cite{suslina_2010_parabolic} and hyperbolic semigroup \cite{dorodnyi_suslina_2018_hyperbolic}, and to the setting of bounded domains \cite{suslina_2013_dirichlet,suslina_2013_neumann} and perforated domains \cite{suslina_2018_perforated}.

The use of Floquet-Bloch analysis in the context of homogenization can be traced back to Zhikov \cite{zhikov1989} and Conca and Vanninathan \cite{conca_vanninathan1997}, but the authors did not pursue the goal of obtaining operator norm estimates. We mention also Ganesh and Vanninathan \cite{gv} in which Floquet-Bloch analysis was used for the homogenization of three dimensional linear elasticity system. It should be noted however, that the result \cite[Theorem 1]{zhikov1989} is a pointwise estimate of the form
\begin{align}\label{eqn:intro_pointwise}
    |K(x,y,t) - K_{\text{hom}}(x,y,t)| \leq \frac{C}{t^{(d+1)/2}},  \quad x, y \in \R^d,
\end{align}
where $K$ and $K_{\text{hom}}$ are the fundamental solutions corresponding to the operators $\mathcal{A}_{\eps=1}$ and $\mathcal{A}^{\text{hom}}$ on $L^2(\R^d;\R)$. That \eqref{eqn:intro_pointwise} implies the norm-resolvent estimate \eqref{eqn:intro_operator_est} is explained in the survey paper by Zhikov and Pastukhova \cite{zhikov_pastukhova_2016_opsurvey}. With this in mind, we will refer to \cite{zhikov1989} as ``Zhikov's spectral approach".

Other approaches that appeared thereafter include: The periodic unfolding method, introduced by Griso in \cite{griso_2004, griso_2006}. The shift method, introduced by Zhikov and Pastukhova in \cite{zhikov_pastukhova_2005_operator} (see also their survey paper \cite{zhikov_pastukhova_2016_opsurvey}). A refinement of the two-scale expansion method by Kenig, Lin, and Shen \cite{kenig_lin_shen_2012}, which directly dealt with the case of bounded domains (see also the recent book by Shen \cite{shen_book}). A recent work by Cooper and Waurick \cite{cooper_waurick_2019}, under which uniform in $\chi$ norm-resolvent estimates for the family $\{ \mathcal{A}_\chi \}_{\chi}$ can be achieved. A recent work by Cherednichenko and D’Onofrio applicable to periodic thin structures \cite{kirill_serena_thinstruct_scalar, kirill_serena_thinstruct_maxwell}. Finally, we mention a recent work by Cooper, Kamotski, and Smyshlyaev \cite{cooper_kamotski_smyshlyaev_2023} that develops a general framework applicable to many classes of periodic homogenization problems.

The operator-asymptotic approach adds to the above list. This approach first appeared in the work of Cherednichenko and Velčić for periodic thin elastic plates \cite{kirill_igor_plates}. The problem under study in \cite{kirill_igor_plates} was one of simultaneous dimension-reduction and homogenization, where the thickness of the plate $h>0$ goes to zero together with the periodicity $\eps>0$ of the medium.

For historical reasons, we mention the work of Cherednichenko and Cooper \cite{kirill_cooper_highcontrast_2016}, in which norm-resolvent estimates were obtained for the case of high-contrast homogenization by means of operator asymptotics, which led to the development of the Cherednichenko-D’Onofrio and Cherednichenko-Velčić operator-asymptotic approach.

The above list is non-exhaustive, and is growing at the point of writing. With so many approaches to choose from, it is natural to wonder how the operator-asymptotic approach fits in. This is the content of the next section.

\subsection{Comparing the operator-asymptotic approach to existing spectral methods}

Of the above list, we will further restrict our discussion to these spectral methods: (a) Birman-Suslina's spectral germ approach, (b) Zhikov's spectral approach, (c) Cooper-Waurick approach, (d) Cherednichenko-D’Onofrio approach, and (e) Cherednichenko-Velčić operator-asymptotic approach.

The first point of comparison is the method's applicability to vector problems, say, a system of $n$ PDEs on $\R^d$ where $n>1$: (b) only applies to scalar problems, whereas (a), (c), (d), and (e) applies to vector problems. In moving from scalar to vector problems, the lowest eigenvalue of $\mathcal{A}_{\chi=0}$ is no longer simple, and this complicates the use of analytic perturbation theory. (a) addresses the issue by writing $\chi$ in polar coordinates, $\chi = t\theta$ with $t\geq 0$ and $|\theta| = 1$, perturbing the $1D$ parameter $t$ while carefully keeping track of the estimates in $\theta$. (c), (d), and (e) chose to work on the level of resolvents. From $(\mathcal{A}_\chi - zI)^{-1}$, one may recover the \textit{sum} of the spectral projections $P_\chi^1,\dots, P_\chi^n$ for the $n$ lowest eigenvalues of $\mathcal{A}_\chi$ (for small $\chi$), by taking an appropriate contour integral of $(\mathcal{A}_\chi - zI)^{-1}$ (see Figure \ref{fig:contour}). The observation that studying the sum $P_\chi^1 + \cdots + P_\chi^n$ (rather than individual $P_\chi^i$'s) is sufficient for homogenization, is one key takeaway of the operator-asymptotic approach, and we hope to make this explicit in the analysis.

A second point of comparison is the method's robustness to obtaining more refined estimates. At present, the works employing (b), (c), and (d) conclude with an $L^2\rightarrow L^2$ estimate \eqref{eqn:intro_operator_est}. On the other hand, (a) has been used to obtain $L^2\rightarrow L^2$ \cite{birman_suslina_2004}, $L^2\rightarrow H^1$ \cite{birman_suslina_2007_l2h1}, and higher-order $L^2\rightarrow L^2$ norm-resolvent estimates \cite{birman_suslina_2006_l2l2higherorder}. We will show that the method (e) could also give us these three estimates (Theorem \ref{thm:main_thm}).

The third point of comparison pertains to the spectral germ of (a), which also plays a key role in (e). To apply (a) to \eqref{eqn:intro_linearelas}, one is first required to rewrite $\mathcal{A}_\eps$ in an appropriate form \cite[Chapter~5.2]{birman_suslina_2004}. The spectral germ is then given in this form (as a $6\times 6$ matrix, where $\dim(\R_{\text{sym}}^{3\times 3}) = 6$), hence work has to be done to recast the spectral germ back to the original setting. In (e), this is simply the $3\times3$ matrix $\mathcal{A}_\chi^{\text{hom}}: \ker(\mathcal{A}_0) \rightarrow \ker(\mathcal{A}_0)$ (Definition \ref{defn:hom_matrix}). Furthermore, $\mathcal{A}_\chi^{\text{hom}}$ is defined in terms of $\chi$-dependent cell problems \eqref{focorr2}, which arises naturally in the asymptotic procedure of Section \ref{sect:the_asymp_method}, thus demonstrating that this definition for $\mathcal{A}_\chi^{\text{hom}}$ is a natural one.

In a similar vein, the asymptotic procedure of Section \ref{sect:the_asymp_method} provides a natural way to define the correctors: The corrector (operators) are defined as the solution operators to problems arising from the asymptotic procedure (Definition \ref{defn:correctors}). Due to the explicit nature of these problems, we are able to show that the first-order corrector agrees with the classical two-scale expansion formulae (Section \ref{sect:first_corr_agree}). Among the spectral approaches (a)-(e), the operator-asymptotic approach is the only one to do so for the setting of \eqref{eqn:intro_linearelas}.

The final point of comparison is the method's applicability to the hybrid problem of simultaneous dimension-reduction and homogenization. As mentioned above, this is possible for (e), as was first shown in the setting periodic thin elastic plates \cite{kirill_igor_plates}, and later for periodic thin elastic rods \cite{kirill_igor_josip_rods}. The mathematical challenge here is the presence of eigenvalues of mixed orders at the bottom of $\sigma(\mathcal{A}_\chi)$, which prevents the construction of the spectral germ. (This issue is absent for \eqref{eqn:intro_linearelas}, see Theorem \ref{thm:evalue}.) This is addressed in \cite{kirill_igor_plates, kirill_igor_josip_rods} by decomposing $\mathcal{A}_\chi$ with respect to carefully chosen invariant subspaces. It remains to be seen if the approaches above (including the non-spectral ones) could be employed to the hybrid problem to obtain operator norm results.

\subsection{Structure of the paper}

This paper is structured as follows: In Section \ref{sect:prelim}, we fix some notation and define the key operators under study. In particular, we introduce the (scaled) Gelfand transform $\mathcal{G}_\eps$ and apply it to $\mathcal{A}_\eps$ to obtain the family $\{ \mathcal{A}_\chi \}_{\chi \in [-\pi,\pi)^3}$. The section ends with the main result of the paper, Theorem \ref{thm:main_thm}. Section \ref{sect:method_summary} contains an extended summary of the operator-asymptotic method.

Sections \ref{sect:spectral_analysis} and \ref{sect:fibre_nr_est} focuses on the family $\{ \mathcal{A}_\chi \}_\chi$. Section \ref{sect:spectral_analysis} contains the necessary spectral information of $\mathcal{A}_\chi$ needed for the fibrewise (i.e.~for each $\chi$) homogenized operator $\mathcal{A}_\chi^{\text{hom}}$, which we define in Section \ref{sect:ahomchi_intro}. Several important properties of $\mathcal{A}_\chi^{\text{hom}}$ are then proved, including its connection to the homogenized operator $\mathcal{A}^\text{hom}$ (defined in Section \ref{sect:main_results}). In Section \ref{sect:the_asymp_method}, we detail an asymptotic procedure (in the quasimomentum $\chi$) for ${(\frac{1}{|\chi|^2}\mathcal{A}_\chi - zI)^{-1}}$. The results of this procedure are summarized in Section \ref{sect:asymp_results_chi}.

Section \ref{sect:norm_resolvent_est} relates the fibrewise results of Section \ref{sect:asymp_results_chi} back to the full space, giving Theorem \ref{thm:main_thm}. The proof of Theorem \ref{thm:main_thm} is broken down into Theorem \ref{thm:l2l2} ($L^2\rightarrow L^2$), Theorem \ref{thm:l2h1} ($L^2\rightarrow H^1$), Theorem \ref{thm:l2l2_higherorder} (higher-order $L^2\rightarrow L^2$), and Theorem \ref{thm:smoothingdrop} (removal of the smoothing operator, defined in Section \ref{sect:spectral_analysis}). Section \ref{sect:first_corr_agree} proves that the first-order corrector as defined through the operator-asymptotic approach agrees with classical formulae.





\section{Preliminaries, problem setup, and main results}\label{sect:prelim}
\subsection{Notation}

Fix a dimension $d \in \mathbb{N} = \{ 1, 2,\cdots \}$. If $a,b \in \mathbb{C}^d$, we will denote their inner product by $\langle a, b \rangle_{\mathbb{C}^d} = a^\top \overline{b} = a \cdot \overline{b} = \sum_{i=1}^d a_i \overline{b}_i$. We will write $\mathfrak{R}(a)$ and $\mathfrak{I}(a)$ for the vector of real and imaginary parts of $a \in \C^d$ respectively (taken component-wise). The tensor product is defined by $a \otimes b := a\overline{b^\top}$. For matrices $A=(A^i_j),B=(B^i_j) \in \mathbb{C}^{d \times d}$, $A^i_j$ denotes the $i$'th row and $j$'th column of A. $\R^{d\times d}_{\text{sym}}$ stands for the space of real $d$ by $d$ symmetric matrices, and similarly for $\C_{\text{sym}}^{3 \times 3}$. $\C^{d\times d}$ will be equipped with the Frobenius inner product, denoted by
\begin{equation}
    A:\overline{B} = \text{tr}(A^\top \overline{B}) = \sum_{1\leq i,j \leq d} A^i_j \overline{B^i_j},
\end{equation}
and the symmetric part of $A \in \C^{d \times d}$ is defined by $\sym A = \frac{1}{2} (A + A^\top).$ The corresponding norms on $\mathbb{C}^d$ and $\mathbb{C}^{d\times d}$ will be denoted by $|a|$ and $|A|$ respectively. 

We will also work with fourth-order tensors, $\mathbb{A} = (\mathbb{A}_{jl}^{ik}) \in \mathbb{C}^{d\times d\times d\times d}$. This acts on matrices $B = (B^k_l) \in \mathbb{C}^{d\times d}$ to give another matrix $\mathbb{A} B = ((\mathbb{A} B)^i_j) \in \mathbb{C}^{d\times d}$ defined by
\begin{equation}
    (\mathbb{A} B)^i_j = \sum_{k,l=1}^d \mathbb{A}_{jl}^{ik} B_l^k, 
    \quad 1\leq i,j \leq d.
\end{equation}

\subsection{The scaled Gelfand transform}\label{sect:scaled_gelfand}

In this section, we introduce more notation, with the goal of introducing the Gelfand transform $\mathcal{G}$ and its scaled version, $\mathcal{G}_\eps$.

Let $Y = [0,1)^3$ be the unit cell, and $Y' = [-\pi,\pi)^3$ be the corresponding dual cell. We will refer to each $\chi \in Y'$ as the quasimomentum. Let $\vect u:\R^3 \rightarrow \C^3$ be a measurable function. By saying that $\vect u$ is a ($\mathbb{Z}^3 -$)periodic function, we mean that
\begin{equation}
    \vect u(x + k e_j) = \vect u(x) \quad \text{a.e. in } \mathbb{R}^3, \quad k \in \mathbb{Z}.
\end{equation}
Here, $\{e_1, e_2, e_3 \}$ denotes the standard basis of $\mathbb{R}^3$.


\textbf{(Vector-valued) function spaces.} Let $\Omega \subset \mathbb{R}^3$ be open. We will need $C^\infty(\Omega;\mathbb{C}^3)$, $L^p(\Omega;\mathbb{C}^3)$, and when the boundary $\partial \Omega$ is Lipschitz, the Sobolev spaces $W^{k,p}(\Omega; \mathbb{C}^3)$, $p \in [1,\infty]$ and $k \in \mathbb{N}\cup \{0\}$. Where it is understood, we will simply omit the domain and co-domain, and write for instance, $L^p$. Of importance are the Hilbert spaces $L^2$ and $H^1 = W^{1,2}$. Another key Hilbert space is ``$H_{\#}^1$", which we discuss now.
First, consider the space of smooth periodic functions, 
\begin{align}
    C_{\#}^{\infty}(Y;\C^3) := \{ \vect u:\mathbb{R}^3 \rightarrow \mathbb{C}^3 ~|~ \vect u \text{ is smooth and periodic} \}.
\end{align}
We will identify $\vect u \in C_{\#}^{\infty}(Y;\mathbb{C}^3)$ with its restriction to $\overline{Y}$. This is used to define
\begin{equation}
    H^1_{\#}(Y;\mathbb{C}^3) := \overline{C_{\#}^{\infty}(Y;\mathbb{C}^3)}^{\|\cdot\|_{H^1}},
\end{equation}
which we will also identify as a subspace of $L^2(Y;\C^3)$.


\textbf{Gelfand transform.} To study periodic functions and operators, we will make use of the Gelfand transform $\mathcal{G}$. This is defined as follows:
\begin{align}
    &\mathcal{G} : L^2(\mathbb{R}^3;\mathbb{C}^3) \to L^2(Y';L^2(Y; \mathbb{C}^3)) =: \int_{Y'}^\oplus L^2(Y; \mathbb{C}^3)d\chi \\
    &(\mathcal{G} \vect u)(y,\chi):= \frac{1}{\left(2\pi\right)^{3/2}} \sum_{n\in \mathbb{Z}^3}e^{-i\chi \cdot (y+n)}\vect u(y+n), \quad y \in  Y, \quad \chi\in Y',
\end{align}

\noindent The Gelfand transform $\mathcal{G}$ is a unitary operator
\begin{equation}
    \left\langle \vect u, \vect v \right\rangle_{L^2(\mathbb{R}^3;\mathbb{C}^3)} = \left\langle \mathcal{G} \vect u,\mathcal{G} \vect v \right\rangle_{L^2(Y;L^2(Y'; \mathbb{C}^3))}, \quad \forall \vect u, \vect v \in L^2(\mathbb{R}^3;\mathbb{C}^3),
\end{equation}

\noindent and the inversion formula is given by
\begin{equation}
   \vect  u(x) = \frac{1}{\left(2\pi\right)^{3/2}} \int_{Y'} e^{i\chi\cdot x} (\mathcal{G} \vect u)(x,\chi)d\chi, \quad x\in\mathbb{R}^3,
\end{equation}
where the integrand $\mathcal{G} \vect u \in L^2(Y \times Y';\mathbb{C}^3)$ is extended in the $x$ variable by $\mathbb{Z}^3-$periodicity to the whole of $\mathbb{R}^3$.

\textbf{Scaled Gelfand transform.} To deal with the setting of highly oscillating material coefficients, we will consider a scaled version of the Gelfand transform, denoted by $\mathcal{G}_\varepsilon$, where $\varepsilon>0$. This is defined by:
\begin{align}
    &\mathcal{G}_\varepsilon : L^2(\mathbb{R}^3;\mathbb{C}^3) \to L^2(Y';L^2(Y; \mathbb{C}^3)) = \int_{Y'}^\oplus L^2(Y; \mathbb{C}^3)d\chi, \\
    &(\mathcal{G}_\varepsilon \vect u)(y,\chi):= \left(\frac{\varepsilon}{2\pi}\right)^{3/2} \sum_{n\in \mathbb{Z}^3}e^{-i\chi \cdot (y+n)}\vect u(\varepsilon(y+n)), \quad y \in  Y, \quad \chi\in Y'. \label{gelfand}
\end{align}

\noindent Note that $\mathcal{G}_\varepsilon = \mathcal{G} \circ \mathcal{S}_\varepsilon$, where $\mathcal{S}_\varepsilon : L^2(\mathbb{R}^3;\mathbb{C}^3) \to L^2(\mathbb{R}^3;\mathbb{C}^3)$ is the unitary scaling operator, given by: 
\begin{equation}
    \mathcal{S}_\varepsilon \vect u(x) := \varepsilon^{3/2} \vect u (\varepsilon x), \quad \vect u\in L^2(\mathbb{R}^3;\mathbb{C}^3). 
\end{equation}
Hence it is clear that that the scaled Gelfand transform $\mathcal{G}_\varepsilon$ is a unitary operator as well, i.e.
\begin{equation}\label{eqn:gelfand_unitary}
    \left\langle \vect u, \vect v \right\rangle_{L^2(\mathbb{R}^3)} = \left\langle \mathcal{G}_\varepsilon \vect u,\mathcal{G}_\varepsilon \vect v \right\rangle_{L^2(Y';L^2(Y; \mathbb{C}^3))}
    = \int_{Y'}  \langle \mathcal{G}_\eps \vect u, \mathcal{G}_\eps \vect v \rangle_{L^2(Y;\C^3)} d\chi
    , \quad \forall \vect u, \vect v \in L^2(\mathbb{R}^3;\mathbb{C}^3).
\end{equation}
The identity $\mathcal{G}_\varepsilon^{-1} = \mathcal{S}_\varepsilon^{-1} \circ \mathcal{G}^{-1}$ then gives the inversion formula for $\mathcal{G}_\varepsilon$:
\begin{equation}\label{eqn:gelfand_inversion}
   \vect  u(x) = \frac{1}{\left( 2\pi \varepsilon\right)^{3/2}} \int_{Y'} e^{i\chi\cdot {\color{blue}\frac{x}{\eps}}} (\mathcal{G}_\varepsilon \vect u)\left( \frac{x}{\eps}, \chi \right) d\chi, \quad x\in\mathbb{R}^3.
\end{equation}


\textbf{Scaling vs derivatives.} A property of $\mathcal{G}_\varepsilon$ is that it commutes with derivatives in the following way: 
\begin{equation}
\label{scalingderivatives}
     \mathcal{G}_\varepsilon (\partial_{x_j} \vect u ) = \frac{1}{\varepsilon} \left(\partial_{y_j}\left(\mathcal{G}_\varepsilon \vect u\right) + i\chi_j \left( \mathcal{G}_\varepsilon \vect u\right) \right), \quad j = 1,2,3.
\end{equation}
Thus, we have the following identity
\begin{equation}\label{eqn:scaling_vs_deriv}
        \mathcal{G}_\varepsilon\left( \nabla \vect u\right)(y,\chi) = \frac{1}{\varepsilon} \left(\nabla_y\left(\mathcal{G}_\varepsilon \vect u\right) + i \left( \mathcal{G}_\varepsilon \vect u\right)\chi^\top \right)(y,\chi).
\end{equation}
For each $\chi \in Y'$, it will be convenient to introduce the following operator
\begin{align}
    &X_{\chi}: L^2( Y; \mathbb{C}^3) \rightarrow L^2(Y;\mathbb{C}^{3\times 3}) \\
    &X_{\chi}\vect u = \sym \left(\vect u \otimes \chi \right) = \sym \left(\vect u \chi^\top \right) \nonumber\\
    &\qquad = \begin{bmatrix}
    \chi_1 u_1 & \frac{1}{2}(\chi_1 u_2 + \chi_2 u_1) & \frac{1}{2}(\chi_1 u_3 + \chi_3 u_1)  \\
    \frac{1}{2}(\chi_1 u_2 + \chi_2 u_1) & \chi_2 u_2 & \frac{1}{2}(\chi_3 u_2 + \chi_2 u_3) \\
    \frac{1}{2}(\chi_3 u_1 + \chi_1 u_3) & \frac{1}{2}(\chi_3 u_2 + \chi_2 u_3) & \chi_3 u_3
    \end{bmatrix}, \label{defxoperator}
\end{align}
so that with this notation, we have
\begin{equation}
\label{gelfandsymetricgradientformula}
    \mathcal{G}_\varepsilon\left( \simgrad \vect u\right)(y,\chi) = \frac{1}{\varepsilon} \left(\simgrad_y\left(\mathcal{G}_\varepsilon \vect u\right) + i \sym \left(\left( \mathcal{G}_\varepsilon \vect u\right)\chi^\top  \right)\right)    = \frac{1}{\varepsilon} \left(\simgrad_y\left(\mathcal{G}_\varepsilon \vect u\right) + i X_\chi\left( \mathcal{G}_\varepsilon \vect u\right) \right).
\end{equation}

\begin{remark}
    There exist constants $c_{\text{symrk1}},C_{\text{symrk1}}>0$ such that
    \begin{equation}\label{eqn:Xchi_est}
        c_{\text{symrk1}} |\chi| \|\vect u \|_{L^2(Y;\mathbb{C}^3)} 
        \leq \|X_{\chi}\vect u \|_{L^2(Y;\mathbb{C}^{3 \times 3})} 
        \leq C_{\text{symrk1}} |\chi|\|\vect u\|_{L^2(Y;\mathbb{C}^3)}, \qquad \text{for all } \vect u\in L^2(Y;\mathbb{C}^d).
    \end{equation}
    Here the lower bound comes from \eqref{rankonesymformula}. In particular, $\| X_\chi \|_{op} < \infty$.
\end{remark}

\subsection{Definition of key operators under study}\label{sect:defn_key_ops}
In this section, we will define the key operators of interest, namely $\mathcal{A}_\eps$ and $\mathcal{A}_\chi$. We begin by stating the assumptions of the coefficient tensor $\mathbb{A}(y)$:

\begin{assumption} 
\label{coffassumption}
We impose the following assumptions on tensor of material coefficients $\A$:
\begin{itemize}
    \item $\A:\R^3 \rightarrow \R^{3\times 3 \times 3 \times 3}$ is $\Z^3-$periodic.

    \item $\A$ is uniformly (in $y$) positive definite on symmetric matrices: There exists $\nu>0$ such that
    \begin{equation}\label{assump:elliptic}
        \nu|\vect \xi|^2 \leq \A(y) \vect \xi : \vect \xi \leq \frac{1}{\nu}|\vect \xi|^2, \quad \forall \, \vect \xi \in \R^{3\times 3}_\text{sym}, \quad \forall y \in Y. 
    \end{equation}
    
    \item The tensor $\A$ satisfies the following material symmetries:
    \begin{equation}\label{assump:symmetric}
        \A_{jl}^{ik} = \A_{il}^{jk} = \A_{lj}^{ki}, \quad  i,j,k,l\in\{1,2,3\}.
    \end{equation}
    \item The coefficients of $\A$ satisfy $\A_{jl}^{ik} \in L^{\infty}(Y)$, where $i,j,k,l\in\{1,2,3\}$.
\end{itemize}
\end{assumption}

Writing $\mathbb{A}_\eps = \mathbb{A}(\frac{\cdot}{\eps})$, we next define the operator $\mathcal{A}_\eps$:

\begin{definition}\label{defn:main_operator_fullspace}
    The operator $\mathcal{A}_\eps \equiv (\simgrad)^* \mathbb{A}_\eps \left( \simgrad \right) $ is the operator on $L^2(\R^3;\C^3)$ defined through its corresponding sesquilinear form $a_\eps$ with form domain $H^1(\R^3;\C^3)$ and action
    \begin{align}\label{eqn:ahom_complexHS}
        a_\eps (\vect u, \vect v) = \int_{\R^3} \mathbb{A} \left( \frac{x}{\eps} \right) \simgrad \vect u(x) : \overline{\simgrad \vect v (x)} ~ dx, \quad \vect u,\vect v \in \mathcal{D}(a_\eps) = H^1(\R^3;\C^3).
    \end{align}
\end{definition}

\begin{proposition}
    $\mathcal{A}_\eps$ is self-adjoint and non-negative.
\end{proposition}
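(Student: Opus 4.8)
The plan is to obtain $\mathcal{A}_\eps$ from the form $a_\eps$ of \eqref{eqn:ahom_complexHS} via the classical representation theorem for densely defined, closed, symmetric, non-negative sesquilinear forms: such a form determines a unique self-adjoint operator whose greatest lower bound coincides with that of the form. Thus it suffices to verify that $a_\eps$ is (i)~densely defined and bounded on its form domain, (ii)~symmetric, (iii)~non-negative, and (iv)~closed; then (iii)~transfers to $\mathcal{A}_\eps$, giving $\sigma(\mathcal{A}_\eps)\subset[0,\infty)$. Property~(i) is immediate: $\mathcal{D}(a_\eps)=H^1(\R^3;\C^3)\supset C_c^\infty(\R^3;\C^3)$ is dense in $L^2(\R^3;\C^3)$, and by the upper bound in \eqref{assump:elliptic} with Cauchy--Schwarz, $|a_\eps(\vect u,\vect v)|\le\nu^{-1}\|\simgrad\vect u\|_{L^2}\|\simgrad\vect v\|_{L^2}\le\nu^{-1}\|\vect u\|_{H^1}\|\vect v\|_{H^1}$.

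For (ii), since $\A$ is real-valued one has $\overline{a_\eps(\vect v,\vect u)}=\int_{\R^3}\A(x/\eps)\,\overline{\simgrad\vect v}:\simgrad\vect u\,dx$; the material symmetries \eqref{assump:symmetric} say precisely that $(\vect\xi,\vect\eta)\mapsto\A(y)\vect\xi:\vect\eta$ is a symmetric bilinear form on $\R^{3\times3}_{\text{sym}}$, and since $\simgrad\vect u(x),\simgrad\vect v(x)\in\C^{3\times3}_{\text{sym}}$, this yields $\overline{a_\eps(\vect v,\vect u)}=a_\eps(\vect u,\vect v)$. For (iii), write $\simgrad\vect u=\mat S_1+i\mat S_2$ with $\mat S_1(x),\mat S_2(x)\in\R^{3\times3}_{\text{sym}}$ pointwise; the same symmetry makes the cross terms cancel, so by the lower bound in \eqref{assump:elliptic},
\begin{equation*}
    a_\eps(\vect u,\vect u)=\int_{\R^3}\big(\A(\tfrac{x}{\eps})\mat S_1:\mat S_1+\A(\tfrac{x}{\eps})\mat S_2:\mat S_2\big)\,dx\ \ge\ \nu\int_{\R^3}\big(|\mat S_1|^2+|\mat S_2|^2\big)\,dx=\nu\,\|\simgrad\vect u\|_{L^2}^2\ \ge\ 0 .
\end{equation*}

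For (iv), I would show that $H^1(\R^3;\C^3)$ is complete for the form norm $\|\vect u\|_{a_\eps}^2:=a_\eps(\vect u,\vect u)+\|\vect u\|_{L^2}^2$. By the bounds above, $\nu\|\simgrad\vect u\|_{L^2}^2+\|\vect u\|_{L^2}^2\le\|\vect u\|_{a_\eps}^2\le\nu^{-1}\|\simgrad\vect u\|_{L^2}^2+\|\vect u\|_{L^2}^2$, so it is enough that $\|\simgrad\vect u\|_{L^2}^2+\|\vect u\|_{L^2}^2$ be equivalent to $\|\vect u\|_{H^1}^2$ on $\R^3$. One direction, $\|\simgrad\vect u\|_{L^2}\le\|\nabla\vect u\|_{L^2}$, is pointwise. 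For the reverse, the whole-space Korn identity gives, for $\vect u\in C_c^\infty(\R^3;\R^3)$ after two integrations by parts, $2\|\simgrad\vect u\|_{L^2}^2=\|\nabla\vect u\|_{L^2}^2+\|\div\vect u\|_{L^2}^2\ge\|\nabla\vect u\|_{L^2}^2$; splitting into real and imaginary parts and using density of $C_c^\infty$ in $H^1(\R^3)$ extends this to all $\vect u\in H^1(\R^3;\C^3)$. Hence $\|\cdot\|_{a_\eps}$ is equivalent to $\|\cdot\|_{H^1}$, and completeness of $(H^1(\R^3;\C^3),\|\cdot\|_{H^1})$ gives closedness of $a_\eps$. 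Applying the representation theorem then proves the proposition.

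The only step that is not entirely routine is the coercivity estimate behind closedness, $\|\nabla\vect u\|_{L^2}\le C(\|\simgrad\vect u\|_{L^2}+\|\vect u\|_{L^2})$. On the full space $\R^3$ this is the elementary identity above, with no degeneration of constants; but it is precisely the feature whose $\chi$-uniform analogue on the torus — the uniform Korn inequality developed later in the paper — is the genuinely new technical ingredient, so it is natural to flag it already at this point.
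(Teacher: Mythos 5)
Your proof is correct and follows the same route as the paper: verify that the form $a_\eps$ is densely defined, bounded, symmetric, non-negative, and closed, then invoke the representation theorem for sesquilinear forms. The paper simply asserts these four properties without proof; you have filled in the details, in particular supplying closedness via the whole-space Korn identity $2\|\simgrad\vect u\|_{L^2}^2=\|\nabla\vect u\|_{L^2}^2+\|\div\vect u\|_{L^2}^2$, which is a cleaner argument than quoting a bounded-domain Korn inequality.
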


\begin{proof}
    This follows because the form $a_\eps$ is densely defined, non-negative (by (\ref{assump:elliptic})), symmetric (by (\ref{assump:symmetric})), and closed.
\end{proof}

Before introducing the operator $\mathcal{A}_\chi$, we make a comment on the choice of the field for the Hilbert space.

\begin{remark}[Real vs Complex Hilbert spaces]
    In the context of linearized elasticity, the solution $\vect u$ to the elasticity equations measures the displacement of the medium from some reference position $\vect u_{\text{ref}}$, and is hence real-valued. It would therefore make more sense to study the operator $\mathcal{A}_\eps^{\R}$ on $L^2(\R^3;\R^3)$ with corresponding bilinear form
    \begin{align}\label{eqn:ahom_realHS}
        a_\eps^{\R} (\vect u, \vect v) = \int_{\R^3} \mathbb{A} \left( \frac{x}{\eps} \right) \simgrad \vect u(x) : \simgrad \vect v (x) ~ dx, \quad \vect u,\vect v \in H^1(\R^3;\R^3).
    \end{align}
    However, it is easier to work with the complex version $\mathcal{A}_\eps^{\C} := \mathcal{A}_\eps$, since the Gelfand transform is unitary from $L^2(\R^3;\C^3)$ to $L^2(Y\times Y';\C^3)$ (and only a partial isometry when restricted to $L^2(\R^3;\R^3)$). We will perform the analysis on $\mathcal{A}_\eps^{\C}$, but ultimately return to $\mathcal{A}_\eps^{\R}$ for the main results (see~Theorem \ref{thm:main_thm}). The passage from $\C$ to $\R$ follows from $\mathcal{A}_\eps^{\C}|_{L^2(\R^3;\R^3)} = \mathcal{A}_\eps^{\R}$. (To see this identity, compare (\ref{eqn:ahom_complexHS}) and (\ref{eqn:ahom_realHS}).) A similar remark holds for the homogenized operator $\mathcal{A}^{\text{hom}}$, defined in the next section. 
\end{remark}

Recall the operator $X_{\chi}$ defined by \eqref{defxoperator}. We next define the operator $\mathcal{A}_\chi$:

\begin{definition}\label{definitionofAchi}
    The operator  $\mathcal{A}_\chi \equiv (\simgrad + i X_\chi)^* \mathbb{A} (\simgrad + i X_\chi) $ is the operator on $L^2(Y;\C^3)$ defined through its corresponding sesquilinear form $a_\chi$ with form domain $H_{\#}^1(Y;\C^3)$ and action
    \begin{align}\label{form_eqn}
        a_\chi (\vect u, \vect v) = \int_Y \mathbb{A}(y) (\simgrad + i X_\chi) \vect u (y) : \overline{(\simgrad + i X_\chi) \vect v (y)} ~ dy, \quad \vect u,\vect v \in H^1_{\#}(Y;\C^3).
    \end{align}
\end{definition}

\begin{remark}[More notation]
    We will write $\mathcal{D}[\mathcal{A}] = \mathcal{D}(a)$ for the form domain of an operator $\mathcal{A}$ corresponding to the form $a$, and $\mathcal{D}(\mathcal{A})$ for the operator domain. $\sigma(\mathcal{A})$ denotes the spectrum for $\mathcal{A}$, and $\rho(\mathcal{A})$ is the resolvent set. We will also omit the differential ``$dy$" where it is understood.
\end{remark}

\begin{proposition}
    For each $\chi \in Y'$, the form $a_{\chi}$ is densely defined, non-negative, symmetric, and closed. As a consequence, the corresponding operator $\mathcal{A}_\chi$ is self-adjoint and non-negative. Moreover, if $\chi \in Y'\setminus\{0\}$, then the form $a_{\chi}$ is also coercive in $H^1(Y;\C^3)$ and satisfies:
    \begin{equation}
                 C\lVert \vect u\rVert_{H^1(Y;\C^3)}^2 \leq  \frac{1}{|\chi|^2}a_{\chi}(\vect u, \vect u), \quad \chi \in Y' \setminus \{ 0 \},
    \end{equation}
\end{proposition}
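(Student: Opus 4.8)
The plan is to verify the four form-theoretic properties of $a_\chi$, deduce self-adjointness and non-negativity of $\mathcal{A}_\chi$ from the first representation theorem for semi-bounded closed symmetric forms, and then obtain the coercivity from a Korn-type inequality for the shifted operator $\simgrad + iX_\chi$. The two elementary consequences of Assumption~\ref{coffassumption} used throughout are: (i) for every symmetric $\vect\xi \in \C^{3\times 3}_{\text{sym}}$, writing $\vect\xi$ via its real and imaginary parts and using the major symmetry \eqref{assump:symmetric} (which makes the cross term real and symmetric), one gets $\mathbb{A}(y)\vect\xi:\overline{\vect\xi} \in \R$ and, by \eqref{assump:elliptic},
\begin{equation*}
    \nu\,|\vect\xi|^2 \le \mathbb{A}(y)\vect\xi:\overline{\vect\xi} \le \nu^{-1}|\vect\xi|^2;
\end{equation*}
(ii) $(\simgrad + iX_\chi)\vect u$ is pointwise a symmetric matrix, so applying (i) with $\vect\xi = (\simgrad + iX_\chi)\vect u(y)$ and integrating gives
\begin{equation*}
    \nu\,\|(\simgrad + iX_\chi)\vect u\|_{L^2(Y)}^2 \le a_\chi(\vect u, \vect u) \le \nu^{-1}\|(\simgrad + iX_\chi)\vect u\|_{L^2(Y)}^2, \qquad \vect u \in H^1_\#(Y;\C^3).
\end{equation*}

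Dense definedness holds since $C^\infty_\#(Y;\C^3)$ is dense in $L^2(Y;\C^3)$ and contained in $H^1_\#(Y;\C^3)$. Symmetry is the identity $a_\chi(\vect u, \vect v) = \overline{a_\chi(\vect v, \vect u)}$, which follows from $\mathbb{A}$ being real-valued and from $\mathbb{A}\vect\xi:\vect\eta = \mathbb{A}\vect\eta:\vect\xi$ for all matrices $\vect\xi,\vect\eta$ --- exactly the symmetry $\mathbb{A}^{ik}_{jl} = \mathbb{A}^{ki}_{lj}$ of \eqref{assump:symmetric}. Non-negativity is the lower bound in the second display. For closedness it suffices to show that the form norm $\vect u \mapsto \big(a_\chi(\vect u, \vect u) + \|\vect u\|_{L^2}^2\big)^{1/2}$ is equivalent to $\|\cdot\|_{H^1(Y)}$ on $H^1_\#(Y;\C^3)$, since that space is complete: the upper bound follows from the second display together with $\|X_\chi \vect u\|_{L^2} \le C_{\text{symrk1}}|\chi|\,\|\vect u\|_{L^2}$ from \eqref{eqn:Xchi_est} and $\|\simgrad \vect u\|_{L^2}\le\|\nabla\vect u\|_{L^2}$; for the lower bound one additionally invokes the classical second Korn inequality on the Lipschitz domain $Y$, $\|\nabla \vect u\|_{L^2}^2 \le C\big(\|\simgrad \vect u\|_{L^2}^2 + \|\vect u\|_{L^2}^2\big)$, combined with $\|\simgrad \vect u\|_{L^2} \le \|(\simgrad + iX_\chi)\vect u\|_{L^2} + C_{\text{symrk1}}|\chi|\,\|\vect u\|_{L^2}$. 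With $a_\chi$ densely defined, symmetric, non-negative and closed, the first representation theorem yields that $\mathcal{A}_\chi$ is self-adjoint and non-negative.

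For the coercivity when $\chi \in Y'\setminus\{0\}$, combining the second display with Korn and \eqref{eqn:Xchi_est} as above reduces the asserted inequality
\begin{equation*}
    C\,\|\vect u\|_{H^1(Y;\C^3)}^2 \le \frac{1}{|\chi|^2}\, a_\chi(\vect u, \vect u)
\end{equation*}
to the single estimate $|\chi|^2\,\|\vect u\|_{L^2(Y)}^2 \le C\,\|(\simgrad + iX_\chi)\vect u\|_{L^2(Y)}^2$. I would prove this via the substitution $\vect v(y) := e^{i\chi\cdot y}\vect u(y)$ on $\R^3$: a direct computation gives $\simgrad \vect v = e^{i\chi\cdot y}(\simgrad + iX_\chi)\vect u$, whence $\|\simgrad \vect v\|_{L^2(Y)} = \|(\simgrad + iX_\chi)\vect u\|_{L^2(Y)}$ and $\|\vect v\|_{L^2(Y)} = \|\vect u\|_{L^2(Y)}$, while $\vect v$ is $\chi$-quasiperiodic, $\vect v(\cdot + n) = e^{i\chi\cdot n}\vect v(\cdot)$ for $n \in \Z^3$. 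Suppose the estimate fails; take $\vect u_k \in H^1_\#(Y;\C^3)$ with $\|\vect u_k\|_{L^2}=1$ and $\|(\simgrad + iX_\chi)\vect u_k\|_{L^2}\to 0$. Korn bounds $\|\vect u_k\|_{H^1}$, so along a subsequence $\vect u_k \weak \vect u$ in $H^1_\#$ and $\vect u_k \to \vect u$ in $L^2$ by Rellich, hence $\|\vect u\|_{L^2}=1$ and, using that $X_\chi$ is bounded on $L^2$ and $\simgrad\vect u_k \weak \simgrad\vect u$, $(\simgrad + iX_\chi)\vect u = 0$. Then $\vect v = e^{i\chi\cdot y}\vect u$ satisfies $\simgrad \vect v = 0$ on $\R^3$, so by the rigidity of the symmetric gradient $\vect v(y) = \vect a + \vect B y$ with $\vect B$ skew-symmetric; matching linear and constant parts in the quasiperiodicity relation gives $(1 - e^{i\chi_j})\vect B = 0$ and $\vect B e_j = (e^{i\chi_j}-1)\vect a$ for each $j$, and since $\chi \ne 0$ some $e^{i\chi_j}\ne 1$, forcing $\vect B = 0$ and then $\vect a = 0$; thus $\vect v \equiv 0$, contradicting $\|\vect v\|_{L^2}=1$.

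I expect the coercivity step to be the main obstacle --- specifically, the reduction to a Korn-type inequality for $\simgrad + iX_\chi$ and the rigidity/quasiperiodicity bookkeeping ruling out a nontrivial kernel. Two cautionary remarks: first, the compactness argument only produces a constant $C=C(\chi)$ (degenerating as $\chi\to 0$), which is all that is claimed here, the $\chi$-uniform version of Korn being a strictly finer statement developed separately in the paper; second, all uses of ellipticity and of the symmetries of $\mathbb{A}$ must stay within $\C^{3\times 3}_{\text{sym}}$, where Assumption~\ref{coffassumption} provides control --- legitimate here because $\simgrad + iX_\chi$ is $\C^{3\times 3}_{\text{sym}}$-valued.
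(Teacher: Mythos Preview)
Your argument is correct. The form-theoretic part is handled the same way as in the paper; the difference lies in the coercivity step.

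The paper does not run a compactness argument. It simply invokes the appendix estimate (Proposition~\ref{prop:coercive_est}), proved by Fourier series on the torus: writing $\vect u=\sum_k a_k e^{2\pi i k\cdot y}$ and using the rank-one bound \eqref{rankonesymformula}, one obtains directly
\[
\|\vect u\|_{L^2}\le \tfrac{C_{\text{fourier}}}{|\chi|}\|(\simgrad+iX_\chi)\vect u\|_{L^2},\qquad
\|\nabla\vect u\|_{L^2}\le C_{\text{fourier}}\|(\simgrad+iX_\chi)\vect u\|_{L^2},
\]
with $C_{\text{fourier}}$ \emph{independent of $\chi$}. Plugging these into the two-sided bound $\nu\|(\simgrad+iX_\chi)\vect u\|^2\le a_\chi(\vect u,\vect u)$ gives the coercivity with a uniform constant.

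Your route---the substitution $\vect v=e^{i\chi\cdot y}\vect u$, rigidity of $\simgrad$, and the quasiperiodicity bookkeeping to kill affine candidates---is a legitimate and self-contained alternative that avoids Fourier analysis. The price, as you already flagged, is that compactness only yields $C=C(\chi)$. That is enough for the proposition as stated, but the paper's Fourier proof buys the $\chi$-uniform constant, and this uniformity is exactly what is needed downstream: the eigenvalue bounds of Proposition~\ref{prop:Rayleighestim}, the equivalence of $\|\cdot\|_{H^1}$ and the form norm in Remark~\ref{rmk:equiv_h1_a_X}/Lemma~\ref{lem:equiv_h1_a}, and ultimately the fibrewise estimates with constants independent of $\chi$. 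So your approach proves the proposition but does not substitute for the appendix result in the rest of the argument.
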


\begin{proof}
    The claims on denseness of the domain and closedness of $a_\chi$ are immediate. Non-negativity and symmetry follows from (\ref{assump:elliptic}) and (\ref{assump:symmetric}) of Assumption \ref{coffassumption} respectively.
    
    Next, we use (\ref{assump:elliptic}) of Assumption \ref{coffassumption} to get: 
    \begin{equation}
    \label{normbounds}
        C_1 \left\lVert \left(\simgrad + iX_\chi \right)\vect u\right\rVert_{L^2(Y;\C^{3 \times 3})}^2 \leq  a_{ \chi}(\vect u, \vect u) \leq C_2 \left\lVert \left(\simgrad + iX_\chi \right)\vect u\right\rVert_{L^2(Y;\C^{3 \times 3})}^2, \quad \forall \vect u \in \mathcal{D}(a_{\chi}) = H_{\#}^1,
    \end{equation}
    where the constants $C_1, C_2>0$ do not depend on $\chi$. Combining this with \eqref{estimate1} and \eqref{estimate11} (Proposition \ref{prop:coercive_est}), we have: 
     \begin{equation}
         C\lVert \vect u\rVert_{H^1(Y;\C^3)}^2 \leq  \frac{1}{|\chi|^2}a_{\chi}(\vect u, \vect u), \quad \chi \in Y' \setminus \{ 0 \},
     \end{equation}
     so the form is coercive when $\chi \neq 0$.
\end{proof}

The operators $\frac{1}{\eps^2}\mathcal{A}_\chi$ are actually the fibres of an operator that is unitarily equivalent, under the scaled Gelfand transform $\mathcal{G}_\varepsilon$, see \eqref{gelfand}, to $\mathcal{A}_\eps$. Let us record this observation in the result below:

\begin{proposition}[Passing to the unit cell for $\mathcal{A}_\eps$] \label{prop:pass_to_unitcell} 
    We have the following relation between the forms $a_\eps$ and $a_\chi$:
    \begin{equation}
        a_\varepsilon(\vect u,\vect v) = \int_{Y'}\frac{1}{\varepsilon^2}a_{\chi}(\mathcal{G}_\varepsilon\vect u,\mathcal{G}_\varepsilon\vect v) d\chi,  \quad \vect u,\vect v \in H^1(\R^3;\C^3),
    \end{equation}
    and the following relations between the operators $\mathcal{A}_\eps$ and $\mathcal{A}_\chi$: 
    \begin{align}
        \mathcal{A}_\varepsilon &= \mathcal{G}_\varepsilon^* \left( \int_{Y'}^\oplus \frac{1}{\varepsilon^2}\mathcal{A}_{\chi} d\chi \right) \mathcal{G}_\varepsilon, \label{eqn:vonneumannformula_ops} \\
        \left(\mathcal{A}_\varepsilon - zI \right)^{-1} &= \mathcal{G}_\varepsilon^* \left( \int_{Y'}^\oplus \left(\frac{1}{\varepsilon^2}\mathcal{A}_{\chi} - zI \right)^{-1}  d\chi \right) \mathcal{G}_\varepsilon, \qquad \text{for } z \in \rho(\mathcal{A}_\varepsilon).\label{eqn:vonneumannformula_resolvent}
    \end{align}
\end{proposition}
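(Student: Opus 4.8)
The plan is to establish the form identity first, and then derive the two operator identities from it together with the unitarity of $\mathcal{G}_\varepsilon$. For the form identity, I would start from the definition \eqref{eqn:ahom_complexHS} of $a_\varepsilon$ and perform the change of variables $x = \varepsilon z$, which turns $\int_{\R^3} \mathbb{A}(\tfrac{x}{\varepsilon}) \simgrad_x \vect u(x) : \overline{\simgrad_x \vect v(x)}\, dx$ into an integral over $\R^3$ in $z$ of $\mathbb{A}(z)$ contracted with the symmetrized gradients of the scaled functions $\mathcal{S}_\varepsilon \vect u$ and $\mathcal{S}_\varepsilon \vect v$, up to the appropriate power of $\varepsilon$. (Concretely, $\simgrad_x \vect u(x) = \varepsilon^{-1} \varepsilon^{-3/2}\,\simgrad_z(\mathcal{S}_\varepsilon \vect u)(z)$ at $x = \varepsilon z$, with $dx = \varepsilon^3\, dz$, so the scaling factors combine to produce $a_\varepsilon(\vect u,\vect v) = \varepsilon^{-2} a_{\varepsilon=1}(\mathcal{S}_\varepsilon \vect u, \mathcal{S}_\varepsilon \vect v)$ in an appropriate sense.) Then I would apply the Gelfand transform $\mathcal{G}$ to $\mathcal{S}_\varepsilon \vect u$ and use the Plancherel-type identity \eqref{eqn:gelfand_unitary} together with the commutation rule \eqref{gelfandsymetricgradientformula} (which identifies $\mathcal{G}_\varepsilon(\simgrad \vect u)$ with $\varepsilon^{-1}(\simgrad_y + iX_\chi)(\mathcal{G}_\varepsilon \vect u)$). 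Since $\mathbb{A}$ is $\Z^3$-periodic and the Gelfand transform fibrewise decomposes $L^2(\R^3)$ into $\int_{Y'}^\oplus L^2(Y)\,d\chi$ with the periodic coefficient acting pointwise on each fibre, the integrand over $Y'$ becomes exactly $\varepsilon^{-2}\int_Y \mathbb{A}(y)(\simgrad + iX_\chi)(\mathcal{G}_\varepsilon \vect u) : \overline{(\simgrad + iX_\chi)(\mathcal{G}_\varepsilon \vect v)}\, dy = \varepsilon^{-2} a_\chi(\mathcal{G}_\varepsilon \vect u, \mathcal{G}_\varepsilon \vect v)$. This yields the claimed form identity.

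Next, for the operator identity \eqref{eqn:vonneumannformula_ops}, I would invoke the general correspondence between closed, densely defined, non-negative, symmetric sesquilinear forms and self-adjoint non-negative operators (the first representation theorem). The right-hand side of \eqref{eqn:vonneumannformula_ops} is a well-defined self-adjoint operator: each fibre $\varepsilon^{-2}\mathcal{A}_\chi$ is self-adjoint and non-negative (by the preceding Proposition), and the direct integral $\int_{Y'}^\oplus \varepsilon^{-2}\mathcal{A}_\chi\, d\chi$ is therefore self-adjoint on $\int_{Y'}^\oplus L^2(Y;\C^3)\, d\chi$, with associated form $\vect U, \vect V \mapsto \int_{Y'} \varepsilon^{-2} a_\chi(\vect U(\chi), \vect V(\chi))\, d\chi$; conjugating by the unitary $\mathcal{G}_\varepsilon$ preserves self-adjointness and transports the form to $(\vect u,\vect v) \mapsto \int_{Y'} \varepsilon^{-2} a_\chi(\mathcal{G}_\varepsilon \vect u, \mathcal{G}_\varepsilon \vect v)\, d\chi$. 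By the form identity just proved, this is exactly $a_\varepsilon$, with the correct form domain $H^1(\R^3;\C^3)$ (using that $\mathcal{G}_\varepsilon$ maps $H^1(\R^3;\C^3)$ onto the natural domain $\int_{Y'}^\oplus H^1_\#(Y;\C^3)\, d\chi$ of the direct-integral form, which follows from \eqref{gelfandsymetricgradientformula}). Since a non-negative self-adjoint operator is uniquely determined by its form, \eqref{eqn:vonneumannformula_ops} follows. Finally, \eqref{eqn:vonneumannformula_resolvent} is a direct consequence: for $z \in \rho(\mathcal{A}_\varepsilon)$, conjugation by a unitary commutes with taking resolvents, $(\mathcal{G}_\varepsilon^* T \mathcal{G}_\varepsilon - zI)^{-1} = \mathcal{G}_\varepsilon^*(T - zI)^{-1}\mathcal{G}_\varepsilon$, and the resolvent of a direct integral is the direct integral of the fibrewise resolvents, $(\int_{Y'}^\oplus \varepsilon^{-2}\mathcal{A}_\chi\, d\chi - zI)^{-1} = \int_{Y'}^\oplus (\varepsilon^{-2}\mathcal{A}_\chi - zI)^{-1}\, d\chi$ (the fibrewise resolvents being uniformly bounded in $\chi$ since $\sigma(\varepsilon^{-2}\mathcal{A}_\chi) \subset [0,\infty)$ for all $\chi$, so $z \in \rho(\mathcal{A}_\varepsilon) \supset \C \setminus [0,\infty)$ keeps $\mathrm{dist}(z, [0,\infty))$ bounded below).

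The bookkeeping of the $\varepsilon$-powers in the change of variables is the only genuinely delicate point — one must track the $\varepsilon^{3/2}$ from $\mathcal{S}_\varepsilon$, the $\varepsilon^{-1}$ from each gradient, and the Jacobian $\varepsilon^3$, and check that they combine to the stated $\varepsilon^{-2}$; I would do this carefully but it is routine. The remaining potential subtlety is the measurability/domain description of the direct-integral operator and the claim that $\mathcal{G}_\varepsilon$ restricts to a bijection between the two form domains, but both follow cleanly from \eqref{gelfandsymetricgradientformula} and the fact that $\|\vect u\|_{H^1(\R^3)}^2 = \int_{Y'}(\|\mathcal{G}_\varepsilon \vect u(\cdot,\chi)\|_{L^2(Y)}^2 + \varepsilon^2\|\mathcal{G}_\varepsilon(\nabla \vect u)(\cdot,\chi)\|_{L^2(Y)}^2)\, d\chi$ rewritten via the commutation rule. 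I expect the main obstacle, such as it is, to be purely expository: presenting the change of variables plus Gelfand-transform computation transparently enough that the reader sees where periodicity of $\mathbb{A}$ enters (it is what allows $\mathbb{A}$ to descend to a fibrewise multiplication operator).
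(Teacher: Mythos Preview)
Your proposal is correct and follows essentially the same route as the paper: the form identity comes from the unitarity of $\mathcal{G}_\varepsilon$ \eqref{eqn:gelfand_unitary} together with the commutation rule \eqref{eqn:scaling_vs_deriv}/\eqref{gelfandsymetricgradientformula}, and the operator and resolvent identities are then read off from the form identity (the paper cites \cite[Theorem XIII.85]{reed_simon4} for the passage to resolvents, which packages the direct-integral argument you sketch). Your explicit separation into the scaling $\mathcal{S}_\varepsilon$ followed by the unscaled Gelfand transform $\mathcal{G}$ is just an unpacking of $\mathcal{G}_\varepsilon = \mathcal{G}\circ\mathcal{S}_\varepsilon$, so there is no substantive difference.
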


\begin{proof}
    The assertion on the forms is a consequence of (\ref{eqn:gelfand_unitary}) and (\ref{eqn:scaling_vs_deriv}). For the operator identities, the assertion (\ref{eqn:vonneumannformula_ops}) follows again from (\ref{eqn:scaling_vs_deriv}). Then (\ref{eqn:vonneumannformula_resolvent}) follows from (\ref{eqn:vonneumannformula_ops}) by using \cite[Theorem XIII.85]{reed_simon4}.
\end{proof}

\subsection{Homogenized tensor, homogenized operator, and main results}\label{sect:main_results}

In this section, we will define the homogenized operator $\mathcal{A}^{\text{hom}}$, and state our main result. We begin by defining a bilinear form $a^{\text{hom}}$, from which we extract the homogenized tensor $\A^{\rm hom}$ (see Proposition \ref{prop:homdefinitionreal}).

\begin{definition}
    Let $a^{\text{hom}}$ be the bilinear form on (the real vector space) $\R^{3 \times 3}_\text{sym}$ defined by
    \begin{equation}
        a^{\rm hom}(\vect \xi, \vect \zeta) = \int_{Y} \A(y) \left( \vect \xi + \simgrad \vect u^{\vect \xi} \right ) : \vect \zeta \, dy, \quad \vect \xi, \vect \zeta \in \R^{3 \times 3}_\text{sym},
    \end{equation}
    where the \textit{corrector term} $\vect u^{\vect \xi} \in H_{\#}^1( Y;\R^3)$ is the unique solution of the cell-problem:
    \begin{equation}\label{correctordefinition}
        \begin{cases}
            \int_{ Y} \A \left( \vect \xi + \simgrad \vect u^{\vect \xi} \right ) : \simgrad \vect v \, dy = 0, \quad \forall \vect v \in  H_{\#}^1( Y;\R^3), \\
            \int_Y \vect u^{ \vect \xi } = 0.
        \end{cases}
    \end{equation}
\end{definition}

We record a useful observation that we will use repeatedly throughout the text.

\begin{lemma}[Orthogonality lemma]\label{lem:ibp_ortho}
    For $\xi \in \R^{3\times 3}_\text{sym}$ and $\vect u \in H_{\#}^1(Y;\R^3)$, we have that $\xi$ and $\simgrad \vect u$ are orthogonal in $L^2(Y;\R^3)$.
\end{lemma}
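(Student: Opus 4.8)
The plan is to prove the orthogonality by an integration-by-parts argument, exploiting the symmetry of the coefficient tensor together with the defining cell problem. The statement to prove is that for any constant $\xi \in \R^{3\times 3}_{\text{sym}}$ and any $\vect u \in H^1_{\#}(Y;\R^3)$, one has $\int_Y \xi : \simgrad \vect u \, dy = 0$.

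First, since $C^\infty_{\#}(Y;\R^3)$ is dense in $H^1_{\#}(Y;\R^3)$ and both sides of the claimed identity depend continuously on $\vect u$ in the $H^1$-norm, it suffices to prove the identity for smooth periodic $\vect u$. For such $\vect u$, I would compute $\int_Y \xi : \simgrad \vect u \, dy$ componentwise. Because $\xi$ is symmetric, $\xi : \simgrad \vect u = \xi : \nabla \vect u = \sum_{i,j} \xi^i_j \, \partial_{x_j} u_i$. Now $\xi^i_j$ is a constant, so each term $\int_Y \xi^i_j \, \partial_{x_j} u_i \, dy = \xi^i_j \int_Y \partial_{x_j} u_i \, dy$, and the latter integral vanishes: integrating a periodic function's derivative over a period cell gives zero (the boundary terms on opposite faces of $Y$ cancel by periodicity). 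Summing over $i,j$ yields $\int_Y \xi : \simgrad \vect u \, dy = 0$, which is precisely the asserted orthogonality in $L^2(Y;\R^3)$ — more precisely, orthogonality in $L^2(Y;\R^{3\times 3})$ of the constant field $\xi$ and the field $\simgrad \vect u$, viewing both as $\R^{3\times 3}_{\text{sym}}$-valued.

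There is essentially no obstacle here; the only point requiring a modicum of care is the density/approximation step, so that we may legitimately integrate by parts, and the observation that the symmetry of $\xi$ lets us replace $\simgrad \vect u$ by the full gradient $\nabla \vect u$ inside the Frobenius pairing (since the skew part of $\nabla \vect u$ pairs to zero against the symmetric $\xi$). Alternatively, and perhaps more cleanly, one can phrase the whole argument via the divergence theorem on the torus: $\int_Y \xi : \nabla \vect u \, dy = \int_Y \operatorname{div}(\xi^\top \vect u)\, dy = 0$ by periodicity, again first for smooth $\vect u$ and then by density. Either route is a one-line computation once the approximation is in place.
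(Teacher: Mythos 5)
Your proof is correct and follows essentially the same route as the paper's: replace $\simgrad \vect u$ by $\nabla \vect u$ in the Frobenius pairing using the symmetry of $\xi$, then integrate by parts (equivalently, use that $\int_Y \partial_{x_j} u_i \, dy = 0$ by periodicity). The only addition is your explicit density-of-smooth-functions step, which the paper leaves implicit.
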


\begin{proof}
    As $\xi$ is orthogonal to the skew-symmetric part of $\nabla \vect u$ (with respect to the Frobenius inner product), we have
    \begin{align}\label{eqn:ibp_ortho}
        \int_Y \xi : \simgrad \vect u \, dy = \int_Y \xi : \nabla \vect u \, dy.
    \end{align}
    Now apply integration by parts to the right hand side of (\ref{eqn:ibp_ortho}), together with the assumption that $\xi$ is constant and $\vect u$ is periodic to conclude that this is zero.
\end{proof}

\begin{remark}
    Though not required for our analysis here, let us point out that by Lemma \ref{lem:ibp_ortho} and the symmetries of $\mathbb{A}$, the expression $(\simgrad)^* \mathbb{A}_\eps (\simgrad)$ for $\mathcal{A}_\eps$ can be simplified to $-\div \mathbb{A}_\eps (\simgrad)$.
\end{remark}

\begin{proposition}[Homogenized tensor $\mathbb{A}^{\text{hom}}$]\label{prop:homdefinitionreal}
    The form $a^{\rm hom}$ is a positive symmetric bilinear form on the space $\R_{\text{sym}}^{3\times 3}$, uniquely represented with a tensor $\A^{\rm hom}\in \R^{3 \times 3 \times 3 \times 3}$ satisfying the symmetries as $\mathbb{A}$. That is,
    \begin{alignat}{2}
        &\left[\A^{\rm hom}\right]_{jl}^{ik} = \left[\A^{\rm hom}\right]_{il}^{jk} = \left[\A^{\rm hom}\right]_{lj}^{ki} \qquad  &&i,j,k,l\in\{1,2,3\}, \label{Ahom_properties}\\
        &a^{\rm{hom}} (\vect \xi, \vect \zeta) = \mathbb{A}^{\rm{hom}} \vect \xi : \vect \zeta, && \forall \, \vect \xi, \vect \zeta \in \R^{3\times 3 }_\text{sym}. \label{eqn:ahom_tensor_form_repr}
    \end{alignat}
    Furthermore, there exist some constant $\nu_{\text{hom}}>0$ depending only on $\nu$ and $\| \mathbb{A}_{jl}^{ik} \|_{L^\infty}$ (from Assumption \ref{coffassumption}), such that
    \begin{equation}\label{eqn:Ahomcoercivity_bdd}
        \nu_{\text{hom}} |\vect \xi|^2 \leq \A^{\rm hom}\vect \xi: \vect \xi \leq \frac{1}{\nu_{\text{hom}}} |\vect \xi|^2, \quad \forall \, \vect \xi \, \in \R^{3 \times 3}_\text{sym}. 
    \end{equation}
\end{proposition}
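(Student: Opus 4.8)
\textbf{Proof plan for Proposition \ref{prop:homdefinitionreal}.}

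The plan is to verify the three assertions — symmetry/tensor representation with the stated index symmetries, positivity, and the two-sided coercivity bound — in that order, exploiting the variational characterization of the corrector $\vect u^{\vect \xi}$ throughout.

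First I would establish bilinearity and symmetry of $a^{\rm hom}$. Bilinearity in $\vect\xi$ follows once we observe that the corrector map $\vect\xi \mapsto \vect u^{\vect\xi}$ is linear: this is immediate from the linearity of the cell-problem \eqref{correctordefinition} in its data $\vect\xi$, together with uniqueness of the solution (which in turn comes from coercivity of $\vect v \mapsto \int_Y \A \simgrad \vect v : \simgrad \vect v$ on $H^1_\#(Y;\R^3)/\R^3$ via the standard Korn inequality on the torus and Assumption \ref{coffassumption}). Bilinearity in $\vect\zeta$ is obvious. For symmetry, the key trick is the usual one: using the Galerkin orthogonality \eqref{correctordefinition} with test function $\vect v = \vect u^{\vect\zeta}$, one rewrites
\begin{equation}
    a^{\rm hom}(\vect\xi,\vect\zeta) = \int_Y \A\left(\vect\xi + \simgrad \vect u^{\vect\xi}\right) : \left(\vect\zeta + \simgrad \vect u^{\vect\zeta}\right) dy,
\end{equation}
where I have added $\int_Y \A(\vect\xi + \simgrad \vect u^{\vect\xi}):\simgrad \vect u^{\vect\zeta} = 0$. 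This symmetrized form is manifestly symmetric in $(\vect\xi,\vect\zeta)$ once we invoke the symmetry $\A_{jl}^{ik} = \A_{lj}^{ki}$ from \eqref{assump:symmetric}, which gives $\A M : N = M : \A N = \A N : M$ for symmetric $M,N$. Since $a^{\rm hom}$ is a symmetric bilinear form on the finite-dimensional real inner product space $\R^{3\times3}_{\rm sym}$, it is represented by a unique self-adjoint linear map, equivalently a tensor $\A^{\rm hom}$ satisfying $\left[\A^{\rm hom}\right]_{jl}^{ik} = \left[\A^{\rm hom}\right]_{lj}^{ki}$ and (because it acts on and produces symmetric matrices) the minor symmetry $\left[\A^{\rm hom}\right]_{jl}^{ik} = \left[\A^{\rm hom}\right]_{il}^{jk}$; this is \eqref{Ahom_properties} and \eqref{eqn:ahom_tensor_form_repr}. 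One subtlety to state carefully: the value $\A^{\rm hom}\vect\xi:\vect\zeta$ only pins down the action of $\A^{\rm hom}$ on symmetric arguments, so we simply define the remaining components so that the displayed symmetries hold, exactly as $\A$ itself is only constrained on $\R^{3\times3}_{\rm sym}$ by \eqref{assump:elliptic}.

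Next, the coercivity bound \eqref{eqn:Ahomcoercivity_bdd}. For the lower bound, using the symmetrized formula above with $\vect\zeta = \vect\xi$ and the ellipticity \eqref{assump:elliptic},
\begin{equation}
    \A^{\rm hom}\vect\xi:\vect\xi = \int_Y \A\left(\vect\xi + \simgrad\vect u^{\vect\xi}\right):\left(\vect\xi + \simgrad\vect u^{\vect\xi}\right) \geq \nu \int_Y \left|\vect\xi + \simgrad\vect u^{\vect\xi}\right|^2 \geq \nu\left|\int_Y \left(\vect\xi + \simgrad\vect u^{\vect\xi}\right)\right|^2 = \nu|\vect\xi|^2,
\end{equation}
where the last equality uses $\int_Y \simgrad\vect u^{\vect\xi} = 0$ by periodicity (Lemma \ref{lem:ibp_ortho} with constant test matrix, or direct integration by parts) and Jensen's inequality on $|Y|=1$. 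For the upper bound, I would use the variational (energy-minimization) characterization: $\vect u^{\vect\xi}$ minimizes $\vect v \mapsto \int_Y \A(\vect\xi + \simgrad\vect v):(\vect\xi + \simgrad\vect v)$ over $H^1_\#(Y;\R^3)$, so testing against $\vect v = 0$ gives
\begin{equation}
    \A^{\rm hom}\vect\xi:\vect\xi = \int_Y \A\left(\vect\xi + \simgrad\vect u^{\vect\xi}\right):\left(\vect\xi + \simgrad\vect u^{\vect\xi}\right) \leq \int_Y \A\,\vect\xi:\vect\xi \leq \frac{1}{\nu}|\vect\xi|^2.
\end{equation}
Positivity of $a^{\rm hom}$ is then a byproduct of the lower bound. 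Defining $\nu_{\rm hom} := \nu$ would already suffice, but since the statement asks for dependence on $\nu$ and $\|\A_{jl}^{ik}\|_{L^\infty}$ only (which are exactly the quantities in Assumption \ref{coffassumption}), one notes both bounds above depend only on $\nu$; the mention of $\|\A_{jl}^{ik}\|_{L^\infty}$ is harmless (it enters, e.g., if one prefers a cruder estimate of the upper bound, or it is simply listed for uniformity with later statements).

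The only genuinely non-routine point is the symmetrization identity and the correct bookkeeping of which index symmetries of $\A$ are needed where — in particular that $\A_{jl}^{ik} = \A_{lj}^{ki}$ (the "major" symmetry) is what makes $a^{\rm hom}$ symmetric, while the minor symmetry $\A_{jl}^{ik} = \A_{il}^{jk}$ is what guarantees $\A$ (and hence $\A^{\rm hom}$) maps symmetric matrices to symmetric matrices, so that the pairing in \eqref{correctordefinition} and \eqref{eqn:ahom_tensor_form_repr} is consistent. Everything else — well-posedness of the cell problem, linearity of the corrector, Jensen, the energy-minimization inequality — is standard, so I expect no real obstacle.
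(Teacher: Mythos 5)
Your proposal is correct and follows the paper's structure for bilinearity, symmetry (the Galerkin-orthogonality symmetrization), and the tensor representation, but it deviates on the two-sided bound in a way worth noting. For the lower bound you use Jensen's inequality $\int_Y |\vect\xi+\simgrad\vect u^{\vect\xi}|^2 \geq |\int_Y(\vect\xi+\simgrad\vect u^{\vect\xi})|^2$, whereas the paper uses Lemma \ref{lem:ibp_ortho} to get the Pythagorean split $\|\vect\xi\|^2 + \|\simgrad\vect u^{\vect\xi}\|^2$ — both give the same $\nu|\vect\xi|^2$. For the upper bound you invoke the energy-minimization characterization of the corrector ($\vect u^{\vect\xi}$ minimizes the cell energy, so testing with $\vect v=0$ gives $\A^{\rm hom}\vect\xi:\vect\xi \leq \int_Y \A\vect\xi:\vect\xi \leq \nu^{-1}|\vect\xi|^2$); the paper instead first bounds $\|\simgrad\vect u^{\vect\xi}\|_{L^2}$ using coercivity, the cell problem, Hölder, and $\|\A_{jl}^{ik}\|_{L^\infty}$, which is more work and produces a constant depending on $\|\A_{jl}^{ik}\|_{L^\infty}$ as well as $\nu$. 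Your energy-minimization route is cleaner and yields the sharper $\nu_{\rm hom}=\nu$; the paper's phrasing "depending only on $\nu$ and $\|\A_{jl}^{ik}\|_{L^\infty}$" is then satisfied vacuously in the second argument. (To keep the minimization argument airtight you should note explicitly that the cell problem is the Euler–Lagrange equation of the convex functional $\vect v\mapsto\int_Y\A(\vect\xi+\simgrad\vect v):(\vect\xi+\simgrad\vect v)$ and that convexity upgrades criticality to global minimality — this is standard but is the one step you use that is not literally written in the paper.)
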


\begin{proof}
    (Bilinearity) This follows from the identity $\vect u^{{a_1 \vect\xi_1 + a_2 \vect\xi_2}} = a_1 \vect u^{\vect{\xi_1}} + a_2 \vect u^{\vect{\xi_2}}$, where $a_1,a_2 \in \R$ and $\vect{\xi_1}, \vect{\xi_2} \in \R^{3 \times 3}_\text{sym}$. This is a consequence of the uniqueness of solutions to (\ref{correctordefinition}). Here, $\vect u^{\vect \xi}$ denotes the solution to (\ref{correctordefinition}) corresponding to the matrix $\vect \xi \in \R^{3 \times 3}_\text{sym}$.

    (Symmetry of $a^{\text{hom}}$) Denote $\vect u^{\vect \zeta} \in H_{\#}^1(Y;\R^3)$ for the solution to (\ref{correctordefinition}) corresponding to the matrix $\vect \zeta \in \R^{3 \times 3}_\text{sym}$. Using $\vect u^{\vect \zeta}$ as the test function for the problem for $\vect u^{\vect \xi}$, we observe that
    \begin{align}\label{eqn:form_tensor_symm}
        a^{\text{hom}} (\vect \xi, \vect \zeta) 
        = \int_Y \mathbb{A} \left( \vect \xi + \simgrad \vect u^{\vect \xi} \right) : \vect \zeta \, dy
        = \int_Y \mathbb{A} \left( \vect \xi + \simgrad \vect u^{\vect \xi} \right) : \left( \vect \zeta + \simgrad \vect u^{\vect \zeta} \right) dy.
    \end{align}
    This identity, together with the symmetries of $\mathbb{A}$ (Assumption \ref{coffassumption}), implies that the form $a^{\text{hom}}$ is symmetric.

    (Unique correspondence of $a^{\text{hom}}$ with an $\mathbb{A}^{\text{hom}}$ satisfying (\ref{Ahom_properties})) As $a^{\text{hom}}$ is a symmetric bilinear form on the vector space $V := \R_{\text{sym}}^{3\times 3}$, it must be uniquely represented by a symmetric matrix $\mathbb{A}^{\text{hom}}$ (viewed as a linear map on the vector space $V$). That is,
    \begin{align}\label{eqn:symm_one}
        a^{\rm{hom}} (\vect \xi, \vect \zeta) = \mathbb{A}^{\rm{hom}} \vect \xi : \vect \zeta = \vect \xi : \mathbb{A}^{\rm{hom}} \vect \zeta, \quad \forall \, \vect \xi, \vect \zeta \in V.
    \end{align}
    The symmetric matrix $\mathbb{A}^{\text{hom}}$, now viewed as a tensor on $\R^{3 \times 3 \times 3 \times 3}$ (keeping the same notation $\mathbb{A}^{\text{hom}}$), satisfies the symmetries (\ref{Ahom_properties}). Indeed, $\left[\A^{\rm hom}\right]_{jl}^{ik} = \left[\A^{\rm hom}\right]_{lj}^{ki}$ follows from (\ref{eqn:symm_one}), and $\left[\A^{\rm hom}\right]_{jl}^{ik} = \left[\A^{\rm hom}\right]_{il}^{jk}$ follows from $\mathbb{A}^{\rm{hom}} \vect \xi : \vect \zeta = \mathbb{A}^{\rm{hom}} \vect \xi : \vect \zeta^\top$.

    (Positivity) We will establish the lower bound of (\ref{eqn:Ahomcoercivity_bdd}). Let $\vect \xi \in \R^{3\times 3}_\text{sym}$. Then,
    \begin{align*}
            \A^{\rm hom} \vect \xi: \vect \xi = a^{\rm hom}(\vect \xi, \vect \xi) 
            &= \int_Y \mathbb{A} \left( \vect \xi + \simgrad \vect u^{\vect \xi} \right) : \left( \vect \xi + \simgrad \vect u^{\vect \xi} \right) \, dy \qquad 
            &&\text{By (\ref{eqn:form_tensor_symm}).} \\
            & \geq \nu \left\lVert \vect \xi + \simgrad \vect u^{\vect \xi} \right\rVert_{L^2(Y;\R^{3 \times 3})}^2 
            &&\text{By (\ref{assump:elliptic}) of Assumption \ref{coffassumption}.} \\ 
            &= \nu  \left( \left\| \vect \xi \right\|_{L^2(Y;\R^{3 \times 3})}^2 + \left\| \simgrad \vect u^{\vect \xi} \right\|_{L^2(Y;\R^{3 \times 3})}^2 \right) 
            &&\text{By Lemma \ref{lem:ibp_ortho}.} \\
            &\geq  \nu \left\lVert \vect \xi  \right\rVert_{L^2(Y;\R^{3 \times 3})}^2 = \nu \lvert \vect \xi\rvert^2.
    \end{align*}
    (Upper bound of (\ref{eqn:Ahomcoercivity_bdd})) Similarly, we have
    \begin{align}\label{eqn:ahom_upperbound}
        \A^{\rm hom} \vect \xi : \vect \xi \leq \frac{1}{\nu} \| \vect \xi + \simgrad \vect u^{\vect \xi} \|_{L^2(Y;\R^{3\times 3})}^2 
        = \frac{1}{\nu} \left( \| \vect \xi \|_{L^2(Y;\R^{3\times 3})}^2 + \| \simgrad \vect u^{\vect \xi} \|_{L^2(Y;\R^{3\times 3})}^2 \right).
    \end{align}
    We may further bound $\| \simgrad \vect u^{\vect \xi} \|_{L^2}$ as follows
    \begin{alignat}{2}
        \| \simgrad \vect u^{\vect \xi} \|_{L^2(Y;\R^{3\times 3})}^2 &\leq \frac{1}{\nu} \int_Y \mathbb{A} \simgrad \vect u^{\vect \xi} : \simgrad \vect u^{\vect \xi} \, dy 
        &&\text{By (\ref{assump:elliptic}) of Assumption \ref{coffassumption}.} \nonumber\\
        &= -\frac{1}{\nu} \int_Y \mathbb{A} \vect \xi : \simgrad \vect u^{\vect \xi} \, dy
        &&\text{By the cell-problem (\ref{correctordefinition}).} \nonumber\\
        &\leq \frac{1}{\nu} \| \mathbb{A} \vect \xi \|_{L^2(Y;\R^{3\times 3})} \| \simgrad \vect u^{\vect \xi} \|_{L^2(Y;\R^{3\times 3})} \qquad
        &&\text{By H\"older's inequality.} \nonumber\\
        &\leq C \| \vect \xi \|_{L^2} \| \simgrad \vect u^{\vect \xi} \|_{L^2}
        &&\text{By Assumption \ref{coffassumption}.} \label{eqn:elliptic_est}
    \end{alignat}
    where the constant $C>0$ depends on $\nu$ and $\| \mathbb{A}_{jl}^{ik} \|_{L^\infty}$. Once again we note that $\| \vect \xi \|_{L^2} = |\xi|$. Now combine the inequality (\ref{eqn:elliptic_est}) with (\ref{eqn:ahom_upperbound}), and we get the upper bound of (\ref{eqn:Ahomcoercivity_bdd}). This completes the proof.
\end{proof}

The tensor $\mathbb{A}^{\rm hom}$ is used to define the homogenized operator $\mathcal{A}^{\rm hom}$ as follows:

\begin{definition}[Homogenized operator] \label{defn:hom_operator_fullspace}
    Let $\mathcal{A}^{\rm hom}$ be the operator on $L^2(\R^3;\C^3)$ given by the following differential expression 
    \begin{equation}
        \mathcal{A}^{\rm hom}  \equiv (\simgrad)^* \mathbb{A}^{\rm hom} \left(\simgrad \right),
    \end{equation}
    with domain $\mathcal{D}\left(\mathcal{A}^{\rm hom}\right) = H^2(\R^3;\C^3)$. Its corresponding form is given by
    \begin{equation}
        \left\langle \mathcal{A}^{\rm hom} \vect u, \vect v \right\rangle_{L^2(\R^3;\C^3)} = \int_{\R^3} \mathbb{A}^{\rm hom} \simgrad \vect u : \overline{\simgrad \vect v} \, dy, \quad \vect u \in \mathcal{D}(\mathcal{A}^{\rm hom}), \vect v \in \mathcal{D}[\mathcal{A}^{\rm hom}] := H^1(\R^3,\C^3).
    \end{equation}
\end{definition}

We are now in a position to state the main results of the paper.

\begin{theorem}\label{thm:main_thm}
    There exists $C>0$ independent of both the period of material oscillations $\varepsilon>0$ and the parameter of spectral scaling $\gamma > -2$ such that the following norm-resolvent estimates hold: 
    \begin{itemize}
        \item \textbf{$L^2 \rightarrow L^2$ estimate.}
        \begin{equation}
            \left\lVert \left(\frac{1}{\varepsilon^\gamma}\mathcal{A}_{\varepsilon} + I \right)^{-1} - \left(\frac{1}{\varepsilon^\gamma}\mathcal{A}^{\rm hom} + I \right)^{-1} \right\rVert_{L^2(\R^3,\R^3) \to L^2(\R^3,\R^3)} \leq C \varepsilon^\frac{\gamma + 2}{2}.
        \end{equation}

        \item \textbf{$L^2 \rightarrow H^1$ estimate.} For $\gamma > -1$,
        \begin{equation}
            \left\lVert \left(\frac{1}{\varepsilon^\gamma}\mathcal{A}_{\varepsilon} + I \right)^{-1} 
            - \left(\frac{1}{\varepsilon^\gamma}\mathcal{A}^{\rm hom} + I \right)^{-1} 
            -  \mathcal{R}_{\rm corr, 1}^\eps \right\rVert_{L^2(\R^3,\R^3) \to H^1(\R^3,\R^3)} 
            \leq C \max\left\{\varepsilon^{\gamma + 1}, \varepsilon^{\frac{\gamma + 2}{2}} \right\}.
        \end{equation}

        \item \textbf{Higher-order $L^2 \rightarrow L^2$ estimate.}
        \begin{equation}
            \left\lVert \left(\frac{1}{\varepsilon^\gamma}\mathcal{A}_{\varepsilon} + I \right)^{-1} 
            - \left(\frac{1}{\varepsilon^\gamma}\mathcal{A}^{\rm hom} + I \right)^{-1}
            -  \mathcal{R}_{\rm corr, 1}^\eps 
            -  \mathcal{R}_{\rm corr, 2}^\eps \right\rVert_{L^2(\R^3,\R^3) \to L^2(\R^3,\R^3)} 
            \leq C \varepsilon^{\gamma + 2}.
        \end{equation}
    \end{itemize}
    Here, $\mathcal{R}_{\rm corr, 1}^\eps$ and $\mathcal{R}_{\rm corr, 2}^\eps$ are the corrector operators defined by \eqref{trans_back_corr1} and \eqref{trans_back_corr2}, respectively.
\end{theorem}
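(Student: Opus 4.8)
The overall strategy is to work on the Gelfand fibres and then lift back to $\mathbb{R}^3$. By Proposition \ref{prop:pass_to_unitcell}, the resolvent $(\tfrac{1}{\varepsilon^\gamma}\mathcal{A}_\varepsilon + I)^{-1}$ is unitarily equivalent (via $\mathcal{G}_\varepsilon$) to the direct integral of the fibre resolvents $(\tfrac{1}{\varepsilon^{\gamma+2}}\mathcal{A}_\chi + I)^{-1}$ over $\chi \in Y'$, and the same holds for $\mathcal{A}^{\mathrm{hom}}$ with $\mathcal{A}_\chi$ replaced by the (constant-coefficient) fibre $\mathcal{A}_\chi^{\mathrm{hom}}$ built from $\mathbb{A}^{\mathrm{hom}}$ and $X_\chi$. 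Since $\mathcal{G}_\varepsilon$ is unitary, each of the three operator-norm differences equals the supremum over $\chi \in Y'$ of the corresponding fibrewise operator norm on $L^2(Y;\mathbb{C}^3)$. Thus the whole theorem reduces to uniform-in-$\chi$ fibrewise estimates, which — per the structure described in the introduction — are exactly the output of the asymptotic procedure of Section \ref{sect:the_asymp_method}, summarized in Section \ref{sect:asymp_results_chi}. After rescaling $z = -\varepsilon^{\gamma+2}$ in the resolvent identity, one sees the relevant small parameter on the fibre is $|\chi|^2$ together with $\varepsilon^{\gamma+2}$, and the $\gamma$-dependence enters only through these combinations; this is why a single constant $C$ works for all $\gamma > -2$ (resp. $\gamma > -1$ for the $H^1$ estimate).

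First I would split $Y'$ into the ``near-threshold'' region $\{|\chi| \le \delta\}$ and the ``bulk'' region $\{|\chi| > \delta\}$ for a fixed small $\delta>0$. On the bulk region, coercivity of $a_\chi$ uniformly in $|\chi|>\delta$ (the last part of the Proposition following Definition \ref{definitionofAchi}) gives a spectral gap: $\mathcal{A}_\chi \ge c\, I$ and $\mathcal{A}_\chi^{\mathrm{hom}} \ge c\,I$ with $c$ independent of $\chi$, so both fibre resolvents are $O(\varepsilon^{\gamma+2})$ in norm and their difference is trivially controlled by $C\varepsilon^{\gamma+2}$, which is dominated by the claimed bounds. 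The substantive work is on the near-threshold region, where one uses the spectral analysis of Section \ref{sect:spectral_analysis} (the $3$-dimensional kernel $\ker\mathcal{A}_0$ of rigid-body-type modes, isolation of the lowest group of eigenvalues, and the Riesz projection $P_\chi$ obtained by contour-integrating $(\mathcal{A}_\chi - zI)^{-1}$) and feeds the resolvent into the asymptotic expansion in powers of $|\chi|$ developed in Section \ref{sect:the_asymp_method}. The leading term reproduces $(\tfrac{1}{\varepsilon^{\gamma+2}}\mathcal{A}_\chi^{\mathrm{hom}} + I)^{-1}$ on $\ker\mathcal{A}_0$, the next term produces the first corrector operator $\mathcal{R}_{\mathrm{corr},1}^\varepsilon$, and the following term produces $\mathcal{R}_{\mathrm{corr},2}^\varepsilon$; the key inputs making these manipulations rigorous are the $\chi$-dependent Korn inequalities (Proposition \ref{prop:coercive_est}) which give resolvent bounds scaling correctly in $|\chi|$. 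Collecting the error terms of each truncation order, summing/maximizing over $|\chi|\le\delta$, and using $|\chi| \le \pi\sqrt{3}$ to absorb powers, yields the three fibrewise estimates; then one inverts $\mathcal{G}_\varepsilon$. Finally, Theorem \ref{thm:smoothingdrop} is invoked to remove the smoothing operator that naturally appears in the fibre construction, and the passage from $\mathbb{C}$ to $\mathbb{R}$ Hilbert spaces is by restriction as noted in the ``Real vs Complex'' remark (using $\mathcal{A}_\varepsilon^{\mathbb{C}}|_{L^2(\mathbb{R}^3;\mathbb{R}^3)} = \mathcal{A}_\varepsilon^{\mathbb{R}}$ and the analogous identity for $\mathcal{A}^{\mathrm{hom}}$, together with the fact that the corrector operators map real functions to real functions).

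I expect the main obstacle to be the near-threshold analysis for the \emph{vector} system: because $\ker\mathcal{A}_0$ is $3$-dimensional, the lowest eigenvalue of $\mathcal{A}_0$ is not simple, so one cannot track a single analytic eigenbranch. The remedy — central to the operator-asymptotic method — is to work with the \emph{sum} of the spectral projections via the Riesz projection $P_\chi$ and with resolvents rather than individual eigenprojections, so that only the block operator $\mathcal{A}_\chi^{\mathrm{hom}}\colon \ker\mathcal{A}_0 \to \ker\mathcal{A}_0$ (Definition \ref{defn:hom_matrix}) matters; showing that this block, after the asymptotic expansion, matches the constant-coefficient operator coming from $\mathbb{A}^{\mathrm{hom}}$ uniformly in the direction $\chi/|\chi|$ is the technical heart. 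A secondary difficulty is bookkeeping the error terms so that the powers of $\varepsilon$ (and the $\max\{\varepsilon^{\gamma+1},\varepsilon^{(\gamma+2)/2}\}$ in the $H^1$ estimate) come out sharp and uniform in $\gamma$; this is where the precise scaling in $|\chi|$ provided by the uniform Korn inequality is indispensable, since a crude Korn constant would lose a power of $|\chi|$ and break the uniformity.
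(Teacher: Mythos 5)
Your overall architecture matches the paper's: pass to Gelfand fibres via Proposition \ref{prop:pass_to_unitcell} and \ref{prop:pass_to_unitcell2}, split the dual cell into a near-threshold region $[-\mu,\mu]^3\setminus\{0\}$ and a bulk region where the uniform spectral gap makes both resolvents $O(\eps^{\gamma+2})$, feed the near-threshold fibres into the two-cycle asymptotic expansion of Section \ref{sect:the_asymp_method} (Theorem \ref{thm:tau_resolvent_est}), lift back, and remove the smoothing operator via Theorem \ref{thm:smoothingdrop}. However, there are two genuine gaps in the reduction step.

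First, your statement that each of the three operator-norm differences ``equals the supremum over $\chi\in Y'$ of the corresponding fibrewise operator norm'' is only true for the $L^2\to L^2$ cases. For $L^2\to H^1$ it fails: by the scaling relation \eqref{eqn:scaling_vs_deriv}, the gradient on $\R^3$ Gelfand-transforms to $\frac{1}{\eps}\big(\nabla_y + i(\cdot)\chi^\top\big)$, so the $H^1(\R^3)$ norm is \emph{not} the direct integral of fibrewise $H^1(Y)$ norms — it carries an extra factor of $\frac{1}{\eps}$ and a contribution $\otimes\frac{\chi}{\eps}$. If one naively used the fibrewise $L^2\to H^1$ estimate of order $O(|\chi|)$ from \eqref{eqn:tau_resolvent_est1}, the $\frac{1}{\eps}$ factor would destroy the estimate (you would get $O(1)$ rather than $O(\eps^{\gamma+1})$ for $\gamma=0$). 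The actual argument, in the proof of Theorem \ref{thm:l2h1}, splits the $H^1$ norm into the $L^2$ part and the $L^2$-of-gradient part, and for the gradient part crucially uses the \emph{second-cycle} $O(|\chi|^2)$ fibrewise estimate \eqref{eqn:tau_resolvent_est2} (with $\mathcal{R}_{\mathrm{corr},2,\chi}$ re-inserted and then discarded because it is constant in $y$), combined with Lemma \ref{lem:g_str}, so that the extra factor $|\chi|/\eps$ is exactly compensated. Your proposal does not anticipate this, and in particular it does not explain why both a first- and a second-cycle estimate are required for the $H^1$ conclusion even though only one corrector $\mathcal{R}^\eps_{\mathrm{corr},1}$ appears in the statement.

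Second, ``rescaling $z=-\eps^{\gamma+2}$ in the resolvent identity'' is not a workable mechanism: the fibrewise estimates of Theorem \ref{thm:tau_resolvent_est} have $z$-dependent prefactors involving $D(z)$ and $D_{\mathrm{hom}}(z)$, which degrade as $z$ approaches the rescaled spectrum; and a direct rescaling would put $z=-\eps^{\gamma+2}/|\chi|^2$, which is not uniformly controlled as $\chi$ ranges over the dual cell. The paper instead fixes a compact contour $\Gamma$ (Lemma \ref{lemma:contour}) uniformly bounded away from the rescaled spectra, and represents $(\eps^{-\gamma-2}\mathcal{A}_\chi+I)^{-1}P_\chi$ via the Dunford--Riesz formula with the function $g_{\eps,\chi}(z)=(\tfrac{|\chi|^2}{\eps^{\gamma+2}}z+1)^{-1}$. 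The essential quantitative input is Lemma \ref{lem:g_str}, $|g_{\eps,\chi}(z)|\leq C\big(\max\{|\chi|^2/\eps^{\gamma+2},1\}\big)^{-1}$ on $\Gamma$, which, multiplied against the fibrewise powers $|\chi|$ and $|\chi|^2$, produces precisely $\eps^{(\gamma+2)/2}$ and $\eps^{\gamma+2}$ respectively. Without this contour/functional-calculus step the $z$-dependence in the fibrewise bounds cannot be traded uniformly for the $\eps$- and $\gamma$-independent constant claimed in the theorem.
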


\begin{remark}[Relation to classical two-scale expansion formulae]\label{rmk:main_thm}
    Theorem \ref{thm:main_thm} states in particular, that the zeroth-order approximation to $\vect u_\eps = (\frac{1}{\eps^{\gamma}} \mathcal{A}_\eps + I )^{-1} \vect f$, where $\vect f \in L^2$, is $(\frac{1}{\eps^{\gamma}} \mathcal{A}^{\rm hom} + I )^{-1} \vect f$, and this is in agreement with the classical two-scale expansion \cite[Chpt 7]{cioranescu_donato}. We go a step further, and show that the first-order approximation $\mathcal{R}_{\rm corr,1}^{\eps} \vect f$ coincides with the first-order term in the classical two-scale expansion. This is the content of Section \ref{sect:first_corr_agree}.
\end{remark}

\section{Summary of the operator-asymptotic method}\label{sect:method_summary}

This section contains an overview of the operator-asymptotic method. Recall that our goal (Theorem \ref{thm:main_thm}) is to show that $\mathcal{A}_\eps \equiv (\simgrad)^* \mathbb{A}_\eps \left( \simgrad \right)$ and $\mathcal{A}^{\rm hom} \equiv (\simgrad)^* \mathbb{A}^{\rm hom} \left(\simgrad \right)$ are close in the norm-resolvent sense. 

\textbf{Step 1.} In Section \ref{sect:prelim}, we have shown that rather than studying directly the operator $\mathcal{A}_\eps$ on $L^2(\R^3;\C^3)$, we may equivalently study the family $\mathcal{A}_\chi \equiv (\simgrad + i X_\chi)^* \mathbb{A} (\simgrad + i X_\chi) $ on $L^2(Y;\C^3)$. This is made possible by the Gelfand transform $\mathcal{G}_\eps$, as we see that (Proposition \ref{prop:pass_to_unitcell}) $\mathcal{A}_\eps$ is unitarily equivalent to the following operator on $L^2(Y\times Y')$:
\begin{align*}
    \int_{Y'}^\oplus \frac{1}{\eps^2} \mathcal{A}_\chi d\chi.
\end{align*}
The upshots are: (i) The expression $\frac{1}{\eps^2} \mathcal{A}_\chi$ separates the dependence on $\eps$, allowing us to focus on $\mathcal{A}_\chi$. (ii) Each $\mathcal{A}_\chi$ has discrete spectrum $\sigma(\mathcal{A}_\chi) = \sigma_p(\mathcal{A}_\chi) = \{ \lambda_n^\chi\}_n$, allowing for an easy spectral approximation.

\textbf{Step 2.} We then collect the necessary spectral information on $\mathcal{A}_\chi$ in Section \ref{sect:spectral_analysis}. Due to Korn's inequalities (Proposition \ref{prop:coercive_est}), we know that
\begin{itemize}
    \item As $|\chi|\downarrow 0$, we have $\lambda_1^\chi, \lambda_2^\chi, \lambda_3^\chi \asymp |\chi|^2$ (note two-sided inequality), and $\lambda_n^\chi \geq C > 0$ for $n\geq 4$. 
    \item The eigenspace corresponding to $0 = \lambda_1^0 = \lambda_2^0 = \lambda_3^0$ is $\C^3$. That is, $\ker(\mathcal{A}_{\chi=0}) = \C^3$.
\end{itemize}

Similarly to existing spectral methods, we will also focus on a small neighbourhood of $\chi=0$, as we will eventually show (in Section \ref{sect:norm_resolvent_est}) that the remaining values of $\chi$ do not contribute to the estimates.

\textbf{Step 3.} The spectral information (Section \ref{sect:spectral_analysis}) is used Section \ref{sect:fibre_nr_est} in the following way:
\begin{enumerate}
    \item First, we define the fibre-wise homogenized operator $\mathcal{A}_\chi^\text{hom} : \ker(\mathcal{A}_0) \rightarrow \ker(\mathcal{A}_0)$ (Definition \ref{defn:hom_matrix}). This is simply a $3\times 3$ matrix (by bullet point two) defined in terms of ($\chi$-dependent) cell problems \eqref{focorr2}. The fibre-wise homogenized operator $\mathcal{A}_\chi^\text{hom}$ is connected to the homogenized operator $\mathcal{A}^\text{hom}$ by the key formula \eqref{eqn:vn_ahom_ops}.

    \item Second, we develop an asymptotic expansion in powers of $|\chi|$ for the resolvent of $\frac{1}{|\chi|^2} \mathcal{A}_\chi$ (Section \ref{sect:the_asymp_method}). We have replaced the factor $\frac{1}{\eps^2}$ by $\frac{1}{|\chi|^2}$ so that we only have to keep track of a single parameter $\chi$. We have chosen $\alpha = 2$ in $\frac{1}{|\chi|^\alpha} \mathcal{A}_\chi$ based on bullet point one. The conclusion of Section \ref{sect:fibre_nr_est} is Theorem \ref{thm:tau_resolvent_est}. Here are some notable features of the asymptotic procedure:

    \begin{enumerate}
        \item The problems used to define $\mathcal{A}_\chi^\text{hom}$ and other important ``corrector" objects $\mathcal{R}_{\text{corr},1,\chi}(z)$ and $\mathcal{R}_{\text{corr},2,\chi}(z)$ arise naturally in the procedure.

        \item The expansion is done in iterated cycles. The $n$-th cycle captures the all terms of order $|\chi|^{n-1}$, and some terms of order $|\chi|^n$ and $|\chi|^{n+1}$. We perform two cycles, sufficient for Theorem \ref{thm:main_thm}.
    \end{enumerate}
\end{enumerate}

\textbf{Step 4.} Section \ref{sect:norm_resolvent_est} then brings the fibre-wise result (Theorem \ref{thm:tau_resolvent_est}) back to a result on the full space (Theorem \ref{thm:main_thm}). We convert the rescaling $\frac{1}{|\chi|^2}$ back to $\frac{1}{\eps^2}$ by the holomorphic functional calculus. While using the functional calculus, an appropriate contour $\Gamma$ has to be chosen (Figure \ref{fig:contour}), and this is possible by the spectral estimates on $\mathcal{A}_\chi$.

As discussed in the introduction, Theorem \ref{thm:main_thm} will be proved in several steps. The key ones are Theorem \ref{thm:l2l2} ($L^2 \rightarrow L^2$), Theorem \ref{thm:l2h1} ($L^2 \rightarrow H^1$), and Theorem \ref{thm:l2l2_higherorder} (higher-order $L^2 \rightarrow L^2$), and their proofs have a similar structure: On $L^2(Y;\C^3)$, one uses Theorem \ref{thm:tau_resolvent_est}, and separately shows that the case of large $\chi$ or large eigenvalues do not play a role in the estimates. To return to $L^2(\R^3;\C^3)$, we then use Proposition \ref{prop:pass_to_unitcell} ($\mathcal{A}_\eps$ vs $\mathcal{A}_\chi$) and Proposition \ref{prop:pass_to_unitcell2} ($\mathcal{A}^\text{hom}$ vs $\mathcal{A}_\chi^\text{hom}$).


\section{Spectral analysis of \texorpdfstring{$\mathcal{A}_\chi$}{Achi}}\label{sect:spectral_analysis}
Recall the Definition \ref{definitionofAchi} of the family of operators $\mathcal{A}_\chi$. By the Rellich-Kondrachov theorem, the space $H^1_{\#}(Y;\C^3)$ is compactly embedded into $L^2(Y;\C^3)$ \cite[Corollary 6.11]{david_borthwick}. It follows that the spectrum of $\mathcal{A}_\chi$ is discrete, with eigenvalues $\lambda_n^\chi$, which we arrange in non-decreasing order as follows
\begin{align}
    0 \leq \lambda_1^\chi \leq \lambda_2^\chi \leq \lambda_3^\chi \leq \lambda_4^\chi \leq \cdots \rightarrow \infty
\end{align}

The goal of this section is to provide further spectral analysis on the operator $\mathcal{A}_\chi$. In Section \ref{sect:evalue_estimates} we will employ the min-max principle to estimate the size of the eigenvalues of $\mathcal{A}_\chi$. In Section \ref{sect:avg_smoothing_ops} we focus on the subspace $\C^3 \subset L^2(Y;\C^3)$ and introduce two key auxillary objects: the averaging operator $S$ and the smoothing operator $\Xi_\eps$.

\subsection{Eigenvalue estimates}\label{sect:evalue_estimates}

Recall the definition of the form $a_\chi$ in (\ref{form_eqn}).

\begin{definition}
    We denote the Rayleigh quotient associated with $a_\chi$ by
    \begin{equation}
        \mathcal{R}_\chi(\vect u) = \frac{a_\chi( \vect u, \vect u)}{\Vert \vect u\Vert _{L^2(Y;\C^3)}^2}, \quad \vect u \in H^1_{\#}(Y;\C^3) \setminus \{ 0 \},
    \end{equation}
    and denote by $\Lambda_n$ the set of subspaces of $H^1_{\#}(Y;\C^3)$ with dimension $n$.
\end{definition}
With these notation at hand, we can write the min-max principle \cite[Theorem 5.15]{david_borthwick} for $\mathcal{A}_\chi$ as follows:
\begin{align}
    \lambda_n^\chi &= \min_{V \in \Lambda_n} \max_{ \vect v \in V \setminus \{ 0 \}} \mathcal{R}_\chi(\vect v) \label{eqn:min-max} \\
    &= \max_{v_1, \cdots, v_{n-1} \in L^2(Y;\C^3)} \min_{u \in H^1_{\#}(Y;\C^3) \cap \text{span}\{v_1,\cdots, v_{n-1} \}^{\perp} \setminus \{ 0 \}} \mathcal{R}_\chi(\vect v) \label{eqn:max-min}
\end{align}
where $n \in \mathbb{N}$.

The first result of this section is a collection of estimates on the Rayleigh quotient $\mathcal{R}_\chi$.

\begin{proposition}\label{prop:Rayleighestim}
    There exist constants $C_{\text{rayleigh}} > c_{\text{rayleigh}} > 0$ such that
    \begin{alignat}{2}
        c_{\text{rayleigh}}{|\chi|^2} &\leq \mathcal{R}_\chi(\vect u)    &&\forall \vect u \in H^1_{\#}(Y;\C^3)\setminus \{ 0 \}, \label{Rayleighestim_1} \\
        0 &\leq\mathcal{R}_\chi(\vect u) \leq C_{\text{rayleigh}}{|\chi|^2} \qquad   &&\forall \vect u \in \C^3 \setminus \{ 0 \}, \label{Rayleighestim_2}\\
        c_{\text{rayleigh}} &\leq \mathcal{R}_\chi(\vect u)    &&\forall \vect u \in (\C^3)^\perp \cap  H_{\#}^1(Y;\C^3) \setminus \{ 0 \}, \label{Rayleighestim_3}
    \end{alignat}
    where $\C^3$ is viewed as a subspace of $L^2(Y;\C^3)$ by identifying $\C^3$ with the space of constant functions. The notation $(\C^3)^\perp$ stands for the orthogonal complement of $\C^3$ with respect to the $L^2(Y;\C^3)$ inner product, that is, the subspace of $L^2(Y;\C^3)$ containing functions with zero mean.
\end{proposition}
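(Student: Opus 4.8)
The plan is to prove the three estimates by separately analyzing the numerator $a_\chi(\vect u, \vect u)$ on the relevant subspaces, using the ellipticity bound \eqref{assump:elliptic} to reduce everything to estimates on $\|(\simgrad + iX_\chi)\vect u\|_{L^2}^2$, and then to invoke the uniform (in $\chi$) Korn-type inequalities of Proposition \ref{prop:coercive_est} (referenced as \eqref{estimate1}, \eqref{estimate11}) together with the bounds \eqref{eqn:Xchi_est} on $X_\chi$. For \eqref{Rayleighestim_2}, I would observe that on the space of constant functions $\C^3$, the symmetrized gradient $\simgrad_y \vect u$ vanishes, so $(\simgrad + iX_\chi)\vect u = iX_\chi \vect u$; hence $a_\chi(\vect u,\vect u) \le \nu^{-1}\|X_\chi\vect u\|_{L^2}^2 \le \nu^{-1}C_{\text{symrk1}}^2|\chi|^2\|\vect u\|_{L^2}^2$ by \eqref{assump:elliptic} and the upper bound in \eqref{eqn:Xchi_est}, which gives \eqref{Rayleighestim_2} with $C_{\text{rayleigh}} = \nu^{-1}C_{\text{symrk1}}^2$; non-negativity is immediate from \eqref{assump:elliptic}.

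For \eqref{Rayleighestim_1}, the point is a lower bound $a_\chi(\vect u,\vect u) \ge c|\chi|^2\|\vect u\|_{L^2}^2$ valid for \emph{all} $\vect u \in H^1_\#$. By \eqref{assump:elliptic} it suffices to bound $\|(\simgrad + iX_\chi)\vect u\|_{L^2}^2$ from below by $c|\chi|^2\|\vect u\|_{L^2}^2$; this is precisely the kind of $\chi$-uniform Korn inequality that Proposition \ref{prop:coercive_est} supplies (the combination of \eqref{estimate1} and \eqref{estimate11} was already used in the proof of coercivity of $a_\chi$, yielding $C\|\vect u\|_{H^1}^2 \le |\chi|^{-2}a_\chi(\vect u,\vect u)$, and in particular $C\|\vect u\|_{L^2}^2 \le |\chi|^{-2}a_\chi(\vect u,\vect u)$). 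So \eqref{Rayleighestim_1} follows directly from that coercivity estimate by discarding the $\|\nabla \vect u\|_{L^2}^2$ part of the $H^1$ norm. For \eqref{Rayleighestim_3}, on the zero-mean subspace $(\C^3)^\perp \cap H^1_\#$ I expect a Poincaré-type inequality $\|\vect u\|_{L^2} \le C\|\nabla \vect u\|_{L^2}$ to be available (functions with zero mean on the torus), and combined with the classical Korn inequality on the torus for zero-mean fields, $\|\nabla \vect u\|_{L^2} \le C\|\simgrad_y \vect u\|_{L^2}$; one then needs to absorb the cross terms coming from expanding $\|(\simgrad_y + iX_\chi)\vect u\|_{L^2}^2 = \|\simgrad_y\vect u\|_{L^2}^2 + \|X_\chi\vect u\|_{L^2}^2 + 2\,\mathfrak{R}\langle \simgrad_y\vect u, iX_\chi\vect u\rangle$ — since the cross term is purely real only up to the $i$, one should check it is in fact of lower order or controllable, or alternatively argue by contradiction/compactness: if no uniform $c_{\text{rayleigh}}$ existed, one would extract a sequence $\vect u_k$ with $\|\vect u_k\|_{L^2}=1$, zero mean, $a_{\chi_k}(\vect u_k,\vect u_k)\to 0$, pass to a weak $H^1$ limit using the already-established $H^1$-coercivity for $\chi \ne 0$ (and handling $\chi_k \to 0$ separately via \eqref{Rayleighestim_1} adapted, noting that on the zero-mean subspace the $|\chi|^2$ lower bound is not enough, so one genuinely needs the Korn structure), and derive a contradiction with zero mean.

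The main obstacle I anticipate is \eqref{Rayleighestim_3}: getting a lower bound that is uniform in $\chi$ \emph{and} bounded away from zero (not degenerating like $|\chi|^2$) on the zero-mean subspace. The subtlety is that as $\chi \to 0$ the operator $X_\chi \to 0$, so the $a_\chi$-energy degenerates to $\|\simgrad_y\vect u\|_{L^2}^2$, and one must rely on the classical Korn + Poincaré inequalities on the torus restricted to zero-mean fields to keep the bound positive; meanwhile for $\chi$ bounded away from $0$ one can use the $H^1$-coercivity already proven. The cleanest route is probably to split into the regimes $|\chi| \le \delta$ and $|\chi| \ge \delta$ for a fixed small $\delta$: on $|\chi| \ge \delta$ invoke the proven coercivity $C\|\vect u\|_{H^1}^2 \le |\chi|^{-2}a_\chi(\vect u,\vect u)$ so $a_\chi(\vect u,\vect u) \ge C\delta^2\|\vect u\|_{L^2}^2$; on $|\chi|\le\delta$ use that $a_\chi(\vect u,\vect u) \ge \nu\|\simgrad_y\vect u\|_{L^2}^2 - (\text{cross terms of order }\delta) \ge c\|\vect u\|_{L^2}^2$ on zero-mean fields by Korn–Poincaré, choosing $\delta$ small enough to absorb the cross terms. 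I would then take $c_{\text{rayleigh}}$ to be the minimum of the constants arising in \eqref{Rayleighestim_1} and \eqref{Rayleighestim_3} and $C_{\text{rayleigh}}$ as in \eqref{Rayleighestim_2}, after possibly shrinking/enlarging to make the ordering $C_{\text{rayleigh}} > c_{\text{rayleigh}} > 0$ hold.
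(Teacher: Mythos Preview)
Your treatment of \eqref{Rayleighestim_1} and \eqref{Rayleighestim_2} matches the paper's proof exactly: ellipticity \eqref{assump:elliptic} reduces everything to $\|(\simgrad+iX_\chi)\vect u\|_{L^2}^2$, then \eqref{estimate1} gives the lower bound and \eqref{eqn:Xchi_est} gives the upper bound on constants.

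For \eqref{Rayleighestim_3}, however, you take a genuinely different route. Your regime-splitting argument (small $|\chi|$ via Korn--Poincar\'e on zero-mean fields plus absorption of the cross term $2\mathfrak{R}\langle\simgrad\vect u, iX_\chi\vect u\rangle$; large $|\chi|$ via the already-proven $H^1$-coercivity) is correct and would work once you bound the cross term by Cauchy--Schwarz and Young's inequality. The paper's proof is much shorter: it invokes the third estimate of Proposition~\ref{prop:coercive_est}, namely \eqref{estimate12}, which states
\[
\Big\|\vect u - \fint_Y \vect u\Big\|_{L^2} \le C_{\text{fourier}}\,\|(\simgrad+iX_\chi)\vect u\|_{L^2}
\]
uniformly in $\chi\in Y'$ (including $\chi=0$). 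Since $\fint_Y\vect u=0$ on $(\C^3)^\perp$, this immediately gives $\mathcal{R}_\chi(\vect u)\ge \nu/C_{\text{fourier}}^2$ with no case split and no cross terms to absorb. The estimate \eqref{estimate12} is itself proved by Fourier series on the torus, exploiting that $|2\pi k + \chi|\ge C>0$ for all $k\in\Z^3\setminus\{0\}$ and all $\chi\in Y'$, which is exactly the mechanism that makes the bound uniform in $\chi$. So the paper front-loads the uniformity into the Korn-type proposition, whereas you recover it by hand at the point of use; both are valid, but you should be aware that the direct Fourier argument avoids the perturbation entirely.
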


\begin{proof}
    First, let $\vect u \in H_{\#}^1(Y;\C^3) \setminus \{ 0 \}$. Then, using Proposition \ref{prop:coercive_est} and Assumption \ref{coffassumption}, 
    \begin{align}
        \frac{\nu}{(C_{\text{fourier}})^2} | \chi |^2 \stackrel{(\ref{estimate1})}{\leq} 
        \nu \frac{\| (\simgrad + i X_\chi) \vect u \|_{L^2(Y;\C^{3\times 3})}^2}{ \| \vect u \|_{L^2(Y;\C^3)}^2 } 
        \stackrel{(\ref{assump:elliptic})}{\leq} \mathcal{R}_\chi(\vect u), \qquad \text{for } \chi \neq 0.
    \end{align}
    Combining this with the observation that (\ref{assump:elliptic}) implies $0\leq \mathcal{R}_\chi(\vect u)$ for all $\chi$, we obtain (\ref{Rayleighestim_1}) and the first inequality of (\ref{Rayleighestim_2}).
    
    Now assume that $\vect u \in \C^3 \setminus \{ 0 \}$. Then $\simgrad \vect u$ vanishes, and the Rayleigh quotient can be estimated above by
    \begin{align}
        \mathcal{R}_\chi (\vect u) \stackrel{(\ref{assump:elliptic})}{\leq} \frac{1}{\nu} \frac{\|X_\chi \vect u \|_{L^2(Y;\C^{3\times 3})}^2}{ \| \vect u \|_{L^2(Y;\C^3)}^2 } 
        \stackrel{(\ref{eqn:Xchi_est})}{\leq} \frac{(C_{\text{symrk1}})^2}{\nu} |\chi|^2,
    \end{align}
    giving us the second inequality of (\ref{Rayleighestim_2}).
    
    Finally, suppose that $\vect u \in (\C^3)^{\perp} \cap H_{\#}^1(Y;\C^3)$ and $\vect u \neq 0$. Then $\fint_Y \vect u = 0$, and so using (\ref{estimate12}) of Proposition \ref{prop:coercive_est} (which is valid for the case $\chi=0$ as well),
    \begin{align}
        \frac{\nu}{(C_{\text{fourier}})^2} \stackrel{(\ref{estimate12})}{\leq} 
        \nu \frac{\| (\simgrad + i X_\chi) \vect u \|_{L^2(Y;\C^{3\times 3})}^2}{ \| \vect u \|_{L^2(Y;\C^3)}^2 } 
        \stackrel{(\ref{assump:elliptic})}{\leq} \mathcal{R}_\chi(\vect u).
    \end{align}
    This concludes the proof.
\end{proof}

Motivated by this result, we now make the following convention that will apply to the rest of the paper.
\begin{quote}
    \textbf{We will henceforth identify $\C^3$ with the space of constant functions, and hence view $\C^3$ as a subspace of $L^2(Y;\C^3)$.}
\end{quote}

Combining Proposition \ref{prop:Rayleighestim} and the min-max principle ((\ref{eqn:min-max}) and (\ref{eqn:max-min})), we arrive at the following result

\begin{theorem}\label{thm:evalue}
    The spectrum $\sigma(\mathcal{A}_\chi)$ contains $3$ eigenvalues of order $\mathcal{O}({|\chi|^2})$ as $|\chi| \downarrow 0$, while the remaining eigenvalues are uniformly separated from zero.
\end{theorem}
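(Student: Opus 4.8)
The plan is to combine the Rayleigh quotient estimates of Proposition \ref{prop:Rayleighestim} with the min-max characterization \eqref{eqn:min-max}--\eqref{eqn:max-min} of the eigenvalues $\lambda_n^\chi$. The key observation is that $\C^3$, viewed as the $3$-dimensional subspace of constant functions inside $H^1_{\#}(Y;\C^3)$, is precisely the ``test space'' that witnesses the smallness of the first three eigenvalues, while its orthogonal complement inside $H^1_{\#}(Y;\C^3)$ witnesses the uniform separation of the remaining ones.

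First I would prove the upper bound $\lambda_3^\chi \leq C_{\text{rayleigh}} |\chi|^2$. Take $V = \C^3 \in \Lambda_3$ in \eqref{eqn:min-max}; then
\begin{equation}
    \lambda_3^\chi = \min_{W \in \Lambda_3} \max_{\vect v \in W \setminus \{0\}} \mathcal{R}_\chi(\vect v) \leq \max_{\vect v \in \C^3 \setminus \{0\}} \mathcal{R}_\chi(\vect v) \leq C_{\text{rayleigh}} |\chi|^2,
\end{equation}
where the last inequality is exactly \eqref{Rayleighestim_2}. Since $\lambda_1^\chi \leq \lambda_2^\chi \leq \lambda_3^\chi$, all three of the lowest eigenvalues are $O(|\chi|^2)$; the matching lower bound $\lambda_1^\chi \geq c_{\text{rayleigh}} |\chi|^2$ is immediate from \eqref{Rayleighestim_1}, which bounds $\mathcal{R}_\chi$ from below on all of $H^1_{\#}(Y;\C^3)$. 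This gives $\lambda_n^\chi \asymp |\chi|^2$ for $n = 1,2,3$.

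Next I would show $\lambda_4^\chi \geq c_{\text{rayleigh}} > 0$ uniformly in $\chi$. Here I use the max-min form \eqref{eqn:max-min} with the choice $v_1, v_2, v_3$ spanning $\C^3$: then the inner minimization runs over $\vect u \in H^1_{\#}(Y;\C^3) \cap (\C^3)^\perp \setminus \{0\}$, and \eqref{Rayleighestim_3} gives $\mathcal{R}_\chi(\vect u) \geq c_{\text{rayleigh}}$ on exactly this set. Hence
\begin{equation}
    \lambda_4^\chi = \max_{v_1, v_2, v_3} \ \min_{\vect u \in H^1_{\#} \cap \, \mathrm{span}\{v_1,v_2,v_3\}^\perp \setminus \{0\}} \mathcal{R}_\chi(\vect u) \geq c_{\text{rayleigh}},
\end{equation}
and monotonicity of $n \mapsto \lambda_n^\chi$ propagates this lower bound to all $n \geq 4$. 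Combining the two displays yields the statement: exactly three eigenvalues of order $|\chi|^2$, the rest bounded below by $c_{\text{rayleigh}}$ independently of $\chi$.

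I do not expect any serious obstacle: the entire argument is a bookkeeping exercise matching the three cases of Proposition \ref{prop:Rayleighestim} to the two forms of the min-max principle. The only point requiring a little care is that the max-min formula \eqref{eqn:max-min} allows the $v_i$ to range over all of $L^2(Y;\C^3)$ (not just $H^1_{\#}$), so one must note that choosing them in the subspace $\C^3$ of constant functions is legitimate and that the resulting constrained minimization domain $H^1_{\#}(Y;\C^3) \cap (\C^3)^\perp$ is precisely the set on which \eqref{Rayleighestim_3} applies; both are straightforward since $\C^3 \subset L^2(Y;\C^3)$ under the standing identification.
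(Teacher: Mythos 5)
Your proof is correct and takes exactly the same approach as the paper, which simply asserts the theorem as an immediate consequence of Proposition \ref{prop:Rayleighestim} together with the min-max formulas \eqref{eqn:min-max}--\eqref{eqn:max-min}; you have only filled in the short bookkeeping the paper leaves implicit. The one subtlety you flag (that the $v_i$ in \eqref{eqn:max-min} range over $L^2$, so choosing them to span $\C^3$ is legitimate) is indeed the right thing to note, and is handled correctly.
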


\subsection{Averaging and smoothing operators}\label{sect:avg_smoothing_ops}

As seen from Proposition \ref{prop:Rayleighestim}, the subspace $\C^3 \subset L^2(Y;\C^3)$ plays an important role in the spectral approach to homogenization. This prompts us to make the following definition:

\begin{definition}
    The averaging operator on $Y$, $S:L^2(Y;\C^3) \rightarrow \C^3 \hookrightarrow L^2(Y;\C^3)$ is given by
    \begin{align*}
        S\vect u = \int_Y \vect u.
    \end{align*}
    That is, $S = P_{\C^3}$, the orthogonal projection of $L^2(Y;\C^3)$ onto $\C^3$.
\end{definition}

\begin{remark}[Significance of the subspace $\C^3$]\label{rmk:importance_of_c3}
    By Proposition \ref{prop:Rayleighestim},  we observe that $\C^3$ is precisely the eigenspace spanned by the first three eigenvalues $\lambda^0_1 = \lambda^0_2 = \lambda^0_3 = 0$ of the operator $\mathcal{A}_0$. That is,
    \begin{align}
        \C^3 &= \text{Eig} \left( \lambda_1^0 ; \mathcal{A}_0 \right) \oplus \text{Eig} \left( \lambda_2^0 ; \mathcal{A}_0 \right) \oplus \text{Eig} \left( \lambda_3^0 ; \mathcal{A}_0 \right) = \text{Eig} \left( 0 ; \mathcal{A}_0 \right) = \text{ker}(\mathcal{A}_0).
    \end{align}
    While we have very precise spectral information of $\mathcal{A}_\chi$ at $\chi=0$, at the bottom of the spectrum, less is known about the eigenspaces of $\mathcal{A}_\chi$, for a general $\chi$ (since the coefficient tensor $\mathbb{A}(y)$ is varying in $y$). Nonetheless, we have used the subspace $\C^3$, together with the min-max principle, to estimate the size of the eigenvalues of $\mathcal{A}_\chi$ (Proposition \ref{prop:Rayleighestim}).
\end{remark}

\begin{remark}[Another significance of $\C^3$]\label{rmk:rigid_body_motions}
    The subspace $\C^3$ also appears when we study the operator $\simgrad$ with domain $\mathcal{D}(\simgrad) = H^1(Y;\C^3)$ (Neumann boundary conditions). By Korn's inequality (Proposition \ref{appendixkorn3}) the kernel of $\simgrad$ is the set of vector fields of the form $\vect w = Ax + \vect c$, where $A \in \C^{3 \times 3}$ $A^\top = -A$, $\vect c\in \C^3$. Such vector fields are called \textit{rigid displacements}. When restricted to $Y$-periodic functions, observe that we have
    \begin{equation*}
        \ker(\simgrad)\cap H_\#^1(Y;\C^3) = \C^3.
    \end{equation*}
    This is used in the asymptotic procedure in Section \ref{sect:the_asymp_method}.
\end{remark}

The averaging operator $S$ will be used for our discussion on the operators $\mathcal{A}_\chi$ on $L^2(Y;\C^3)$. We will also need its counterpart that brings the discussion back to the operator $\mathcal{A}_\eps$ on $L^2(\R^3;\R^3)$:

\begin{definition}\label{defn:smoothing_op}
    For $\eps>0$, the smoothing operator $\Xi_\eps :L^2(\R^3;\C^3) \to L^2(\R^3;\C^3)$ is defined as follows:
   \begin{align}\label{eqn:smoothing_op}
        \Xi_\varepsilon \vect u := \mathcal{G}_\varepsilon^{*} \left( \int_{Y'}^{\oplus} S d\chi \right) \mathcal{G}_\varepsilon \vect u 
        = \mathcal{G}_\varepsilon^{*} \left( \int_{Y}  (\mathcal{G}_\varepsilon \vect u)(y,\cdot) dy \right)
    \end{align}
    Here, $\int_{Y}  (\mathcal{G}_\varepsilon \vect u)(y,\cdot) dy$ is viewed as a function of $y\in Y$ and $\chi \in Y'$, which is constant in $y$.
\end{definition}

The formula (\ref{eqn:smoothing_op}) says that $\Xi_\eps$ is simply a projection operator on the Gelfand/frequency space. We may also interpret $\Xi_\eps$ in the following way: It smooths out any given $\vect u \in L^2$ (hence the name ``smoothing operator"), by truncating $\vect u$ at frequencies $\theta \in \R^3$ whose components $\theta_i = \pm \eps^{-1} \pi$.

Indeed, let us write $\mathcal{F}(\cdot)$ and $\mathcal{F}^{-1}(\cdot)$ for the Fourier transform and inverse Fourier transform respectively, and $\theta = \eps^{-1}\chi$. Then we compute:
\begin{align}
    \left( \Xi_\eps \vect f \right) (x)
    &\stackrel{\text{(\ref{eqn:smoothing_op})}}{=} \mathcal{G}_\eps^\ast \left( \int_{Y} (\mathcal{G}_\varepsilon \vect f)(y, \eps\theta) dy \right) (x) \nonumber \\ 
    &= \mathcal{G}_\eps^\ast \left( \left( \frac{1}{2\pi\eps} \right)^{3/2} \mathcal{F}(\vect f) \left( \frac{\theta}{2\pi} \right) \right) (x) \label{eqn:smoothing_fourier_gelfand}\\
    &\stackrel{\text{(\ref{eqn:gelfand_inversion})}}{=} \left( \frac{1}{2\pi\eps} \right)^{3/2} \int_{Y'} \left( \frac{1}{2\pi\eps} \right)^{3/2} \mathcal{F}(\vect f) \left( \frac{\eps^{-1}\chi}{2\pi} \right) e^{i\chi \cdot \frac{x}{\eps}} d\chi \nonumber \\
    &= \left( \frac{1}{2\pi\eps} \right)^3 \int_{Y'} \mathcal{F}(\vect f) \left( \frac{\eps^{-1}\chi}{2\pi} \right) e^{i\frac{\chi}{\eps} \cdot x} d\chi \\
    &\stackrel{\widetilde{\theta}=\frac{\chi}{2\pi\eps}}{=} \cancel{\frac{(2\pi\eps)^3}{(2\pi\eps)^3}} \int_{(2\pi\eps)^{-1}Y'} \mathcal{F}(\vect f) \left( \widetilde{{\theta}} \right) e^{i 2\pi\widetilde{{\theta}} \cdot x} d \widetilde{\theta}
    = \left( \mathcal{F}^{-1}(\mathbbm{1}_{(2\pi\eps)^{-1} Y'}) * \vect f \right) (x). \label{eqn:smoothing_cutoff}
\end{align}
The identity (\ref{eqn:smoothing_fourier_gelfand}) relating the scaled Gelfand transform to the Fourier transform follows from a straightforward computation, which we include in Lemma \ref{lem:fourier_vs_gelfand} (in the appendix) for completeness.

\begin{remark}
    From (\ref{eqn:smoothing_cutoff}), it is also clear that the image of $L^2(\R^3;\R^3)$ (the real-valued functions) under the mapping $\Xi_\eps$ is again a subset of $L^2(\R^3;\R^3)$. 
\end{remark}

\section{Fibrewise norm-resolvent estimates}\label{sect:fibre_nr_est}

By applying the Gelfand transform, we have converted the problem on $L^2(\R^3;\C^3)$ into a family of problems on $L^2(Y;\C^3)$. This section performs the ``estimation step" on $L^2(Y;\C^3)$, and is therefore the crux of our approach to homogenization.

In Section \ref{sect:ahomchi_intro} we introduce the fibrewise (i.e.~for each $\chi$) homogenized operator $\mathcal{A}_\chi^{\text{hom}}$, and explain how it is related to the homogenized operator $\mathcal{A}^{\text{hom}}$ (defined in Section \ref{sect:main_results}). In Section \ref{sect:the_asymp_method} we detail an asymptotic procedure (in the quasimomentum $\chi$) that will give us norm-resolvent estimates for ${(\frac{1}{|\chi|^2}\mathcal{A}_\chi - zI)^{-1}}$. Finally, Section \ref{sect:asymp_results_chi} summarizes the results of the asymptotic procedure.

\subsection{The fibrewise homogenized operator \texorpdfstring{$\mathcal{A}_\chi^{\text{hom}}$}{}}\label{sect:ahomchi_intro}

In this section we estimate the distance between the resolvents of $\frac{1}{|\chi|^2} \mathcal{A}_\chi$ and the \textit{rescaled fibrewise homogenized operator} $\frac{1}{|\chi|^2}\mathcal{A}_\chi^{\rm hom}$. We first begin with the definition of $\mathcal{A}_\chi^{\rm hom}$. Recall the definition \eqref{defxoperator} of the operator $X_\chi$.

\begin{definition}\label{defn:hom_matrix}
    For each $\chi \in Y'$, set $\mathcal{A}^{\rm hom}_\chi \in \C^{3 \times 3}$ to be the constant matrix satisfying
    \begin{equation}\label{homdefinition}
        \left\langle \mathcal{A}^{\rm hom}_\chi \vect c,\vect d \right\rangle_{\C^3} = \int_{Y} \A\left(  \simgrad   \vect u_{\vect c}  +  iX_\chi   \vect c \right) : \overline{  iX_\chi  \vect d }, \qquad \forall \vect c,\vect d \in \C^3,
    \end{equation}
    where the \textit{corrector term} $\vect u_{\vect c} \in H_\#^1(Y;\C^3)$ is the unique solution of the ($\chi$ dependent) cell-problem
    \begin{equation}\label{focorr2}
        \begin{cases}
		\int_{Y} \A\left(  \simgrad \vect u_{\vect c}  +  iX_\chi  \vect c \right) : \overline{ \simgrad   \vect v} = 0, \quad \forall\vect v \in H^1_{\#}(Y;\C^3),\\
        \int_Y \vect u_{\vect c} = 0.
        \end{cases}
    \end{equation}
    (Note that $\vect u_{\vect c}$ depends linearly in $\vect c$, by the same arguments as in the proof of bilinearity in Proposition \ref{prop:homdefinitionreal}.) 
\end{definition}

\begin{remark}
    As we will see in Section \ref{sect:the_asymp_method}, the equations (\ref{homdefinition}) and (\ref{focorr2}) occur naturally in the asymptotic procedure, in (\ref{homformula2}) and (\ref{corr2new}) respectively.
\end{remark}

\begin{lemma}
    $\mathcal{A}^{\rm hom}_\chi$ is a Hermitian matrix.
\end{lemma}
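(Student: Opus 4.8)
The plan is to show that $\overline{\langle \mathcal{A}_\chi^{\rm hom} \vect c, \vect d\rangle_{\C^3}} = \langle \mathcal{A}_\chi^{\rm hom} \vect d, \vect c\rangle_{\C^3}$ for all $\vect c, \vect d \in \C^3$, which is exactly the statement that the matrix is Hermitian. The key trick, mirroring the symmetry argument for $a^{\rm hom}$ in Proposition \ref{prop:homdefinitionreal}, is to rewrite the defining bilinear expression \eqref{homdefinition} in a manifestly symmetric form by absorbing a $\simgrad \vect u$ term into the second slot at no cost.

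First I would use the cell-problem \eqref{focorr2} for $\vect u_{\vect c}$, tested against $\vect v = \vect u_{\vect d}$ (the corrector associated to $\vect d$), to obtain
\begin{equation*}
    \int_Y \A\left( \simgrad \vect u_{\vect c} + iX_\chi \vect c \right) : \overline{\simgrad \vect u_{\vect d}} = 0.
\end{equation*}
Adding this to \eqref{homdefinition} gives the symmetric-looking identity
\begin{equation*}
    \langle \mathcal{A}_\chi^{\rm hom} \vect c, \vect d\rangle_{\C^3} = \int_Y \A\left( \simgrad \vect u_{\vect c} + iX_\chi \vect c \right) : \overline{\left( \simgrad \vect u_{\vect d} + iX_\chi \vect d \right)}.
\end{equation*}
Next I would take the complex conjugate of this expression. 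Using that $\A$ is real-valued and that for real tensors $\overline{A : \overline{B}} = \overline{A} : B = B : \overline{A}$ (together with the symmetry $\A_{jl}^{ik} = \A_{lj}^{ki}$ of Assumption \ref{coffassumption}, which makes $\A$ self-adjoint as a map on matrices with respect to the Frobenius pairing), one gets
\begin{equation*}
    \overline{\langle \mathcal{A}_\chi^{\rm hom} \vect c, \vect d\rangle_{\C^3}} = \int_Y \A\left( \simgrad \vect u_{\vect d} + iX_\chi \vect d \right) : \overline{\left( \simgrad \vect u_{\vect c} + iX_\chi \vect c \right)} = \langle \mathcal{A}_\chi^{\rm hom} \vect d, \vect c\rangle_{\C^3},
\end{equation*}
where the last equality applies the symmetric identity above with the roles of $\vect c$ and $\vect d$ interchanged. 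This is precisely the Hermitian property.

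The steps here are all routine; the only point requiring a little care is the handling of complex conjugation versus the Frobenius inner product, i.e.\ checking that the material symmetry $\A_{jl}^{ik} = \A_{lj}^{ki}$ indeed yields $\A A : \overline{B} = \A B : \overline{A}$ when $A, B$ are complex matrices and $\A$ is real — but this is exactly the same computation used in the symmetry part of Proposition \ref{prop:homdefinitionreal}, now tracked through the conjugation. I expect no genuine obstacle; one should just be careful not to conflate $\overline{X_\chi \vect c}$ with $X_\chi \vect c$ since $\vect c$ is complex, and to keep the conjugates on the correct slots throughout.
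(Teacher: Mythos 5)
Your proposal is correct and follows essentially the same route as the paper: test the cell problem for $\vect u_{\vect c}$ against $\vect u_{\vect d}$ to pass from \eqref{homdefinition} to the manifestly symmetric form \eqref{symmetricitycomplex}, then invoke the material symmetry $\A_{jl}^{ik} = \A_{lj}^{ki}$ together with realness of $\A$ to conclude the sesquilinear form is Hermitian. The only difference is that you spell out the conjugation bookkeeping explicitly, whereas the paper compresses that final step into "we therefore obtain a symmetric sesquilinear form"; your unpacking is accurate and fills in nothing that would surprise the authors.
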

\begin{proof}
    The proof is similar to that of Proposition \ref{prop:homdefinitionreal}. Writing $\vect u_{\vect d} \in H_{\#}^1(Y;\C^3)$ for the solution to (\ref{focorr2}) corresponding to the vector $\vect d \in \C^3$, and using $\vect u_{\vect d}$ as the test function for the problem for $\vect u_{\vect c}$, we get
    \begin{equation}\label{symmetricitycomplex}
         \left\langle \mathcal{A}^{\rm hom}_\chi \vect c,\vect d \right\rangle_{\C^3} = \int_{Y} \A\left(  \simgrad  \vect u_{\vect c}  +  iX_\chi   \vect c \right) : \overline{ \simgrad    \vect u_{\vect d} +  iX_\chi   \vect d }, \quad \vect c, \vect d \in \C^3.
    \end{equation}
    We therefore obtain a symmetric sesquilinear form, and so the corresponding matrix $\mathcal{A}_\chi^{\text{hom}}$ is Hermitian.
\end{proof}

It follows that $\mathcal{A}_\chi^{\text{hom}}$ is diagonalizable. Let us introduce a notation for its eigenvalues, similar to that of $\mathcal{A}_\chi$.

\begin{definition}
    Write $\lambda_1^{\text{hom},\chi}$, $\lambda_2^{\text{hom},\chi}$, and $\lambda_3^{\text{hom},\chi}$ for the eigenvalues of $\mathcal{A}_\chi^{\text{hom}}$, arranged in non-decreasing order.
\end{definition}

\begin{proposition}
\label{prop:hom_matrix}
The matrix $\mathcal{A}_\chi^{\rm hom}$ can be written in the form
    \begin{equation}
    \label{Achihom_ahom}
        \mathcal{A}_\chi^{\rm hom} = \left(iX_\chi \right)^*\A^{\rm hom} (i X_\chi),
    \end{equation}
    where $\A^{\rm hom} \in \R^{3 \times 3 \times 3 \times 3}$ is a constant tensor defined through (\ref{eqn:ahom_tensor_form_repr}). 
    In particular, $\mathcal{A}_\chi^{\rm hom}$ is quadratic in $\chi$ in the following sense: there exist a constant $\nu_1 >0$ that depends only on $\nu_{\text{hom}}$ (from (\ref{eqn:Ahomcoercivity_bdd})), $c_{\text{symrk1}}$, and $C_{\text{symrk1}}$ (from (\ref{eqn:Xchi_est})), such that 
    \begin{equation}\label{eqn:achihom_coer_bdd}
        \nu_1 |\chi|^2|\vect c|^2 \leq \langle \mathcal{A}_\chi^{\rm hom} \vect c, \vect c \rangle_{\C^3} \leq \frac{1}{\nu_1}|\chi|^2 |\vect c|^2, \quad \forall \vect c \in \C^3.
    \end{equation} 
\end{proposition}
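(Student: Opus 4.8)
The plan is to establish the representation \eqref{Achihom_ahom} first, and then derive the two-sided bound \eqref{eqn:achihom_coer_bdd} as an easy consequence. To prove \eqref{Achihom_ahom}, I would fix $\vect c, \vect d \in \C^3$ and compare the defining relation \eqref{homdefinition} for $\mathcal{A}_\chi^{\rm hom}$ with the definition of $\A^{\rm hom}$ via \eqref{eqn:ahom_tensor_form_repr} and the cell problem \eqref{correctordefinition}. The key observation is that $iX_\chi \vect c = \sym(i\vect c \otimes \chi)$ is a (complex) symmetric matrix, so it plays the role of the constant matrix $\vect\xi$ in the definition of $\A^{\rm hom}$, with $\vect u_{\vect c}$ playing the role of the corrector $\vect u^{\vect\xi}$. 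More precisely, by $\C$-linearity of the cell problem \eqref{focorr2}, writing $iX_\chi \vect c = \xi_1 + i\xi_2$ with $\xi_1, \xi_2 \in \R^{3\times 3}_{\rm sym}$, one has $\vect u_{\vect c} = \vect u^{\xi_1} + i \vect u^{\xi_2}$, where $\vect u^{\xi_j}$ solves \eqref{correctordefinition}. (One should check that the complexified cell problem \eqref{focorr2} with right-hand side $iX_\chi\vect c$ has the same structure as \eqref{correctordefinition}; this uses that $\A$ is real, so testing against real and imaginary parts of $\vect v$ decouples the real and imaginary parts of the equation.) Then $\simgrad \vect u_{\vect c} + iX_\chi \vect c$ is exactly $(\vect\xi + \simgrad \vect u^{\vect\xi})$ complexified with $\vect\xi = iX_\chi\vect c$, and \eqref{homdefinition} becomes
\begin{equation*}
    \langle \mathcal{A}_\chi^{\rm hom} \vect c, \vect d \rangle_{\C^3}
    = \int_Y \A\big( iX_\chi\vect c + \simgrad \vect u_{\vect c} \big) : \overline{iX_\chi \vect d}
    = a^{\rm hom}\big(iX_\chi \vect c, \overline{iX_\chi\vect d}\big)_{\C}
    = \A^{\rm hom}(iX_\chi \vect c) : \overline{iX_\chi\vect d},
\end{equation*}
where $a^{\rm hom}$ is understood as the natural sesquilinear extension of the bilinear form; the middle equality needs the symmetric form \eqref{eqn:form_tensor_symm} of $a^{\rm hom}$ together with Lemma \ref{lem:ibp_ortho} (applied to real and imaginary parts) to absorb the corrector $\vect u_{\vect d}$ into the right slot. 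Reading off the adjoint, this says precisely $\mathcal{A}_\chi^{\rm hom} = (iX_\chi)^* \A^{\rm hom}(iX_\chi)$.

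For the bound \eqref{eqn:achihom_coer_bdd}: given \eqref{Achihom_ahom}, we have $\langle \mathcal{A}_\chi^{\rm hom}\vect c, \vect c\rangle_{\C^3} = \A^{\rm hom}(iX_\chi \vect c) : \overline{iX_\chi \vect c}$. Since $iX_\chi \vect c \in \C^{3\times 3}_{\rm sym}$, splitting into real and imaginary parts and using that $\A^{\rm hom}$ is real with the coercivity/boundedness \eqref{eqn:Ahomcoercivity_bdd} on $\R^{3\times 3}_{\rm sym}$ gives
\begin{equation*}
    \nu_{\rm hom} |iX_\chi \vect c|^2 \leq \A^{\rm hom}(iX_\chi \vect c):\overline{iX_\chi\vect c} \leq \tfrac{1}{\nu_{\rm hom}} |iX_\chi \vect c|^2.
\end{equation*}
Then apply the two-sided estimate \eqref{eqn:Xchi_est} (in the pointwise, constant-function form, i.e.\ $c_{\rm symrk1}|\chi||\vect c| \le |X_\chi \vect c| \le C_{\rm symrk1}|\chi||\vect c|$; note $\|X_\chi \vect c\|_{L^2(Y)} = |X_\chi\vect c|$ for constant $\vect c$) to convert $|iX_\chi\vect c|^2 = |X_\chi\vect c|^2$ into $\asymp |\chi|^2|\vect c|^2$, yielding \eqref{eqn:achihom_coer_bdd} with $\nu_1 := \min\{\nu_{\rm hom} c_{\rm symrk1}^2,\ \nu_{\rm hom} C_{\rm symrk1}^{-2}\}$ (or a suitable such constant depending only on $\nu_{\rm hom}, c_{\rm symrk1}, C_{\rm symrk1}$).

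I expect the main obstacle to be the careful bookkeeping in the complexification argument — specifically, verifying that the single complex cell problem \eqref{focorr2} with source $iX_\chi\vect c$ genuinely decomposes into the two real cell problems \eqref{correctordefinition} for $\mathfrak{R}(iX_\chi\vect c)$ and $\mathfrak{I}(iX_\chi\vect c)$, and that the symmetric identity \eqref{eqn:form_tensor_symm} for $a^{\rm hom}$ extends correctly to the sesquilinear setting so that the corrector $\vect u_{\vect d}$ on the $\vect d$ side can be dropped. Everything else (the material symmetries of $\A^{\rm hom}$, the coercivity transfer) is routine once the representation \eqref{Achihom_ahom} is in hand. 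An alternative that avoids some of this bookkeeping: prove \eqref{Achihom_ahom} by directly checking that both sides agree as Hermitian forms on the real vector space underlying $\C^3$, using the already-established facts that $\mathcal{A}_\chi^{\rm hom}$ is Hermitian (previous lemma) and that $\A^{\rm hom}$ represents $a^{\rm hom}$ — but the computation ultimately reduces to the same cell-problem identity.
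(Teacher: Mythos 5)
Your proposal is correct and takes essentially the same approach as the paper: reduce the complex cell problem (\ref{focorr2}) to the real one (\ref{correctordefinition}) using the realness of $\A$, read off the tensor representation from the definition of $a^{\rm hom}$, and transfer the bounds (\ref{eqn:Ahomcoercivity_bdd}) via (\ref{eqn:Xchi_est}). The paper's bookkeeping is slightly lighter---it fixes $\vect c, \vect d \in \R^3$ first, so that $iX_\chi\vect c$ is purely imaginary and $\vect u_{\vect c} = i\,\vect u^{X_\chi\vect c}$ involves a single real corrector, then extends to $\C^3$ by $\C$-linearity only at the very end, avoiding the two-real-corrector decomposition $iX_\chi\vect c = \xi_1 + i\xi_2$ you carry around; note also that the middle equality in your displayed chain is the (sesquilinearly extended) definition of $a^{\rm hom}$ directly, so the appeal to (\ref{eqn:form_tensor_symm}) and Lemma \ref{lem:ibp_ortho} at that step is not actually required.
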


\begin{proof}
    Our first task is to relate the corrector equations (\ref{correctordefinition}) and (\ref{focorr2}), which are used to define $\mathbb{A}^{\text{hom}}$ and $\mathcal{A}_\chi^{\text{hom}}$ respectively. Care has to be taken when passing between the real and complex vector spaces.
    
    To this end, we claim that for each $\vect c \in \R^3$, the unique solution $\vect u_{\vect c} \in H^1_{\#}(Y;\C^3)$ to the problem (\ref{focorr2}) corresponding to the vector $\vect c$ can be expressed as $\vect u_{\vect c} = i \, \widetilde{\vect u}_{\vect c}$, where the function $\widetilde{\vect u}_{\vect c} \in H^1_{\#}(Y;\R^3)$ is the unique solution to the problem
    \begin{equation}\label{eqn:corr_real}
        \begin{cases}
		\int_{Y} \mathbb{A} \left(  \simgrad \widetilde{\vect u}_{\vect c}  +  X_\chi  \vect c \right) :  \simgrad   \vect v = 0, \quad \forall \vect v \in H^1_{\#}(Y;\R^3),\\
        \int_Y \widetilde{\vect u}_{\vect c} = 0.
        \end{cases}
    \end{equation}
    Indeed, by multiplying (\ref{eqn:corr_real}) throughout by $i$, we have
    \begin{equation}\label{eqn:corr_real_i}
        \int_Y \mathbb{A} \left(  \simgrad i\, \widetilde{\vect u}_{\vect c}  +  X_\chi  i \vect c \right) :  \simgrad   \vect v = 0, \quad \forall \vect v \in H^1_{\#}(Y;\R^3).
    \end{equation}
    Now fix a test function $\vect v$ in $H_{\#}^1(Y;\C^3)$, and write $\vect v = \mathfrak{R}(\vect v) + i \mathfrak{I}(\vect v)$, where $\mathfrak{R}(\vect v), \mathfrak{I}(\vect v) \in H_{\#}^1(Y;\R^3)$. Then,
    \begin{align*}
        &\int_Y \mathbb{A} \left( \simgrad i \, \widetilde{\vect u}_{\vect c} + i X_\chi \vect c \right) : \overline{\simgrad \vect v} \\
        &\quad = \int_Y \mathbb{A} \left( \simgrad i \, \widetilde{\vect u}_{\vect c} + X_\chi i \vect c \right) : \simgrad \left( \mathfrak{R}(\vect v) - i \mathfrak{I}(\vect v) \right) \\
        &\quad = \underbrace{\int_Y \mathbb{A} \left( \simgrad i \, \widetilde{\vect u}_{\vect c} + X_\chi i \vect c \right) : \simgrad \mathfrak{R}(\vect v)}_{= 0 \text{ by (\ref{eqn:corr_real_i})}}
        - \underbrace{\int_Y \mathbb{A} \left( \simgrad i \, \widetilde{\vect u}_{\vect c} + X_\chi i \vect c \right) : i \simgrad \mathfrak{I}(\vect v)}_{= 0 \text{ by (\ref{eqn:corr_real})}} 
        = 0.
    \end{align*}
    That is, $i \, \widetilde{\vect u}_{\vect c}$ and $\vect u_{\vect c}$ both solve the cell-problem (\ref{focorr2}). By the uniqueness of solutions, they must be equal.

    Now for $\vect c \in \R^3$, we have $X_\chi \vect c \in \R^{3\times 3}_\text{sym}$, and so by comparing the problem (\ref{eqn:corr_real}) with the problem (\ref{correctordefinition}), we observe that $\widetilde{\vect u}_{\vect c}$ is simply $\vect u^{X_\chi \vect c}$ in the notation of (\ref{correctordefinition}). This completes the first task.

    As a result, we are able to use (\ref{eqn:form_tensor_symm}) in the computation below, where $\vect c, \vect d \in \R^3$:
    \begin{alignat}{2}
             \left\langle \mathcal{A}^{\rm hom}_\chi \vect c,\vect d \right\rangle_{\C^3} 
             & = \int_{Y} \A\left(  \simgrad  \vect u_{\vect c}  +  iX_\chi   \vect c \right) : \overline{ \left( \simgrad   \vect u_{\vect d} +   iX_\chi  \vect d \right)} 
             \quad &&\text{By (\ref{symmetricitycomplex}).} \nonumber\\
             & = \int_{Y} \A\left(  \simgrad  \widetilde{\vect u}_{\vect c}  +  X_\chi   \vect c \right) :  \left( \simgrad   \widetilde{\vect u}_{\vect d} +   X_\chi  \vect d \right)  
             &&\text{By the identity $\vect u_{\vect c} = i\, \widetilde{\vect u}_{\vect c}$.} \nonumber\\
             & = \left\langle \A^{\rm hom } X_\chi \vect c, X_\chi   \vect d \right\rangle_{\R^{3\times 3}} &&\text{By (\ref{eqn:form_tensor_symm}) and the identity $\widetilde{\vect u}_{\vect c} = \vect u^{X_\chi \vect c}$.} \nonumber\\
             & = \left\langle \A^{\rm hom } (iX_\chi) \vect c, (iX_\chi) \vect d \right\rangle_{\C^{3 \times 3}} \nonumber\\
             &= \left\langle \left(iX_\chi \right)^*\A^{\rm hom } (iX_\chi) \vect c, \vect d \right\rangle_{\C^3}. \label{eqn:ahomchi_symmetric}
    \end{alignat}
    By $\C-$linearity, this implies the validity (\ref{eqn:ahomchi_symmetric}) for all $\vect c$, $\vect d \in \C^3$. This verifies (\ref{Achihom_ahom}). The bounds (\ref{eqn:achihom_coer_bdd}) are now an easy consequence of (\ref{eqn:ahomchi_symmetric}), (\ref{eqn:Ahomcoercivity_bdd}), and (\ref{eqn:Xchi_est}).
\end{proof}

Recall the homogenized operator $\mathcal{A}^{\rm hom}$ defined in Definition \ref{defn:hom_operator_fullspace}. The following proposition is the counterpart to Proposition \ref{prop:pass_to_unitcell}:

\begin{proposition}[Passing to the unit cell for $\mathcal{A}^{\text{hom}}$] \label{prop:pass_to_unitcell2} 
    The following identity holds 
    \begin{align}\label{eqn:vn_ahom_ops}
        \mathcal{A}^{\rm hom} \Xi_\varepsilon 
        = \mathcal{G}_\eps^* \left(\int_{Y'}^{\oplus} \frac{1}{\eps^2} S^* \mathcal{A}_\chi^{\rm hom} S \, d\chi \right) \mathcal{G}_\eps.
    \end{align}
    As a consequence, we also have the following identity on their resolvents 
    \begin{align}\label{eqn:vn_ahom_resolvents}
        \left( \mathcal{A}^{\rm{hom}} - zI \right)^{-1} \Xi_\eps 
        &= \mathcal{G}_\eps^* \left( \int_{Y'}^{\oplus} \left( \frac{1}{\eps^{2}} \mathcal{A}_{\chi}^{\rm{hom}} - zI_{\C^3} \right)^{-1} S d\chi \right) \mathcal{G}_\eps, \qquad \text{for } z \in \rho(\mathcal{A}^{\rm{hom}}).
    \end{align}
\end{proposition}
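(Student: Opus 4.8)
The plan is to establish the operator identity \eqref{eqn:vn_ahom_ops} first, and then deduce the resolvent identity \eqref{eqn:vn_ahom_resolvents} from it by the functional calculus on direct integrals. For the operator identity, I would start from the defining form of $\mathcal{A}^{\rm hom}$ (Definition \ref{defn:hom_operator_fullspace}), namely $\langle \mathcal{A}^{\rm hom} \vect u, \vect v \rangle = \int_{\R^3} \mathbb{A}^{\rm hom} \simgrad \vect u : \overline{\simgrad \vect v}$. Since $\mathcal{G}_\eps$ is unitary, the left-hand side composed with $\Xi_\eps$ can be transported to $L^2(Y';L^2(Y;\C^3))$: using $\Xi_\eps = \mathcal{G}_\eps^*(\int_{Y'}^\oplus S\, d\chi)\mathcal{G}_\eps$ from \eqref{eqn:smoothing_op}, the key is to compute, for $\vect u \in H^2(\R^3;\C^3)$ and $\vect v \in H^1(\R^3;\C^3)$,
\begin{align*}
    \langle \mathcal{A}^{\rm hom}\Xi_\eps \vect u, \vect v\rangle_{L^2(\R^3)} = \int_{Y'} \frac{1}{\eps^2}\big\langle \mathcal{A}_\chi^{\rm hom}\, S\, (\mathcal{G}_\eps \vect u)(\cdot,\chi), S\, (\mathcal{G}_\eps \vect v)(\cdot,\chi)\big\rangle_{\C^3}\, d\chi.
\end{align*}
The three ingredients needed here are: (i) the scaling-vs-derivative identity \eqref{gelfandsymetricgradientformula}, which turns $\simgrad$ on $\R^3$ into $\frac{1}{\eps}(\simgrad_y + iX_\chi)$ on the fibre; (ii) the fact that $\Xi_\eps$ projects each fibre onto the constants $\C^3$, so that $\simgrad_y$ of the projected function vanishes and only the $iX_\chi$ term survives, giving $\frac{1}{\eps}iX_\chi\, S(\mathcal{G}_\eps\vect u)(\cdot,\chi)$; and (iii) the identity $\mathcal{A}_\chi^{\rm hom} = (iX_\chi)^*\mathbb{A}^{\rm hom}(iX_\chi)$ from Proposition \ref{prop:hom_matrix}, which lets me recognize $\int_Y \mathbb{A}^{\rm hom}(iX_\chi\, S\mathcal{G}_\eps\vect u):\overline{(iX_\chi\, S\mathcal{G}_\eps\vect v)} = \langle \mathcal{A}_\chi^{\rm hom} S\mathcal{G}_\eps\vect u, S\mathcal{G}_\eps\vect v\rangle_{\C^3}$ (using that $X_\chi$ acting on constants produces a constant, so the $Y$-integral just multiplies by $|Y|=1$). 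Putting these together, and noting $\langle \mathcal{A}_\chi^{\rm hom}S\phi, S\psi\rangle_{\C^3} = \langle S^*\mathcal{A}_\chi^{\rm hom}S\phi,\psi\rangle$, yields the claimed weak form of \eqref{eqn:vn_ahom_ops}; since both sides are bounded operators from $H^2$ (with the appropriate domain) into $L^2$ agreeing against all $\vect v$ in a dense set, they coincide as operators, and one checks the domains match because $\Xi_\eps$ preserves $H^2$ and $S^*\mathcal{A}_\chi^{\rm hom}S$ is a bounded fibre operator.

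Once \eqref{eqn:vn_ahom_ops} is in hand, the resolvent identity \eqref{eqn:vn_ahom_resolvents} follows by a now-standard argument. The operator $\mathcal{A}^{\rm hom}$ commutes with $\Xi_\eps$ (both are Fourier multipliers after conjugation by $\mathcal{G}_\eps$: $\mathcal{A}^{\rm hom}$ acts fibrewise and $\Xi_\eps$ is the fibrewise projection $S$, and $S$ commutes with $S^*\mathcal{A}_\chi^{\rm hom}S$ because the latter already maps into $\C^3$ and annihilates $(\C^3)^\perp$). Hence for $z \in \rho(\mathcal{A}^{\rm hom})$, $(\mathcal{A}^{\rm hom}-zI)^{-1}\Xi_\eps = \Xi_\eps(\mathcal{A}^{\rm hom}-zI)^{-1}$, and applying $\mathcal{G}_\eps$ and using \cite[Theorem XIII.85]{reed_simon4} (exactly as in the proof of Proposition \ref{prop:pass_to_unitcell}) decomposes the resolvent as a direct integral of $(\frac{1}{\eps^2}S^*\mathcal{A}_\chi^{\rm hom}S - zI)^{-1}$. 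The final step is to observe that, restricted to the fibre $L^2(Y;\C^3) = \C^3 \oplus (\C^3)^\perp$, the operator $S^*\mathcal{A}_\chi^{\rm hom}S$ is block-diagonal: it is $\mathcal{A}_\chi^{\rm hom}$ on $\C^3$ and $0$ on $(\C^3)^\perp$; therefore $(\frac{1}{\eps^2}S^*\mathcal{A}_\chi^{\rm hom}S - zI)^{-1} = (\frac{1}{\eps^2}\mathcal{A}_\chi^{\rm hom} - zI_{\C^3})^{-1}\oplus(-z^{-1}I_{(\C^3)^\perp})$, and composing on the right with $S$ kills the second block, leaving exactly $(\frac{1}{\eps^2}\mathcal{A}_\chi^{\rm hom} - zI_{\C^3})^{-1}S$ as claimed. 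One should also note $z \in \rho(\mathcal{A}^{\rm hom})$ guarantees $z \in \rho(\frac{1}{\eps^2}\mathcal{A}_\chi^{\rm hom})$ for a.e.\ $\chi$ (indeed for all $\chi$, since $\mathcal{A}^{\rm hom}$ is a constant-coefficient elliptic operator whose spectrum is $[0,\infty)$, and $\frac{1}{\eps^2}\mathcal{A}_\chi^{\rm hom}$ has spectrum $\{0\}\cup\{\eps^{-2}\lambda_j^{{\rm hom},\chi}\}$ contained in it), so the fibrewise inverses make sense.

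The main obstacle, and the step deserving the most care, is the bookkeeping in the first computation: tracking the factors of $\eps$ and $2\pi$ through \eqref{gelfandsymetricgradientformula}, and — more subtly — verifying that the cross terms vanish, i.e.\ that $\int_Y \mathbb{A}^{\rm hom}\simgrad_y(\text{something}):\overline{iX_\chi\,(\text{constant})} $-type terms are not actually present. This is handled precisely because $\Xi_\eps$ is applied on \emph{both} $\vect u$ and effectively on $\vect v$ (through the projection structure of the form): once both arguments are replaced by their fibrewise constant parts, there is no $\simgrad_y$ term at all, so no cross terms arise. A secondary point requiring a line of justification is the domain issue: $\mathcal{A}^{\rm hom}\Xi_\eps$ has domain $\{\vect u : \Xi_\eps \vect u \in H^2\}$, which contains $H^2$ but is larger; one should confirm that the right-hand side of \eqref{eqn:vn_ahom_ops} is defined on the same domain, which follows since the fibre operators $S^*\mathcal{A}_\chi^{\rm hom}S$ are uniformly bounded in $\chi$ (by \eqref{eqn:achihom_coer_bdd}, $\|\mathcal{A}_\chi^{\rm hom}\| \le \nu_1^{-1}|\chi|^2 \le \nu_1^{-1}\pi^2 d$ on $Y'$), so $\mathcal{G}_\eps^*(\int_{Y'}^\oplus \eps^{-2}S^*\mathcal{A}_\chi^{\rm hom}S\,d\chi)\mathcal{G}_\eps$ is in fact a bounded operator on all of $L^2(\R^3;\C^3)$ — consistent with $\mathcal{A}^{\rm hom}\Xi_\eps$ being bounded because $\Xi_\eps$ truncates frequencies to a bounded set.
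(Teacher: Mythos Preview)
Your overall strategy is sound and close to the paper's, but there is one genuine error in your treatment of the cross terms. You write that ``$\Xi_\eps$ is applied on \emph{both} $\vect u$ and effectively on $\vect v$ (through the projection structure of the form): once both arguments are replaced by their fibrewise constant parts, there is no $\simgrad_y$ term at all.'' This is not correct: in the form $\langle \mathcal{A}^{\rm hom}\Xi_\eps \vect u,\vect v\rangle$, the smoothing $\Xi_\eps$ acts only on $\vect u$, and $\vect v\in H^1(\R^3;\C^3)$ is arbitrary. After applying \eqref{gelfandsymetricgradientformula} you genuinely obtain a cross term
\[
\frac{1}{\eps^2}\int_{Y'}\int_Y \mathbb{A}^{\rm hom}\big(iX_\chi\, S\mathcal{G}_\eps\vect u\big):\overline{\simgrad_y(\mathcal{G}_\eps\vect v)}\,dy\,d\chi,
\]
and you must argue that it vanishes. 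The correct reason is that $\mathbb{A}^{\rm hom}(iX_\chi\, S\mathcal{G}_\eps\vect u)$ is a constant (in $y$) symmetric matrix, hence orthogonal in $L^2(Y)$ to $\simgrad_y$ of any periodic function by Lemma~\ref{lem:ibp_ortho}. Once this is inserted, your argument goes through; the second surviving term $\int_Y \mathbb{A}^{\rm hom}(iX_\chi S\mathcal{G}_\eps\vect u):\overline{iX_\chi\mathcal{G}_\eps\vect v}$ does indeed reduce to $\langle\mathcal{A}_\chi^{\rm hom}S\mathcal{G}_\eps\vect u,S\mathcal{G}_\eps\vect v\rangle_{\C^3}$ because the first factor is constant in $y$, so averaging passes inside $iX_\chi$.

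For comparison, the paper avoids this cross-term bookkeeping by a cleaner packaging: it first applies the argument of Proposition~\ref{prop:pass_to_unitcell} verbatim to the constant-coefficient operator $\mathcal{A}^{\rm hom}$, obtaining a direct-integral decomposition with fibre operator $\mathcal{A}_\chi^{\text{hom-full}}:=(\simgrad+iX_\chi)^*\mathbb{A}^{\rm hom}(\simgrad+iX_\chi)$ on $H^1_\#(Y;\C^3)$, and only then composes with $\Xi_\eps$. The key observation is then that $\C^3$ is an invariant subspace for $\mathcal{A}_\chi^{\text{hom-full}}$ (precisely because $\mathbb{A}^{\rm hom}$ is constant in $y$ --- this is the same orthogonality fact in operator form), with $\mathcal{A}_\chi^{\text{hom-full}}\big|_{\C^3}=(iX_\chi)^*\mathbb{A}^{\rm hom}(iX_\chi)=\mathcal{A}_\chi^{\rm hom}$ by Proposition~\ref{prop:hom_matrix}. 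This invariance immediately gives both $\mathcal{A}_\chi^{\text{hom-full}}S=S\mathcal{A}_\chi^{\rm hom}S$ (yielding \eqref{eqn:vn_ahom_ops}) and $(\tfrac{1}{\eps^2}\mathcal{A}_\chi^{\text{hom-full}}-zI)^{-1}S=(\tfrac{1}{\eps^2}\mathcal{A}_\chi^{\rm hom}-zI_{\C^3})^{-1}S$ (yielding \eqref{eqn:vn_ahom_resolvents}), without needing your block-diagonal resolvent computation or the commutation argument $\mathcal{A}^{\rm hom}\Xi_\eps=\Xi_\eps\mathcal{A}^{\rm hom}$.
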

\begin{proof}
    Since $\mathcal{A}^{\text{hom}} = (\simgrad)^* \mathbb{A}^{\text{hom}} (\simgrad)$, the same arguments of Proposition \ref{prop:pass_to_unitcell} for $\mathcal{A}_\eps$ may be applied to $\mathcal{A}^{\text{hom}}$, and we get
    \begin{align}\label{eqn:ahom_step1}
        \mathcal{A}^{\text{hom}} = \mathcal{G}_\eps^* \left( \int_{Y'}^\oplus \frac{1}{\eps^2} \mathcal{A}_\chi^{\text{hom-full}} d\chi \right) \mathcal{G}_\eps,
    \end{align}
    where
    \begin{align}
        \mathcal{A}_\chi^{\text{hom-full}} = (\simgrad + iX_\chi)^* \mathbb{A}^{\text{hom}} (\simgrad + iX_\chi), \quad \text{with} \quad \mathcal{D}\left[ \mathcal{A}_\chi^{\text{hom-full}} \right] = H_{\#}^1(Y;\C^3).
    \end{align}
    Now apply the smoothing operator $\Xi_\eps$ on both sides of (\ref{eqn:ahom_step1}), on the right:
    \begin{align}
        \mathcal{A}^{\text{hom}} \Xi_\eps 
        &= \mathcal{G}_\eps^* \left( \int_{Y'}^\oplus \frac{1}{\eps^2} \mathcal{A}_\chi^{\text{hom-full}} d\chi \right) \cancel{\mathcal{G}_\eps \mathcal{G}_\eps^*} \left( \int_{Y'}^{\oplus} S d\chi \right) \mathcal{G}_\eps 
        = \mathcal{G}_\eps^* \left( \int_{Y'}^\oplus \frac{1}{\eps^2} \mathcal{A}_\chi^{\text{hom-full}} S d\chi \right) \mathcal{G}_\eps. \label{eqn:ahom_step2}
    \end{align}
    Recall that $S = P_{\C^3}$, a projection onto the space of constant functions. Since the coefficient tensor $\mathbb{A}^{\text{hom}}$ is constant in space, it implies that
    \begin{itemize}
        \item $\C^3$ is an invariant subspace for $\mathcal{A}_\chi^{\text{hom-full}}$.
        \item $\mathcal{A}_\chi^{\text{hom-full}}|_{\C^3} = (iX_\chi)^* \mathbb{A}^{\text{hom}} (iX_\chi)$. The latter is just $\mathcal{A}_\chi^{\text{hom}}$, by Proposition \ref{prop:hom_matrix}.
    \end{itemize}
    We may therefore continue from (\ref{eqn:ahom_step2}) to obtain
    \begin{align}
        \mathcal{A}^{\text{hom}} \Xi_\eps 
        &= \mathcal{G}_\eps^* \left( \int_{Y'}^{\oplus} \frac{1}{\eps^2} S \mathcal{A}_\chi^{\text{hom}} S d\chi \right) \mathcal{G}_\eps.
    \end{align}
    By noting that $S$ is an orthogonal projection (so that $S=S^*$), we obtain (\ref{eqn:vn_ahom_ops}).

    Applying \cite[Theorem XIII.85]{reed_simon4} to (\ref{eqn:ahom_step1}), we obtain
    \begin{align}\label{eqn:ahom_resolvent_step1}
        \left( \mathcal{A}^{\text{hom}} - zI \right)^{-1}
        &= \mathcal{G}_\eps^* \left( \int_{Y'}^{\oplus} \left( \frac{1}{\eps^{2}} \mathcal{A}_{\chi}^{\text{hom-full}} - zI \right)^{-1} d\chi \right) \mathcal{G}_\eps, \qquad \text{for } z \in \rho(\mathcal{A}^{\text{hom}}).
    \end{align}
    The identity (\ref{eqn:vn_ahom_resolvents}) now follows by applying $\Xi_\eps$ on both sides of (\ref{eqn:ahom_resolvent_step1}), on the right, and then using (\ref{eqn:vn_ahom_ops}).
\end{proof}

\subsection{Asymptotics for the operator \texorpdfstring{$\left(\frac{1}{|\chi|^2} \mathcal{A}_\chi -zI\right)^{-1}$}{(|chi|-2 Achi - z I)-1}} \label{sect:the_asymp_method}

\textbf{Assume that $\chi\neq 0$ and $z \in \rho(\frac{1}{|\chi|^2} \mathcal{A}_\chi ) \cap \rho (\frac{1}{|\chi|^2} \mathcal{A}_\chi^{\text{hom}})$.} To study the resolvent $( \frac{1}{|\chi|^2} \mathcal{A}_\chi -z  I )^{-1}$, we consider the resolvent equation
\begin{equation}
    \left( \frac{1}{|\chi|^2} \mathcal{A}_\chi -z I \right) \vect u = \vect f \in L^2(Y;\C^3),
\end{equation}
which in terms of the weak formulation, is given by
\begin{equation}\label{chiproblem}
    \frac{1}{|\chi|^2} \int_{Y} \A \left( \simgrad + iX_\chi \right) \vect u  : \overline{\left( \simgrad + iX_\chi \right)  \vect v} -z \int_Y \vect u \cdot \overline{\vect v} = \int_Y \vect f \cdot \overline{\vect v} ,\quad \forall \, \vect v\in H_\#^1(Y;\C^3). 
\end{equation}
The unique solution $\vect u \in \mathcal{D}(\frac{1}{|\chi|^2} \mathcal{A}_\chi ) \subset H_\#^1(Y;\C^3)$ to the resolvent equation (\ref{chiproblem}) can be equivalently expressed as $\vect u = (\frac{1}{|\chi|^2} \mathcal{A}_\chi -z I)^{-1} \vect f$. We will now proceed with an asymptotic procedure \textit{in the quasimomentum $\chi$}, by expanding the solution $\vect u$ in orders of $|\chi|$ (as $|\chi| \downarrow 0$).

\begin{remark}
    The reader may wish to look ahead at Section \ref{sect:norm_resolvent_est} to see how the asymptotics of $( \frac{1}{|\chi|^2} \mathcal{A}_\chi - zI)^{-1}$ in $\chi$ is converted back into asymptotics in $\eps$, and in turn how the result on $L^2(Y;\C^3)$ is converted back into a result on $L^2(\R^3;\C^3)$. Let us also point out that the scaling $\frac{1}{|\chi|^2}$ used here comes from the fact that the first three eigenvalues of $\mathcal{A}_\chi$ are of order $O(|\chi|^2)$ (Proposition \ref{prop:Rayleighestim}).  
\end{remark}

\subsubsection{Cycle 1: First expansion of the solution}
The goal of this section is to expand the solution $\vect u$ of \eqref{chiproblem} in the form
\begin{equation}
\label{expansion}
   \vect u = \vect u_0 + \vect u_1 + \vect u_2 + \vect u_{\rm err}, \quad \vect u_i, \, \vect u_{\rm err} \in H_\#^1(Y;\C^3),  \quad i = 0,1,2,
\end{equation}
where the terms satisfy the following bounds
\begin{align}\label{eqn:expansion_bounds}
    \vect u_0 = \mathcal{O}(1), \quad \vect u_1 = \mathcal{O}(|\chi|), \quad \vect u_2 = \mathcal{O}(|\chi|^2), \quad \text{as $|\chi| \downarrow 0.$}
\end{align}
with respect to $H^{1}(Y;\C^3)$ norm, with explicit dependence on $\| \vect f \|_{L^2(Y;\C^3)}$, and each $\vect u_i$ solves some boundary value problem which we will derive below. 
At this point, the expansion \eqref{expansion} is purely formal, and will gain rigor as soon as we provide precise definitions of the involved terms.

To begin, we plug \eqref{expansion} into \eqref{chiproblem} to obtain the following equation, valid for all $\vect v\in H^1_\#(Y;\C^3)$:
\begin{alignat}{5}\label{firstexpansion}
        \int_Y \A \simgrad  \vect u_0  :  \overline{ \simgrad  \vect v} 
        &+ \int_Y \A \simgrad  \vect u_0  :  \overline{  iX_\chi  \vect v} 
        &&+ \int_Y \A iX_\chi  \vect u_0  :  \overline{ \simgrad  \vect v} 
        &&+ \int_Y \A   iX_\chi  \vect u_0  :  \overline{ iX_\chi  \vect v} \nonumber\\
        + \int_Y \A \simgrad  \vect u_1  :  \overline{ \simgrad  \vect v} 
        &+ \int_Y \A \simgrad  \vect u_1  :  \overline{  iX_\chi  \vect v} 
        &&+ \int_Y \A iX_\chi  \vect u_1  :  \overline{ \simgrad  \vect v} 
        &&+ \int_Y \A iX_\chi  \vect u_1  :  \overline{ iX_\chi  \vect v} \nonumber\\
        + \int_Y \A \simgrad  \vect u_2  :  \overline{ \simgrad  \vect v} 
        &+ \int_Y \A \simgrad  \vect u_2  :  \overline{  iX_\chi  \vect v} 
        &&+ \int_Y \A iX_\chi  \vect u_2  :  \overline{ \simgrad  \vect v} 
        &&+ \int_Y \A iX_\chi  \vect u_2  :  \overline{ iX_\chi  \vect v} \nonumber\\
        + \int_Y \A \simgrad  \vect u_{\rm err}  :  \overline{ \simgrad  \vect v} 
        &+ \int_Y \A \simgrad  \vect u_{\rm err}  :  \overline{  iX_\chi  \vect v} 
        &&+ \int_Y \A iX_\chi  \vect u_{\rm err}  :  \overline{ \simgrad  \vect v} 
        &&+ \int_Y \A iX_\chi  \vect u_{\rm err}  :  \overline{ iX_\chi  \vect v} \nonumber\\
        -z |\chi|^2 \int_Y \vect u_0 \cdot \overline{\vect v} 
        &-z |\chi|^2 \int_Y \vect u_1 \cdot \overline{\vect v}
        -z |\chi|^2 \int_Y \span\span \vect u_2 \cdot \overline{\vect v}  
        -z |\chi|^2 \int_Y \vect u_{\rm err} \cdot \overline{\vect v}
        \span\span= |\chi|^2\int_Y \vect f \cdot \overline{\vect v}. \span\span
\end{alignat}

The general heuristic in defining the terms $\vect u_i$ in \eqref{expansion} is to extract the terms in \eqref{firstexpansion} which would be of the same order in $|\chi|$, as $|\chi| \downarrow 0$, according to \eqref{eqn:expansion_bounds}, and combine them as to inductively define a sequence of boundary value problems for determining $\vect u_i$. It is crucial to verify the well-posedness of these problems, as to justify the method. Having done so, we estimate the error term $\vect u_{\rm err}$ in the $H^1$ norm.

Let us begin the inductive procedure. Equating the terms in \eqref{firstexpansion} which are of order $1$, we arrive at the following constraint on the leading order term $\vect u_0$, namely $\vect u_0\in H^{1}_\#(Y;\C^3)$ satisfies
\begin{equation}\label{eqn:lot_extra}
    \int_{Y} \A  \simgrad   \vect u_0  : \overline{ \simgrad \vect v} = 0, \quad \forall\vect v \in H^{1}_\#(Y;\C^3). 
\end{equation}
By Remark \ref{rmk:rigid_body_motions}, $\vect u_0$ must be a constant function $\vect u_0 \in \C^3$. In what follows, we will impose an additional condition making $\vect u_0$ unique (see (\ref{leadingorderterm})).

We proceed with the definition of the term $\vect u_1$, in which we suppose that $\vect u_0$ is readily available. Collecting the terms of order $|\chi|$ in \eqref{firstexpansion}, and noting that the $\mathcal{O}(|\chi|)$ term $\int_{Y} \A  \simgrad   \vect u_0  : \overline{  iX_\chi  \vect v}$ is zero since $\vect u_0$ is a constant, we arrive at the following problem, where we seek a unique solution $\vect u_1 \in H_{\#}^1(Y;\C^3)$:
\begin{equation}\label{corr2}
\begin{split}
    \int_{Y} \A  \simgrad   \vect u_1  : \overline{ \simgrad   \vect v} = - \int_{Y} \A  iX_\chi   \vect u_0  : \overline{ \simgrad   \vect v}, \quad \forall\vect v \in H^{1}_\#(Y;\C^3), \quad \text{ and } \quad \quad \int_Y \vect u_1 = 0.
\end{split}
\end{equation}
The quadratic form: $H^{1}_\#(Y;\C^3)\ni \vect v \mapsto \int_{Y} \A  \simgrad   \vect v  : \overline{ \simgrad   \vect v}$ is coercive on the space $\Dot{H}_\#^1(Y;\C^3)$, defined by
\begin{equation}
\label{Hdot}
    \Dot{H}_\#^1(Y;\C^3): = \left\{\vect v \in H_\#^1(Y;\C^3);  \int_Y \vect v = 0 \right\} = \ker(\simgrad)^\perp \cap H_\#^1(Y;\C^3),
\end{equation}
due to the Korn's inequality \eqref{korninequality3}. The orthogonal complement in \eqref{Hdot} is taken with respect to $L^2(Y;\C^3)$.  Furthermore, the functional: $H^{1}_\#(Y;\C^3)\ni \vect v \mapsto \int_{Y} \A  iX_\chi   \vect u_0  : \overline{ \simgrad   \vect v}$ is clearly bounded in $\Dot{H}_\#^1(Y;\C^3)$, with a bound depending on $\vect u_0$. The well-posedness of the problem \eqref{corr2} then follows from Lax-Milgram Lemma.

Next, we collect the terms of order $|\chi|^2$ to arrive at the following problem, where we seek a unique solution $\vect u_2 \in H_\#^1(Y;\C^3)$:
\begin{equation}\label{correctoru2}
\begin{split}
    \int_{Y} \A  \simgrad   \vect u_2  : \overline{ \simgrad   \vect v} = 
    & - \int_{Y} \A  iX_\chi    \vect u_1  : \overline{ \simgrad  \vect v} 
    - \int_{Y} \A  \simgrad   \vect u_1  : \overline{ i X_\chi   \vect v} - \int_{Y} \A  iX_\chi    \vect u_0  : \overline{ i X_\chi   \vect v} \\
    &+z |\chi|^2\int_Y \vect u_0 \cdot\overline{\vect v} + |\chi|^2\int_Y \vect f  \cdot \overline{\vect v}, 
    \quad \forall\vect v \in H^{1}_\#(Y;\C^3), \quad \text{ and } \quad \int_Y \vect u_2 = 0. 
\end{split}
\end{equation}
By choosing a constant test function $\vect v \in \C^3$ in the first equation of \eqref{correctoru2}, we note that the left-hand side vanishes. Since we would like the problem (\ref{correctoru2}) to have a solution, we ask that the right-hand side should vanish as well. That is, for $\vect v \in \C^3 \subset H^{1}_\#(Y;\C^3)$,
\begin{equation}\label{eqn:necessary_u2}
    - \int_{Y} \A  iX_\chi    \vect u_1  : \overline{ \simgrad  \vect v} 
    - \int_{Y} \A  \simgrad   \vect u_1  : \overline{ i X_\chi   \vect v} - \int_{Y} \A  iX_\chi    \vect u_0  : \overline{ i X_\chi   \vect v} - z|\chi|^2\int_Y \vect u_0 \cdot\overline{\vect v} + |\chi|^2\int_Y \vect f  \cdot \overline{\vect v} = 0.
\end{equation}
Under the assumption that (\ref{eqn:necessary_u2}) holds, a Lax-Milgram type argument similar to \eqref{corr2} shows that the solution $\vect u_2$ to (\ref{correctoru2}) exists and is uniquely determined.

Let us now derive a condition that will determine the constant $\vect u_0 \in \C^3$, which will in turn satisfy assumption (\ref{eqn:necessary_u2}): By testing with $\vect v = \vect v_0 \in \C^3$ in \eqref{correctoru2}, we obtain
\begin{align}
    &\cancel{\int_{Y} \A  \simgrad   \vect u_2  : \overline{ \simgrad   \vect v_0}} \nonumber \\
    &=
    \cancel{- \int_{Y} \A  iX_\chi    \vect u_1  : \overline{ \simgrad  \vect v_0}} 
     - \int_{Y} \A  \simgrad   \vect u_1  : \overline{ iX_\chi \vect v_0} - \int_{Y} \A  iX_\chi    \vect u_0  : \overline{ iX_\chi \vect v_0}  +z |\chi|^2\int_Y \vect u_0 \cdot \overline{\vect v_0} + |\chi|^2\int_Y \vect f \cdot \overline{\vect v_0} \nonumber\\
    &\overset{\eqref{homdefinition}}{=} - \left\langle \mathcal{A}_\chi^{\rm hom} \vect u_0,\vect v_0 \right\rangle +z |\chi|^2\int_Y \vect u_0 \cdot \overline{\vect v_0} + |\chi|^2\int_Y \vect f \cdot \overline{\vect v_0}.
\end{align}
In other words, we obtain the following equation: 
\begin{equation}\label{leadingorderterm_weak}
     \frac{1}{|\chi|^2}\left\langle \mathcal{A}_\chi^{\rm hom} \vect u_0,\vect v_0 \right\rangle_{\C^3} -z \int_Y \vect u_0 \cdot \overline{\vect v_0} = \int_Y \vect f \cdot \overline{\vect v_0}, 
    \quad \forall\vect v_0 \in \C^3,
\end{equation}
or equivalently (since $\int_Y = 1$),
\begin{equation}
\label{leadingorderterm}
    \frac{1}{|\chi|^2}\mathcal{A}_\chi^{\rm hom} \vect u_0 -z \vect u_0 = \int_Y \vect f.
\end{equation}
Thus, by defining the leading order term $\vect u_0 \in \C^3$ as the solution to (\ref{eqn:lot_extra}) and (\ref{leadingorderterm}), we have ensured the existence and uniqueness of the solution $\vect u_2$ to \eqref{correctoru2}.

\begin{remark}
    Under the condition \eqref{eqn:necessary_u2}, the problem \eqref{correctoru2} can be equivalently formulated as follows: Find $\vect u_2 \in \dot{H}_\#^1(Y;\C^3)$ such that:
    \begin{equation}\label{correctoru2Hdot}
    \begin{split}
        \int_{Y} \A  \simgrad   \vect u_2  : \overline{ \simgrad   \vect v} = 
        & - \int_{Y} \A  iX_\chi    \vect u_1  : \overline{ \simgrad  \vect v} 
        - \int_{Y} \A  \simgrad   \vect u_1  : \overline{ i X_\chi   \vect v} - \int_{Y} \A  iX_\chi    \vect u_0  : \overline{ i X_\chi   \vect v} \\
        & +|\chi|^2\int_Y S^\perp \vect f  \cdot \overline{\vect v}, 
        \quad \forall\vect v \in \dot{H}^{1}_\#(Y;\C^3),
    \end{split}
    \end{equation}
    where $S^\perp :L^2(Y;\C^3) \to L^2(Y;\C^3)$ is the projection operator defined by: 
    \begin{equation*}
        \label{Sperp} S^\perp \vect u:= (I-S) \vect u = \vect u - \int_Y \vect u. \qedhere
    \end{equation*}
\end{remark}

\begin{table}[ht]
\centering
\begin{tabular}{ |c|c|c|c| }
    \hline
    Function    & Equation(s) that it solves                            & Space that it belongs to  & Dependencies (excl.~$\chi$, $z$, and $\vect f$) \\
    \hline
    $\vect u_0$ & \eqref{eqn:lot_extra} and \eqref{leadingorderterm}    & $\C^3$
    & --- \\ 
    $\vect u_1$ & \eqref{corr2}                                         & $\Dot{H}_{\#}^1(Y;\C^3)$
    & $\vect u_0$\\ 
    $\vect u_2$ & \eqref{correctoru2}                                   & $\Dot{H}_{\#}^1(Y;\C^3)$
    & $\vect u_0$, $\vect u_1$\\ 
    $\vect u$   & \eqref{chiproblem}                                    & $H_{\#}^1(Y;\C^3)$ 
    & $\vect u_0$, $\vect u_1$, $\vect u_2$ \\ 
    \hline
\end{tabular}
\caption{The equations that $\vect u, \, \vect u_0, \, \vect u_1, \, \vect u_2 \in H_{\#}^1(Y;\C^3)$ uniquely solve, \\ with additional information.}\label{tab:first_cycle}
\centering
\end{table}

We summarize our derivation thus far in Table \ref{tab:first_cycle}. In the next subsection, we verify the estimates in \eqref{eqn:expansion_bounds}, and justify the expansion \eqref{expansion} with error estimates.

\subsubsection{Cycle 1: Estimates}\label{sect:the_asymp_method_cycle1est}

Having derived the equations for $\vect u_0, \vect u_1, \vect u_2$, we now prove the estimates in (\ref{eqn:expansion_bounds}), and study the error term $\vect u_{\rm err} := \vect  u - \vect u_0 - \vect u_1 -\vect u_2$. Note that $\vect u_{\rm err}$ is uniquely determined, since $\vect u_j$'s and $\vect u$ are uniquely determined. To keep track of the dependence on $z$, it will be convenient to introduce the following notation:

\begin{definition}[Constants depending on $z$]
    \begin{equation}\label{eqn:dist_to_spec}
        D_{\text{hom}}(z) := \text{dist} (z, \sigma(\tfrac{1}{|\chi|^2} \mathcal{A}_\chi^{\text{hom}} ) ), \qquad
        D(z) := \text{dist} (z, \sigma(\tfrac{1}{|\chi|^2} \mathcal{A}_\chi ) ).
    \end{equation}
\end{definition}
We claim that the following estimates hold:
\begin{equation}\label{bound11}
    \| \vect u_0 \|_{H^1(Y;\C^3)} \leq \frac{C}{D_{\text{hom}}(z)} \| \vect f \|_{L^2(Y;\C^3)},
\end{equation}
\begin{equation}\label{bound12}
    \left\lVert \vect u_1 \right\rVert_{H^{1}(Y;\C^3)} \leq \frac{C}{D_{\text{hom}}(z)} |\chi| \left\lVert \vect f \right\rVert_{L^2(Y;\C^3)},
\end{equation}
\begin{equation}\label{bound13}
    \left\lVert \vect u_2 \right\rVert_{H^{1}(Y;\C^3)} \leq C \left[\frac{\max \{1,|z|\}}{D_\text{hom}(z)} + 1 \right] |\chi|^2 \left\lVert \vect f \right\rVert_{L^2(Y;\C^3)},
\end{equation}
where the constant $C>0$ is independent of $\chi$ and $z$. These inequalities follow by routine applications of Assumption \ref{coffassumption}, Korn's inequalities (Appendix \ref{sect:useful_ineq}), and the definition of $\vect u_i$'s. We refer the reader to Appendix \ref{sect:routine_estimates} for their proof.

Now we turn our focus to the error term $\vect u_{\rm err}$. By collecting the leftover terms in (\ref{firstexpansion}), we see that $\vect u_{\text{err}} \in H_{\#}^1$ solves the following equation:
\begin{equation}\label{firsterrorequation}
    \frac{1}{|\chi|^2} \int_{Y} \A \left( \simgrad + iX_\chi \right) \vect u_{\rm err}  : \overline{\left( \simgrad + iX_\chi \right) \vect v} -z \int_Y \vect u_{\rm err}\cdot \overline{\vect v}  = \frac{1}{|\chi|^2}\mathcal{R}_{\rm err}(\vect v) ,\quad \forall\vect v\in H^{1}_\#(Y;\C^3), 
\end{equation}
where the functional $\mathcal{R}_{\rm err}$ is given by:
\begin{equation}
    \begin{split}
    \mathcal{R}_{\rm err}(\vect v) := &-  \int_{Y} \A   iX_\chi  \vect u_1  : \overline{ iX_\chi  \vect v} -  \int_{Y} \A  \simgrad   \vect u_2  : \overline{  iX_\chi  \vect v} -  \int_{Y} \A   iX_\chi  \vect u_2  : \overline{ \simgrad   \vect v} -  \int_{Y} \A   iX_\chi  \vect u_2  : \overline{ iX_\chi  \vect v} \\
    &+z |\chi|^2\int_Y \vect u_1 \cdot \overline{\vect v} +z |\chi|^2\int_Y \vect u_2 \cdot \overline{\vect v}.
    \end{split}
\end{equation}
By employing the estimates \eqref{bound11}, \eqref{bound12}, and \eqref{bound13} we obtain (see Appendix \ref{sect:routine_estimates} for details):
\begin{equation}\label{bounderror1}
    |\mathcal{R}_{\rm err}(\vect v)| \leq C \left[ \frac{\max \{1,|z|^2\}}{D_\text{hom}(z)} + \max \{1,|z|\} \right] |\chi|^3 \left\lVert\vect v \right\rVert_{H^{1}(Y;\C^3)} \left\lVert \vect f \right\rVert_{L^2(Y;\C^3)}. 
\end{equation}
That is, $\frac{1}{|\chi|^2}\mathcal{R}_\text{err}$ is a bounded linear functional on $H^1(Y;\C^3)$, with operator norm satisfying
\begin{equation}
    \left\| \frac{1}{|\chi|^2} \mathcal{R}_{\text{err}} \right\|_{(H^1(Y;\C^3))^*} \leq C \left[ \frac{\max \{1,|z|^2\}}{D_\text{hom}(z)} + \max \{1,|z|\} \right] |\chi| \| \vect f \|_{L^2}.
\end{equation}
Applying Proposition \ref{prop:abstract_ineq} with $H = L^2(Y;\C^3)$, $X = H^1_{\#}(Y;\C^3)$, $\mathcal{A} = \frac{1}{|\chi|^2} \mathcal{A}_\chi$, $\mathcal{R} = \frac{1}{|\chi|^2} \mathcal{R}_{\text{err}}$, together with Remark \ref{rmk:equiv_h1_a_X}, we obtain
\begin{equation}\label{bounduerr1}
    \| \vect u_{\rm err} \|_{H^1(Y;\C^3)} 
    \leq C \max \left\{1, \frac{|z +1|}{D(z)} \right\} \left[ \frac{\max \{1,|z|^2\}}{D_\text{hom}(z)} + \max \{1,|z|\} \right] |\chi| \| \vect f \|_{L^2(Y;\C^3)}. 
\end{equation}
Therefore, terms $\vect u_1$, $\vect u_2$ may be dropped, as they do not contribute to the order of the estimate. That is, 
\begin{align}
    \| \vect u - \vect u_0 \|_{H^{1}(Y;\C^3)} 
    &= \| \vect u_{\rm err} + \vect u_1 + \vect u_2 \|_{H^{1}(Y;\C^3)} \nonumber\\
    &\leq C \max \left\{1, \frac{|z +1|}{D(z)} \right\} \left[ \frac{\max \{1,|z|^2\}}{D_\text{hom}(z)} + \max \{1,|z|\} \right] |\chi| \| \vect f \|_{L^2(Y;\C^3)}. \label{chi1estimate}
\end{align}

\subsubsection{Cycle 2: Second expansion of the solution}

The procedure thus far proves \eqref{eqn:tau_resolvent_est1} of Theorem \ref{thm:tau_resolvent_est}, which, as we will see in Section \ref{sect:norm_resolvent_est}, is sufficient for the $L^2 \to L^2$ result of Theorem \ref{thm:main_thm}. On the contrary, it turns out that the first cycle alone is insufficient for the $L^2 \to H^1$ and higher-order $L^2\to L^2$ result, and thus we have to continue with the asymptotic procedure.

Since $\vect u_0$, $\vect u_1$, and $\vect u_2$ have been uniquely specified, we require a deeper investigation into $\vect u_{\text{err}}$, bringing us to a core idea of this procedure. The method we will demonstrate now was first employed in \cite{kirill_igor_plates} and later in \cite{kirill_igor_josip_rods}. Let us refine the expansion \eqref{expansion} in the following way:
\begin{equation}\label{expansion2}
    \begin{split}
        \vect u = \vect u_0 &+ \vect u_1 + \vect u_2  \\
        &+ \vect u_0^{(1)} + \vect u_1^{(1)} + \vect u_2^{(1)} + \vect u_{\rm err}^{(1)},
    \end{split}
\end{equation}
where the new corrector terms $\vect u_i^{(1)} \in H_\#^1(Y;\C^3)$ satisfy the following estimates in $H^{1}(Y;\C^3)$ norm:
\begin{align}\label{eqn:expansion_bounds2}
    \vect u_0^{(1)} = \mathcal{O}(|\chi|), \quad \vect u_1^{(1)} = \mathcal{O}(|\chi|^2), \quad \vect u_2^{(1)} = \mathcal{O}(|\chi|^3).
\end{align}
(We have specified the leading order term $\vect u_0^{(1)}$ to be of order $|\chi|$, as this is the order of $\vect u_{\text{err}}$ (by \eqref{bounduerr1}.)

Plugging the expansion \eqref{expansion2} into the equation \eqref{chiproblem} results again in a sizeable expression:
\begin{alignat}{5}\label{secondexpansion}
    \cancel{\int_Y \A \simgrad  \vect u_0  :  \overline{ \simgrad  \vect v}}
    &+ \cancel{\int_Y \A \simgrad  \vect u_0  :  \overline{ iX_\chi  \vect v}}
    &&+ \cancel{\int_Y \A iX_\chi  \vect u_0  :  \overline{ \simgrad  \vect v}}
    &&+ \cancel{\int_Y \A iX_\chi  \vect u_0  :  \overline{ iX_\chi  \vect v}} \nonumber\\
    + \cancel{\int_Y \A \simgrad  \vect u_1  :  \overline{ \simgrad  \vect v}} 
    &+ \cancel{\int_Y \A \simgrad  \vect u_1  :  \overline{ iX_\chi  \vect v}} 
    &&+ \cancel{\int_Y \A iX_\chi  \vect u_1  :  \overline{ \simgrad  \vect v}} 
    &&+ \int_Y \A iX_\chi  \vect u_1  :  \overline{ iX_\chi  \vect v} \nonumber\\
    + \cancel{\int_Y \A \simgrad  \vect u_2  :  \overline{ \simgrad  \vect v}} 
    &+ \int_Y \A \simgrad  \vect u_2  :  \overline{ iX_\chi  \vect v} 
    &&+ \int_Y \A iX_\chi  \vect u_2  :  \overline{ \simgrad  \vect v} 
    &&+ \int_Y \A iX_\chi  \vect u_2  :  \overline{ iX_\chi  \vect v} \nonumber\\
    + \int_Y \A \simgrad  \vect u_0^{(1)}  :  \overline{ \simgrad  \vect v} 
    &+ \int_Y \A \simgrad  \vect u_0^{(1)}  :  \overline{ iX_\chi  \vect v} 
    &&+ \int_Y \A iX_\chi  \vect u_0^{(1)}  :  \overline{ \simgrad  \vect v} 
    &&+ \int_Y \A iX_\chi  \vect u_0^{(1)}  :  \overline{ iX_\chi  \vect v} \nonumber\\
    + \int_Y \A \simgrad  \vect u_1^{(1)}  :  \overline{ \simgrad  \vect v} 
    &+ \int_Y \A \simgrad  \vect u_1^{(1)}  :  \overline{ iX_\chi  \vect v} 
    &&+ \int_Y \A iX_\chi  \vect u_1^{(1)}  :  \overline{ \simgrad  \vect v} 
    &&+ \int_Y \A iX_\chi  \vect u_1^{(1)}  :  \overline{ iX_\chi  \vect v} \nonumber\\
    + \int_Y \A \simgrad  \vect u_2^{(1)}  :  \overline{ \simgrad  \vect v} 
    &+ \int_Y \A \simgrad  \vect u_2^{(1)}  :  \overline{ iX_\chi  \vect v} 
    &&+ \int_Y \A iX_\chi  \vect u_2^{(1)}  :  \overline{ \simgrad  \vect v} 
    &&+ \int_Y \A iX_\chi  \vect u_2^{(1)}  :  \overline{ iX_\chi  \vect v} \nonumber\\
    + \int_Y \A \simgrad  \vect u_{\rm err}^{(1)}  :  \overline{ \simgrad  \vect v} 
    &+ \int_Y \A \simgrad  \vect u_{\rm err}^{(1)}  :  \overline{ iX_\chi  \vect v} 
    &&+ \int_Y \A iX_\chi  \vect u_{\rm err}^{(1)}  :  \overline{ \simgrad  \vect v} 
    &&+ \int_Y \A iX_\chi  \vect u_{\rm err}^{(1)}  :  \overline{ iX_\chi  \vect v} \nonumber\\
    -z |\chi|^2\int_Y \vect u_0^{(1)} \cdot \overline{\vect v} 
    \span -z |\chi|^2\int_Y \vect u_1^{(1)} \cdot \overline{\vect v} 
    \span\span -z |\chi|^2\int_Y \vect u_2^{(1)} \cdot \overline{\vect v} 
    \span\span -z \cancel{|\chi|^2\int_Y \vect u_0 \cdot \overline{\vect v}} 
    -z |\chi|^2\int_Y \vect u_1 \cdot \overline{\vect v} 
    -z |\chi|^2\int_Y \vect u_2 \cdot \overline{\vect v}  \nonumber\\ 
    -z |\chi|^2\int_Y \vect u_{\rm err} \cdot \overline{\vect v} 
    = \cancel{|\chi|^2\int_Y \vect f \cdot \overline{\vect v}},
\end{alignat}
where we can ignore the cancelled terms due to the definitions for $\vect u_0$, $\vect u_1$, and $\vect u_2$.

We now begin an inductive procedure to derive the problems for $\vect u_i^{(1)}$. Equating the terms in \eqref{secondexpansion} of order $|\chi|$, we arrive at the following problem, where we seek a unique solution $\vect u_0^{(1)} \in H_{\#}^1(Y;\C^3)$:
\begin{equation}\label{eqn:lotnew_extra}
    \int_Y \A \simgrad  \vect u_0^{(1)}  :  \overline{ \simgrad  \vect v} = 0, \quad \forall \vect v \in H^1_\#(Y;\C^3). 
\end{equation}
Once again, Remark \ref{rmk:rigid_body_motions} asserts that $\vect u_0^{(1)}$ must be a constant function $\vect u_0^{(1)} = \vect c^{(1)} \in \C^3$. In that follows, we will impose an additional condition on $\vect u_0^{(1)}$ making $\vect u_0^{(1)}$ unique (see \eqref{newconstantcorrector}).

Next, we collect the terms of order $|\chi|^2$, arriving at the following problem, where we seek a unique solution $\vect u_1^{(1)} \in H_{\#}^1(Y;\C^3)$:
\begin{equation}\label{corr2new}
    \int_Y \A \simgrad  \vect u_1^{(1)}  :  \overline{ \simgrad  \vect v} 
    = - \int_Y \A iX_\chi  \vect u_0^{(1)}  :  \overline{ \simgrad  \vect v},
    \quad \forall \vect v \in H^1_\#(Y;\C^3), \quad \text{and} \qquad \int_Y \vect u_1^{(1)} = 0.
\end{equation}
Note that the $\mathcal{O}(|\chi|^2)$ term $\int_Y \mathbb{A} \simgrad \vect u_0^{(1)} : \overline{iX_\chi \vect v}$ is zero, since $\vect u_0^{(1)}$ is constant. A Lax-Milgram type argument similar to that for \eqref{corr2} (the problem for $\vect u_1$) shows that the solution $\vect u_1^{(1)}$ exists and is unique.

Next, we collect the terms of order $|\chi|^3$ to arrive at the following problem, where we seek a unique solution $\vect u_2^{(1)} \in H_{\#}^1(Y;\C^3)$:
\begin{align}\label{correctoru3new}
    \int_Y \A \simgrad  \vect u_2^{(1)}  :  \overline{ \simgrad  \vect v} 
    = &- \int_Y \A iX_\chi  (\vect u_1^{(1)} + \vect u_2)  :  \overline{ \simgrad  \vect v} 
    - \int_Y \A \simgrad  (\vect u_1^{(1)} + \vect u_2)  :  \overline{ iX_\chi  \vect v} \nonumber\\
    &- \int_Y \A iX_\chi  (\vect u_0^{(1)} + \vect u_1)  :  \overline{ iX_\chi  \vect v} 
    +z |\chi|^2\int_Y \vect u_0^{(1)} \cdot \overline{\vect v} 
    +z |\chi|^2\int_Y \vect u_1 \cdot \overline{\vect v}, 
    \quad \forall \vect v \in H^{1}_\#(Y;\C^3), \nonumber\\
    &\quad \text{and} \qquad \int_Y \vect u_2^{(1)} = 0.
\end{align}

\noindent
Similarly to the problem for $\vect u_2$, we assume that the right-hand side of the first equation of \eqref{correctoru3new} should vanish when tested against constants $\vect v \in \C^3$. Under this assumption, a Lax-Milgram type argument similar to \eqref{corr2} shows that the solution $\vect u_2^{(1)}$ to \eqref{correctoru3new} exists and is unique.

Let us now derive a condition that will determine the consant $\vect u_0^{(1)} \in \C^3$, which will in turn satisfy the assumption that the right-hand side of \eqref{correctoru3new} vanishes when tested against constants: By testing $\vect v_0 \in \C^3$ in \eqref{correctoru3new}, we have
\begin{align}\label{correctoru3new2}
    \cancel{\int_Y \A \simgrad  \vect u_2^{(1)}  :  \overline{ \simgrad  \vect v_0}}
    = &\cancel{- \int_Y \A iX_\chi  (\vect u_1^{(1)} + \vect u_2)  :  \overline{ \simgrad  \vect v_0}} 
    - \int_Y \A \simgrad  (\vect u_1^{(1)} + \vect u_2)  :  \overline{ iX_\chi  \vect v_0} \nonumber\\
    &- \int_Y \A iX_\chi (\vect u_0^{(1)} + \vect u_1)  :  \overline{ iX_\chi  \vect v_0} 
    +z |\chi|^2 \int_Y \vect u_0^{(1)} \cdot \overline{\vect v_0}
    \cancel{+z |\chi|^2 \int_Y \vect u_1 \cdot \overline{\vect v_0}}, \nonumber\\
    = - \int_Y \A \simgrad  (\vect u_1^{(1)} + \vect u_2)  :  \overline{ iX_\chi  \vect v_0} 
    \span - \int_Y \A iX_\chi  (\vect u_0^{(1)} + \vect u_1)  :  \overline{ iX_\chi  \vect v_0} 
    +z |\chi|^2 \int_Y \vect u_0^{(1)} \cdot \overline{\vect v_0}.
\end{align}
Here, the term $\int \vect u_1 \cdot \overline{\vect v_0}$ is zero, since $\vect u_1 \in \Dot{H}_\#^1(Y;\C^3)$. Now similarly to the previous cycle, the right-hand side of \eqref{correctoru3new2} can be expressed in terms of the matrix $\mathcal{A}_\chi^{\text{hom}}$. Indeed, let us recall that by Definition \ref{defn:hom_matrix} and by \eqref{corr2new} (the problem for $\vect u_1^{(1)}$), we have
\begin{equation}\label{homformula2}
    \left\langle \mathcal{A}^{\rm hom}_\chi \vect u_0^{(1)}, \vect v_0 \right\rangle_{\C^3} 
    = \int_Y \A \left( \simgrad  \vect u_1^{(1)}  +  iX_\chi  \vect u_0^{(1)} \right)  :  \overline{ iX_\chi \vect v_0 }, \quad \forall \vect v_0\in \C^3.
\end{equation}
Therefore, we may substitute \eqref{homformula2} into \eqref{correctoru3new2} to obtain the following problem that will determine the constant $\vect u_0^{(1)}$:
\begin{equation}
    \label{newconstantcorrector}
    \begin{split}
        \left\langle \mathcal{A}^{\rm hom}_\chi \vect u_0^{(1)}, \vect v_0 \right\rangle_{\C^3} 
        -z |\chi|^2 \int_Y \vect u_0^{(1)} \cdot \overline{\vect v_0} 
        & = - \int_Y \A  \left( \simgrad  \vect u_2 + iX_\chi  \vect u_1 \right)  :  \overline{ i X_\chi    \vect v_0} , 
        \quad \forall \vect v_0 \in \C^3.
    \end{split}
\end{equation}

\begin{table}[ht]
\centering
\begin{tabular}{ |c|c|c|c| }
    \hline
    Function    & Equation(s) that it solves                            & Space that it belongs to  & Dependencies (excl.~$\chi$, $z$, and $\vect f$) \\
    \hline
    $\vect u_0^{(1)}$ & \eqref{eqn:lotnew_extra} and \eqref{newconstantcorrector}    & $\C^3$
    & $\vect u_0$, $\vect u_1$, $\vect u_2$ \\ 
    $\vect u_1^{(1)}$ & \eqref{corr2new}                                         & $\Dot{H}_{\#}^1(Y;\C^3)$
    & $\vect u_0$, $\vect u_1$, $\vect u_2$, $\vect u_0^{(1)}$\\ 
    $\vect u_2^{(1)}$ & \eqref{correctoru3new}                                   & $\Dot{H}_{\#}^1(Y;\C^3)$
    & $\vect u_0$, $\vect u_1$, $\vect u_2$, $\vect u_0^{(1)}$, $\vect u_1^{(1)}$ \\
    \hline
\end{tabular}
\caption{The equations that $\vect u_0^{(1)}, \, \vect u_1^{(1)}, \, \vect u_2^{(1)} \in H_{\#}^1(Y;\C^3)$ uniquely solves, \\ with additional information.}\label{tab:second_cycle}
\centering
\end{table}

We summarize our derivation thus far in Table \ref{tab:second_cycle}. In the next subsection we verify the estimates in \eqref{eqn:expansion_bounds2}, and justify the expansion \eqref{expansion2} with error estimates.

\subsubsection{Cycle 2: Estimates}\label{sect:the_asymp_method_cycle2est}
Due to the already calculated estimates on the terms obtained in the first cycle of approximation, it is straightforward to obtain the following set of estimates:
\begin{equation}\label{bound21}
    \| \vect u_0^{(1)} \|_{H^1(Y;\C^3)} \leq C \left[ \frac{\max \{1,|z|\}}{D_\text{hom}(z)^2} + \frac{1}{D_\text{hom}(z)} \right] |\chi| \lVert \vect f \rVert_{L^2(Y;\C^3)},
\end{equation}
\begin{equation}\label{bound22}
    \| \vect u_1^{(1)} \|_{H^1(Y;\C^3)} \leq C \left[ \frac{\max \{1,|z|\}}{D_\text{hom}(z)^2} + \frac{1}{D_\text{hom}(z)} \right] |\chi|^2 \lVert \vect f \rVert_{L^2(Y;\C^3)},
\end{equation}
\begin{equation}\label{bound23}
    \| \vect u_2^{(1)} \|_{H^1(Y;\C^3)} 
    \leq C \left[ \frac{\max \{1,|z|^2\}}{D_\text{hom}(z)^2} 
    + \frac{\max \{1,|z|\}}{D_\text{hom}(z)} + 1 \right] |\chi|^3 \lVert \vect f \rVert_{L^2(Y;\C^3)},
\end{equation}
where the constant $C>0$ do not depend on $\chi$ nor $z$, and $D_{\rm hom}(z)$ is defined with \eqref{eqn:dist_to_spec}. Once again, we refer the reader to Appendix \ref{sect:routine_estimates} for their proof.

Now we turn to the error term $\vect u_{\rm err}^{(1)}:= \vect u - \vect u_0 - \vect u_1 -\vect u_2 - \vect u_0^{(1)} - \vect u_1^{(1)} -\vect u_2^{(1)}$. By collecting the leftover terms in \eqref{secondexpansion}, we see that $\vect u_{\text{err}}^{(1)}$ solves the following equation
\begin{equation}
    \label{seconderrorequation}
        \frac{1}{|\chi|^2} \int_Y \A \left( \simgrad + iX_\chi \right) \vect u_{\rm err}^{(1)}  : \overline{\left( \simgrad + iX_\chi \right) \vect v} 
        -z \int_Y \vect u_\text{err}^{(1)} \cdot \overline{\vect v}
        = \frac{1}{|\chi|^2}\mathcal{R}_{\rm err}^{(1)}(\vect v) ,\quad \forall \vect v\in H^1(Y;\C^3), 
\end{equation}
where the functional $\mathcal{R}_{\rm err}^{(1)}$ is given by:
\begin{align}
    \mathcal{R}_{\text{err}}^{(1)} (\vect v) := 
    &- \int_Y \A iX_\chi  \vect u_2  :  \overline{ iX_\chi  \vect v}
    - \int_Y \A iX_\chi  \vect u_1^{(1)}  :  \overline{ iX_\chi  \vect v}
    - \int_Y \A \simgrad  \vect u_2^{(1)}  :  \overline{ iX_\chi  \vect v} 
    - \int_Y \A iX_\chi  \vect u_2^{(1)}  :  \overline{ \simgrad  \vect v} \nonumber\\
    &- \int_Y \A iX_\chi  \vect u_2^{(1)}  :  \overline{ iX_\chi  \vect v}
    +z |\chi|^2\int_Y \vect u_1^{(1)} \cdot \overline{\vect v} 
    +z |\chi|^2\int_Y \vect u_2^{(1)} \cdot \overline{\vect v} 
    +z |\chi|^2\int_Y \vect u_2 \cdot \overline{\vect v}.
\end{align}
In the same manner as $\mathcal{R}_{\text{err}}$, we have the following bound for $\mathcal{R}_{\rm err}^{(1)}$ (see Appendix \ref{sect:routine_estimates} for details):
\begin{equation}\label{bounderror2}
    |\mathcal{R}_{\rm err}^{(1)}(\vect v)| 
    \leq C \left[ \frac{\max \{1,|z|^3\}}{D_\text{hom}(z)^2} 
    + \frac{\max \{1,|z|^2\}}{D_\text{hom}(z)} 
    + \max \{1,|z|\} \right]
    |\chi|^4 \lVert \vect v \rVert_{H^1(Y;\C^3)} \lVert \vect f \rVert_{L^2(Y;\C^3)}.
\end{equation}
By an application of Proposition \ref{prop:abstract_ineq}, we obtain
\begin{align}
    &\| \vect u - \vect u_0 - \vect u_1 - \vect u_2 - \vect u_0^{(1)} - \vect u_1^{(1)} -\vect u_2^{(1)} \|_{H^1(Y;\C^3)} = \| \vect u_{\text{err}}^{(1)} \|_{H^1(Y;\C^3)} \nonumber \\
    &\qquad \leq C \text{max} \left\{1, \frac{|z+1|}{D(z)} \right\}
    \left[ \frac{\max \{1,|z|^3\}}{D_\text{hom}(z)^2} 
    + \frac{\max \{1,|z|^2\}}{D_\text{hom}(z)} 
    + \max \{1,|z|\} \right]
    |\chi|^2 \| \vect f \|_{L^2(Y;\C^3)}. \label{estim81}
\end{align}
Here, we can ignore the terms $\vect u_2$, $\vect u_1^{(1)}$ and $\vect u_2^{(1)}$ due to the order of the estimate \eqref{estim81} to obtain
\begin{align}
    &\lVert \vect u - \vect u_0 - \vect u_1 - \vect u_0^{(1)} \rVert_{H^{1}(Y;\C^3)} \nonumber\\
    &\qquad \leq C \text{max} \left\{1, \frac{|z+1|}{D(z)} \right\}
    \left[ \frac{\max \{1,|z|^3\}}{D_\text{hom}(z)^2} 
    + \frac{\max \{1,|z|^2\}}{D_\text{hom}(z)} 
    + \max \{1,|z|\} \right]
    |\chi|^2 \| \vect f \|_{L^2(Y;\C^3)}. \label{chi2estimate}
\end{align}

\begin{remark}
    The ``restarting procedure" can be iterated in a similar fashion to achieve better approximations in $\chi$. For purposes of demonstration, we have done the ``restart" once, which is already enough to give us the results of Theorem \ref{thm:main_thm}.
\end{remark}

\subsection{Fibrewise results}\label{sect:asymp_results_chi}

We now summarize the results of the asymptotic procedure. Define the following operators

\begin{definition}[Corrector operators]\label{defn:correctors}
    Let $\chi \in Y'\setminus \{ 0 \}$ and $z \in \rho(\frac{1}{|\chi|^2} \mathcal{A}_\chi ) \cap \rho(\frac{1}{|\chi|^2} \mathcal{A}_\chi^{\text{hom}} )$. Let $\mathcal{R}_{\rm corr, 1, \chi}(z)$ and $\mathcal{R}_{\rm corr, 2, \chi}(z)$ be the bounded operators
    \begin{equation}
        \mathcal{R}_{\rm corr, 1, \chi}(z) : L^2(Y;\C^3) \to H_\#^1(Y;\C^3) \hookrightarrow L^2(Y;\C^3), 
        \quad \mathcal{R}_{\rm corr, 2, \chi}(z) : L^2(Y;\C^3) \to \C^3 \hookrightarrow L^2(Y;\C^3),
    \end{equation}
    with 
    \begin{equation}
    \label{corrector_defn}
        \mathcal{R}_{\rm corr, 1, \chi}(z) \vect f:= \vect u_1, \quad \mathcal{R}_{\rm corr, 2, \chi}(z) \vect f:= \vect u_0^{(1)},
    \end{equation}
    where $\vect u_1$, $\vect u_0^{(1)}$ are defined using the procedure above (by formulas \eqref{corr2} and \eqref{newconstantcorrector} respectively). We will refer to $\mathcal{R}_{\rm corr, 1, \chi}(z)$ is the first-order corrector operator, and to $\mathcal{R}_{\rm corr, 2, \chi}(z)$ as the second-order corrector operator.
\end{definition}

Note that \eqref{bound12} and \eqref{bound21} provide bounds for the operator norms of $\mathcal{R}_{\rm corr, 1, \chi}(z)$ and $\mathcal{R}_{\rm corr, 2, \chi}(z)$.

Recall the notation for the averaging operator $S$ defined in Section \ref{sect:avg_smoothing_ops}, and the constants $D_\text{hom}(z)$, $D(z)$ from \eqref{eqn:dist_to_spec}. The results of the asymptotic procedure can now be stated as follows:

\begin{theorem} \label{thm:tau_resolvent_est}
    Let $\chi \in Y'\setminus \{ 0 \}$ and $z \in \rho(\frac{1}{|\chi|^2} \mathcal{A}_\chi ) \cap \rho(\frac{1}{|\chi|^2} \mathcal{A}_\chi^{\text{hom}} )$, where the operator $\mathcal{A}_{\chi}$ is defined in Definition \ref{definitionofAchi}. There exists a constant $C > 0$, which does not depend on $\chi$ and $z$, such that the following norm-resolvent estimates hold:
    \begin{align}
        &\left\lVert \left(\frac{1}{|\chi|^2}\mathcal{A}_{\chi} - zI \right)^{-1} - \left(\frac{1}{|\chi|^2}\mathcal{A}_\chi^{\rm hom} - zI_{\C^3} \right)^{-1}S  \right\rVert_{L^2(Y;\C^3) \to H^{1}(Y;\C^3)} \nonumber\\
        &\qquad \leq C \text{max} \left\{1, \frac{|z+1|}{D(z)} \right\}
        \left[ \frac{\max \{1,|z|^2\}}{D_\text{hom}(z)} 
        + \max \{1,|z|\} \right]
        |\chi|, \label{eqn:tau_resolvent_est1} \\
        &\left\lVert \left(\frac{1}{|\chi|^2}\mathcal{A}_{\chi} - z I \right)^{-1} - \left(\frac{1}{|\chi|^2}\mathcal{A}_\chi^{\rm hom} - zI_{\C^3} \right)^{-1}S  - \mathcal{R}_{\rm corr, 1, \chi}(z) - \mathcal{R}_{\rm corr, 2, \chi}(z) \right\rVert_{L^2(Y;\C^3) \to H^{1}(Y;\C^3)} \nonumber \\
        &\qquad \leq C \text{max} \left\{1, \frac{|z+1|}{D(z)} \right\}
        \left[ \frac{\max \{1,|z|^3\}}{D_\text{hom}(z)^2} 
        + \frac{\max \{1,|z|^2\}}{D_\text{hom}(z)} 
        + \max \{1,|z|\} \right]
        |\chi|^2. \label{eqn:tau_resolvent_est2}
    \end{align}
    Here, $\mathcal{A}_\chi^{\rm hom}$ is the constant matrix defined by (\ref{homdefinition}). Moreover, $\mathcal{A}_\chi^{\rm hom}$ is quadratic in $\chi$, with
    \begin{equation}
        \mathcal{A}_\chi^{\rm hom} = \left(iX_\chi \right)^*\mathbb{A}^{\rm hom} \left( i X_\chi \right),
    \end{equation}
    where $\mathbb{A}^{\rm hom} \in \R^{3 \times 3 \times 3 \times 3}$ is a constant tensor satisfying the same symmetries as $\mathbb{A}$ in (\ref{assump:symmetric}).
\end{theorem}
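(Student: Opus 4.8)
The plan is to assemble the estimates already derived in the asymptotic procedure of Section~\ref{sect:the_asymp_method}; essentially no new analysis is needed, and the argument is bookkeeping.

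First I would record that, as noted after \eqref{chiproblem}, for $z\in\rho(\tfrac{1}{|\chi|^2}\mathcal{A}_\chi)$ the unique solution $\vect u\in\mathcal{D}(\tfrac{1}{|\chi|^2}\mathcal{A}_\chi)$ of the resolvent equation is $\vect u=(\tfrac{1}{|\chi|^2}\mathcal{A}_\chi-zI)^{-1}\vect f$. Next I would identify the leading-order term: by \eqref{eqn:lot_extra} the term $\vect u_0$ is a constant, and by \eqref{leadingorderterm} it satisfies $\tfrac{1}{|\chi|^2}\mathcal{A}_\chi^{\rm hom}\vect u_0-z\vect u_0=\int_Y\vect f=S\vect f$; since $z\in\rho(\tfrac{1}{|\chi|^2}\mathcal{A}_\chi^{\rm hom})$ this gives $\vect u_0=(\tfrac{1}{|\chi|^2}\mathcal{A}_\chi^{\rm hom}-zI_{\C^3})^{-1}S\vect f$, using the convention identifying $\C^3$ with the subspace of constant functions in $L^2(Y;\C^3)$. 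Consequently the operator appearing on the left-hand side of \eqref{eqn:tau_resolvent_est1}, evaluated at $\vect f$, is exactly $\vect u-\vect u_0$, so \eqref{eqn:tau_resolvent_est1} follows from \eqref{chi1estimate} by taking the supremum over $\vect f$ with $\|\vect f\|_{L^2(Y;\C^3)}\le 1$.

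For \eqref{eqn:tau_resolvent_est2} I would invoke Definition~\ref{defn:correctors}: $\mathcal{R}_{\rm corr,1,\chi}(z)\vect f=\vect u_1$ and $\mathcal{R}_{\rm corr,2,\chi}(z)\vect f=\vect u_0^{(1)}$, which are bounded operators by \eqref{bound12} and \eqref{bound21}. Then the left-hand side of \eqref{eqn:tau_resolvent_est2} applied to $\vect f$ equals $\vect u-\vect u_0-\vect u_1-\vect u_0^{(1)}$, and \eqref{chi2estimate} yields the $|\chi|^2$ bound after taking the supremum over unit $\vect f$. The final assertions --- that $\mathcal{A}_\chi^{\rm hom}$ is the matrix of \eqref{homdefinition}, that $\mathcal{A}_\chi^{\rm hom}=(iX_\chi)^*\mathbb{A}^{\rm hom}(iX_\chi)$, and that $\mathbb{A}^{\rm hom}$ satisfies the symmetries \eqref{assump:symmetric} --- are precisely Proposition~\ref{prop:hom_matrix} together with Proposition~\ref{prop:homdefinitionreal}, so nothing further is required.

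Since the heavy lifting (well-posedness of the cell problems, the Korn-type coercivity, and the error bounds \eqref{bounderror1}, \eqref{bounderror2} fed into Proposition~\ref{prop:abstract_ineq}) is already done, the only care needed is matching the $z$-dependent prefactors of \eqref{chi1estimate} and \eqref{chi2estimate} to those in the statement, and noting that discarding the higher-order terms $\vect u_1,\vect u_2$ (respectively $\vect u_2,\vect u_1^{(1)},\vect u_2^{(1)}$) does not degrade the order in $|\chi|$ --- which is already justified in passing from \eqref{bounduerr1} to \eqref{chi1estimate} and from \eqref{estim81} to \eqref{chi2estimate}. The one genuinely substantive point to double-check is the identification of the constant $\vect u_0$ with $(\tfrac{1}{|\chi|^2}\mathcal{A}_\chi^{\rm hom}-zI_{\C^3})^{-1}S\vect f$, i.e.\ that the restriction of the homogenized fibre operator to $\C^3$ is invertible at $z$; this is exactly where the hypothesis $z\in\rho(\tfrac{1}{|\chi|^2}\mathcal{A}_\chi^{\rm hom})$ is used.
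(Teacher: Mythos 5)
Your proposal is correct and matches the paper's own argument, which is a short appeal to \eqref{chi1estimate} and \eqref{chi2estimate} together with the identifications $\vect u_0=(\tfrac{1}{|\chi|^2}\mathcal{A}_\chi^{\rm hom}-zI_{\C^3})^{-1}S\vect f$, $\mathcal{R}_{\rm corr,1,\chi}(z)\vect f=\vect u_1$, $\mathcal{R}_{\rm corr,2,\chi}(z)\vect f=\vect u_0^{(1)}$ from Definition~\ref{defn:correctors}, and Proposition~\ref{prop:hom_matrix} for the final assertions about $\mathcal{A}_\chi^{\rm hom}$. Your elaboration of the bookkeeping (in particular, that discarding the higher-order terms does not degrade the order in $|\chi|$, and that $z\in\rho(\tfrac{1}{|\chi|^2}\mathcal{A}_\chi^{\rm hom})$ is exactly what makes $\vect u_0$ well-defined) is faithful to what the paper leaves implicit.
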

\begin{proof}
    The norm-resolvent estimate \eqref{eqn:tau_resolvent_est1} is the consequence of the estimate \eqref{chi1estimate}, while the norm-resolvent estimate \eqref{eqn:tau_resolvent_est2} is a consequence of the estimate \eqref{chi2estimate}.
\end{proof}

\begin{remark}
    \begin{itemize}
        \item In the following section, we will only consider those $z$ taken from a compact set $\Gamma$, which is uniformly bounded away from both $\sigma(\frac{1}{|\chi|^2} \mathcal{A}_\chi )$ and $\sigma (\frac{1}{|\chi|^2} \mathcal{A}_\chi^{\text{hom}})$. (See Lemma \ref{lemma:contour}.) This permits us to further bound \eqref{eqn:tau_resolvent_est1} by $C |\chi| \| \vect f \|_{L^2}$, and \eqref{eqn:tau_resolvent_est2} by $C |\chi|^2 \| \vect f \|_{L^2}$, where $C>0$ is independent of $\chi$, and only depends on $z$ through the set $\Gamma$.

        \item To bring the discussion from $L^2(Y;\C^3)$ back to $L^2(\R^3;\C^3)$, we will need to define full-space analogues of $\mathcal{R}_{\rm corr,1,\chi}(z)$ and $\mathcal{R}_{\rm corr,2,\chi}(z)$. This will be done in Section \ref{sect:corrector_ops}. \qedhere
    \end{itemize}
\end{remark}

\section{Norm-resolvent estimates}\label{sect:norm_resolvent_est}

In this section, we will explain how the eigenvalue estimates in Section \ref{sect:evalue_estimates}, together with the resolvent estimates in Section \ref{sect:asymp_results_chi} combine to give us the norm-resolvent estimates on the full space.

As it will become clear in the proof of the theorems below, the case of large $\chi$ will not contribute to the final error estimate. We therefore focus our efforts on those quasimomenta $\chi$ in the neighborhood of $\chi=0$.

\begin{lemma}
\label{lemma:contour}
    There exists a closed contour  $\Gamma \subset \left\{z\in \C, \Re(z)>0\right\}$, oriented anticlockwise, where the following are valid:
    
    \begin{itemize}
        \item \textbf{(Separation of spectrum)} There exist some $\mu>0$, such that for each $\chi \in [-\mu,\mu]^3 \setminus \left\{0\right\}$, $\Gamma$ encloses the three smallest eigenvalues of the operators $\frac{1}{|\chi|^2}\mathcal{A}_\chi$ and $\frac{1}{|\chi|^2}\mathcal{A}_{\chi}^{\rm hom}$. That is, the points
        \begin{align}\label{eqn:contour_separation}
            \frac{1}{|\chi|^2} \lambda_i^\chi, \quad \frac{1}{|\chi|^2} \lambda_i^{\text{hom},\chi}, \quad i=1,2,3.
        \end{align}
        Furthermore, $\Gamma$ does not enclose any other eigenvalues of $\frac{1}{|\chi|^2}\mathcal{A}_\chi$ (and $\frac{1}{|\chi|^2}\mathcal{A}_{\chi}^{\rm hom}$).

        \item \textbf{(Buffer between contour and spectra)} There exist some $\rho_0 > 0$ such that 
        \begin{align}\label{eqn:contour_buffer}
            \inf_{\substack{ z\in \Gamma, \\ \chi \in [-\mu,\mu]^3\setminus \{ 0 \} \\ i \in \{1,2,3,4\}}} \left|z - \frac{1}{|\chi|^2}\lambda_i^{\chi} \right| \geq \rho_0
            \qquad \text{ and } \quad
            \inf_{\substack{ z\in \Gamma, \\ \chi \in [-\mu,\mu]^3\setminus \{ 0 \} \\ i \in \{1,2,3\}} } \left|z - \frac{1}{|\chi|^2}\lambda_i^{\text{hom},\chi} \right| \geq \rho_0.
        \end{align}
        
    \end{itemize}
\end{lemma}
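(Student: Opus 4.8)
The plan is to deduce the lemma by bookkeeping on top of the eigenvalue estimates already established: I will show that, after rescaling by $1/|\chi|^2$, the three lowest eigenvalues of $\mathcal{A}_\chi$ and of $\mathcal{A}_\chi^{\rm hom}$ remain inside a \emph{fixed} compact interval of $(0,\infty)$, uniformly in $\chi\neq 0$, whereas the remaining part of $\sigma(\tfrac{1}{|\chi|^2}\mathcal{A}_\chi)$ can be pushed arbitrarily far to the right by shrinking $\mu$. Then $\Gamma$ will simply be the boundary of a thin rectangle surrounding that interval.

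First I would locate the rescaled low eigenvalues. For every $\chi\in Y'\setminus\{0\}$, Proposition~\ref{prop:Rayleighestim} combined with the min--max principle \eqref{eqn:min-max}--\eqref{eqn:max-min} yields $c_{\text{rayleigh}}\le \tfrac{1}{|\chi|^2}\lambda_i^{\chi}\le C_{\text{rayleigh}}$ for $i=1,2,3$ (using $\C^3$ as a trial $3$-dimensional subspace together with \eqref{Rayleighestim_2} for the upper bound, and \eqref{Rayleighestim_1} for the lower bound), as well as $\tfrac{1}{|\chi|^2}\lambda_n^{\chi}\ge \tfrac{c_{\text{rayleigh}}}{|\chi|^2}$ for all $n\ge 4$ (using the standard basis of $\C^3$ in \eqref{eqn:max-min} together with \eqref{Rayleighestim_3}); while \eqref{eqn:achihom_coer_bdd} of Proposition~\ref{prop:hom_matrix} gives $\nu_1\le\tfrac{1}{|\chi|^2}\lambda_i^{\text{hom},\chi}\le \tfrac{1}{\nu_1}$ for $i=1,2,3$, and $\mathcal{A}_\chi^{\rm hom}$, being a $3\times 3$ Hermitian matrix, has no further eigenvalues. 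Setting $a:=\min\{c_{\text{rayleigh}},\nu_1\}>0$ and $b:=\max\{C_{\text{rayleigh}},1/\nu_1\}$, all six rescaled low eigenvalues then lie in the fixed interval $[a,b]\subset(0,\infty)$, independently of $\chi\neq 0$. Next, since $|\chi|^2\le 3\mu^2$ for $\chi\in[-\mu,\mu]^3$, we get $\tfrac{1}{|\chi|^2}\lambda_n^{\chi}\ge \tfrac{c_{\text{rayleigh}}}{3\mu^2}$ for every $n\ge 4$ and every such $\chi\neq 0$; I then choose $\mu>0$ small enough (in particular $\mu<\pi$) that $\tfrac{c_{\text{rayleigh}}}{3\mu^2}>b+1$, so that the part of $\sigma(\tfrac{1}{|\chi|^2}\mathcal{A}_\chi)$ outside the three lowest eigenvalues lies in $(b+1,\infty)$, uniformly over $\chi\in[-\mu,\mu]^3\setminus\{0\}$.

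Finally I would take $\Gamma$ to be the anticlockwise-oriented boundary of the rectangle $R:=[\tfrac a2,\,b+1]\times[-\tfrac a2,\,\tfrac a2]$. Since $[a,b]\subset\operatorname{int}R$, the contour $\Gamma$ encloses exactly the points \eqref{eqn:contour_separation}: the three lowest rescaled eigenvalues of each operator lie in $R$, while every other (rescaled) eigenvalue of $\mathcal{A}_\chi$ is real and $>b+1$, hence lies strictly to the right of $R$ and is not enclosed, and $\mathcal{A}_\chi^{\rm hom}$ contributes nothing further; moreover $\Gamma\subset\{\Re z\ge a/2\}\subset\{\Re z>0\}$. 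For the buffer condition \eqref{eqn:contour_buffer}, any point of $[a,b]$ has distance at least $\min\{a/2,\,1\}$ from $\partial R$, and any real number $\ge\tfrac{c_{\text{rayleigh}}}{3\mu^2}$ has distance at least $\tfrac{c_{\text{rayleigh}}}{3\mu^2}-(b+1)>0$ from $\partial R$ (its nearest boundary point lying on the right edge $\{b+1\}\times[-\tfrac a2,\tfrac a2]$), so \eqref{eqn:contour_buffer} holds with $\rho_0:=\min\{a/2,\,1,\,\tfrac{c_{\text{rayleigh}}}{3\mu^2}-(b+1)\}>0$.

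The main point requiring care — rather than a genuine obstacle — is that the bounds on $\tfrac{1}{|\chi|^2}\lambda_i^{\chi}$ and $\tfrac{1}{|\chi|^2}\lambda_i^{\text{hom},\chi}$ must be genuinely \emph{two-sided} and uniform as $|\chi|\downarrow 0$ (not merely the one-sided ``order $|\chi|^2$'' statement of Theorem~\ref{thm:evalue}), which is exactly what Proposition~\ref{prop:Rayleighestim} and \eqref{eqn:achihom_coer_bdd} supply; and that $\mu$ is selected only after $a$ and $b$ have been fixed, so no circularity enters.
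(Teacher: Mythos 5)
Your proof is correct and follows essentially the same strategy as the paper's: deduce from Proposition~\ref{prop:Rayleighestim} and Proposition~\ref{prop:hom_matrix} that the three lowest rescaled eigenvalues of both operators live in a fixed compact interval of $(0,\infty)$ while the remaining spectrum of $\tfrac{1}{|\chi|^2}\mathcal{A}_\chi$ is of order $|\chi|^{-2}$, then shrink $\mu$ and surround the interval with a rectangle. The paper's own proof is a brief sketch of these facts; you supply the explicit rectangle, the choice of $\mu$, and the value of $\rho_0$, all of which are correct.
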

\begin{proof}
    From Proposition \ref{prop:Rayleighestim}, we know that the first three eigenvalues $\frac{1}{|\chi|^2} \lambda_1^{\chi}, \frac{1}{|\chi|^2} \lambda_2^{\chi}, \frac{1}{|\chi|^2} \lambda_3^{\chi}$ of the rescaled operator $\frac{1}{|\chi|^2} \mathcal{A}_\chi$ are bounded and bounded away from zero, uniformly in $\chi$ (as $|\chi| \downarrow 0$). By Proposition \ref{prop:hom_matrix} the same is also true for the eigenvalues $\frac{1}{|\chi|^2} \lambda_1^{\text{hom},\chi}, \frac{1}{|\chi|^2} \lambda_2^{\text{hom},\chi}, \frac{1}{|\chi|^2} \lambda_3^{\text{hom},\chi}$ of the rescaled matrix $\frac{1}{|\chi|^2} \mathcal{A}_\chi^{\text{hom}}$. The remaining eigenvalues of $\frac{1}{|\chi|^2} \mathcal{A}_\chi$ are of order $\mathcal{O}(|\chi|^{-2})$. 
\end{proof}

\begin{remark}
    The contour $\Gamma$ and the constants $\mu$ and $\rho_0$ are independent of $\chi$ and $\eps$. This is crucial for the arguments we will make in this section. It is also crucial that $\Gamma$ does not cross the imaginary axis $i\mathbb{R}$, see Lemma \ref{lem:g_str}.
\end{remark}

We refer the reader to Figure \ref{fig:contour} for a schematic of the contour $\Gamma$, for small $\chi$.

\begin{figure}[h]
  \centering
  \includegraphics[page=1, clip, trim=4cm 4cm 5cm 5cm, width=0.90\textwidth]{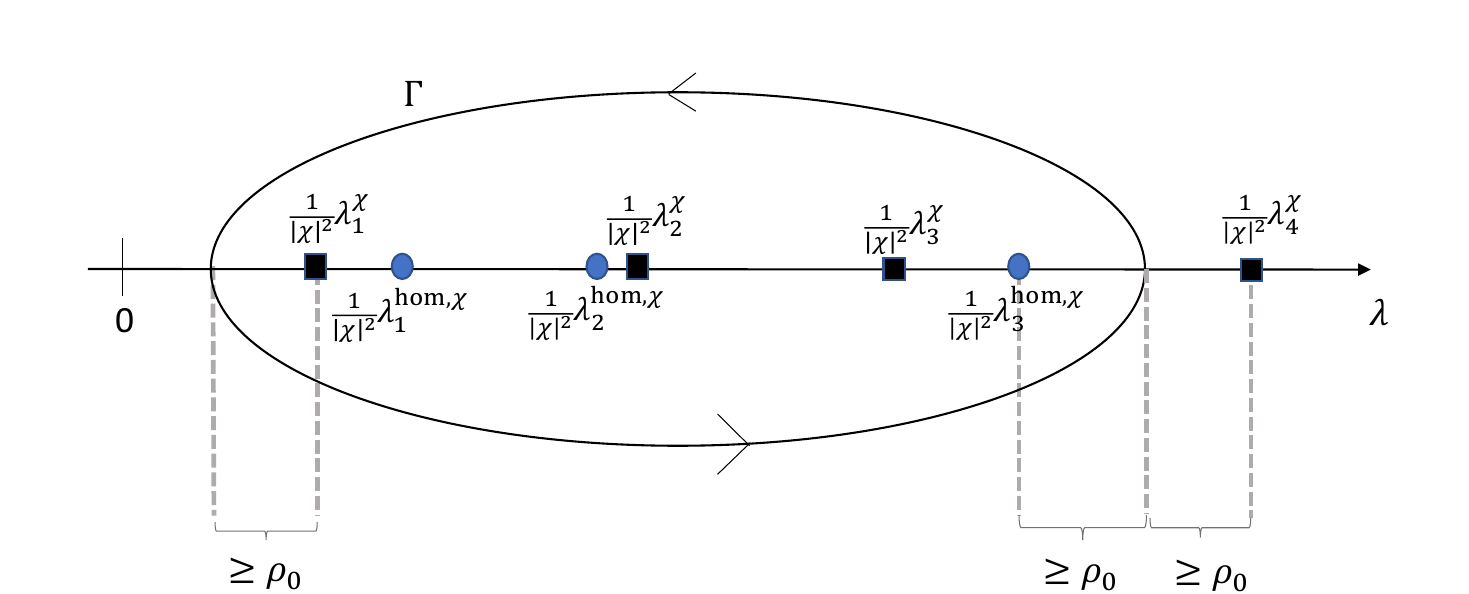}
  \caption{A schematic of the contour $\Gamma$, for quasimomentum $\chi \in [-\mu,\mu]^3\setminus \{ 0 \}$.} \label{fig:contour}
\end{figure}

Next, we would like to use Theorem \ref{thm:tau_resolvent_est} to deduce estimates on the resolvents of $\frac{1}{\eps^2} \mathcal{A}_\chi$ and $\frac{1}{\eps^2} \mathcal{A}_\chi^{\rm hom}$. To that end, it will be convenient to introduce the following function:

\begin{definition}
    For each $\eps>0$, $\chi \neq 0$, $\gamma > -2$, define the function $g_{\eps,\chi}: \{z \in \C : \Re(z) > 0 \} \rightarrow \C$ 
    \begin{equation}\label{def_g_analytic}
       g_{\eps,\chi}(z) := \left(\frac{|\chi|^2}{\eps^{\gamma + 2}}z + 1 \right)^{-1}. 
    \end{equation} 
\end{definition}

\begin{remark}
    For simplicity, the reader might wish to focus on the case $\gamma = 0$ (which will recover the results of \cite{birman_suslina_2004}. The parameter $\gamma$ acts as a scaling on the spectral parameter, meant for a discussion on various high\,/\,low frequency regimes.
\end{remark}

Note that $g_{\eps,\chi}$ is analytic on the right half plane, thus for $a \in \C$ enclosed with $\Gamma$ the Cauchy integral formula gives us:
$$g_{\eps,\chi}(a):= \frac{1}{2\pi{\rm i}} \oint_\Gamma \frac{g_{\eps,\chi}(z)}{z-a}dz.$$
Using the Dunford-Riesz functional calculus \cite[Sect VII.9]{dunford_schwartz1}, a similar formula is valid when $g_{\eps,\chi}$ is applied to a closed operator $\mathcal{A}$:
\begin{align}
    g_{\eps,\chi}(\mathcal{A})P_{\Gamma, \mathcal{A}}:=-\frac{1}{2\pi{\rm i}} \oint_\Gamma g_{\varepsilon,\chi}(z)(\mathcal{A}-zI)^{-1}dz, \label{eqn:cauchy_int_formula}
\end{align}
where $P_{\Gamma, \mathcal{A}}$ is the projection operator onto the eigenspace corresponding to the eigenvalues enclosed by the contour $\Gamma$, in the case where $\mathcal{A}$ has compact resolvent.

Another property that we will need from $g_{\eps,\chi}$ is the following estimate:

\begin{lemma}\label{lem:g_str}
    For every fixed $\eta > 0$, function  $g_{\varepsilon,\chi}$ is bounded on the half plane $\{z \in \C, \Re(z) \geq \eta\}$, with
    \begin{equation}
        |g_{\varepsilon,\chi}(z)| \leq C_\eta \left(\max\left\{\frac{|\chi|^2}{\varepsilon^{\gamma + 2}}, 1\right\}\right)^{-1}.
    \end{equation}
\end{lemma}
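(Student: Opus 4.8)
The plan is to reduce the statement to an elementary estimate on a single complex number. Set $t := |\chi|^2/\eps^{\gamma+2}$; since $\gamma > -2$, $\eps > 0$, and $\chi \neq 0$, we have $t > 0$, and the function in question is $g_{\eps,\chi}(z) = (tz+1)^{-1}$. Thus everything comes down to a uniform lower bound on $|tz+1|$ on the half-plane $\{\Re(z) \ge \eta\}$, together with a comparison of that bound against $(\max\{t,1\})^{-1}$.

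First I would note that, because $t > 0$,
\[
    |tz + 1| \ \ge\ \Re(tz+1) \ =\ t\,\Re(z) + 1 \ \ge\ t\eta + 1,
\]
so that $|g_{\eps,\chi}(z)| \le (t\eta+1)^{-1}$ for every $z$ with $\Re(z) \ge \eta$. This is the one place where the hypothesis $\eta > 0$ (equivalently, that the relevant half-plane, and ultimately the contour $\Gamma$ of Lemma~\ref{lemma:contour}, stays off the imaginary axis) is actually used. Then I would compare $(t\eta+1)^{-1}$ with $(\max\{t,1\})^{-1}$ by splitting into the two cases for $t$: if $t \ge 1$ then $\max\{t,1\} = t \le \eta^{-1}(t\eta+1)$, while if $t < 1$ then $\max\{t,1\} = 1 \le t\eta+1$. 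In either case $\max\{t,1\} \le \max\{1,\eta^{-1}\}\,(t\eta+1)$, whence
\[
    |g_{\eps,\chi}(z)| \ \le\ \frac{1}{t\eta+1} \ \le\ \frac{\max\{1,\eta^{-1}\}}{\max\{t,1\}} \ =\ C_\eta\left(\max\left\{\tfrac{|\chi|^2}{\eps^{\gamma+2}},1\right\}\right)^{-1},
\]
with $C_\eta := \max\{1,\eta^{-1}\}$.

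There is no real obstacle here — the lemma is a short computation. The only point worth flagging is the one that motivates the precise form of the bound: the constant $C_\eta$ must depend on $\eta$ alone and be independent of $\eps$, $\chi$, and $\gamma$, since in the next section this estimate is integrated over $\chi \in Y'$ and used uniformly as $\eps \downarrow 0$ inside the Dunford--Riesz functional calculus \eqref{eqn:cauchy_int_formula}. This is exactly why I would phrase the whole argument in terms of the single combined parameter $t = |\chi|^2/\eps^{\gamma+2}$ and carry out the two-sided comparison uniformly in $t > 0$.
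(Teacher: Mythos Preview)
Your proof is correct and follows essentially the same approach as the paper: both bound $|tz+1|$ below by its real part $t\eta+1$ (with $t=|\chi|^2/\eps^{\gamma+2}$) and then compare $t\eta+1$ with $\max\{t,1\}$. You are simply more explicit about the case split and the resulting constant $C_\eta=\max\{1,\eta^{-1}\}$.
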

 
\begin{proof}
    Focusing on the real part $\Re(g_{\eps,\chi}(z))$ of the complex number $g_{\eps,\chi}(z)$, we estimate
    \begin{align*}
        |g_{\varepsilon,\chi}(z)|^{-1}
        = \left\lvert\frac{|\chi|^2}{\varepsilon^{\gamma + 2}}z + 1\right\rvert 
        \geq \frac{|\chi|^2}{\varepsilon^{\gamma + 2}}\eta + 1 
        \geq C \max\left\{\frac{|\chi|^2}{\varepsilon^{\gamma + 2}}, 1\right\}. \tag*{\qedhere}
    \end{align*} 
\end{proof}

\subsection{\texorpdfstring{$L^2 \to L^2$}{L2 to L2} estimates}

Recall the definition of the homogenized operator $\mathcal{A}^{\rm hom}$ in Definition \ref{defn:hom_operator_fullspace}, and the smoothing operator $\Xi_\varepsilon$ in Definition \ref{defn:smoothing_op}.

\begin{theorem}\label{thm:l2l2}
    For $\eps>0$ small enough, there exists a constant $C>0$, independent of $\eps$, such that, the following norm-resolvent estimate holds for all $\gamma > -2$:
    \begin{equation}\label{eqn:l2l2}
        \left\Vert \left( \frac{1}{\varepsilon^{\gamma }}\mathcal{A}_\varepsilon + I\right)^{-1} -\left( \frac{1}{\varepsilon^{\gamma}}\mathcal{A}^{\rm hom} + I\right)^{-1} \Xi_\varepsilon\right\Vert_{L^2(\R^3;\R^3) \to L^2(\R^3;\R^3)}  
        \leq C \varepsilon^\frac{\gamma + 2}{2}, 
    \end{equation}
    where the operator $\mathcal{A}_\varepsilon$ is given by the Definition \ref{defn:main_operator_fullspace}.
\end{theorem}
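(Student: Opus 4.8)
The plan is to transplant the estimate to the Gelfand fibres, use the holomorphic functional calculus to trade the rescaling $\tfrac{1}{\varepsilon^{\gamma+2}}$ for the rescaling $\tfrac{1}{|\chi|^2}$ that appears in Theorem~\ref{thm:tau_resolvent_est}, and then split the quasimomentum range into a small‑$\chi$ regime handled via the contour $\Gamma$ and a large‑$\chi$ regime handled by coercivity.

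\textbf{Step 1 (reduction to fibres).} It suffices to prove the estimate on $L^2(\R^3;\C^3)$: since $\mathcal{A}_\varepsilon^{\C}|_{L^2(\R^3;\R^3)}=\mathcal{A}_\varepsilon^{\R}$, the analogous identity holds for $\mathcal{A}^{\rm hom}$, and $\Xi_\varepsilon$ preserves real‑valuedness (the Remark after~\eqref{eqn:smoothing_cutoff}), the operator on the left of \eqref{eqn:l2l2} maps $L^2(\R^3;\R^3)$ into itself, so its $L^2(\R^3;\R^3)$‑operator norm is at most its $L^2(\R^3;\C^3)$‑operator norm. Taking $z=-\varepsilon^{\gamma}\in\rho(\mathcal{A}_\varepsilon)\cap\rho(\mathcal{A}^{\rm hom})$ in \eqref{eqn:vonneumannformula_resolvent} and \eqref{eqn:vn_ahom_resolvents} and multiplying through by $\varepsilon^{\gamma}$ yields
\[
\Bigl(\tfrac{1}{\varepsilon^{\gamma}}\mathcal{A}_\varepsilon+I\Bigr)^{-1}-\Bigl(\tfrac{1}{\varepsilon^{\gamma}}\mathcal{A}^{\rm hom}+I\Bigr)^{-1}\Xi_\varepsilon=\mathcal{G}_\varepsilon^{*}\Bigl(\int_{Y'}^{\oplus}\mathcal{B}_{\varepsilon,\chi}\,d\chi\Bigr)\mathcal{G}_\varepsilon,\qquad \mathcal{B}_{\varepsilon,\chi}:=\Bigl(\tfrac{1}{\varepsilon^{\gamma+2}}\mathcal{A}_\chi+I\Bigr)^{-1}-\Bigl(\tfrac{1}{\varepsilon^{\gamma+2}}\mathcal{A}_\chi^{\rm hom}+I_{\C^3}\Bigr)^{-1}S .
\]
As $\mathcal{G}_\varepsilon$ is unitary and the integrand is decomposable, the left‑hand operator norm equals the essential supremum over $\chi\in Y'$ of $\|\mathcal{B}_{\varepsilon,\chi}\|_{L^2(Y;\C^3)\to L^2(Y;\C^3)}$, so it remains to bound $\|\mathcal{B}_{\varepsilon,\chi}\|$ by $C\varepsilon^{(\gamma+2)/2}$ uniformly in $\chi$ (for $\varepsilon$ small).

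\textbf{Step 2 (large $\chi$).} Let $\mu,\rho_0$ and the contour $\Gamma$ be as in Lemma~\ref{lemma:contour}. For $\chi\in Y'\setminus[-\mu,\mu]^3$ we have $|\chi|\ge\mu$, so by \eqref{Rayleighestim_1} and \eqref{eqn:achihom_coer_bdd}, $\mathcal{A}_\chi\ge c_{\rm rayleigh}\mu^{2}I$ and $\mathcal{A}_\chi^{\rm hom}\ge\nu_{1}\mu^{2}I_{\C^3}$ in the form sense; hence both resolvents in $\mathcal{B}_{\varepsilon,\chi}$ have norm $\le C\varepsilon^{\gamma+2}$ (using $\|S\|=1$), and therefore $\|\mathcal{B}_{\varepsilon,\chi}\|\le C\varepsilon^{\gamma+2}\le C\varepsilon^{(\gamma+2)/2}$ for $\varepsilon\le 1$.

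\textbf{Step 3 (small $\chi$, the core).} For $\chi\in[-\mu,\mu]^3\setminus\{0\}$ the key identity is $\bigl(\tfrac{1}{\varepsilon^{\gamma+2}}\mathcal{A}_\chi+I\bigr)^{-1}=g_{\varepsilon,\chi}\bigl(\tfrac{1}{|\chi|^2}\mathcal{A}_\chi\bigr)$ and likewise $\bigl(\tfrac{1}{\varepsilon^{\gamma+2}}\mathcal{A}_\chi^{\rm hom}+I_{\C^3}\bigr)^{-1}=g_{\varepsilon,\chi}\bigl(\tfrac{1}{|\chi|^2}\mathcal{A}_\chi^{\rm hom}\bigr)$, both immediate from \eqref{def_g_analytic}. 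Let $P_\Gamma=P_{\Gamma,\,\mathcal{A}_\chi/|\chi|^2}$ be the rank‑$3$ spectral projection onto the first three eigenspaces of $\mathcal{A}_\chi$; by Lemma~\ref{lemma:contour}, $\Gamma$ encloses exactly the corresponding three eigenvalues of $\tfrac{1}{|\chi|^2}\mathcal{A}_\chi$ and all three eigenvalues of $\tfrac{1}{|\chi|^2}\mathcal{A}_\chi^{\rm hom}$ (so its spectral projection for the latter is $I_{\C^3}$), while $\Gamma$ and the region it bounds lie in $\{\Re z>0\}$ where $g_{\varepsilon,\chi}$ is analytic. The Dunford--Riesz formula \eqref{eqn:cauchy_int_formula} then gives
\[
\mathcal{B}_{\varepsilon,\chi}=-\frac{1}{2\pi\mathrm{i}}\oint_\Gamma g_{\varepsilon,\chi}(z)\Bigl[\bigl(\tfrac{1}{|\chi|^2}\mathcal{A}_\chi-zI\bigr)^{-1}-\bigl(\tfrac{1}{|\chi|^2}\mathcal{A}_\chi^{\rm hom}-zI_{\C^3}\bigr)^{-1}S\Bigr]\,dz+g_{\varepsilon,\chi}\bigl(\tfrac{1}{|\chi|^2}\mathcal{A}_\chi\bigr)(I-P_\Gamma).
\]
For the contour term I invoke Theorem~\ref{thm:tau_resolvent_est}, estimate~\eqref{eqn:tau_resolvent_est1}: since $\Gamma$ is compact and, by Lemma~\ref{lemma:contour}, at distance $\ge\rho_0$ from $\sigma(\tfrac{1}{|\chi|^2}\mathcal{A}_\chi)$ and $\sigma(\tfrac{1}{|\chi|^2}\mathcal{A}_\chi^{\rm hom})$ while $|z+1|$ stays bounded on $\Gamma$, the prefactor in \eqref{eqn:tau_resolvent_est1} is bounded by a constant depending only on $\Gamma$, so the bracketed resolvent difference has $L^2\to H^1\hookrightarrow L^2$ norm $\le C|\chi|$ uniformly in $z\in\Gamma$; combining this with $|g_{\varepsilon,\chi}(z)|\le C\bigl(\max\{|\chi|^2/\varepsilon^{\gamma+2},1\}\bigr)^{-1}$ from Lemma~\ref{lem:g_str} (valid on $\Gamma$ since $\Re z$ is bounded below there by some $\eta>0$) and the finite length of $\Gamma$ bounds the contour term by $C|\chi|\bigl(\max\{|\chi|^2/\varepsilon^{\gamma+2},1\}\bigr)^{-1}$, which is $\le C\varepsilon^{(\gamma+2)/2}$ (distinguishing $|\chi|\le\varepsilon^{(\gamma+2)/2}$ and $|\chi|>\varepsilon^{(\gamma+2)/2}$). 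For the remaining term, on the range of $I-P_\Gamma$ the operator $\tfrac{1}{|\chi|^2}\mathcal{A}_\chi$ has spectrum $\ge\lambda_4^\chi/|\chi|^2$ with $\lambda_4^\chi\ge c_{\rm rayleigh}$ by \eqref{Rayleighestim_3} and the min--max principle (cf.\ Theorem~\ref{thm:evalue}); since $\lambda\mapsto g_{\varepsilon,\chi}(\lambda)$ is decreasing on $(0,\infty)$,
\[
\bigl\|g_{\varepsilon,\chi}\bigl(\tfrac{1}{|\chi|^2}\mathcal{A}_\chi\bigr)(I-P_\Gamma)\bigr\|\le g_{\varepsilon,\chi}\!\left(\frac{c_{\rm rayleigh}}{|\chi|^2}\right)=\Bigl(\tfrac{c_{\rm rayleigh}}{\varepsilon^{\gamma+2}}+1\Bigr)^{-1}\le C\varepsilon^{\gamma+2}\le C\varepsilon^{(\gamma+2)/2}.
\]
Adding the two contributions gives $\|\mathcal{B}_{\varepsilon,\chi}\|\le C\varepsilon^{(\gamma+2)/2}$ for all $\chi\in[-\mu,\mu]^3\setminus\{0\}$; together with Step~2 this yields \eqref{eqn:l2l2}.

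\textbf{Main obstacle.} The delicate point is the uniformity underpinning the contour argument: one must verify that the single contour $\Gamma$ of Lemma~\ref{lemma:contour} simultaneously isolates the three ``homogenized'' eigenvalues of $\tfrac{1}{|\chi|^2}\mathcal{A}_\chi$ for every $\chi\in[-\mu,\mu]^3\setminus\{0\}$ with a buffer $\rho_0$ independent of $\chi$ and $\varepsilon$, so that both the Dunford--Riesz representation and the uniform‑in‑$z$ bound extracted from \eqref{eqn:tau_resolvent_est1} are legitimate; this is exactly what the eigenvalue analysis of Section~\ref{sect:evalue_estimates} (Proposition~\ref{prop:Rayleighestim}, Theorem~\ref{thm:evalue}) and the quadratic behaviour of $\mathcal{A}_\chi^{\rm hom}$ (Proposition~\ref{prop:hom_matrix}) are there to supply. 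Tied to it is the need to keep the complementary ``high‑mode'' piece $g_{\varepsilon,\chi}(\tfrac{1}{|\chi|^2}\mathcal{A}_\chi)(I-P_\Gamma)$ genuinely negligible, which rests on the spectral gap $\lambda_4^\chi\ge c_{\rm rayleigh}$.
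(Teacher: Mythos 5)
Your proof is correct and follows essentially the same route as the paper's: decompose the fibre resolvent into the low-mode part (treated via the contour $\Gamma$, the Dunford--Riesz formula, Lemma~\ref{lem:g_str}, and the $\mathcal{O}(|\chi|)$ estimate \eqref{eqn:tau_resolvent_est1}) and the high-mode part (treated via the spectral gap $\lambda_4^\chi\ge c_{\rm rayleigh}$), with the large-$\chi$ regime handled by coercivity and the passage back to $L^2(\R^3;\R^3)$ by unitarity of $\mathcal{G}_\eps$. The only cosmetic differences are that you phrase the small-$\chi$ splitting through $g_{\eps,\chi}(\tfrac{1}{|\chi|^2}\mathcal{A}_\chi)(I-P_\Gamma)$ rather than the paper's $(I-P_\chi)(\tfrac{1}{\eps^{\gamma+2}}\mathcal{A}_\chi+I)^{-1}(I-P_\chi)$, and you do not need the paper's extra smallness condition $\eps<3\mu^2$ in the large-$\chi$ case.
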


\begin{proof}
    \subsubsection*{Step 1: Estimates on \texorpdfstring{$L^2(Y;\C^3)$}{L2(Y;C3)}.}
    
    Let us denote $P_\chi$ for the projection onto the three-dimensional eigenspace of $\mathcal{A}_\chi$ associated with three eigenvalues of order $|\chi|^2$ (Proposition \ref{prop:Rayleighestim}). Then, we may decompose the resolvent of $\frac{1}{\eps^{\gamma + 2}}\mathcal{A}_\chi$ as follows:
    \begin{align}
        \left(\frac{1}{\eps^{\gamma + 2}}\mathcal{A}_\chi + I\right)^{-1} 
        = P_\chi\left(\frac{1}{\eps^{\gamma + 2}}\mathcal{A}_\chi + I\right)^{-1} P_\chi 
        + (I - P_\chi) \left(\frac{1}{\eps^{\gamma + 2}}\mathcal{A}_\chi + I\right)^{-1} (I - P_\chi).
    \end{align}
    To estimate the second term, we use (\ref{Rayleighestim_3}) of Proposition \ref{prop:Rayleighestim}, giving us
    \begin{align}\label{eqn:l2l2_other_evalues}
            \left\| (I - P_\chi) \left( \frac{1}{\eps^{\gamma + 2}}\mathcal{A}_\chi + I\right)^{-1} (I - P_\chi) \right\|_{L^2(Y;\C^3) \to L^2(Y;\C^3)} \leq C\eps^{\gamma + 2}.
    \end{align}
    As for the first term, we split into two further cases:

    \textbf{Case 1: $\chi \in [-\mu,\mu]^3\setminus \{ 0 \}$.} We first apply the Cauchy formula (\ref{eqn:cauchy_int_formula}) with the contour $\Gamma$ provided by the Lemma \ref{lemma:contour}, giving us 
    \begin{align}
            P_\chi\left(\frac{1}{\eps^{\gamma + 2}}\mathcal{A}_\chi + I\right)^{-1} P_\chi 
            = g_{\eps,\chi}\left( \frac{1}{|\chi|^2} \mathcal{A}_\chi \right) P_{\Gamma,\frac{1}{|\chi|^2} \mathcal{A}_\chi} 
            \stackrel{\text{(\ref{eqn:cauchy_int_formula})}}{=} -\frac{1}{2\pi i} \oint_\Gamma g_{\eps,\chi}(z)\left(\frac{1}{|\chi|^2}\mathcal{A}_\chi - zI \right)^{-1}  dz, \label{eqn:contour_calc1}\\
            \left(\frac{1}{\eps^{\gamma + 2}}\mathcal{A}^{\rm hom}_\chi + I_{\C^3}\right)^{-1} S
            = g_{\eps,\chi}\left( \frac{1}{|\chi|^2} \mathcal{A}_\chi^{\rm hom} \right) P_{\Gamma,\frac{1}{|\chi|^2} \mathcal{A}_\chi^{\rm hom}} 
            \stackrel{\text{(\ref{eqn:cauchy_int_formula})}}{=} -\frac{1}{2\pi i} \oint_\Gamma g_{\eps,\chi}(z)\left(\frac{1}{|\chi|^2}\mathcal{A}_\chi^{\rm hom} - zI \right)^{-1}  dz. \label{eqn:contour_calc2}
    \end{align}
    Then, we may estimate the difference between the first term and $\left( \frac{1}{\eps^{\gamma + 2}}\mathcal{A}_\chi^{\rm hom} + I\right)^{-1}$ as follows: 
    \begin{align}
        &\left\Vert P_\chi\left( \frac{1}{\eps^{\gamma + 2}}\mathcal{A}_\chi + I\right)^{-1} P_\chi -\left( \frac{1}{\eps^{\gamma + 2}}\mathcal{A}_\chi^{\rm hom} + I_{\C^3} \right)^{-1} S \right\Vert_{L^2(Y;\C^3) \to L^2(Y;\C^3)}  
        \nonumber \\
        &\quad \leq \frac{1}{2\pi} \oint_{\Gamma} | g_{\eps,\chi}(z)|\left\Vert \left( \frac{1}{|\chi|^{2}}\mathcal{A}_\chi - zI \right)^{-1} -\left( \frac{1}{|\chi|^{2}}\mathcal{A}_\chi^{\rm hom} - zI_{\C^3} \right)^{-1} S \right\Vert_{L^2(Y;\C^3) \to L^2(Y;\C^3)} dz \label{eqn:link_eps_and_chi}\\
        &\qquad\qquad\qquad \text{By (\ref{eqn:contour_calc1}) and (\ref{eqn:contour_calc2}).} \nonumber \\
        &\quad \leq  C\left( \max\left\{\frac{|\chi|^{2}}{\eps^{\gamma + 2}}, 1\right\} \right)^{-1}  |\chi| \label{eqn:smallchi_l2l2_intermediate} \\
        &\qquad\qquad\qquad \parbox{30em}{By Lemma \ref{lem:g_str} and Theorem \ref{thm:tau_resolvent_est}, since $\chi \neq 0$. \\ $C>0$ does not depend on $z$, because $\Gamma$ satisfies (\ref{eqn:contour_buffer}).} \nonumber \\
        &\quad \leq C \eps^\frac{\gamma + 2}{2}. \label{eqn:smallchi_l2l2_final}
    \end{align}

    \textbf{Case 2: $\chi \in Y'\setminus [-\mu,\mu]^3$.} Then $3\mu^2 \leq |\chi|^2$. Combining this inequality with (\ref{Rayleighestim_1}) of Proposition \ref{prop:Rayleighestim}, we observe that 
    \begin{align}\label{eqn:largechi_l2l2_achi}
        C \leq \mathcal{R}_\chi (\vect u), \quad \forall \vect u \in H_\#^1(Y;\C^3) \setminus \{ 0 \},
    \end{align}
    where $C>0$ depends only on $c_\text{rayleigh}$ and $\mu$. Assuming further that $\eps$ is small enough such that
    \begin{align}\label{eqn:largechi_l2l2_eps}
        0 < \eps < 3\mu^2 \leq |\chi|^2,
    \end{align}
    we obtain the following resolvent estimate for $\mathcal{A}_\chi$:
    \begin{align}\label{eqn:l2l2_first3_evalues}
        \left\| P_\chi \left( \frac{1}{\eps^{\gamma+2}} \mathcal{A}_\chi + I \right)^{-1} P_\chi \right\|_{L^2(Y;\C^3) \rightarrow L^2(Y;\C^3)} \leq C \eps^{\gamma+2}.
    \end{align}
    In a similar fashion, we can combine (\ref{eqn:largechi_l2l2_eps}) with Proposition \ref{prop:hom_matrix} to obtain
    \begin{align}\label{eqn:largechi_ahomchi}
        \left\| \left( \frac{1}{\eps^{\gamma+2}} \mathcal{A}_\chi^{\text{hom}} + I_{\C^3} \right)^{-1} S \right\|_{L^2(Y;\C^3) \rightarrow L^2(Y;\C^3)} \leq C \eps^{\gamma+2},
    \end{align}
    where the constant $C>0$ depends only on $\nu_1$ (from Proposition \ref{prop:hom_matrix}). This gives us an estimate of
    \begin{align}\label{eqn:largechi_l2l2_final}
        \left\Vert P_\chi\left( \frac{1}{\varepsilon^{\gamma + 2}}\mathcal{A}_\chi + I\right)^{-1} P_\chi -\left( \frac{1}{\varepsilon^{\gamma + 2}}\mathcal{A}_\chi^{\rm hom} + I_{\C^3}\right)^{-1} S \right\Vert_{L^2(Y;\C^3) \to L^2(Y;\C^3)} \leq C \eps^{\gamma+2}.
    \end{align}

    \textbf{Conclusion of Step 1.} By combining (\eqref{eqn:l2l2_other_evalues}), (\eqref{eqn:smallchi_l2l2_final}), and (\eqref{eqn:largechi_l2l2_final}), we obtain the overall estimate
    \begin{align}
        \left\Vert \left( \frac{1}{\varepsilon^{\gamma + 2}}\mathcal{A}_\chi + I\right)^{-1} - \left( \frac{1}{\varepsilon^{\gamma + 2}}\mathcal{A}_\chi^{\rm hom} + I_{\C^3}\right)^{-1} S \right\Vert_{L^2(Y;\C^3) \to L^2(Y;\C^3)} \leq C \eps^{\frac{\gamma+2}{2}},
    \end{align}
    where the constant $C>0$ does not depend on $\chi$ and $\eps$.

    \subsubsection*{Step 2: Back to estimates on \texorpdfstring{$L^2(\R^3;\R^3)$}{L2(R3;R3)}.}

    \noindent
    To pass the estimates from the unit cell $Y$ back to the full space $\R^3$, we recall from Proposition \ref{prop:pass_to_unitcell} and Proposition \ref{prop:pass_to_unitcell2} that
    \begin{align*}
        \left( \frac{1}{\eps^{\gamma}} \mathcal{A}_\eps + I \right)^{-1} 
        &= \mathcal{G}_\eps^* \left( \int_{Y'}^{\oplus} \left( \frac{1}{\eps^{\gamma+2}} \mathcal{A}_\chi + I \right)^{-1} d \chi  \right) \mathcal{G}_\eps, \\
        \left( \frac{1}{\eps^{\gamma}} \mathcal{A}^{\text{hom}} + I \right)^{-1} \Xi_\eps 
        &= \mathcal{G}_\eps^* \left( \int_{Y'}^{\oplus} \left( \frac{1}{\eps^{\gamma+2}} \mathcal{A}_{\chi}^{\text{hom}} + I_{\C^3} \right)^{-1} S d\chi \right) \mathcal{G}_\eps.
    \end{align*}
    Since the Gelfand transform is a unitary operator from $L^2(\R^3;\C^3)$ to $L^2(Y';L^2(Y;\C^3))$, this implies that
    \begin{align*}
        \left\| \left( \frac{1}{\varepsilon^{\gamma }}\mathcal{A}_\varepsilon + I\right)^{-1} -\left( \frac{1}{\varepsilon^{\gamma}}\mathcal{A}^{\rm hom} + I\right)^{-1} \Xi_\varepsilon\right\|_{L^2(\R^3;\C^3) \to L^2(\R^3;\C^3)}  
        \leq C \varepsilon^\frac{\gamma + 2}{2}, 
    \end{align*}
    which in turn implies the $L^2(\R^3;\R^3)\rightarrow L^2(\R^3;\R^3)$ norm estimate (\ref{eqn:l2l2}). This completes the proof.
\end{proof}

\subsection{Definition of rescaled and full-space correctors}\label{sect:corrector_ops}

To obtain $L^2 \rightarrow H^1$ and higher-order $L^2 \rightarrow L^2$ norm-resolvent estimates, we will rely on (\ref{eqn:tau_resolvent_est2}) of Theorem \ref{thm:tau_resolvent_est}. This requires us to introduce rescaled and full-space analogues of the corrector operators, which we will do in this section. \textbf{In this section, we will assume that $z \in \rho ( \frac{1}{|\chi|^2} \mathcal{A}_\chi^{\rm hom} ) \cap \rho ( \frac{1}{|\chi|^2} \mathcal{A}_\chi )$ and $\chi \in Y' \setminus \{ 0 \}$, unless stated otherwise.}

\subsubsection*{First-order corrector operator}

Let us observe that the asymptotic procedure of Section \ref{sect:the_asymp_method} begins by fixing $\vect f \in L^2(Y;\C^3)$ in the resolvent equation $(\frac{1}{|\chi|^2} \mathcal{A}_\chi - zI ) \vect u = \vect f$. This $\vect f$ determines the constant $\vect u_0 \in \C^3$ through (\ref{leadingorderterm}). That is,
\begin{align*}
    \vect u_0 = \left( \frac{1}{|\chi|^2} \mathcal{A}_\chi^{\text{hom}} - z I_{\C^3} \right)^{-1} S \vect f.
\end{align*}
By Definition \ref{defn:correctors}, the first-order corrector operator $\mathcal{R}_{\rm corr,1,\chi}(z)$ takes $\vect f$ to $\vect u_0$, and then to the solution $\vect u_1$ of the problem \eqref{corr2}. This suggests that $\mathcal{R}_{\rm corr,1,\chi}(z)$ may be expressed in the following way:
\begin{align}\label{eqn:R1corr_z}
    \mathcal{R}_{\rm corr,1,\chi}(z) = \mathcal{B}_{\text{corr},1,\chi} \left( \frac{1}{|\chi|^2} \mathcal{A}_\chi^{\rm hom} - zI_{\C^3} \right)^{-1} S,
\end{align}
where $\mathcal{B}_{\rm corr,1,\chi}$ is defined as follows:
\begin{definition}
    For all $\chi \in Y'$, let $\mathcal{B}_{\text{corr}, 1, \chi} : \C^3 \rightarrow L^2(Y;\C^3)$ be the bounded operator defined by 
    \begin{equation}
        \mathcal{B}_{\text{corr}, 1, \chi} \vect c := \vect u_1,
    \end{equation}
    where $\vect u_1 \in H_\#^1(Y;\C^3)$ solves the following ($\chi$-dependent) problem (compare this with (\ref{corr2}))
    \begin{equation}\label{eqn:B1corr}
        \begin{cases}
             \int_{Y} \A  (\simgrad   \vect u_1 + iX_\chi \vect c) : \overline{ \simgrad   \vect v} = 0, \quad \forall \vect v \in H^{1}_\#(Y;\C^3), \\
             \int_Y \vect u_1 = 0. 
        \end{cases}
    \end{equation}\qedhere
\end{definition}

\begin{lemma}\label{lem:bcorr1chi_bound}
    $\| \mathcal{B}_{\text{corr},1,\chi} \|_{L^2\rightarrow H^1} \leq C |\chi|$. 
\end{lemma}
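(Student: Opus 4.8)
The plan is to treat \eqref{eqn:B1corr} as an elliptic cell problem for $\vect u_1$ in the space $\dot{H}_\#^1(Y;\C^3)$ and to run the standard energy estimate. First I would record well-posedness: the sesquilinear form $(\vect u,\vect v)\mapsto\int_Y\A\simgrad\vect u:\overline{\simgrad\vect v}$ is coercive on $\dot{H}_\#^1(Y;\C^3)$ by Korn's inequality \eqref{korninequality3} (this is exactly the coercivity already invoked for problem \eqref{corr2}), and the right-hand side $\vect v\mapsto-\int_Y\A\, iX_\chi\vect c:\overline{\simgrad\vect v}$ is a bounded antilinear functional, so Lax--Milgram gives a unique $\vect u_1$; hence $\mathcal{B}_{\text{corr},1,\chi}$ is well-defined and linear, and it suffices to bound $\|\vect u_1\|_{H^1}$ by $C|\chi||\vect c|$, using that $\|\vect c\|_{L^2(Y;\C^3)}=|\vect c|$ since $|Y|=1$.

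The key steps, in order: (i) test \eqref{eqn:B1corr} with $\vect v=\vect u_1$ to get $\int_Y\A\simgrad\vect u_1:\overline{\simgrad\vect u_1}=-\int_Y\A\,iX_\chi\vect c:\overline{\simgrad\vect u_1}$; (ii) apply the ellipticity bound \eqref{assump:elliptic} on the left to obtain $\nu\|\simgrad\vect u_1\|_{L^2}^2$ from below; (iii) apply Cauchy--Schwarz together with the $L^\infty$-bound on $\A$ from Assumption \ref{coffassumption} on the right to get $\leq C\|X_\chi\vect c\|_{L^2}\|\simgrad\vect u_1\|_{L^2}$; (iv) divide through by $\|\simgrad\vect u_1\|_{L^2}$ and use the upper bound $\|X_\chi\vect c\|_{L^2}\leq C_{\text{symrk1}}|\chi|\,\|\vect c\|_{L^2}=C_{\text{symrk1}}|\chi||\vect c|$ from \eqref{eqn:Xchi_est} to conclude $\|\simgrad\vect u_1\|_{L^2}\leq C|\chi||\vect c|$; (v) finally, since $\int_Y\vect u_1=0$, upgrade to the full $H^1$ norm via Korn's inequality \eqref{korninequality3} on $\dot{H}_\#^1(Y;\C^3)$, giving $\|\vect u_1\|_{H^1}\leq C\|\simgrad\vect u_1\|_{L^2}\leq C|\chi||\vect c|$, which is the claimed operator-norm bound.

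I do not expect a genuine obstacle here — this is the same computation underlying the estimate \eqref{bound12} for $\vect u_1$ and the bilinearity/coercivity arguments in Proposition \ref{prop:homdefinitionreal}. The only points that require a little care are bookkeeping ones: making sure every constant (the ellipticity constant $\nu$, the constant $C_{\text{symrk1}}$ from \eqref{eqn:Xchi_est}, and the Korn constant from \eqref{korninequality3}) is independent of $\chi$, so that the final $C$ in $\|\mathcal{B}_{\text{corr},1,\chi}\|_{L^2\to H^1}\leq C|\chi|$ does not degenerate as $|\chi|\downarrow0$; and correctly using the identification of $\C^3$ with the constant functions in $L^2(Y;\C^3)$, where $\|\vect c\|_{L^2(Y;\C^3)}=|\vect c|$ because $|Y|=1$.
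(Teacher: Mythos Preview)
Your proposal is correct and follows essentially the same argument as the paper's proof: test \eqref{eqn:B1corr} with $\vect u_1$, use ellipticity \eqref{assump:elliptic} and the bound \eqref{eqn:Xchi_est} to get $\nu\|\simgrad\vect u_1\|_{L^2}^2\leq C|\chi||\vect c|\|\simgrad\vect u_1\|_{L^2}$, then apply Korn's inequality \eqref{korninequality33} for zero-mean periodic functions. The only cosmetic difference is that the paper cites the corollary \eqref{korninequality33} directly rather than \eqref{korninequality3}, and omits the well-posedness discussion you included.
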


\begin{proof}
    By testing \eqref{eqn:B1corr} with $\vect u_1$ itself, and using Assumption \ref{assump:elliptic} and \eqref{eqn:Xchi_est}, we have
    \begin{align}
        \nu \| \simgrad \vect u_1 \|_{L^2(Y;\C^{3\times 3})}^2 \leq C_{\text{symrk1}} |\chi| \, |\vect c| \, \| \simgrad \vect u_1 \|_{L^2(Y;\C^{3\times 3})}.
    \end{align}
    This, combined with \eqref{korninequality33} (as $\int \vect u_1 = 0$) completes the proof.
\end{proof}

We would like to relate the scaling factor of $\frac{1}{|\chi|^2}$ in \eqref{eqn:R1corr_z} back to a scaling in $\eps$, by means of the Dunford-Riesz calculus (see (\ref{eqn:cauchy_int_formula})). Recall the contour $\Gamma$, and function $g_{\eps,\chi}$ defined at the start of Section \ref{sect:norm_resolvent_est}.

\begin{definition}[Rescaled first-order corrector]
    For $\chi \in Y' \setminus \{0\}$, $\eps>0$, and $\gamma > -2$, the rescaled first-order corrector operator $\mathcal{R}_{\rm corr,1,\chi}^\eps : L^2(Y;\C^3) \rightarrow L^2(Y;\C^3)$ is defined by
    \begin{align}\label{eqn:R1corr_fctcalc}
        \mathcal{R}_{\rm corr,1,\chi}^\eps := \mathcal{B}_{\text{corr},1,\chi} \left( \frac{1}{\eps^{\gamma+2}} \mathcal{A}_\chi^{\text{hom}} + I_{\C^3} \right)^{-1} S.
    \end{align}
\end{definition}

\begin{remark}\label{rmk:first_corr_smallchi}
    If $\chi \in [-\mu,\mu]^3 \setminus \{0\}$, then we may express $\mathcal{R}_{\text{corr},1,\chi}^\eps$ in contour integral form
    \begin{align}
        \begin{split}
        &\mathcal{R}_{\rm corr,1,\chi}^\eps
        \stackrel{\text{\eqref{eqn:contour_calc2}}}{=} - \frac{1}{2\pi i} \mathcal{B}_{\text{corr},1,\chi} \oint_{\Gamma} g_{\eps,\chi}(z) \left( \frac{1}{|\chi|^2} \mathcal{A}_\chi^{\text{hom}} - zI_{\C^3} \right)^{-1} S dz
        = - \frac{1}{2\pi i} \oint_{\Gamma} g_{\eps,\chi}(z) \mathcal{R}_{\rm corr,1,\chi}(z) dz.
        \end{split}\qedhere
    \end{align}
\end{remark}

To bring the discussion from $L^2(Y;\C^3)$ back to the full space $L^2(\R^3;\C^3)$, we introduce:

\begin{definition}
    For $\eps>0$, the full-space first-order corrector operator $\mathcal{R}_{\rm corr, 1}^\eps: L^2(\R^3;\C^3) \to L^2(\R^3;\C^3)$ is given by
    \begin{equation}\label{trans_back_corr1}
        \mathcal{R}_{\text{corr}, 1}^\eps = \mathcal{G}_\eps^\ast \left( \int_{Y'}^{\oplus} \mathcal{R}_{\text{corr},1,\chi}^\eps d\chi \right) \mathcal{G}_\eps.
    \end{equation}
\end{definition}

\subsubsection*{Second-order corrector operator}

Recall from Definition \ref{defn:correctors} that $\mathcal{R}_{\rm corr,2,\chi}(z) : L^2(Y;\C^3) \to \C^3 \hookrightarrow L^2(Y;\C^3)$ takes $\vect f$ to $\vect u_0^{(1)}$ (the solution to \eqref{newconstantcorrector}). We now introduce rescaled and full-space versions of $\mathcal{R}_{\rm corr,2,\chi}(z)$.

\begin{definition}[Rescaled second-order corrector] 
    For $\chi \in Y' \setminus \{ 0 \}$, $\eps>0$, and $\gamma > -2$, the rescaled second-order corrector operator $\mathcal{R}_{\rm corr,2,\chi}^\eps : L^2(Y;\C^3) \rightarrow L^2(Y;\C^3)$ is defined by
    \begin{align}
    \label{eqn:corr2_rescaled}
        \mathcal{R}_{\rm corr,2,\chi}^\varepsilon :=
        \left\{ \begin{array}{cc}
             - \frac{1}{2\pi i} \oint_{\Gamma} g_{\eps,\chi}(z) \mathcal{R}_{\rm corr,2,\chi}(z) dz, &  \text{ if } \chi \in [-\mu,\mu]^3 \setminus \{0\} \\
            0 & \text{ otherwise. }
        \end{array}     \right.
    \end{align}
\end{definition}

\begin{definition}
    For $\eps>0$, the full-space second-order corrector operator $\mathcal{R}_{\text{corr},2}^{\eps}: L^2(\R^3;\C^3) \to L^2(\R^3;\C^3)$ is given by
        \begin{equation}\label{trans_back_corr2}
            \mathcal{R}_{\text{corr}, 2}^\eps = \mathcal{G}_\eps^\ast \left( \int_{Y'}^{\oplus} \mathcal{R}_{\text{corr}, 2,\chi}^\eps d\chi \right) \mathcal{G}_\eps.
        \end{equation}
\end{definition}

\subsection{\texorpdfstring{$L^2 \to H^1$}{L2 to H1} estimates}

In this section we will prove an $L^2 \to H^1$ estimate. The proof builds on the proof of Theorem \ref{thm:l2l2}.

\begin{theorem}\label{thm:l2h1}
    There exists a constant $C>0$, independent of $\eps>0$, such that, the following norm-resolvent estimate holds for all $\gamma > -1$:
    \begin{equation}\label{eqn:l2h1}
        \left\Vert \left( \frac{1}{\eps^{\gamma }}\mathcal{A}_\eps + I\right)^{-1} -\left( \frac{1}{\eps^{\gamma}}\mathcal{A}^{\rm hom} + I\right)^{-1} \Xi_\eps -  \mathcal{R}_{\rm corr, 1}^\eps   \right\Vert_{L^2(\R^3;\R^3) \to H^1(\R^3;\R^3)}  
     \leq C \max \left\{ \eps^{\gamma + 1}, \eps^\frac{\gamma + 2}{2} \right\}, 
    \end{equation}
    where the homogenized operator $\mathcal{A}^{\rm hom}$ is given by Definition \ref{defn:hom_operator_fullspace}, $\mathcal{A}_\varepsilon$ is given by Definition \ref{defn:main_operator_fullspace}, the smoothing operator $\Xi_\varepsilon$ is given by Definition \ref{defn:smoothing_op}, and the corrector operator $\mathcal{R}_{\rm corr, 1}^\eps$ is defined by \eqref{trans_back_corr1}.
\end{theorem}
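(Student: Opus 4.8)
The plan is to mirror the two-step structure of the proof of Theorem \ref{thm:l2l2}: first establish the corresponding fibrewise estimate on $L^2(Y;\C^3)$ in the $H^1$-norm, then transfer it to $L^2(\R^3;\R^3)$ via the unitarity of $\mathcal{G}_\eps$ and Propositions \ref{prop:pass_to_unitcell} and \ref{prop:pass_to_unitcell2}. The target fibrewise statement is
\begin{align}\label{eqn:l2h1_fibre_plan}
    \left\Vert \left(\tfrac{1}{\eps^{\gamma+2}}\mathcal{A}_\chi + I\right)^{-1} - \left(\tfrac{1}{\eps^{\gamma+2}}\mathcal{A}_\chi^{\rm hom} + I_{\C^3}\right)^{-1}S - \mathcal{R}_{\rm corr,1,\chi}^\eps \right\Vert_{L^2(Y;\C^3)\to H^1(Y;\C^3)} \leq C \max\left\{\eps^{\gamma+1}, \eps^{\frac{\gamma+2}{2}}\right\}.
\end{align}
Once \eqref{eqn:l2h1_fibre_plan} holds with $C$ independent of $\chi$ and $\eps$, applying $\mathcal{G}_\eps^*(\int_{Y'}^\oplus \cdot\, d\chi)\mathcal{G}_\eps$ and using that $\mathcal{G}_\eps$ is unitary from $L^2(\R^3;\C^3)$ onto $\int_{Y'}^\oplus L^2(Y;\C^3)d\chi$ — together with the fact that $\mathcal{G}_\eps$ also intertwines the $H^1(\R^3)$ norm with the $\chi$-twisted $H^1(Y)$ norms (via \eqref{eqn:scaling_vs_deriv}, with a uniform equivalence on the relevant range of $\chi$) — yields the $L^2(\R^3;\C^3)\to H^1(\R^3;\C^3)$ bound, which restricts to the real spaces exactly as at the end of the proof of Theorem \ref{thm:l2l2}.

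For \eqref{eqn:l2h1_fibre_plan} I would again split by the size of $\chi$ and by the spectral projection $P_\chi$ onto the three small eigenvalues. On the complement $(I-P_\chi)$ and in the regime $\chi\in Y'\setminus[-\mu,\mu]^3$, the estimate \eqref{Rayleighestim_3} of Proposition \ref{prop:Rayleighestim} (which controls the full $H^1$-norm, not just $L^2$, since it bounds $\mathcal{R}_\chi$ below away from $\C^3$) gives resolvent bounds of order $\eps^{\gamma+2}$ in the $L^2\to H^1$ norm; the homogenized piece $(\tfrac{1}{\eps^{\gamma+2}}\mathcal{A}_\chi^{\rm hom}+I_{\C^3})^{-1}S$ lands in $\C^3$, where the $H^1$ and $L^2$ norms coincide, so Proposition \ref{prop:hom_matrix} gives the matching $\eps^{\gamma+2}$ bound there, and $\mathcal{R}_{\rm corr,1,\chi}^\eps$ is handled by Lemma \ref{lem:bcorr1chi_bound} together with the same homogenized resolvent bound, contributing order $|\chi|\cdot\eps^{\gamma+2}$ which is subsumed. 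For the main regime $\chi\in[-\mu,\mu]^3\setminus\{0\}$ and the $P_\chi$-part, I would write all three operators as contour integrals over $\Gamma$ against $g_{\eps,\chi}$: equations \eqref{eqn:contour_calc1}, \eqref{eqn:contour_calc2}, and Remark \ref{rmk:first_corr_smallchi} express $P_\chi(\tfrac{1}{\eps^{\gamma+2}}\mathcal{A}_\chi+I)^{-1}P_\chi$, $(\tfrac{1}{\eps^{\gamma+2}}\mathcal{A}_\chi^{\rm hom}+I_{\C^3})^{-1}S$, and $\mathcal{R}_{\rm corr,1,\chi}^\eps$ respectively as $-\frac{1}{2\pi i}\oint_\Gamma g_{\eps,\chi}(z)(\cdot)\,dz$ of the corresponding $z$-dependent fibrewise objects. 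The difference of the three integrands is exactly the quantity estimated by \eqref{eqn:tau_resolvent_est2} of Theorem \ref{thm:tau_resolvent_est} in the $L^2(Y)\to H^1(Y)$ norm, which is $\leq C|\chi|^2$ for $z$ on the fixed compact $\Gamma$ bounded away from both spectra (Lemma \ref{lemma:contour}). Pulling $\sup_{z\in\Gamma}|g_{\eps,\chi}(z)| \leq C(\max\{|\chi|^2\eps^{-(\gamma+2)},1\})^{-1}$ out of the integral (Lemma \ref{lem:g_str}), the $P_\chi$-contribution in this regime is bounded by $C\,(\max\{|\chi|^2\eps^{-(\gamma+2)},1\})^{-1}|\chi|^2 \leq C\eps^{\gamma+2}$; combining with the $(I-P_\chi)$-piece, the large-$\chi$ piece, and the remaining $\mathcal{R}_{\rm corr,2,\chi}^\eps$-type terms gives the fibrewise $L^2\to H^1$ bound, and then the transfer as above completes the theorem.

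The main obstacle I anticipate is not the $L^2$-level bookkeeping — which is parallel to Theorem \ref{thm:l2l2} — but ensuring that the Gelfand transform transfer step is genuinely an $H^1$-level statement with a uniform constant. Concretely, $\|\mathcal{G}_\eps(\nabla \vect u)\|_{L^2(Y\times Y')}$ equals $\tfrac1\eps\|\nabla_y(\mathcal{G}_\eps\vect u)+i(\mathcal{G}_\eps\vect u)\chi^\top\|$, so the full-space $H^1$-seminorm corresponds to a $\chi$-twisted $H^1$-seminorm on the fibres with the $\tfrac1\eps$ prefactor; one must check that the fibrewise estimate \eqref{eqn:l2h1_fibre_plan}, which is stated in the untwisted $H^1(Y)$-norm, upgrades correctly once multiplied through by the scaling — in particular that the extra $\tfrac1\eps$ is compensated by the $\eps^{\gamma+1}$ versus $\eps^{\gamma+2}$ discrepancy (this is precisely why the right-hand side of \eqref{eqn:l2h1} is $\max\{\eps^{\gamma+1},\eps^{(\gamma+2)/2}\}$ rather than $\eps^{(\gamma+2)/2}$, and why the restriction $\gamma>-1$ appears). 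I would therefore be careful to track, on the range $|\chi|\lesssim \eps^{(\gamma+2)/2}$ where $g_{\eps,\chi}\approx 1$ versus $|\chi|\gtrsim \eps^{(\gamma+2)/2}$ where $g_{\eps,\chi}\approx |\chi|^{-2}\eps^{\gamma+2}$, how the twisted-gradient factor interacts with the bound $C|\chi|^2$ from \eqref{eqn:tau_resolvent_est2}, as this is where the two competing powers of $\eps$ in the final estimate originate.
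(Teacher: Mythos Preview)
Your plan identifies the right ingredients and you correctly flag the transfer step as the crux, but the intermediate target \eqref{eqn:l2h1_fibre_plan} as stated will not do the job. The point is that unitarity of $\mathcal{G}_\eps$ transfers $L^2$-norms, not $H^1$-norms: by \eqref{eqn:scaling_vs_deriv}, the full-space gradient corresponds fibrewise to $\tfrac{1}{\eps}\bigl(\nabla_y(\cdot)+i(\cdot)\otimes\chi\bigr)$, so a fibrewise $L^2(Y)\to H^1(Y)$ bound of order $\max\{\eps^{\gamma+1},\eps^{(\gamma+2)/2}\}$ would, after the $\tfrac{1}{\eps}$ prefactor, only yield $\max\{\eps^{\gamma},\eps^{\gamma/2}\}$ for $\|\nabla(\cdot)\|_{L^2(\R^3)}$. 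There is no uniform norm-equivalence that absorbs this $\tfrac{1}{\eps}$.

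What the paper does, and what your last paragraph gestures at but does not make explicit, is to \emph{separate} the fibrewise estimate into two pieces with different orders. On each fibre one proves $\|\mathbb{X}-\mathbb{Y}-\mathbb{Z}\|_{L^2\to L^2}\leq C\eps^{(\gamma+2)/2}$ (this part transfers directly by unitarity and gives the $\eps^{(\gamma+2)/2}$ in the final maximum), and separately $\|\nabla_y(\mathbb{X}-\mathbb{Y}-\mathbb{Z})\|_{L^2\to L^2}\leq C\eps^{\gamma+2}$, obtained from \eqref{eqn:tau_resolvent_est2} after noting that $\nabla_y\mathcal{R}_{\rm corr,2,\chi}(z)\vect f=0$ since $\vect u_0^{(1)}\in\C^3$. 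The full-space gradient is then split via \eqref{eqn:scaling_vs_deriv} into $\tfrac{1}{\eps}\nabla_y(\cdot)$ and $\tfrac{1}{\eps}(\cdot)\otimes\chi$. The first piece picks up $\tfrac{1}{\eps}\cdot\eps^{\gamma+2}=\eps^{\gamma+1}$. The second piece is \emph{not} handled by the uniform $L^2$ bound $\eps^{(\gamma+2)/2}$ (that would give only $\eps^{\gamma/2}$); instead one must retain the $\chi$-dependent form $(\max\{|\chi|^2\eps^{-(\gamma+2)},1\})^{-1}|\chi|$ of the fibrewise $L^2$ bound (your \eqref{eqn:l2bit_est} combined with Lemma \ref{lem:g_str}), so that after multiplication by $|\chi|/\eps$ one gets $(\max\{|\chi|^2\eps^{-(\gamma+2)},1\})^{-1}|\chi|^2/\eps\leq\eps^{\gamma+1}$. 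This two-term splitting of the full-space gradient, with the $\otimes\chi$ piece exploiting the sharper $\chi$-pointwise bound, is the missing mechanical step in your outline; once you insert it, the rest of your argument (the $P_\chi/(I-P_\chi)$ and small/large $\chi$ case analysis) goes through as you describe.
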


\begin{proof}
    All the operators involved (e.g. $(\eps^{-\gamma} \mathcal{A}_\eps+I)^{-1}$ on $L^2(\R^3;\C^3)$, or $\mathcal{R}_{\text{corr},1,\chi}$ on $L^2(Y;\C^3)$), have their images contained in $H^1$ on their respective spaces. Thus it makes sense to look at the $L^2\rightarrow H^1$ norm. We will split our discussion of the $H^1$ norm into the $L^2$ norm and the $L^2$ norm of the gradients.
    
    \subsubsection*{Step 1: Estimates on $L^2(Y;\C^3)$.}
    
    \textbf{$L^2$ norm.} We split into two further cases:
    
    \textbf{Case 1: $\chi \in [-\mu,\mu]^3 \setminus \{ 0 \}$.} By using \eqref{eqn:tau_resolvent_est2} of Theorem \ref{thm:tau_resolvent_est} and \eqref{bound21}, we have that for $z \in \Gamma$,
    \begin{align}
        &\left\| \left( \frac{1}{|\chi|^2} \mathcal{A}_\chi - zI \right)^{-1} 
        - \left( \frac{1}{|\chi|^2} \mathcal{A}_\chi^{\text{hom}} - zI_{\C^3} \right)^{-1} S
        - \mathcal{R}_{\text{corr},1,\chi}(z)
        \right\|_{L^2\rightarrow L^2} \nonumber\\
        &\leq \left\| \left( \frac{1}{|\chi|^2} \mathcal{A}_\chi - zI \right)^{-1} 
        - \left( \frac{1}{|\chi|^2} \mathcal{A}_\chi^{\text{hom}} - zI_{\C^3} \right)^{-1} S
        - \mathcal{R}_{\text{corr},1,\chi}(z) 
        - \mathcal{R}_{\text{corr},2,\chi}(z)
        \right\|_{L^2\rightarrow L^2} + \| \mathcal{R}_{\text{corr},2,\chi}(z) \|_{L^2\rightarrow L^2} \nonumber\\
        &\leq C |\chi|^2 + C |\chi| \leq C |\chi|. \label{eqn:l2bit_est}
    \end{align}
    This implies that we may modify the estimates in \eqref{eqn:smallchi_l2l2_final} to give us
    \begin{align}
        \Bigg\| \underbrace{\left( \frac{1}{\eps^{\gamma + 2}} \mathcal{A}_\chi + I \right)^{-1}}_{=: \mathbb{X}} 
        - \underbrace{\left( \frac{1}{\eps^{\gamma + 2}} \mathcal{A}_\chi^{\text{hom}} + I_{\C^3} \right)^{-1} S}_{=: \mathbb{Y}}
        - \underbrace{ \phantom{\bigg(} \mathcal{R}_{\text{corr},1,\chi}^\eps \phantom{\bigg)} }_{=: \mathbb{Z}}
        \Bigg\|_{L^2\rightarrow L^2} \leq C \eps^{\frac{\gamma + 2}{2}}. \label{eqn:three_terms}
    \end{align}
    Here, the projections $P_\chi$ were dropped from $(\eps^{-\gamma - 2} \mathcal{A}_\chi + I )^{-1}$ by using \eqref{eqn:l2l2_other_evalues}.

    \textbf{Case 2: $\chi \in Y' \setminus [-\mu,\mu]^3$.} Of the three terms in the left-hand side of \eqref{eqn:three_terms}, we know that the first two terms are norm bounded by $C \eps^{\gamma+2}$, due to \eqref{eqn:l2l2_other_evalues} and \eqref{eqn:largechi_l2l2_final}. We claim that the same bound holds for $\mathcal{R}_{\text{corr},1,\chi}^{\eps}$. Indeed, by recalling \eqref{eqn:R1corr_fctcalc}, the claim now follows from \eqref{eqn:largechi_ahomchi} and Lemma \ref{lem:bcorr1chi_bound}. In summary,
    \begin{align}\label{eqn:three_terms_bigchi}
        \| \mathbb{X} - \mathbb{Y} - \mathbb{Z} \|_{L^2\to L^2} \leq C \eps^{\gamma + 2}.
    \end{align}

    \noindent
    \textbf{$L^2$ norm of the gradient.} Similarly, we have two further cases:

    \textbf{Case 1: $\chi \in [-\mu,\mu]^3 \setminus \{ 0 \}$.} Let $\vect f \in L^2(Y;\C^3)$. Then,
    \begin{align}
        &\left\| \nabla P_\chi \left( \frac{1}{\eps^{\gamma + 2}} \mathcal{A}_\chi + I \right)^{-1} P_\chi \vect f
        - \nabla \left( \frac{1}{\eps^{\gamma + 2}} \mathcal{A}_\chi^{\text{hom}} + I_{\C^3} \right)^{-1} S \vect f
        - \nabla \mathcal{R}_{\text{corr},1,\chi}^\eps \vect f \right\|_{L^2(Y;\C^{3\times 3})} \\
        &= \frac{1}{2\pi} \left\| \nabla \oint_\Gamma g_{\eps,\chi}(z) \left[ \left( \frac{1}{|\chi|^2} \mathcal{A}_\chi - zI \right)^{-1} \vect f 
        - \left( \frac{1}{|\chi|^2} \mathcal{A}_\chi^{\text{hom}} 
        - zI_{\C^3} \right)^{-1}S \vect f 
        - \mathcal{R}_{\text{corr},1,\chi}(z) \right] dz \right\|_{L^2}    \\
        &\leq \frac{1}{2\pi} \oint_{\Gamma} |g_{\eps,\chi}(z)| \left\|  \nabla \left( \frac{1}{|\chi|^2} \mathcal{A}_\chi - zI \right)^{-1} \vect f 
        - \nabla \left( \frac{1}{|\chi|^2} \mathcal{A}_\chi^{\text{hom}} 
        - zI_{\C^3} \right)^{-1}S \vect f
        - \nabla \mathcal{R}_{\text{corr},1,\chi}(z) \vect f \right\|_{L^2} dz \\ 
        &\qquad\qquad\qquad \text{Swap $\nabla$ and integral (by noting that the integral is a finite sum).} \nonumber \\
        &= \frac{1}{2\pi} \oint_{\Gamma} |g_{\eps,\chi}(z)| \left\|  \nabla \left( \frac{1}{|\chi|^2} \mathcal{A}_\chi - zI \right)^{-1} \vect f 
        - \nabla \left( \frac{1}{|\chi|^2} \mathcal{A}_\chi^{\text{hom}} 
        - zI_{\C^3} \right)^{-1}S \vect f
        - \nabla \mathcal{R}_{\text{corr},1,\chi}(z) \vect f \right. \nonumber\\
        &\qquad\qquad\qquad\qquad\qquad - \left. \phantom{\bigg(} \nabla \mathcal{R}_{\text{corr},2,\chi}(z) \vect f \right\|_{L^2} dz \\
        &\qquad\qquad\qquad \text{As $\mathcal{R}_{\text{corr},2,\chi}(z) \vect f$ is a constant vector.} \nonumber \\
        &\leq C \left( \max \left\{ \frac{|\chi|^2}{\eps^{\gamma + 2}}, 1 \right\} \right)^{-1} |\chi|^2 \, \| \vect f \|_{L^2} \\
        &\qquad\qquad\qquad \text{By Lemma \ref{lem:g_str} and Theorem \ref{thm:tau_resolvent_est}, since $\chi \neq 0$.} \nonumber \\
        &\leq C \eps^{\gamma + 2} \| \vect f \|_{L^2}. \label{eqn:h1bit_smallchi1}
    \end{align}

    We now look at the term $\nabla ( \eps^{-\gamma - 2} \mathcal{A}_\chi + I )^{-1}(I - P_\chi) \vect f$. Writing $\vect u = ( \eps^{-\gamma - 2} \mathcal{A}_\chi + I )^{-1} \vect g$ for $\vect g \in L^2$, we have that $\vect u \in H_{\#}^1$ solves the following problem
    \begin{align}
        \frac{1}{\eps^{\gamma + 2}} \int_Y \mathbb{A} (\simgrad + i X_\chi) \vect u : \overline{(\simgrad + i X_\chi) \vect v} + \int_Y \vect u \cdot \overline{\vect v} = \int_Y \vect g \cdot \overline{\vect v}, \quad \forall \vect v \in H_{\#}^1(Y;\C^3).
    \end{align}
    By testing with $\vect u$ itself, we have
    \begin{align}
        \frac{\nu}{\eps^{\gamma + 2}} \| (\simgrad + iX_\chi) \vect u \|_{L^2}^2 + \| \vect u \|_{L^2}^2 \leq \| \vect g \|_{L^2} \| \vect u \|_{L^2}.
    \end{align}
    Combining this inequality with \eqref{estimate11}, we arrive at 
    \begin{align}\label{eqn:l2h1_gradient_est}
        \| \nabla \vect u \|_{L^2}^2 \leq C \eps^{\gamma + 2} \left( \| \vect g \|_{L^2} \| \vect u \|_{L^2} - \| \vect u \|_{L^2}^2 \right).
    \end{align}
    In the present case, we have $\vect g = (I - P_\chi)\vect f$. This allows us to further estimate $\| \vect u \|_{L^2}$ in terms of $\| \vect f \|_{L^2}$ by employing \eqref{eqn:l2l2_other_evalues}, which gives us
    \begin{align}
        \| \nabla \vect u \|_{L^2}^2 
        \leq C \eps^{\gamma + 2} \left( \widetilde{C} \eps^{\gamma + 2} \| \vect f \|_{L^2}^2 - \widetilde{C}^2 \eps^{2(\gamma + 2)} \| \vect f \|_{L^2}^2  \right)
        \leq C \eps^{2(\gamma + 2)} \| \vect f \|_{L^2}^2,
    \end{align}
    assuming $\eps$ is small enough. That is,
    \begin{align}
        \left\| \nabla \left( \frac{1}{\eps^{\gamma + 2}} \mathcal{A}_\chi + I \right)^{-1} (I - P_\chi) \vect f \right\|_{L^2} \leq C \eps^{\gamma + 2} \| \vect f \|_{L^2}. \label{eqn:h1bit_smallchi2}
    \end{align}
    To summarize the present case, we combine \eqref{eqn:h1bit_smallchi1} and \eqref{eqn:h1bit_smallchi2} to get
    \begin{align}
        \| \nabla [\mathbb{X} - \mathbb{Y} - \mathbb{Z}] \vect f \|_{L^2} \leq C \eps^{\gamma + 2} \| \vect f \|_{L^2}.
    \end{align}

    \textbf{Case 2: $\chi \in Y' \setminus [-\mu,\mu]^3$.} Let $\vect f \in L^2(Y;\C^3)$. Similarly to the proof of Theorem \ref{thm:l2l2}, we estimate the three terms $\mathbb{X}$, $\mathbb{Y}$, and $\mathbb{Z}$ separately. We begin with \textbf{the term $\mathbb{Y}$}. Since $\mathcal{A}_\chi^{\text{hom}}:\C^3\rightarrow \C^3$, we have
    \begin{align}\label{eqn:grad_largechi_Y}
        \left\| \nabla \left( \frac{1}{\eps^{\gamma + 2}} \mathcal{A}_\chi^{\text{hom}} + I_{\C^3} \right)^{-1} S\vect f \right\|_{L^2(Y;\C^{3\times 3})} = 0.
    \end{align}
    Next, \textbf{we look at $\mathbb{Z}$}. By combining Lemma \ref{lem:bcorr1chi_bound} with \eqref{eqn:largechi_ahomchi}, we have
    \begin{align}\label{eqn:grad_largechi_Z}
        \left\| \nabla \mathcal{B}_{\text{corr},1,\chi} \left( \frac{1}{\eps^{\gamma + 2}} \mathcal{A}_\chi^{\text{hom}} + I_{\C^3} \right)^{-1} S \vect f  \right\|_{L^2}
        \leq C \left\| \left( \frac{1}{\eps^{\gamma + 2}} \mathcal{A}_\chi^{\text{hom}} + I_{\C^3} \right)^{-1} S \vect f \right\|_{L^2}
        \leq C \eps^{\gamma + 2} \| \vect f \|_{L^2}.
    \end{align}
    Finally, \textbf{we look at $\mathbb{X}$}. Write $\vect u = ( \eps^{-\gamma - 2} \mathcal{A}_\chi + I )^{-1} \vect f$. As pointed out in \eqref{eqn:l2h1_gradient_est} of Case 1, we know that 
    \begin{align}
        \| \nabla \vect u \|_{L^2}^2 \leq C \eps^{\gamma + 2} \left( \| \vect f \|_{L^2} \| \vect u \|_{L^2} - \| \vect u \|_{L^2}^2 \right).
    \end{align}
    In the present case, we have $\chi \in Y' \setminus [-\mu,\mu]^3$. This allows to further estimate $\| \vect u \|_{L^2}$ in terms of $\| \vect f \|_{L^2}$ using \eqref{eqn:l2l2_other_evalues} and \eqref{eqn:l2l2_first3_evalues} to get
    \begin{align}
        \| \nabla \vect u \|_{L^2}^2 
        \leq C \eps^{\gamma + 2} \left( \widetilde{C} \eps^{\gamma + 2} \| \vect f \|_{L^2}^2 - \widetilde{C}^2 \eps^{2(\gamma + 2)} \| \vect f \|_{L^2}^2  \right)
        \leq C \eps^{2(\gamma + 2)} \| \vect f \|_{L^2}^2.
    \end{align}
    assuming $\eps$ is small enough. That is,
    \begin{align}\label{eqn:grad_largechi_X}
        \left\| \nabla \left( \frac{1}{\eps^{\gamma + 2}} \mathcal{A}_\chi + I \right)^{-1} \vect f \right\|_{L^2} \leq C \eps^{\gamma + 2} \| \vect f \|_{L^2}.
    \end{align}
    Combining \eqref{eqn:grad_largechi_Y}, \eqref{eqn:grad_largechi_Z}, and \eqref{eqn:grad_largechi_X}, we conclude that
    \begin{align}
        \| \nabla [\mathbb{X} - \mathbb{Y} - \mathbb{Z}] \vect f \|_{L^2} \leq C \eps^{\gamma + 2} \| \vect f \|_{L^2}.
    \end{align}
    
    \subsubsection*{Step 2: Back to estimates on $L^2(\R^3;\C^3)$.}
    
    \textbf{$L^2$ norm.} The argument here is similar to Step 2 of the proof of Theorem \ref{thm:l2l2}. In Step 1, we have shown that the $L^2(Y;\C^3)$ norm contributes to an overall $\mathcal{O}(\eps^{\frac{\gamma + 2}{2}})$ estimate. Since the Gelfand transform is unitary from $L^2(\R^3;\C^3)$ to $L^2(Y';L^2(Y;\C^3))$, the same $\mathcal{O}(\eps^{\frac{\gamma + 2}{2}})$ estimate holds on $L^2(\R^3;\C^3)$.

    \noindent
    \textbf{$L^2$ norm of the gradient.} In Step 1, we have shown that for $\vect g \in L^2(Y\times Y';\C^3)$,
    \begin{align}\label{eqn:h1bit_step1}
        &\left\| \left( \nabla_y \left( \frac{1}{\eps^{\gamma+2}} \mathcal{A}_\chi + I \right)^{-1} \vect g \right) (\cdot, \chi) - \left( \nabla_y \left( \frac{1}{\eps^{\gamma+2}} \mathcal{A}_\chi^{\text{hom}} + I_{\C^3} \right)^{-1}S \vect g \right) (\cdot, \chi) - \left( \nabla_y \mathcal{R}_{\text{corr},1,\chi} \vect g\right) (\cdot, \chi) \right\|_{L^2(Y;\C^{3\times 3})} \nonumber\\
        &\leq C\eps^{\gamma + 2 } \| \vect g(\cdot, \chi)\|_{L^2(Y;\C^3)},
    \end{align}
    for almost every $\chi \neq 0$. Here, $g$ is a function of $y \in Y$ and $\chi \in Y'$, so that $\nabla_y$ denotes the gradient with respect to the $y$-variable.

    Let $\vect f \in L^2(\R^3;\C^3)$. Our goal now is to prove the following estimate
    \begin{align}\label{eqn:h1bit_claim}
        \bigg\| \nabla \underbrace{\left( \frac{1}{\eps^{\gamma}}\mathcal{A}_\eps + I \right)^{-1}}_{=: \mathcal{X}} \vect f 
        - \nabla \underbrace{\left( \frac{1}{\eps^{\gamma}} \mathcal{A}^{\text{hom}} + I_{\C^3} \right)^{-1} \Xi_\eps}_{=: \mathcal{Y}} \vect f 
        - \nabla \underbrace{\mathcal{R}_{\text{corr},1}^\eps}_{=: \mathcal{Z}} \vect f \bigg\|_{L^2(\R^3;\C^{3\times 3})}
        \leq C \eps^{\gamma + 1} \| \vect f \|_{L^2(\R^3;\R^3)}.
    \end{align}

    To begin, we apply the Gelfand transform $\mathcal{G}_\eps$ to the $L^2$ function $\nabla (\mathcal{X} - \mathcal{Y} - \mathcal{Z})\vect f$. As $\mathcal{G}_\eps$ is unitary, the left-hand side of \eqref{eqn:h1bit_claim} now becomes
    \begin{align}
        &\| \mathcal{G}_\eps \left[ \nabla (\mathcal{X} - \mathcal{Y} - \mathcal{Z}) \vect f \right] \|_{L^2(Y\times Y' ; \C^{3\times 3})} \nonumber\\
        &\stackrel{\text{\eqref{eqn:scaling_vs_deriv}}}{\leq} \frac{1}{\eps} \left\| \nabla_y (\mathcal{G}_\eps [\mathcal{X} - \mathcal{Y} - \mathcal{Z}] \vect f ) \right\|_{L^2(Y\times Y';\C^{3\times 3})} 
        + \frac{1}{\eps} \left\| \left( \mathcal{G}_\eps [\mathcal{X} - \mathcal{Y} - \mathcal{Z}] \vect f \right) \otimes \chi \right\|_{L^2(Y\times Y';\C^{3\times 3})}. \label{eqn:h1bit_claim1_a}
    \end{align}

    We investigate the two terms in \eqref{eqn:h1bit_claim1_a} separately.
    
    \textbf{The first term, $\nabla_y (\mathcal{G}_\eps [\mathcal{X} \vect f - \mathcal{Y} \vect f - \mathcal{Z} \vect f] )$.} Let us note that by applying $\nabla_y \mathcal{G}_\eps$ on the left, on both sides of \eqref{eqn:vonneumannformula_resolvent}, we have
    \begin{align}\label{eqn:h1bit_fullspace}
        \nabla_y \mathcal{G}_\eps \mathcal{X} \vect f 
        = \nabla_y \mathcal{G}_\eps \left( \frac{1}{\eps^{\gamma}} \mathcal{A}_\eps + I \right)^{-1} \vect f 
        = \nabla_y \left( \int_{Y'}^{\oplus} \left( \frac{1}{\eps^{\gamma + 2}} \mathcal{A}_\chi + I \right)^{-1} d\chi \right) \mathcal{G}_\eps \vect f.
    \end{align}
    Similar statements hold for $\mathcal{Y}$ and $\mathcal{Z}$. We now use the result \eqref{eqn:h1bit_step1} from Step 1 as follows:
    \begin{alignat}{2}
        &\frac{1}{\eps^2} \| \nabla_y \mathcal{G}_\eps [\mathcal{X} - \mathcal{Y} - \mathcal{Z}] \vect f \|_{L^2(Y\times Y';\C^{3\times 3})}^2
        \span \span = \frac{1}{\eps^2} \int_{Y'} \left\| \bigg(\nabla_y \mathcal{G}_\eps [\mathcal{X} - \mathcal{Y} - \mathcal{Z}] \vect f \bigg) (\cdot,\chi) \right\|_{L^2(Y;\C^{3\times 3})}^2 d\chi \nonumber\\
        &\leq \frac{1}{\eps^2} C \eps^{2(\gamma + 2)} \int_{Y'} \| \mathcal{G}_\eps \vect f(\cdot, \chi) \|_{L^2(Y;\C^3)}^2 d\chi 
        &&\qquad\qquad \text{By \eqref{eqn:h1bit_fullspace} and \eqref{eqn:h1bit_step1}.} \nonumber\\
        &= C \eps^{2(\gamma + 1)} \| \mathcal{G}_\eps \vect f \|_{L^2(Y\times Y';\C^3)}^2 \nonumber\\
        &= C \eps^{2(\gamma + 1)} \| \vect f \|_{L^2(\R^3;\C^3)}^2
        &&\qquad\qquad \text{As $\mathcal{G}_\eps$ is unitary.}
    \end{alignat}

    \textbf{The second term, $\left( \mathcal{G}_\eps [\mathcal{X} - \mathcal{Y} - \mathcal{Z}] \vect f \right) \otimes \chi$.} We have:
    \begin{align}\label{eqn:h1bit_claim1_b}
        \frac{1}{\eps} \| \left( \mathcal{G}_\eps [\mathcal{X} - \mathcal{Y} - \mathcal{Z}] \vect f \right) \otimes \chi \|_{L^2(Y\times Y';\C^{3\times 3})}
        = \left\| \mathcal{G}_\eps [\mathcal{X} - \mathcal{Y} - \mathcal{Z}] \vect f \otimes \frac{\chi}{\eps} \right\|_{L^2(Y\times Y' ; \C^{3\times 3})}.
    \end{align}

    Once again, by applying $\mathcal{G}_\eps$ on the left, on both sides of \eqref{eqn:vonneumannformula_resolvent}, we have
    \begin{align}\label{eqn:h1bit_fullspace2}
        \mathcal{G}_\eps \mathcal{X} \vect f 
        = \mathcal{G}_\eps \left( \frac{1}{\eps^{\gamma}} \mathcal{A}_\eps + I \right)^{-1} \vect f 
        = \left( \int_{Y'}^{\oplus} \left( \frac{1}{\eps^{\gamma + 2}} \mathcal{A}_\chi + I \right)^{-1} d\chi \right) \mathcal{G}_\eps \vect f.
    \end{align}
    Similar statements hold for $\mathcal{Y}$ and $\mathcal{Z}$. Making use of the right-hand side of \eqref{eqn:h1bit_fullspace2}, we now argue that the term $\frac{\chi}{\eps}$ from \eqref{eqn:h1bit_claim1_b} improves the overall $L^2\to L^2$ estimate. Indeed,
    \begin{itemize}
        \item If $\chi \in [-\mu,\mu]^3 \setminus \{0\}$, then we continue from the proof of Theorem \ref{thm:l2l2}, using \eqref{eqn:l2bit_est} instead of \eqref{eqn:link_eps_and_chi}. Then, instead of \eqref{eqn:smallchi_l2l2_intermediate}, we now have the bound
        \begin{equation}\label{eqn:h1bit_upgrade1}
            C \left( \max \left\{ \frac{|\chi|^2}{\eps^{\gamma + 2}}, 1 \right\} \right)^{-1} |\chi| \frac{|\chi|}{\eps}
            = C \eps^{\gamma + 2} \eps^{-1}
            = C \eps^{\gamma + 1}.
        \end{equation}
        
        \item If $\chi \in Y'\setminus [\mu,\mu]^3$, then by \eqref{eqn:three_terms_bigchi} (which builds on the proof of Theorem \ref{thm:l2l2}), we have the bound
        \begin{equation}\label{eqn:h1bit_upgrade2}
            C \eps^{\gamma + 2} \frac{|\chi|}{\eps} = C \eps^{\gamma + 1}.
        \end{equation}
    \end{itemize}

    Then, continuing from \eqref{eqn:h1bit_claim1_b},
    \begin{alignat}{2}
        &\frac{1}{\eps} \| \left( \mathcal{G}_\eps [\mathcal{X} - \mathcal{Y} - \mathcal{Z}] \vect f \right) \otimes \chi \|_{L^2(Y\times Y';\C^{3\times 3})} \nonumber\\
        &\leq C \left\|\int_{Y'}^{\oplus} \left(\mathbb{X} - \mathbb{Y} - \mathbb{Z}\right) d\chi \otimes \frac{\chi}{\eps} \right\|_{L^2(Y\times Y';\C^3)\to L^2(Y\times Y';\C^{3\times 3})} \| \mathcal{G}_\eps \vect f \|_{L^2(Y\times Y';\C^3)}\qquad 
        &&\text{By \eqref{eqn:h1bit_fullspace2}.} \nonumber\\
        &\leq C \eps^{\gamma + 1} \| \mathcal{G}_\eps \vect f \|_{L^2(Y\times Y';\C^3)}
        &&\text{By \eqref{eqn:h1bit_upgrade1} and \eqref{eqn:h1bit_upgrade2}.} \nonumber\\
        &= C \eps^{\gamma + 1} \| \vect f \|_{L^2(\R^3;\C^3)}.
        &&\text{As $\mathcal{G}_\eps$ is unitary.}
    \end{alignat}
    We have completed our investigation of the two terms. Returning to \eqref{eqn:h1bit_claim1_a}, we have proved that 
    \begin{align}
        \| \mathcal{G}_\eps [\nabla(\mathcal{X} - \mathcal{Y} - \mathcal{Z}) \vect f] \|_{L^2(Y\times Y';\C^{3\times 3})} \leq C \eps^{\gamma + 1} \| \vect f\|_{L^2(\R^3;\C^3)},
    \end{align}
    and this proves our claim \eqref{eqn:h1bit_claim}.

    \textbf{Conclusion.} We have shown that the $L^2$ norm contributes to an $\mathcal{O}(\eps^{\frac{\gamma + 2}{2}})$ estimate, and $L^2$ norm of the gradients contributes to an $\mathcal{O}(\eps^{\gamma + 1})$ estimate. This concludes the proof of the theorem.
\end{proof}

\begin{remark}
    To obtain the $L^2 \rightarrow H^1$ norm-resolvent estimate of Theorem \ref{thm:l2h1}, we have used (a) the $L^2 \rightarrow L^2$ $\mathcal{O}(|\chi|)$ estimate (\ref{eqn:tau_resolvent_est1}) and (b) the $L^2 \rightarrow H^1$ $\mathcal{O}(|\chi|^2)$ estimate (\ref{eqn:tau_resolvent_est2}). We did not use the $L^2\rightarrow H^1$ $\mathcal{O}(|\chi|)$ estimate in (\ref{eqn:tau_resolvent_est1}). The reason for this boils down to the identity (\ref{scalingderivatives}), which implies for instance in the case $\gamma=0$, that we would only get an $\mathcal{O}(1)$ estimate on the gradients.
\end{remark}

\subsection{Higher-order \texorpdfstring{$L^2 \to L^2$}{L2 to L2} estimates}

In this section we discuss higher-order $L^2\rightarrow L^2$ estimates. Unlike Theorem \ref{thm:l2h1} ($L^2 \rightarrow H^1$), the proof here is a straightforward modification of the proof of Theorem \ref{thm:l2l2}. For this reason, we omit the proof.

\begin{theorem}\label{thm:l2l2_higherorder}
    There exists a constant $C>0$, independent of $\eps > 0$, such that, for all $\gamma > -2$, the following norm-resolvent estimate holds:
    \begin{equation}\label{eqn:l2l2_higherorder}
        \left\| \left( \frac{1}{\eps^{\gamma }}\mathcal{A}_\eps + I\right)^{-1} -\left( \frac{1}{\eps^{\gamma}}\mathcal{A}^{\rm hom} + I\right)^{-1}\Xi_{\eps} -  \mathcal{R}_{\rm corr, 1}^\eps - \mathcal{R}_{\rm corr, 2}^\eps  \right\|_{L^2(\R^3;\R^3) \to L^2(\R^3;\R^3)}  
        \leq C  \eps^{\gamma + 2}, 
    \end{equation}
    where the homogenized operator $\mathcal{A}^{\rm hom}$ is given by Definition \ref{defn:hom_operator_fullspace}, $\mathcal{A}_\varepsilon$ is given by Definition \ref{defn:main_operator_fullspace}, the smoothing operator $\Xi_\varepsilon$ is given by Definition \ref{defn:smoothing_op}, and the corrector operators $\mathcal{R}_{\rm corr, 1}^\eps$ and $\mathcal{R}_{\rm corr, 2}^\eps$ are defined by \eqref{trans_back_corr1} and \eqref{trans_back_corr2} respectively.
\end{theorem}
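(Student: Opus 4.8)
The plan is to run the two-step template of the proof of Theorem \ref{thm:l2l2}, replacing the fibrewise $\mathcal{O}(|\chi|)$ bound \eqref{eqn:tau_resolvent_est1} by the sharper $\mathcal{O}(|\chi|^2)$ bound \eqref{eqn:tau_resolvent_est2} and carrying along the two corrector operators. For \textbf{Step 1} (estimates on $L^2(Y;\C^3)$), I would again decompose $\left(\frac{1}{\eps^{\gamma+2}}\mathcal{A}_\chi+I\right)^{-1}$ through the spectral projection $P_\chi$ onto the three lowest eigenspaces of $\mathcal{A}_\chi$; the complementary piece $(I-P_\chi)(\cdots)(I-P_\chi)$ is $\mathcal{O}(\eps^{\gamma+2})$ by \eqref{eqn:l2l2_other_evalues}. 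For $\chi\in[-\mu,\mu]^3\setminus\{0\}$ I would use the Dunford--Riesz representations \eqref{eqn:cauchy_int_formula}, \eqref{eqn:contour_calc1}--\eqref{eqn:contour_calc2}, Remark \ref{rmk:first_corr_smallchi}, and \eqref{eqn:corr2_rescaled} to write
\[
 P_\chi\!\left(\tfrac{1}{\eps^{\gamma+2}}\mathcal{A}_\chi+I\right)^{-1}\!P_\chi
 -\left(\tfrac{1}{\eps^{\gamma+2}}\mathcal{A}_\chi^{\rm hom}+I_{\C^3}\right)^{-1}\!S
 -\mathcal{R}_{\rm corr,1,\chi}^\eps-\mathcal{R}_{\rm corr,2,\chi}^\eps
 =-\tfrac{1}{2\pi i}\oint_\Gamma g_{\eps,\chi}(z)\,\mathcal{E}_\chi(z)\,dz,
\]
where $\mathcal{E}_\chi(z)$ is exactly the operator whose $H^1$-norm is controlled by \eqref{eqn:tau_resolvent_est2}. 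Bounding $|g_{\eps,\chi}(z)|$ by Lemma \ref{lem:g_str}, using that the $z$-dependent prefactor in \eqref{eqn:tau_resolvent_est2} is uniformly bounded for $z\in\Gamma$ (the remark after Theorem \ref{thm:tau_resolvent_est}), and that the $H^1$-norm dominates the $L^2$-norm, this yields the bound $C\big(\max\{|\chi|^2/\eps^{\gamma+2},1\}\big)^{-1}|\chi|^2\le C\eps^{\gamma+2}$ — the same computation as \eqref{eqn:smallchi_l2l2_intermediate}--\eqref{eqn:smallchi_l2l2_final} with $|\chi|$ upgraded to $|\chi|^2$.

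For $\chi\in Y'\setminus[-\mu,\mu]^3$, I would bound all four terms separately: the resolvent of $\mathcal{A}_\chi$ by \eqref{eqn:l2l2_first3_evalues} and \eqref{eqn:l2l2_other_evalues}, the resolvent of $\mathcal{A}_\chi^{\rm hom}$ by \eqref{eqn:largechi_ahomchi}, the rescaled first-order corrector by Lemma \ref{lem:bcorr1chi_bound} combined with \eqref{eqn:largechi_ahomchi} and \eqref{eqn:R1corr_fctcalc} (using that $|\chi|$ is bounded on $Y'$), and the rescaled second-order corrector trivially since $\mathcal{R}_{\rm corr,2,\chi}^\eps=0$ there by \eqref{eqn:corr2_rescaled}; each is $\mathcal{O}(\eps^{\gamma+2})$. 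Combining the two ranges of $\chi$ with the $(I-P_\chi)$ estimate gives, uniformly in $\chi$,
\[
 \Big\|\left(\tfrac{1}{\eps^{\gamma+2}}\mathcal{A}_\chi+I\right)^{-1}-\left(\tfrac{1}{\eps^{\gamma+2}}\mathcal{A}_\chi^{\rm hom}+I_{\C^3}\right)^{-1}\!S-\mathcal{R}_{\rm corr,1,\chi}^\eps-\mathcal{R}_{\rm corr,2,\chi}^\eps\Big\|_{L^2(Y;\C^3)\to L^2(Y;\C^3)}\le C\eps^{\gamma+2}.
\]

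For \textbf{Step 2} (back to $L^2(\R^3)$), I would invoke Proposition \ref{prop:pass_to_unitcell}, Proposition \ref{prop:pass_to_unitcell2}, and the full-space definitions \eqref{trans_back_corr1}--\eqref{trans_back_corr2} to see that $\left(\tfrac{1}{\eps^\gamma}\mathcal{A}_\eps+I\right)^{-1}-\left(\tfrac{1}{\eps^\gamma}\mathcal{A}^{\rm hom}+I\right)^{-1}\Xi_\eps-\mathcal{R}_{\rm corr,1}^\eps-\mathcal{R}_{\rm corr,2}^\eps$ is the conjugation by the unitary $\mathcal{G}_\eps$ of the direct integral in $\chi$ of the operator estimated in Step 1; unitarity of $\mathcal{G}_\eps$ transfers the $\mathcal{O}(\eps^{\gamma+2})$ bound to $L^2(\R^3;\C^3)$, and restricting to real-valued functions (via $\mathcal{A}_\eps^\C|_{L^2(\R^3;\R^3)}=\mathcal{A}_\eps^\R$, the analogue for $\mathcal{A}^{\rm hom}$, and the fact that $\Xi_\eps$ and the two correctors map real functions to real functions) gives \eqref{eqn:l2l2_higherorder}.

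I do not expect a genuine obstacle here: the only careful bookkeeping is to check that the rescaled correctors $\mathcal{R}_{\rm corr,1,\chi}^\eps$ and $\mathcal{R}_{\rm corr,2,\chi}^\eps$ are precisely $g_{\eps,\chi}$ applied termwise to the corresponding $z$-dependent objects in \eqref{eqn:tau_resolvent_est2}, so that the contour integral of the left-hand side of \eqref{eqn:tau_resolvent_est2} reproduces the left-hand side of \eqref{eqn:l2l2_higherorder} with the $\tfrac{1}{|\chi|^2}\to\tfrac{1}{\eps^{\gamma+2}}$ rescaling correctly paired with the projections $P_\chi$ and $S$. Once that identification is in place, the higher-order estimate is immediate from Lemma \ref{lem:g_str} and the uniform-in-$z$ form of Theorem \ref{thm:tau_resolvent_est} on $\Gamma$, which is exactly why the proof reduces to a routine modification of that of Theorem \ref{thm:l2l2}.
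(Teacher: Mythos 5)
Your proposal is correct and takes essentially the same route as the paper: you decompose through $P_\chi$, write the small-$\chi$ difference as a Dunford--Riesz contour integral of the fibrewise error controlled by \eqref{eqn:tau_resolvent_est2}, bound it via Lemma \ref{lem:g_str}, handle large $\chi$ term by term, and transfer back via unitarity of $\mathcal{G}_\eps$. In fact you spell out the case $\chi \in Y'\setminus[-\mu,\mu]^3$ more explicitly than the paper does, which simply states that it is identical to the argument around \eqref{eqn:l2l2_other_evalues}; your treatment there (in particular noting $\mathcal{R}_{\rm corr,2,\chi}^\eps\equiv 0$ for large $\chi$, and combining Lemma \ref{lem:bcorr1chi_bound} with \eqref{eqn:largechi_ahomchi} for $\mathcal{R}_{\rm corr,1,\chi}^\eps$) is exactly what is implicitly invoked.
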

\begin{proof}
     We demonstrate only the case $\chi \in [-\mu,\mu]^3\setminus \{ 0 \}$, since the rest is identical to \eqref{eqn:l2l2_other_evalues}.  Consider the contour $\Gamma$ provided by the Lemma \ref{lemma:contour} and recall the contour integration formulas \eqref{eqn:contour_calc1} and \eqref{eqn:contour_calc2}.
     Furthermore, recall the definitions of the corrector operators \eqref{eqn:R1corr_fctcalc} and \eqref{eqn:corr2_rescaled}. With these at hand, the resolvent estimate \eqref{eqn:tau_resolvent_est2} yields
    \begin{align}
        &\left\| P_\chi\left(\frac{1}{\eps^{\gamma + 2}}\mathcal{A}_\chi + I\right)^{-1} P_\chi 
        -   \left(\frac{1}{\eps^{\gamma + 2}}\mathcal{A}^{\rm hom}_\chi + I_{\C^3}\right)^{-1} S
        - \mathcal{R}_{\text{corr},1,\chi}^\varepsilon(z) 
        - \mathcal{R}_{\text{corr},2,\chi}^\varepsilon(z)
        \right\|_{L^2(Y;\C^3) \to L^2(Y;\C^3)} \nonumber\\
        &\leq \frac{1}{2\pi} \oint_{\Gamma} | g_{\eps,\chi}(z)|\left\Vert \left( \frac{1}{|\chi|^{2}}\mathcal{A}_\chi - zI \right)^{-1} -\left( \frac{1}{|\chi|^{2}}\mathcal{A}_\chi^{\rm hom} - zI_{\C^3} \right)^{-1} S  - \mathcal{R}_{\text{corr},1,\chi}(z) 
        - \mathcal{R}_{\text{corr},2,\chi}(z)\right\Vert_{L^2 \to L^2} dz \nonumber\\
         &\quad \leq  C\left( \max\left\{\frac{|\chi|^{2}}{\eps^{\gamma + 2}}, 1\right\} \right)^{-1}  |\chi|^2 \\
        &\qquad\qquad\qquad \parbox{30em}{By Lemma \ref{lem:g_str} and Theorem \ref{thm:tau_resolvent_est}, since $\chi \neq 0$. \\ $C>0$ does not depend on $z$, because $\Gamma$ satisfies (\ref{eqn:contour_buffer}).} \nonumber \\
        &\quad \leq C \eps^{\gamma + 2}. 
    \end{align}
\end{proof}

\subsection{Dropping the smoothing operator}\label{sect:smoothingdrop}
In this section, we show that the smoothing operator $\Xi_\varepsilon$ can be omitted from the left-hand sides of \eqref{eqn:l2l2}, \eqref{eqn:l2h1}, and \eqref{eqn:l2l2_higherorder}, thereby giving the claimed results of Theorem \ref{thm:main_thm}.
\begin{theorem}\label{thm:smoothingdrop}
     There exists a constant $C>0$, independent of $\eps$, such that,  for all $\gamma > -2$ we have:
    \begin{align}
        \left\Vert \left( \frac{1}{\varepsilon^{\gamma}}\mathcal{A}^{\rm hom} + I\right)^{-1} \left( I- \Xi_\varepsilon \right)\right\Vert_{L^2(\R^3;\R^3) \to L^2(\R^3;\R^3)}  
        \leq C \varepsilon^{\gamma + 2}, \label{eqn:l2l2drop} \\
        \left\Vert \left( \frac{1}{\varepsilon^{\gamma}}\mathcal{A}^{\rm hom} + I\right)^{-1} \left( I- \Xi_\varepsilon \right)\right\Vert_{L^2(\R^3;\R^3) \to H^1(\R^3;\R^3)}  
        \leq C \varepsilon^{\gamma + 1}. \label{eqn:l2h1drop}
    \end{align}
\end{theorem}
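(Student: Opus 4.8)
The plan is to reduce \eqref{eqn:l2l2drop}--\eqref{eqn:l2h1drop} to a fibrewise statement via the Gelfand transform, in the spirit of the proofs of Theorems~\ref{thm:l2l2}--\ref{thm:l2l2_higherorder}. Heuristically (cf.\ \eqref{eqn:smoothing_cutoff}), $I-\Xi_\eps$ is the projection onto frequencies of modulus $\gtrsim\eps^{-1}$, on which the symbol of $\mathcal{A}^{\rm hom}$ is $\gtrsim\eps^{-2}$, so that multiplying by $\eps^{-\gamma}$ and adding $I$ produces a gain of $\eps^{\gamma+2}$. To make this precise, recall from the proof of Proposition~\ref{prop:pass_to_unitcell2} (see \eqref{eqn:ahom_step1} and \eqref{eqn:ahom_resolvent_step1}) that, with $\mathcal{A}_\chi^{\text{hom-full}}:=(\simgrad+iX_\chi)^*\mathbb{A}^{\rm hom}(\simgrad+iX_\chi)$ and form domain $H^1_\#(Y;\C^3)$, and since $I-\Xi_\eps=\mathcal{G}_\eps^*\big(\int_{Y'}^{\oplus}S^\perp\,d\chi\big)\mathcal{G}_\eps$, one has
\begin{equation*}
    \left(\tfrac{1}{\eps^{\gamma}}\mathcal{A}^{\rm hom}+I\right)^{-1}(I-\Xi_\eps)
    =\mathcal{G}_\eps^*\left(\int_{Y'}^{\oplus}\left(\tfrac{1}{\eps^{\gamma+2}}\mathcal{A}_\chi^{\text{hom-full}}+I\right)^{-1}S^\perp\,d\chi\right)\mathcal{G}_\eps .
\end{equation*}
Since $\mathcal{G}_\eps$ is unitary, and by \eqref{eqn:scaling_vs_deriv} one has $\|\nabla\vect u\|_{L^2(\R^3)}=\tfrac1\eps\,\|\nabla_y(\mathcal{G}_\eps\vect u)+i(\mathcal{G}_\eps\vect u)\chi^\top\|_{L^2(Y\times Y')}$, it suffices to bound, uniformly in $\chi\in Y'\setminus\{0\}$ and $\gamma>-2$, the fibre operators $(\tfrac{1}{\eps^{\gamma+2}}\mathcal{A}_\chi^{\text{hom-full}}+I)^{-1}S^\perp$ in the $L^2(Y;\C^3)$ operator norm, and --- for \eqref{eqn:l2h1drop} --- also the $L^2(Y)$-norm of $\nabla_y(\cdot)+i(\cdot)\chi^\top$ applied to their output. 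The real-valued statements then follow at once, since $\mathcal{A}^{\rm hom}$ and $\Xi_\eps$ preserve $L^2(\R^3;\R^3)$.

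For \eqref{eqn:l2l2drop}: the operator $S^\perp$ projects onto the zero-mean subspace $(\C^3)^\perp$, which is a reducing subspace for the self-adjoint operator $\mathcal{A}_\chi^{\text{hom-full}}$ (invariance of $\C^3$ being recorded in the proof of Proposition~\ref{prop:pass_to_unitcell2}). On $(\C^3)^\perp\cap H^1_\#(Y;\C^3)$, combining the ellipticity \eqref{eqn:Ahomcoercivity_bdd} of $\mathbb{A}^{\rm hom}$ with the $\chi$-uniform Korn inequality \eqref{estimate12} of Proposition~\ref{prop:coercive_est} (valid also at $\chi=0$) yields a constant $c_0>0$, independent of $\chi$, with $\int_Y\mathbb{A}^{\rm hom}(\simgrad+iX_\chi)\vect u:\overline{(\simgrad+iX_\chi)\vect u}\geq c_0\,\|\vect u\|_{L^2(Y;\C^3)}^2$. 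Hence the part of $\mathcal{A}_\chi^{\text{hom-full}}$ in $(\C^3)^\perp$ is $\geq c_0 I$, so $\tfrac{1}{\eps^{\gamma+2}}\mathcal{A}_\chi^{\text{hom-full}}+I\geq\big(\tfrac{c_0}{\eps^{\gamma+2}}+1\big)I$ there, and
\begin{equation*}
    \left\|\left(\tfrac{1}{\eps^{\gamma+2}}\mathcal{A}_\chi^{\text{hom-full}}+I\right)^{-1}S^\perp\right\|_{L^2(Y;\C^3)\to L^2(Y;\C^3)}\ \leq\ \left(\tfrac{c_0}{\eps^{\gamma+2}}+1\right)^{-1}\ \leq\ \tfrac{1}{c_0}\,\eps^{\gamma+2},
\end{equation*}
uniformly in $\chi$ and $\gamma$. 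Unitarity of $\mathcal{G}_\eps$ then gives \eqref{eqn:l2l2drop}.

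For \eqref{eqn:l2h1drop}: fix $\vect f$, put $\vect v:=(\tfrac{1}{\eps^{\gamma}}\mathcal{A}^{\rm hom}+I)^{-1}(I-\Xi_\eps)\vect f$, and on the fibre $\chi$ set $\vect g_\chi:=(\mathcal{G}_\eps\vect f)(\cdot,\chi)$ and $\vect w_\chi:=(\mathcal{G}_\eps\vect v)(\cdot,\chi)=(\tfrac{1}{\eps^{\gamma+2}}\mathcal{A}_\chi^{\text{hom-full}}+I)^{-1}S^\perp\vect g_\chi$. By the reduction above, $\vect w_\chi$ is zero-mean and is the weak solution of $(\tfrac{1}{\eps^{\gamma+2}}\mathcal{A}_\chi^{\text{hom-full}}+I)\vect w_\chi=S^\perp\vect g_\chi$ in $H^1_\#(Y;\C^3)$. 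Testing this equation against $\vect w_\chi$, using \eqref{eqn:Ahomcoercivity_bdd} and the $L^2$ bound $\|\vect w_\chi\|_{L^2}\leq c_0^{-1}\eps^{\gamma+2}\|\vect g_\chi\|_{L^2}$ just obtained, gives $\|(\simgrad+iX_\chi)\vect w_\chi\|_{L^2(Y)}^2\leq C\,\eps^{2(\gamma+2)}\|\vect g_\chi\|_{L^2(Y)}^2$. Since $\vect w_\chi$ is zero-mean, the $\chi$-uniform Korn inequality of Proposition~\ref{prop:coercive_est} bounds $\|\nabla_y\vect w_\chi+i\vect w_\chi\chi^\top\|_{L^2(Y)}\leq C\,\|(\simgrad+iX_\chi)\vect w_\chi\|_{L^2(Y)}$ with $C$ independent of $\chi$. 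Plugging this into the fibrewise expression for $\|\nabla\vect v\|_{L^2(\R^3)}$ from \eqref{eqn:scaling_vs_deriv} and using unitarity of $\mathcal{G}_\eps$,
\begin{equation*}
    \|\nabla\vect v\|_{L^2(\R^3)}^2=\tfrac{1}{\eps^2}\int_{Y'}\|\nabla_y\vect w_\chi+i\vect w_\chi\chi^\top\|_{L^2(Y)}^2\,d\chi\ \leq\ \tfrac{C}{\eps^2}\,\eps^{2(\gamma+2)}\|\vect f\|_{L^2(\R^3)}^2=C\,\eps^{2(\gamma+1)}\|\vect f\|_{L^2(\R^3)}^2 .
\end{equation*}
Combined with \eqref{eqn:l2l2drop} (and $\eps\leq1$, so $\eps^{\gamma+2}\leq\eps^{\gamma+1}$) this yields \eqref{eqn:l2h1drop}.

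The computation is essentially bookkeeping once Proposition~\ref{prop:coercive_est} is available; the one point requiring genuine care is the uniformity of the constants ($c_0$ and the Korn constant) over \emph{all} $\chi\in Y'$, in particular at $\chi=0$. This is exactly what the $\chi$-uniform Korn inequality supplies: it forces the zero-mean part of $\mathcal{A}_\chi^{\text{hom-full}}$ to be bounded below by a $\chi$-independent $c_0$, and (for \eqref{eqn:l2h1drop}) it controls the full scaled gradient $\nabla_y(\cdot)+i(\cdot)\chi^\top$ by its symmetric part. A fully self-contained alternative is to diagonalize via the Fourier transform using \eqref{eqn:smoothing_cutoff}: there $I-\Xi_\eps$ is the sharp cutoff to $\{\,\xi:\ |\xi|_\infty\geq(2\eps)^{-1}\,\}$ and $\mathcal{A}^{\rm hom}$ the matrix Fourier multiplier $4\pi^2(iX_\xi)^*\mathbb{A}^{\rm hom}(iX_\xi)$, which satisfies $(iX_\xi)^*\mathbb{A}^{\rm hom}(iX_\xi)\asymp|\xi|^2 I$ by Proposition~\ref{prop:hom_matrix}; then \eqref{eqn:l2l2drop} and \eqref{eqn:l2h1drop} reduce to maximizing $(\eps^{-\gamma}|\xi|^2+1)^{-1}$, respectively $|\xi|\,(\eps^{-\gamma}|\xi|^2+1)^{-1}$, over $|\xi|\gtrsim\eps^{-1}$.
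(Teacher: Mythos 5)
Your proof is correct, but it takes a genuinely different route from the paper's. The paper proves Theorem~\ref{thm:smoothingdrop} by passing directly to the Fourier transform: since $\mathbb{A}^{\rm hom}$ is constant, $\mathcal{A}^{\rm hom}$ is a Fourier multiplier with symbol $(iX_\xi)^*\mathbb{A}^{\rm hom}(iX_\xi)$, and $I-\Xi_\eps$ is the sharp frequency cutoff \eqref{eqn:smoothing_cutoff}; the estimates \eqref{eqn:l2l2drop}--\eqref{eqn:l2h1drop} then reduce to the pointwise symbol bounds \eqref{eqn:l2l2drop_step2}--\eqref{eqn:l2l2drop_step3} on $\xi\in\C^3\setminus(2\pi\eps)^{-1}Y'$, i.e.\ exactly the ``fully self-contained alternative'' you sketch at the end. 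Your main argument instead stays on the Gelfand side: you use the direct-integral decomposition of Proposition~\ref{prop:pass_to_unitcell2}, the reducing-subspace structure $L^2(Y)=\C^3\oplus(\C^3)^\perp$ for $\mathcal{A}_\chi^{\text{hom-full}}$, and the $\chi$-uniform Korn inequality \eqref{estimate12} to deduce a $\chi$-independent lower bound $c_0$ on the part of $\mathcal{A}_\chi^{\text{hom-full}}$ in $(\C^3)^\perp$; an energy estimate plus \eqref{estimate11} then delivers the gradient bound. Both arguments are sound and of comparable length. The paper's Fourier route is more elementary here because it exploits the constant-coefficient structure of $\mathcal{A}^{\rm hom}$ directly without invoking Korn; your Gelfand route has the merit of being stylistically uniform with the proofs of Theorems~\ref{thm:l2l2}--\ref{thm:l2l2_higherorder} and of making transparent that the key input is again the uniform-in-$\chi$ coercivity from Proposition~\ref{prop:coercive_est}. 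Two minor points you correctly anticipated and handled: \eqref{estimate12} must hold at $\chi=0$ as well (the paper records this in the proof of Proposition~\ref{prop:Rayleighestim}), and the final $H^1$ bound needs $\eps\le 1$ and $\gamma+2>0$ so that the $L^2$ contribution $\eps^{\gamma+2}$ is dominated by $\eps^{\gamma+1}$.
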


\begin{proof}
    We begin by taking $\vect f \in L^2(\R^3;\R^3)$ and applying the Fourier transform, denoted by $\mathcal{F}(\cdot)$: 
    \begin{alignat}{3}
        &\mathcal{F}\left( \left( \frac{1}{\eps^{\gamma}}\mathcal{A}^{\rm hom} + I\right)^{-1} \left( I - \Xi_\eps  \right)  \vect f \right) (\xi) \nonumber\\
        &= \left(\frac{1}{\eps^{\gamma}}\left(iX_\xi \right)^*\A^{\rm hom}i X_\xi  + I\right)^{-1} \mathcal{F}\left( \left( I - \Xi_\eps \right) \vect f \right) (\xi)  \qquad
        &&\text{As $\mathbb{A}^{\rm hom}$ is constant in space.} \nonumber\\
        &= \left(\frac{1}{\eps^{\gamma}}\left(iX_\xi \right)^*\A^{\rm hom}i X_\xi  + I\right)^{-1} \mathcal{F}(\vect f)(\xi)  \mathbbm{1}_{ \C^3 \setminus (2\pi\eps)^{-1}Y' }(\xi)
        &&\text{By \eqref{eqn:smoothing_cutoff}.} \nonumber\\
        &=: \mathcal{F}(\vect u)(\xi) \mathbbm{1}_{ \C^3 \setminus (2\pi\eps)^{-1}Y' }(\xi). \label{eqn:l2l2drop_step1}
     \end{alignat}

    By \eqref{eqn:achihom_coer_bdd}, we have 
    \begin{equation}\label{eqn:l2l2drop_step2}
        |\mathcal{F}(\vect u)(\xi)| \leq  \frac{ |\mathcal{F}(\vect f)(\xi)|}{\frac{\nu_1 |\xi|^2}{\eps^\gamma} +1 }.
    \end{equation}
    
    If we assume further that $\xi \in \C^3 \setminus (2\pi\eps)^{-1}Y'$, then
    \begin{equation}\label{eqn:l2l2drop_step3}
        |\mathcal{F}(\vect u)(\xi)| \leq C\frac{\eps^{\gamma + 2} |\mathcal{F}(\vect f)(\xi)| }{ 1 + \eps^{\gamma + 2}} \leq C \eps^{\gamma + 2} |\mathcal{F}(\vect f)(\xi)|.
    \end{equation}
    
    Now combine \eqref{eqn:l2l2drop_step1} and \eqref{eqn:l2l2drop_step3} to get
    \begin{equation}
        \begin{split}
            \left\lVert\mathcal{F}\left( \left( \frac{1}{\varepsilon^{\gamma}}\mathcal{A}^{\rm hom} + I\right)^{-1} \left( I-  \Xi_\varepsilon  \right)  \vect f \right)\right\Vert_{L^2(\C^3;\C^3)} \leq C\varepsilon^{ \gamma + 2}\left\Vert \mathcal{F}(\vect f) \right\Vert_{L^2(\C^3;\C^3)}, 
        \end{split}
    \end{equation}
    which, together with Plancherel's identity, verifies \eqref{eqn:l2l2drop}.

    
    As for \eqref{eqn:l2h1drop}, we modify \eqref{eqn:l2l2drop_step1} to get
    \begin{align}
        \mathcal{F}\left( \nabla \left( \frac{1}{\eps^{\gamma}}\mathcal{A}^{\rm hom} + I\right)^{-1} \left( I - \Xi_\eps  \right)  \vect f \right) (\xi)
        = (i\widetilde{X}_\xi) \underbrace{\left(\frac{1}{\eps^{\gamma}}\left(iX_\xi \right)^*\A^{\rm hom}i X_\xi  + I\right)^{-1} \mathcal{F}(\vect f)(\xi)}_{\widehat{\vect u}(\xi)}  \mathbbm{1}_{ \C^3 \setminus (2\pi\eps)^{-1}Y' }(\xi),
    \end{align}
    where $\widetilde{X}_\xi: L^2 \to L^2$ is defined by $\widetilde{X}_\xi \vect u = \vect u \otimes \xi$ (compare this with $X_\xi$). This gives us
    \begin{align}
        |\xi| |\mathcal{F}(\vect u)(\xi)| 
        &\stackrel{\text{\eqref{eqn:l2l2drop_step2}}}{\leq} \frac{ |\xi| }{\frac{\nu_1 |\xi|^2}{\eps^\gamma} +1 } |\mathcal{F}(\vect f)(\xi)|
        \leq C \frac{ |\xi| }{\frac{|\xi|^2}{\eps^\gamma} } |\mathcal{F}(\vect f)(\xi)| \nonumber\\
        &\leq C \frac{\eps^{\gamma}}{|\xi|} |\mathcal{F}(\vect f)(\xi)|
        \stackrel{\text{for $\xi \in \C^3 \setminus (2\pi\eps)^{-1} Y'$}}{\leq} C \eps^{\gamma+1} |\mathcal{F}(\vect f)(\xi)|,
    \end{align}
    which, together with Plancherel's identity, verifies \eqref{eqn:l2h1drop}.
\end{proof}

\subsection{Agreement of first-order corrector with classical results}\label{sect:first_corr_agree}

Section \ref{sect:smoothingdrop} shows that the zeroth-order approximation to $\vect u_\eps = (\frac{1}{\eps^{\gamma}} \mathcal{A}_\eps + I )^{-1} \vect f$ may be upgraded from $(\frac{1}{\eps^{\gamma}} \mathcal{A}^{\rm hom} + I )^{-1} \Xi_\eps \vect f$ to $(\frac{1}{\eps^{\gamma}} \mathcal{A}^{\rm hom} + I )^{-1} \vect f$, the latter is in agreement with the classical two-scale expansion \cite[Chpt 7]{cioranescu_donato}. In this section, we show a similar result for the first-order approximation, namely, that $\mathcal{R}_{\rm corr,1}^{\eps} \vect f$ coincides with the first-order term in the classical two-scale expansion.

In order to see this, we focus on the cell-problem \eqref{correctordefinition}. Let $\{\vect e_1, \ldots, \vect e_6 \}$ be an orthonormal basis for $\R_{\text{sym}}^{3 \times 3}$. For each $i = 1, \ldots, 6$, let $\vect N_i$ be the solution to problem: Find $\vect N_i \in H_{\#}^1( Y;\R^3)$ that solves
\begin{equation}\label{Nidefinitionreal}
        \begin{cases}
            \int_{ Y} \A \left( \vect e_i + \simgrad \vect N_i \right ) : \simgrad \vect v \, dy = 0, \quad \forall \vect v \in  H_{\#}^1( Y;\R^3), \\
            \int_Y \vect N_i = 0.
        \end{cases}
\end{equation}
(That is, $\vect N_i = \vect u^{\vect e_i}$, in the notation of \eqref{correctordefinition}.) Note that $\vect N_i$ is also the solution to problem: Find $\vect N_i \in H_{\#}^1( Y;\C^3)$ that solves
\begin{equation}\label{Nidefinitioncomplex}
        \begin{cases}
            \int_{ Y} \A \left( \vect e_i + \simgrad \vect N_i \right ) : \overline{\simgrad \vect v} \, dy = 0, \quad \forall \vect v \in  H_{\#}^1( Y;\C^3), \\
            \int_Y \vect N_i = 0.
        \end{cases}
\end{equation}
This follows from an argument similar to the proof of Proposition \ref{prop:hom_matrix}: Write $\vect v = \Re(\vect v) + i\Im(\vect v)$, then split \eqref{Nidefinitioncomplex} into two sub-problems, then show that $\vect N_i$ solves both sub-problems, then use the fact that the solution to \eqref{Nidefinitioncomplex} is unique.

Fix $\vect f \in L^2(\R^3;\R^3)$. Denote by $\vect u_0$ the solution to the homogenized equation with smoothened right hand side $\Xi_\eps \vect f$.
\begin{equation}
\label{u0solution}
    \vect u_0 := \left(\tfrac{1}{\eps^{\gamma}} \mathcal{A}^{\rm hom} + I \right)^{-1}\Xi_\varepsilon \vect f \in H^1(\R^3;\R^3).
\end{equation}
Since $\left( \simgrad \vect u_0 \right) (x) \equiv \left( \simgrad_x \vect u_0 \right) (x) \in \R_{\text{sym}}^{3 \times 3}$, for a.e.\,$x \in \R^3$, this allows us to write
\begin{equation}\label{linearity_u0solution}
    \simgrad_x \vect u_0(x) 
    = \sum_{i=1}^6 \langle \simgrad_x \vect u_0(x), \vect e_i \rangle_{\R^{3\times 3}} \, \vect e_i
    = \sum_{i=1}^6 \left( \simgrad_x \vect u_0(x) : \vect e_i \right) \, \vect e_i,
    \quad \text{for a.e. $x \in \R^3$}.
\end{equation}
Recall that the problem \eqref{correctordefinition} is linear in $\vect \xi$ in the following sense: If we write $\vect u^{\vect \xi}$ for the solution to \eqref{correctordefinition} with respect to $\vect \xi \in \R_{\text{sym}}^{3\times 3}$, then $\vect u^{\sum_{i=1}^6 a_i \vect e_i} = \sum_{i=1}^6 a_i \vect u^{\vect e_i}$, where $a_i \in \R$. This implies that for almost every $x \in \R^3$,
\begin{align}\label{eqn:first_order_classical}
    \vect u_1(x, \cdot)
    := \vect u^{\simgrad_x \vect u_0(x)} 
    = \sum_{i=1}^6 \left( \simgrad_x \vect u_0(x) : \vect e_i \right) \vect u^{\vect e_i}
    = \sum_{i=1}^6 \left( \simgrad_x \vect u_0(x) : \vect e_i \right) \vect N_i
    \in H^1_\#(Y;\R^3).
\end{align}

\begin{remark}[More notation]\label{rmk:first_order_collect}
Let us attempt write the right-hand side of \eqref{eqn:first_order_classical} in a compact form. First, we collect the solutions $\mathbf{N}_i$ to the cell-problem \eqref{Nidefinitionreal} into a single function
\begin{equation}
    \N = \vect N_1 \vect e_1 + \cdots + \vect N_6 \vect e_6 \in H_{\#}^1(Y;\R^3) \otimes \R_{\text{sym}}^{3\times 3}.
\end{equation}
That is, $\N(y)\vect e_i = \vect N_i(y)$. With $\N(y)$ in hand, we may now write \eqref{eqn:first_order_classical} as follows: 
\begin{align}\label{u1solution}
        \vect u_1(x,y) 
        = \N(y)  \simgrad_x \vect u_0 (x).
    \end{align}
    The reader may find \eqref{u1solution} useful when comparing with classical formulae such as those found in \cite{cioranescu_donato}.
    %
\end{remark}

Finally, we define the corrector term $\vect u_{\rm corr}^\eps$ as follows
\begin{equation}
\label{ucorrepsilon}
    \vect u_{\rm corr}^\eps(x) 
    := \eps \,\vect u_1\left(x,\frac{x}{\eps}\right) 
    = \eps \, \N \left(\frac{x}{\eps}\right)  \simgrad_x \vect u_0(x), \quad \text{for a.e.\,$x \in \R^3$.}
\end{equation}
In other words, $\vect u_{\rm corr}^\eps$ coincides with the first-order term of the classical two-scale expansion.

Our goal is to show that $\vect u_{\text{corr}}^\eps = \mathcal{R}_{\text{corr,1}}^\eps \vect f$. To this end, let us prove a result relating $\vect u_{\text{corr}}^\eps$ and the solution to the $\chi$ dependent cell-problem \eqref{focorr2}.

\begin{proposition}\label{prop:first_corr_link}
    For each $\vect f \in L^2(\R^3,\R^3)$, the function $\vect u_{\rm corr}^\eps$ also belongs to $L^2(\R^3;\R^3)$. Moreover, we have
    \begin{equation}
         \mathcal{G}_\eps \left(\vect u_{\rm corr}^\eps\right)(y,\chi) = \widetilde{\vect u}_1(y,\chi), 
    \end{equation}
    where $\widetilde{\vect u}_1(\cdot, \chi) \in H_\#^1(Y;C^3)$ is the unique solution to the ($\chi$ dependent) problem 
    \begin{equation}\label{eqn:u1tilde_problem}
        \begin{cases}
            \int_{Y} \A \left(iX_\chi    \widetilde{\vect u}_0  +  \simgrad   \widetilde{\vect u}_1 \right) : \overline{ \simgrad   \vect v}  = 0, \quad \forall \vect v \in  H_{\#}^1( Y;\C^3), \\
            \int_Y \widetilde{\vect u}_1 = 0,
        \end{cases}
    \end{equation}
    with $\widetilde{\vect u}_0 
    = \widetilde{\vect u}_0(\chi) 
    :=  \left(\int_{Y'}^\oplus \left(\frac{1}{\eps^{\gamma + 2}} \mathcal{A}^{\rm hom}_\chi + I_{\C^3} \right)^{-1} S \, d\chi \right) \mathcal{G}_\eps \vect f \in \C^3$.
\end{proposition}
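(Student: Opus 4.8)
The plan is to avoid checking directly that $\vect u_{\rm corr}^\eps$ lies in $L^2(\R^3;\R^3)$; instead I would build $\widetilde{\vect u}_1$ on the cell level, transport it to the full space via $\mathcal{G}_\eps^{-1}$, and recognise the result as $\vect u_{\rm corr}^\eps$. Both assertions of the proposition then drop out simultaneously.

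\emph{Setting up the fibre data.} Running the rescaled version of Proposition~\ref{prop:pass_to_unitcell2} exactly as in Step~2 of the proof of Theorem~\ref{thm:l2l2} (apply it with $z=-\eps^\gamma\in\rho(\mathcal{A}^{\rm hom})$), one obtains
\[
    \left(\tfrac{1}{\eps^{\gamma}}\mathcal{A}^{\rm hom}+I\right)^{-1}\Xi_\eps=\mathcal{G}_\eps^{\ast}\left(\int_{Y'}^{\oplus}\left(\tfrac{1}{\eps^{\gamma+2}}\mathcal{A}_\chi^{\rm hom}+I_{\C^3}\right)^{-1}S\,d\chi\right)\mathcal{G}_\eps ,
\]
so that $\mathcal{G}_\eps\vect u_0(y,\chi)=\bigl(\tfrac{1}{\eps^{\gamma+2}}\mathcal{A}_\chi^{\rm hom}+I_{\C^3}\bigr)^{-1}S(\mathcal{G}_\eps\vect f)(\cdot,\chi)=\widetilde{\vect u}_0(\chi)$, a function constant in $y$ and coinciding with the $\widetilde{\vect u}_0$ of the statement. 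Next, observe that the $\chi$-problem \eqref{eqn:u1tilde_problem} is literally \eqref{eqn:B1corr} (equivalently \eqref{focorr2}) with data $\vect c=\widetilde{\vect u}_0(\chi)$; hence it is well posed by Lax--Milgram together with Korn's inequality, with unique solution $\widetilde{\vect u}_1(\cdot,\chi)=\mathcal{B}_{\text{corr},1,\chi}\widetilde{\vect u}_0(\chi)$. By Lemma~\ref{lem:bcorr1chi_bound}, $\|\widetilde{\vect u}_1(\cdot,\chi)\|_{H^1(Y)}\le C|\chi|\,|\widetilde{\vect u}_0(\chi)|$; integrating over $Y'$ and using $\|\widetilde{\vect u}_0\|_{L^2(Y')}=\|\mathcal{G}_\eps\vect u_0\|_{L^2(Y\times Y')}=\|\vect u_0\|_{L^2(\R^3)}<\infty$ together with $|\chi|\le C$ on $Y'$ shows $\widetilde{\vect u}_1\in L^2(Y\times Y';\C^3)$ (measurability in $\chi$ being routine, as the $\chi$-dependence enters only through $X_\chi$ and $\widetilde{\vect u}_0(\chi)$). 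Since $\mathcal{G}_\eps$ is unitary I may set $\vect w:=\mathcal{G}_\eps^{-1}(\widetilde{\vect u}_1)\in L^2(\R^3;\C^3)$, and the proposition is reduced to proving $\vect w(x)=\vect u_{\rm corr}^\eps(x)$ for a.e.\ $x$.

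\emph{Unwinding the inversion formula.} First I would record that
\[
    \widetilde{\vect u}_1(y,\chi)=\sum_{i=1}^{6}\bigl(iX_\chi\widetilde{\vect u}_0(\chi):\vect e_i\bigr)\,\vect N_i(y).
\]
For real $\vect c$ this is exactly the content of the proof of Proposition~\ref{prop:hom_matrix}: there $X_\chi\vect c\in\R^{3\times3}_{\rm sym}$, the real corrector $\vect u^{X_\chi\vect c}=\sum_i(X_\chi\vect c:\vect e_i)\vect N_i$ solves \eqref{eqn:corr_real}, and $i$ times it solves \eqref{focorr2}, so $\vect u_{\vect c}=\sum_i(iX_\chi\vect c:\vect e_i)\vect N_i$; both sides being $\C$-linear in $\vect c$ (the left by the linearity noted in Definition~\ref{defn:hom_matrix}, the right manifestly), the identity extends to all $\vect c\in\C^3$, and I apply it with $\vect c=\widetilde{\vect u}_0(\chi)$. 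Feeding this into the inversion formula \eqref{eqn:gelfand_inversion} for $\vect w$ gives, for a.e.\ $x$,
\[
    \vect w(x)=\sum_{i=1}^{6}\left(\left[\frac{1}{(2\pi\eps)^{3/2}}\int_{Y'}e^{i\chi\cdot x/\eps}\,iX_\chi\widetilde{\vect u}_0(\chi)\,d\chi\right]:\vect e_i\right)\vect N_i(x/\eps),
\]
where the finite sum, the fixed matrices $\vect e_i$, and the (for fixed $x$) fixed vectors $\vect N_i(x/\eps)$ have been pulled through the $\chi$-integral — legitimate since everything is finite-dimensional and $\chi\mapsto\widetilde{\vect u}_1(y,\chi)$ lies in $L^1(Y';\C^3)$ for a.e.\ fixed $y$. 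Finally, since $\vect u_0\in H^1(\R^3;\C^3)$ and $\mathcal{G}_\eps\vect u_0(\cdot,\chi)\equiv\widetilde{\vect u}_0(\chi)$ has vanishing $y$-gradient, \eqref{gelfandsymetricgradientformula} yields $\mathcal{G}_\eps(\simgrad_x\vect u_0)(y,\chi)=\tfrac1\eps\,iX_\chi\widetilde{\vect u}_0(\chi)$; applying the inversion formula to the $L^2$ matrix field $\simgrad_x\vect u_0$ then gives $\eps\,\simgrad_x\vect u_0(x)=(2\pi\eps)^{-3/2}\int_{Y'}e^{i\chi\cdot x/\eps}\,iX_\chi\widetilde{\vect u}_0(\chi)\,d\chi$ a.e. Substituting, $\vect w(x)=\sum_i\bigl(\eps\,\simgrad_x\vect u_0(x):\vect e_i\bigr)\vect N_i(x/\eps)=\eps\,\N(x/\eps)\simgrad_x\vect u_0(x)=\vect u_{\rm corr}^\eps(x)$. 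In particular $\vect u_{\rm corr}^\eps=\vect w\in L^2$; it is real-valued (both $\N$ and $\simgrad_x\vect u_0$ are, recalling $\vect u_0\in H^1(\R^3;\R^3)$ since $\vect f$ is real and $\Xi_\eps$ and the homogenized resolvent preserve realness), hence $\vect u_{\rm corr}^\eps\in L^2(\R^3;\R^3)$, and $\mathcal{G}_\eps(\vect u_{\rm corr}^\eps)=\mathcal{G}_\eps(\vect w)=\widetilde{\vect u}_1$.

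\emph{Expected obstacle.} The only genuine friction is the real/complex bookkeeping in identifying $\widetilde{\vect u}_1$ with $\sum_i(iX_\chi\widetilde{\vect u}_0:\vect e_i)\vect N_i$, but this is entirely absorbed by quoting the two intermediate facts already established inside the proof of Proposition~\ref{prop:hom_matrix} (that $\vect u_{\vect c}=i\widetilde{\vect u}_{\vect c}$ and $\widetilde{\vect u}_{\vect c}=\vect u^{X_\chi\vect c}$ for real $\vect c$) plus $\C$-linearity; the remaining steps — exchanging the finite sum with the $\chi$-integral and applying the Gelfand inversion formula to $\simgrad_x\vect u_0$ — are routine given $\vect u_0\in H^1$ and $\widetilde{\vect u}_1\in L^2(Y\times Y';\C^3)$.
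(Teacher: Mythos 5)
Your proof is correct and mirrors the paper's own three-step argument: expand $\widetilde{\vect u}_1$ in terms of the cell correctors $\vect N_i$ via the identity $\widetilde{\vect u}_1(\cdot,\chi)=\vect u_{\widetilde{\vect u}_0(\chi)}=\N(\cdot)\,iX_\chi\widetilde{\vect u}_0(\chi)$ (which both you and the paper establish from the same $\R$-to-$\C$ compatibility of the cell problems, though routed slightly differently — you pass through the $\vect u_{\vect c}=i\widetilde{\vect u}_{\vect c}$ identities inside the proof of Proposition~\ref{prop:hom_matrix} and then invoke $\C$-linearity, while the paper decomposes $iX_\chi\widetilde{\vect u}_0$ directly in the complex orthonormal basis of $\C^{3\times3}_{\rm sym}$ and cites \eqref{Nidefinitioncomplex}), then apply the Gelfand inversion formula, and finally identify $\mathcal{G}_\eps^\ast(iX_\chi\widetilde{\vect u}_0)$ with $\eps\,\simgrad\vect u_0$ via \eqref{gelfandsymetricgradientformula}. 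Your upfront justification that $\widetilde{\vect u}_1\in L^2(Y\times Y';\C^3)$, using the uniform operator bound of Lemma~\ref{lem:bcorr1chi_bound} and the neat observation $\widetilde{\vect u}_1(\cdot,\chi)=\mathcal{B}_{\mathrm{corr},1,\chi}\widetilde{\vect u}_0(\chi)$, is a slightly cleaner substitute for the paper's appeal to continuity in $\chi$, but the proofs are otherwise the same.
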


\begin{proof}
   \textbf{Step 1:} We verify an identity relating the solution $\widetilde{\vect u}_1$ to the function $\widetilde{\vect u}_0$
    \begin{equation}\label{eqn:u0_vs_u1_solution}
        \widetilde{\vect u}_1 (y,\chi) 
        = \sum_{i=1}^6 \left( iX_\chi \widetilde{\vect u}_0 (\chi) : \vect e_i \right) \vect N_i (y)
        = \N(y)  iX_\chi \widetilde{\vect u}_0 (\chi), 
        \quad \text{for } y \in Y,~\chi \in Y'.
    \end{equation}
    To prove this, we note that $iX_\chi \widetilde{\vect u}_0(\chi) \in \C_{\text{sym}}^{3\times 3}$, for a.e.~$\chi \in Y'$. We also note that the basis $\{ \vect e_1, \ldots, \vect e_6 \}$ for $\R_{\text{sym}}^{3\times 3}$ is also an orthonormal basis for the complex vector space $\C_{\text{sym}}^{3\times 3}$. This allows us to write
    \begin{align}
        iX_\chi \widetilde{\vect u}_0(\chi) = \sum_{i=1}^6 \left( iX_\chi \widetilde{\vect u}_0(\chi) : \vect e_i \right) \vect e_i, 
        \quad \text{for a.e.~$\chi \in Y'$}.
    \end{align}
    (Compare this with \eqref{linearity_u0solution}.) Now consider the problem: For $\vect \xi \in \C_{\text{sym}}^{3\times 3}$, find $\widetilde{\vect u}^{\vect \xi} \in H_{\#}^1(Y;\C^3)$ that solves
    \begin{equation}\label{eqn:problem_utildexi}
        \begin{cases}
            \int_{Y} \A ( \vect \xi +  \simgrad   \widetilde{\vect u}^{\vect \xi} ) : \overline{ \simgrad   \vect v} = 0, \quad \forall \vect v \in  H_{\#}^1( Y;\C^3), \\
            \int_Y \widetilde{\vect u}^{\vect \xi} = 0,
        \end{cases}
    \end{equation}
    This problem is linear in $\vect \xi$. Now recall that $\vect N_i$, defined as the solution to \eqref{Nidefinitionreal}, is also the unique solution to problem \eqref{Nidefinitioncomplex}. (That is, $\vect N_i = \widetilde{\vect u}^{\vect e_i}$ in the notation of \eqref{eqn:problem_utildexi}.) This implies that for almost every $\chi \in Y'$,
    \begin{align}
        \widetilde{\vect u}_1(\cdot, \chi) 
        = \widetilde{\vect u}^{iX_\chi \widetilde{\vect u}_0(\chi)}
        = \sum_{i=1}^6 \left( iX_\chi \widetilde{\vect u}_0(\chi) : \vect e_i \right) \widetilde{\vect u}^{\vect e_i}
        = \sum_{i=1}^6 \left( iX_\chi \widetilde{\vect u}_0(\chi) : \vect e_i \right) \vect N_i \in H_{\#}^1(Y;\C^3).
    \end{align}
    This proves \eqref{eqn:u0_vs_u1_solution}.

    \textbf{Step 2:} The formula \eqref{eqn:u0_vs_u1_solution} shows that $\widetilde{\vect u}_1$ is continuous in $\chi$ (by \eqref{Achihom_ahom}) and $H_{\#}^1$ in $y$, and thus $\widetilde{\vect u}_1 \in L^2(Y\times Y')$. This means that its inverse Gelfand transform $\mathcal{G}_\eps^\ast \widetilde{\vect u}_1$ is well-defined, which we may now compute:
    \begin{align}
        \mathcal{G}_\eps^\ast \left( \widetilde{\vect u}_1 \right)(x) 
        &\stackrel{\text{\eqref{eqn:u0_vs_u1_solution}}}{=} \frac{1}{\left(2\pi\eps\right)^{3/2}} \int_{Y'} e^{i\chi\cdot \frac{x}{\eps}} \, \left( \N\left(\frac{x}{\eps}\right)  iX_\chi \widetilde{\vect u}_0 (\chi) \right) \, d\chi \nonumber\\
        &= \N\left(\frac{x}{\eps}\right)  \frac{1}{\left(2\pi\eps\right)^{3/2}} \int_{Y'} e^{i\chi\cdot } \,  iX_\chi \widetilde{\vect u}_0 \, d\chi
        =  \N\left(\frac{x}{\eps}\right)  \mathcal{G}_\eps^\ast \left( iX_\chi \widetilde{\vect u}_0 \right)(x), \quad x\in\mathbb{R}^3. \label{eqn:u1tildesolution_step1}
    \end{align}
    Here, $iX_\chi \widetilde{\vect u}_0$, as a function of $y$ and $\chi$, is constant in $y$.

    \textbf{Step 3:} We now express the term $\mathcal{G}_\eps^\ast \left( iX_\chi \widetilde{\vect u}_0 \right)$ in terms of $\vect u_0$. First, we observe that 
    \begin{align}
        \mathcal{G}_\eps \vect u_0
        \stackrel{\text{\eqref{u0solution}}}{=} \mathcal{G}_\eps \left(\tfrac{1}{\eps^{\gamma}} \mathcal{A}^{\rm hom} + I \right)^{-1}\Xi_\eps \vect f
        \stackrel{\text{\eqref{eqn:vn_ahom_resolvents}}}{=} \left(\int_{Y'}^\oplus \left(\frac{1}{\eps^{\gamma + 2}} \mathcal{A}^{\rm hom}_\chi + I_{\C^3} \right)^{-1} S \, d\chi \right) \mathcal{G}_\eps \vect f. \label{gu0solution}
    \end{align}
    In particular, $\mathcal{G}_\eps \vect u_0$, as a function of $y$ and $\chi$, is constant in $y$. This means that 
    \begin{align}
        \eps \mathcal{G}_\eps (\simgrad \vect u_0)
        &\stackrel{\text{\eqref{gelfandsymetricgradientformula}}}{=} \cancel{\eps} \frac{1}{\cancel{\eps}} \left( \cancel{\simgrad_y (\mathcal{G}_\eps \vect u_0)} + iX_\chi(\mathcal{G}_\eps \vect u_0) \right) \nonumber\\
        &\stackrel{\text{\eqref{gu0solution}}}{=} iX_\chi \left(\int_{Y'}^\oplus \left(\frac{1}{\eps^{\gamma + 2}} \mathcal{A}^{\rm hom}_\chi + I_{\C^3} \right)^{-1} S \, d\chi \right) \mathcal{G}_\eps \vect f 
        = iX_\chi \widetilde{\vect u}_0. \label{eqn:gu0_vs_u0tilde}
    \end{align}
    Now apply $\mathcal{G}_\eps^\ast$ to both sides of \eqref{eqn:gu0_vs_u0tilde}, and we get
    \begin{align}\label{eqn:u1tildesolution_step2}
        \mathcal{G}_\eps^\ast \left( iX_\chi \widetilde{\vect u}_0 \right) = \eps \, \simgrad \vect u_0.
    \end{align}

    \textbf{Conclusion:} By combining \eqref{eqn:u1tildesolution_step1} and \eqref{eqn:u1tildesolution_step2}, we obtain $\mathcal{G}_\eps^\ast \widetilde{\vect u}_1 = \vect u_{\rm corr}^\eps$ and $\vect u_{\rm corr}^\eps \in L^2(\R^3;\R^3)$.
\end{proof}
We may now link the corrector operator $\mathcal{R}_{\rm corr, 1}^\eps$ to $\vect u_{\text{corr}}^\eps$ as follows: For $\vect f \in L^2(\R^3;\R^3)$,
\begin{alignat*}{3}
    \mathcal{G}_\eps \mathcal{R}_{\rm corr, 1}^\eps \vect f (y,\chi)
    &= \left( \int_{Y'}^{\oplus} \mathcal{B}_{\text{corr}, 1,\chi} d\chi \right) 
    \bigg(\int_{Y'}^{\oplus} \left( \frac{1}{\eps^{\gamma+2}} \mathcal{A}_\chi^{\text{hom}} + I_{\C^3} \right)^{-1} &&S d\chi \bigg) \mathcal{G}_\eps \vect f (y,\chi)
    \quad \text{By \eqref{trans_back_corr1} and \eqref{eqn:R1corr_fctcalc}.}\\
    &= \left( \int_{Y'}^{\oplus} \mathcal{B}_{\text{corr}, 1,\chi} d\chi \right) \widetilde{\vect u}_0 (y,\chi) 
    &&\text{Definition of $\widetilde{\vect u}_0$.}\\
    &= \widetilde{\vect u}_1 (y,\chi)
    &&\parbox{30em}{Compare the problems \eqref{eqn:B1corr} and \eqref{eqn:u1tilde_problem}.}\\
    &= \mathcal{G}_\eps \vect u_{\text{corr}}^\eps (y,\chi)
    &&\text{By Proposition \ref{prop:first_corr_link}.}
\end{alignat*}

We have therefore demonstrated that $\mathcal{R}_{\rm corr,1}^{\eps} \vect f$ coincides with the first-order term $\vect u_{\text{corr}}^\eps$ of the classical two-scale expansion.

\section{Acknowledgements}
YSL is supported by a scholarship from the EPSRC Centre for Doctoral Training in Statistical Applied Mathematics at Bath (SAMBa), under the project EP/S022945/1. JŽ is supported by the Croatian Science Foundation under Grant agreement No. IP-2018-01-8904 (Homdirestroptcm) and under Grant agreement No. IP-2022-10-5181 (HOMeOS). For the purpose of open access, YSL has applied a Creative Commons Attribution (CC-BY) licence to any Author Accepted Manuscript version arising from the submission. YSL and JŽ would like to thank Prof.~Kirill~D.~Cherednichenko and Prof.~Igor~Velčić for their guidance and helpful discussions throughout this project.

\appendix
\renewcommand{\thesubsection}{\Alph{subsection}} 
\counterwithin{theorem}{section} 

\section{Korn's inequalities and its consequences}\label{sect:useful_ineq}
Here we put Korn's inequalities as can be found in \cite{Oleinik2012MathematicalPI}.

\begin{theorem}[The second Korn's inequality, \cite{Oleinik2012MathematicalPI}, Theorem 2.4] 
\label{appendixkorn}
Let $\Omega \subset \R^3$ be a bounded open set with Lipschitz boundary. There exists a constant $C>0$ which depends only on $\Omega$, such that for every $\vect u \in H^1(\Omega;\C^3)$, we have the following estimate:
\begin{equation}
    \label{korninequality1}
    \lVert \vect u \rVert_{H^1(\Omega;\C^3)} \leq C \left(\lVert \vect u \rVert_{L^2(\Omega;\C^3)} +  \left\lVert\simgrad \vect u \right\rVert_{L^2(\Omega;\C^{3\times 3})} \right).
\end{equation}
\end{theorem}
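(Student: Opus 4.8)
The plan is to deduce the second Korn inequality from the Lions lemma (the Ne\v{c}as inequality): on a bounded Lipschitz domain $\Omega$ there is a constant $C_\Omega>0$ such that every distribution $v\in\mathcal{D}'(\Omega)$ with $v\in H^{-1}(\Omega)$ and $\nabla v\in H^{-1}(\Omega;\C^3)$ actually lies in $L^2(\Omega)$ and satisfies
\[
\|v\|_{L^2(\Omega)}\le C_\Omega\bigl(\|v\|_{H^{-1}(\Omega)}+\|\nabla v\|_{H^{-1}(\Omega;\C^3)}\bigr).
\]
I would take this as a known input. The rest of the argument is then essentially algebraic, built on the Korn identity for second derivatives.

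First I would record that identity. Writing $e_{ij}(\vect u):=(\simgrad\vect u)^i_j=\tfrac12(\partial_iu_j+\partial_ju_i)$, one checks by a direct computation (valid for smooth $\vect u$, then for $\vect u\in H^1(\Omega;\C^3)$ by density, the right-hand side being continuous $H^1\to H^{-1}$) that
\[
\partial_k\partial_j u_i=\partial_j e_{ik}(\vect u)+\partial_k e_{ij}(\vect u)-\partial_i e_{jk}(\vect u),\qquad i,j,k\in\{1,2,3\}.
\]
Next I would apply the Lions lemma componentwise with $v=\partial_j u_i$ for each fixed $i,j$. Since $\vect u\in H^1(\Omega;\C^3)$ we have $v=\partial_j u_i\in L^2(\Omega)\hookrightarrow H^{-1}(\Omega)$ with $\|v\|_{H^{-1}(\Omega)}\le\|u_i\|_{L^2(\Omega)}\le\|\vect u\|_{L^2(\Omega;\C^3)}$ (differentiation being bounded $L^2\to H^{-1}$), and by the Korn identity $\partial_k v=\partial_k\partial_j u_i$ is a sum of first-order derivatives of components of $\simgrad\vect u\in L^2(\Omega;\C^{3\times3})$, so $\|\nabla v\|_{H^{-1}(\Omega;\C^3)}\le C\|\simgrad\vect u\|_{L^2(\Omega;\C^{3\times3})}$. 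The Lions lemma then gives $\|\partial_j u_i\|_{L^2(\Omega)}\le C_\Omega(\|\vect u\|_{L^2(\Omega;\C^3)}+\|\simgrad\vect u\|_{L^2(\Omega;\C^{3\times3})})$. Summing over $i,j$ and adding $\|\vect u\|_{L^2(\Omega;\C^3)}$ on the left yields \eqref{korninequality1}.

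The main obstacle is the Lions lemma itself, i.e.\ recovering $L^2$ control of $v$ from $H^{-1}$ control of $v$ and $\nabla v$ on a merely Lipschitz domain; this is the only genuinely non-trivial analytic step, and it is typically proved by a partition of unity and a flattening of $\partial\Omega$ reducing to a half-space (or bent half-space) model case, or equivalently by duality with the solvability of the divergence equation in $H^1_0$ with estimates (the Bogovski\u{\i} construction). Since the statement is quoted from \cite{Oleinik2012MathematicalPI}, an acceptable alternative is simply to invoke the reference, keeping the Korn-identity manipulation above as the self-contained portion of the argument; a further alternative is a compactness/contradiction scheme using Rellich's theorem, but that still requires a Korn-type inequality on a model domain as its uniform ingredient and so does not genuinely avoid the difficulty.
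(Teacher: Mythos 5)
The paper does not prove this theorem; it quotes it from \cite{Oleinik2012MathematicalPI}, Theorem 2.4, so there is no in-paper argument for you to be compared against, and the task amounts to checking whether your proposed proof stands on its own.

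It does. Your proposal is the standard Ne\v{c}as--Lions route, and it is sound. The Korn identity
\[
\partial_k\partial_j u_i=\partial_j e_{ik}(\vect u)+\partial_k e_{ij}(\vect u)-\partial_i e_{jk}(\vect u)
\]
checks out: expanding the right-hand side, the terms $\partial_j\partial_iu_k$, $\partial_k\partial_iu_j$, $-\partial_i\partial_ju_k$, $-\partial_i\partial_ku_j$ cancel in pairs, leaving $\tfrac12(\partial_j\partial_ku_i+\partial_k\partial_ju_i)=\partial_k\partial_ju_i$, and the identity holds in $\mathcal{D}'(\Omega)$ for any $\vect u\in H^1(\Omega;\C^3)$. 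Applying the Ne\v{c}as inequality to $v=\partial_ju_i$ is then exactly as you describe: $v\in L^2\hookrightarrow H^{-1}$ with $\|v\|_{H^{-1}}\le C\|\vect u\|_{L^2}$, and by the identity every $\partial_kv$ is a sum of first-order derivatives of the $L^2$ entries of $\simgrad\vect u$, hence lies in $H^{-1}$ with norm controlled by $\|\simgrad\vect u\|_{L^2}$; Ne\v{c}as upgrades $v$ to $L^2$ with the stated bound, and summing over $i,j$ and adding $\|\vect u\|_{L^2}$ yields \eqref{korninequality1}. You are upfront that the Ne\v{c}as lemma carries all the analytic weight, and invoking it (or its dual, solvability of $\div$ in $H^1_0$ with estimates) as a black box is legitimate here, especially since the statement itself is being quoted from a textbook rather than proved in the paper. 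For orientation, note that the paper's Appendix uses a parallel \emph{Fourier-series} mechanism for the \emph{periodic} analogue, Proposition \ref{prop:coercive_est}, resting on the rank-one estimate \eqref{rankonesymformula}; that torus argument sidesteps both Ne\v{c}as and boundary regularity entirely, but it does not directly give the bounded Lipschitz case, which is precisely the extra difficulty your proposal handles.
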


\begin{theorem}[\cite{Oleinik2012MathematicalPI}, Theorem 2.5, Corollary 2.6]
\label{appendixkorn3}
    Let $\Omega \subset \R^3$ be a bounded open set with Lipschitz boundary. Let $\mathcal{R} \subset H^1(\Omega;\C^3)$ be the set of rigid displacements: $\mathcal{R} = \{\vect w = Ax + \vect c, \, A \in \C^{3 \times 3}, \, A^\top = -A, \, \vect c\in \C^3\}.$    
    There exists a constant $C>0$ which depends only on $\Omega$, such that for every $\vect u \in H^1(\Omega;\C^3)$ which satisfies $\langle \vect u, \vect w \rangle_{L^2(\Omega;\C^3)} = 0 $, $\forall \vect w \in \mathcal{R}$, we have the following estimate:
    \begin{equation}
\label{korninequality3}
    \lVert \vect u \rVert_{H^1(\Omega;\C^3)} \leq C \left\lVert\simgrad \vect u \right\rVert_{L^2(\Omega;\C^{3\times 3})},
\end{equation}
\end{theorem}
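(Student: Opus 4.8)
The plan is to deduce \eqref{korninequality3} from the second Korn inequality \eqref{korninequality1} together with the compact embedding $H^1(\Omega;\C^3)\hookrightarrow L^2(\Omega;\C^3)$ (Rellich--Kondrachov), by a contradiction argument that keeps $\Omega$ fixed throughout so that the resulting constant depends only on $\Omega$. Suppose no such $C$ exists. Then there is a sequence $(\vect u_n)\subset H^1(\Omega;\C^3)$ with $\langle \vect u_n,\vect w\rangle_{L^2(\Omega;\C^3)}=0$ for every $\vect w\in\mathcal{R}$, normalized by $\|\vect u_n\|_{H^1(\Omega;\C^3)}=1$, and with $\|\simgrad\vect u_n\|_{L^2(\Omega;\C^{3\times 3})}\to 0$.

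First I would apply \eqref{korninequality1} to the differences $\vect u_n-\vect u_m$: since $\|\simgrad\vect u_n-\simgrad\vect u_m\|_{L^2}\le\|\simgrad\vect u_n\|_{L^2}+\|\simgrad\vect u_m\|_{L^2}\to 0$, the sequence $(\vect u_n)$ is Cauchy in $H^1$ \emph{provided} it is Cauchy in $L^2$. By Rellich--Kondrachov, after passing to a subsequence $\vect u_n\to\vect u$ in $L^2$, hence also in $H^1$ by the previous remark. Consequently $\|\vect u\|_{H^1}=1$, and by continuity of $\simgrad:H^1\to L^2$ we get $\simgrad\vect u=\lim_n\simgrad\vect u_n=0$.

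Next I would invoke the characterization $\ker(\simgrad)\cap H^1(\Omega;\C^3)=\mathcal{R}$: a function with vanishing symmetrized gradient is a rigid displacement, so $\vect u\in\mathcal{R}$. Then the orthogonality conditions pass to the $L^2$-limit, yielding $\langle\vect u,\vect w\rangle_{L^2}=0$ for all $\vect w\in\mathcal{R}$; taking $\vect w=\vect u$ gives $\vect u=0$, which contradicts $\|\vect u\|_{H^1}=1$. This establishes the inequality.

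The step I expect to be the main obstacle is the characterization of $\ker(\simgrad)$, i.e.\ showing that $\simgrad\vect u=0$ in the distributional sense forces $\vect u$ to be affine with skew-symmetric linear part. A clean route is the distributional identity $\partial_{x_k}\partial_{x_l}u_j=\partial_{x_l}(\simgrad\vect u)_{jk}+\partial_{x_k}(\simgrad\vect u)_{jl}-\partial_{x_j}(\simgrad\vect u)_{kl}$, whose right-hand side vanishes, so each component $u_j$ is affine on each connected component of $\Omega$; imposing $\simgrad\vect u=0$ then forces the linear part to be skew, whence $\vect u\in\mathcal{R}$. One also uses that $\mathcal{R}$ is finite-dimensional — immediate from its explicit parametrization $\vect w=Ax+\vect c$ — so that it is closed and the orthogonality constraint is stable under $L^2$-limits. (This is exactly the content of \cite[Theorem 2.5, Corollary 2.6]{Oleinik2012MathematicalPI}.)
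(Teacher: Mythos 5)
The paper states this as a cited result (\cite{Oleinik2012MathematicalPI}, Theorem~2.5 and Corollary~2.6) and supplies no proof of its own, so there is nothing to compare against directly; your compactness-and-contradiction argument built from the second Korn inequality \eqref{korninequality1} and Rellich--Kondrachov is the standard textbook proof and is correct. One caveat worth flagging: the identification $\ker(\simgrad)\cap H^1(\Omega;\C^3)=\mathcal{R}$ — and hence the theorem itself, as stated — requires $\Omega$ to be \emph{connected}; on a disconnected Lipschitz domain a piecewise rigid displacement has vanishing symmetrized gradient yet can be chosen $L^2$-orthogonal to the six-dimensional space $\mathcal{R}$, making \eqref{korninequality3} fail. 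You touch on this yourself (``each connected component'') but do not close it; since the paper only invokes the result on the connected cube $Y$, nothing downstream is affected.
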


\begin{corollary}
 There exists a constant $C>0$ such that we have the following estimate:
\begin{equation}
\label{korninequality33}
    \lVert \vect u \rVert_{H^1(Y;\C^3)} \leq C \left\lVert \simgrad \vect u \right\rVert_{L^2(Y;\C^{3\times 3})},
\end{equation}
for every $\vect u \in H^1_\#(Y;\C^3)$ satisfying $\int_Y \vect u = 0$.
\end{corollary}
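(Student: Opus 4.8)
The goal is to derive the periodic Korn inequality \eqref{korninequality33} from the Neumann-type Korn inequality \eqref{korninequality3} of Theorem \ref{appendixkorn3}, applied on the cube $\Omega = Y = (0,1)^3$, which is a bounded open set with Lipschitz boundary. The main point to address is that the orthogonality hypothesis in Theorem \ref{appendixkorn3} is stated against \emph{all} rigid displacements $\vect w = Ax + \vect c$ with $A^\top = -A$, whereas in \eqref{korninequality33} we only assume the single (vector) condition $\int_Y \vect u = 0$. The plan is therefore to show that, for a periodic function, orthogonality to constants already forces orthogonality to the full space of rigid displacements, after possibly adjusting $\vect u$ by a rigid displacement that lies in $H^1_\#(Y;\C^3)$ — and by Remark \ref{rmk:rigid_body_motions}, the only rigid displacements that are periodic are the constants, so no such adjustment is available and we must argue differently.

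First I would recall the argument structure: given $\vect u \in H^1_\#(Y;\C^3)$ with $\int_Y \vect u = 0$, I want to produce a rigid displacement $\vect r(x) = Ax + \vect c$ (with $A^\top = -A$) such that $\vect u - \vect r$ is $L^2(Y;\C^3)$-orthogonal to all of $\mathcal{R}$, apply \eqref{korninequality3} to $\vect u - \vect r$, and then control $\vect r$. Choosing $\vect c = \int_Y \vect u = 0$ handles the constant part; choosing $A$ to be the appropriate skew-symmetric average of $\nabla \vect u$ (i.e. $A$ determined by $\langle \vect u - \vect r, \vect w\rangle_{L^2} = 0$ for the linear rigid motions $\vect w$) handles the rotational part. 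The key observation is that $\simgrad \vect r = \simgrad(Ax) = \sym A = 0$, so $\simgrad(\vect u - \vect r) = \simgrad \vect u$, and \eqref{korninequality3} gives $\|\vect u - \vect r\|_{H^1(Y)} \le C\|\simgrad \vect u\|_{L^2(Y)}$. It then remains to bound $\|\vect r\|_{H^1(Y)}$ by $\|\simgrad \vect u\|_{L^2(Y)}$. For this I would use periodicity: since $\vect u$ is $\Z^3$-periodic, integrating $\partial_j u_i - \partial_i u_j$ over $Y$ gives zero by the divergence theorem (the boundary terms on opposite faces cancel), so the skew-symmetric part of $\int_Y \nabla \vect u$ vanishes; combined with the definition of $A$ (a bounded linear functional of $\nabla\vect u$ whose skew part is what gets subtracted), this shows $|A| \le C\|\nabla(\vect u - \vect r)\|_{L^2}$ or directly $|A|\le C\|\simgrad\vect u\|_{L^2}$ after the cancellation. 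Hence $\|\vect r\|_{H^1(Y)} \le C|A| \le C\|\simgrad\vect u\|_{L^2}$, and the triangle inequality $\|\vect u\|_{H^1} \le \|\vect u-\vect r\|_{H^1} + \|\vect r\|_{H^1}$ closes the argument.

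Alternatively, and perhaps more cleanly, I would bypass the explicit rigid-displacement subtraction: apply the second Korn inequality \eqref{korninequality1} (Theorem \ref{appendixkorn}) directly to get $\|\vect u\|_{H^1(Y)} \le C(\|\vect u\|_{L^2(Y)} + \|\simgrad\vect u\|_{L^2(Y)})$, and then upgrade this to \eqref{korninequality33} by absorbing the $\|\vect u\|_{L^2}$ term. The standard way to do this is a compactness–contradiction argument (Peetre–Tartar lemma): suppose \eqref{korninequality33} fails, take a sequence $\vect u_n \in H^1_\#(Y;\C^3)$ with $\int_Y \vect u_n = 0$, $\|\vect u_n\|_{H^1(Y)} = 1$, and $\|\simgrad \vect u_n\|_{L^2} \to 0$; by Rellich–Kondrachov (compact embedding $H^1_\# \hookrightarrow L^2$, as used at the start of Section \ref{sect:spectral_analysis}) extract $\vect u_n \to \vect u$ strongly in $L^2$, and then from \eqref{korninequality1} applied to $\vect u_n - \vect u_m$ deduce $(\vect u_n)$ is Cauchy in $H^1$, hence $\vect u_n \to \vect u$ in $H^1_\#(Y;\C^3)$ with $\|\vect u\|_{H^1}=1$, $\simgrad\vect u = 0$, and $\int_Y\vect u = 0$. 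By Remark \ref{rmk:rigid_body_motions}, $\ker(\simgrad)\cap H^1_\#(Y;\C^3) = \C^3$, so $\vect u$ is constant; the mean-zero condition forces $\vect u = 0$, contradicting $\|\vect u\|_{H^1}=1$.

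The main obstacle is the one already flagged: matching the hypotheses of the cited Korn inequalities — whose orthogonality/normalization conditions involve the full rigid-displacement space — to the weaker single mean-zero condition in the periodic setting. Both routes resolve this, but the contradiction argument is the more robust and is the one I would present, since it cleanly exploits the characterization $\ker(\simgrad)\cap H^1_\# = \C^3$ from Remark \ref{rmk:rigid_body_motions} and the compact embedding already invoked in the paper; the only mild care needed is to check that the limit $\vect u$ still lies in $H^1_\#(Y;\C^3)$ (immediate, as it is a closed subspace of $H^1$) and inherits the mean-zero constraint (immediate from $L^2$ convergence).
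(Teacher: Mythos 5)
Your compactness--contradiction argument is correct, and the observation that motivates it — that the Neumann Korn inequality \eqref{korninequality3} asks for orthogonality to \emph{all} rigid displacements $\vect w=Ax+\vect c$ with $A^\top=-A$, whereas the Corollary only assumes $\int_Y \vect u = 0$ — is the right thing to flag, since it means \eqref{korninequality33} is not a mere restriction of \eqref{korninequality3} to a subspace. The paper states the Corollary without proof, so there is no explicit argument to match against; the route closest to the authors' apparent intent is the Fourier-series one used a few lines later in Proposition~\ref{prop:coercive_est}: applying \eqref{estimate11} and \eqref{estimate12} with $\chi=0$ (both remain valid there, as the text notes) gives $\|\nabla\vect u\|_{L^2} \le C\|\simgrad\vect u\|_{L^2}$ and $\|\vect u\|_{L^2}=\|\vect u-\fint_Y\vect u\|_{L^2}\le C\|\simgrad\vect u\|_{L^2}$ directly, via Plancherel on the torus and the pointwise rank-one inequality \eqref{rankonesymformula}. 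That approach is constructive and gives an explicit constant $C_{\text{fourier}}$; your Peetre--Tartar argument is less explicit but more robust, needing only the second Korn inequality \eqref{korninequality1}, the Rellich compact embedding, and the kernel identification $\ker(\simgrad)\cap H^1_\#(Y;\C^3)=\C^3$ from Remark~\ref{rmk:rigid_body_motions}, and it would transfer unchanged to settings where Fourier series are unavailable. Your first sketch (subtracting a rigid displacement $\vect r = Ax+\vect c$ determined by the orthogonality conditions, and bounding $A$ using the vanishing of the skew part of $\int_Y\nabla\vect u$ for periodic $\vect u$) also closes, but it is the least economical of the three and you were right to set it aside in favour of the contradiction argument.
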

\begin{proposition}
Let $\Omega \subset \R^3$ be a bounded open set. There exist constants $C_0,C_1>0$, independent of $\chi \in Y'$, such that
\begin{equation}
\label{equivalenceofnorms}
    C_0 \left\lVert \vect u \right\rVert_{H^1(\Omega;\C^3)} \leq  \left\lVert e^{i\chi y}\vect u \right\rVert_{H^1(\Omega;\C^3)} \leq  C_1 \left\lVert \vect u \right\rVert_{H^1(\Omega;\C^3)}, \quad \forall \chi \in Y', \quad \vect u \in H^1(\Omega;\C^3).
\end{equation}
\end{proposition}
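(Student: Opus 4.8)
The plan is to reduce the two–sided estimate to the elementary fact that multiplication by the unimodular factor $e^{i\chi\cdot y}$ preserves $L^2$ norms and interacts with the gradient through a Leibniz-type identity. First I would record, for $\vect u \in C^\infty(\overline\Omega;\C^3)$, the pointwise identity
\begin{equation*}
    \nabla\!\left(e^{i\chi\cdot y}\vect u\right) = e^{i\chi\cdot y}\left(\nabla\vect u + i\,\vect u\,\chi^\top\right),
\end{equation*}
which is the same computation underlying \eqref{scalingderivatives} (with $\varepsilon=1$ and no lattice sum). Since $\partial\Omega$ is Lipschitz, $C^\infty(\overline\Omega;\C^3)$ is dense in $H^1(\Omega;\C^3)$, so this identity extends to all $\vect u\in H^1(\Omega;\C^3)$; in particular $e^{i\chi\cdot y}\vect u\in H^1(\Omega;\C^3)$, which makes both sides of \eqref{equivalenceofnorms} meaningful.

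Next I would use $|e^{i\chi\cdot y}|=1$ a.e.\ to get $\lVert e^{i\chi\cdot y}\vect u\rVert_{L^2}=\lVert\vect u\rVert_{L^2}$, and combine the Leibniz identity above with the triangle inequality to obtain $\lVert\nabla(e^{i\chi\cdot y}\vect u)\rVert_{L^2}\le\lVert\nabla\vect u\rVert_{L^2}+|\chi|\,\lVert\vect u\rVert_{L^2}$. Because $\chi\in Y'=[-\pi,\pi)^3$ forces $|\chi|\le\sqrt3\,\pi$, this yields the right-hand inequality of \eqref{equivalenceofnorms} with a constant $C_1$ depending only on the dimension (hence independent of $\chi$, $\vect u$, and even of $\Omega$).

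For the left-hand inequality I would argue by symmetry: applying the already-proved upper bound with $\chi$ replaced by $-\chi$ and $\vect u$ replaced by $e^{i\chi\cdot y}\vect u$ gives $\lVert\vect u\rVert_{H^1}=\lVert e^{-i\chi\cdot y}(e^{i\chi\cdot y}\vect u)\rVert_{H^1}\le C_1\lVert e^{i\chi\cdot y}\vect u\rVert_{H^1}$, so one may take $C_0=1/C_1$. There is no genuine obstacle here; the only point that needs a word of care is justifying the product rule for the weak derivative (and membership of the product in $H^1$), which is exactly what the density argument in the first step provides. Everything else is an immediate norm estimate using $|e^{i\chi\cdot y}|\equiv1$ together with the boundedness of $Y'$.
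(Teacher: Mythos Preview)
Your proof is correct and essentially matches the paper's: both use the Leibniz identity $\nabla(e^{i\chi\cdot y}\vect u)=e^{i\chi\cdot y}(\nabla\vect u+i\,\vect u\,\chi^\top)$ together with $|e^{i\chi\cdot y}|=1$ and the uniform bound $|\chi|\le\sqrt3\,\pi$ on $Y'$. The only cosmetic difference is in the lower bound: you invoke symmetry (apply the upper bound with $-\chi$ to $e^{i\chi\cdot y}\vect u$), whereas the paper repeats the direct triangle-inequality estimate $\|\nabla\vect u\|_{L^2}\le\|\nabla\vect u+\vect u\otimes i\chi\|_{L^2}+|\chi|\,\|\vect u\|_{L^2}$. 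Your version is marginally cleaner and gives the explicit relation $C_0=1/C_1$.

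One small point: the statement assumes only that $\Omega$ is bounded and open, not that $\partial\Omega$ is Lipschitz, so your density justification of the product rule invokes a hypothesis not actually given. This is harmless, since multiplication by the smooth function $e^{i\chi\cdot y}\in C^\infty(\R^3)$ preserves $H^1$ and satisfies the Leibniz rule on any open set; one can verify $\partial_j(e^{i\chi\cdot y}\vect u)=e^{i\chi\cdot y}(\partial_j\vect u+i\chi_j\vect u)$ directly from the definition of weak derivative by pairing against test functions in $C_c^\infty(\Omega)$, with no boundary regularity needed.
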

\begin{proof}
We clearly have (due to $|e^{i\chi y}|=1$) 
\begin{equation}
    \left\lVert e^{i\chi y}\vect u\right\rVert_{L^2(\Omega;\C^3)} = \left\lVert \vect u\right\rVert_{L^2(\Omega;\C^3)}, \quad \left\lVert \nabla \left( e^{i\chi y}\vect u \right)\right\rVert_{L^2(\Omega;\C^3)} = \left\lVert  \nabla \vect u +  \vect u \otimes i\chi\right\rVert_{L^2(\Omega;\C^3)}.
\end{equation}
Now, we calculate
\begin{equation}
\begin{split}
         \left\lVert e^{i\chi y}\vect u\right\rVert_{H^1(\Omega;\C^3)}^2 = \left\lVert \vect u\right\rVert_{L^2(\Omega;\C^3)}^2 + \left\lVert  \nabla \vect u +  \vect u \otimes i\chi\right\rVert_{L^2(\Omega;\C^3)}^2 & \leq \left(1 + |\chi|^2 \right) \left\lVert \vect u\right\rVert_{L^2(\Omega;\C^3)}^2 + \left\lVert  \nabla  \vect u\right\rVert_{L^2(\Omega;\C^3)}^2 \\
         & \leq C\left\lVert \vect u \right\rVert_{H^1(\Omega;\C^3)}^2.
\end{split}
\end{equation}
Conversely: 
\begin{equation}
\begin{split}
        \left\lVert \vect u \right\rVert_{H^1(\Omega;\C^3)}^2 = \left\lVert \vect u\right\rVert_{L^2(\Omega;\C^3)}^2 + \left\lVert  \nabla  \vect u\right\rVert_{L^2(\Omega;\C^3)}^2 & \leq \left\lVert \vect u\right\rVert_{L^2(\Omega;\C^3)}^2 + \left\lVert  \nabla \vect u+  \vect u \otimes i\chi\right\rVert_{L^2(\Omega;\C^3)}^2 + |\chi|^2\left\lVert \vect u\right\rVert_{L^2(\Omega;\C^3)}^2 \\
        & = \left(1 + |\chi|^2 \right)\left\lVert \vect u\right\rVert_{L^2(\Omega;\C^3)}^2 + \left\lVert  \nabla \vect u+  \vect u \otimes i\chi\right\rVert_{L^2(\Omega;\C^3)}^2 \\ & \leq C\left\lVert e^{i\chi y}\vect u \right\rVert_{H^1(\Omega;\C^3)}.
\end{split}
\end{equation}
\end{proof}

\begin{proposition}
Let $\Omega$ be a bounded open set with Lipschitz boundary. There exists a constant $C$ such that for every $\vect u \in H^1(\Omega;\C^3)$ and $|\chi|\in Y'$, we have the following estimate: 
\begin{equation}
\label{korninequality5}
    \lVert \vect u \rVert_{H^1(\Omega;\C^3)} \leq C\left( \left\lVert \left(\simgrad + iX_{\chi}\right) \vect u \right\rVert_{L^2(\Omega;\C^{3 \times 3})} + \lVert  \vect u \rVert_{L^2(\Omega;\C^3)} \right),
\end{equation}
where the constant $C$ depends only on the domain $\Omega$.
\end{proposition}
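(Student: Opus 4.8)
The plan is to derive \eqref{korninequality5} from the second Korn inequality \eqref{korninequality1} by absorbing the zeroth-order perturbation $iX_\chi \vect u$ into the right-hand side. The key observation is that $X_\chi \vect u = \sym(\vect u \otimes \chi)$ is a pointwise algebraic operation, so by \eqref{eqn:Xchi_est} we have $\|X_\chi \vect u\|_{L^2(\Omega;\C^{3\times 3})} \le C_{\text{symrk1}} |\chi| \, \|\vect u\|_{L^2(\Omega;\C^3)}$, and since $\chi$ ranges over the bounded cell $Y' = [-\pi,\pi)^3$ we have $|\chi| \le \pi\sqrt 3$, hence $\|X_\chi \vect u\|_{L^2(\Omega;\C^{3\times 3})} \le C \|\vect u\|_{L^2(\Omega;\C^3)}$ with $C$ independent of $\chi$.

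First I would write, by the triangle inequality,
\begin{equation*}
    \|\simgrad \vect u\|_{L^2(\Omega;\C^{3\times 3})}
    \le \|(\simgrad + iX_\chi)\vect u\|_{L^2(\Omega;\C^{3\times 3})} + \|X_\chi \vect u\|_{L^2(\Omega;\C^{3\times 3})}
    \le \|(\simgrad + iX_\chi)\vect u\|_{L^2(\Omega;\C^{3\times 3})} + C\|\vect u\|_{L^2(\Omega;\C^3)}.
\end{equation*}
Then I would feed this into \eqref{korninequality1}, namely $\|\vect u\|_{H^1(\Omega;\C^3)} \le C(\|\vect u\|_{L^2(\Omega;\C^3)} + \|\simgrad \vect u\|_{L^2(\Omega;\C^{3\times 3})})$, to obtain
\begin{equation*}
    \|\vect u\|_{H^1(\Omega;\C^3)}
    \le C\big( \|\vect u\|_{L^2(\Omega;\C^3)} + \|(\simgrad + iX_\chi)\vect u\|_{L^2(\Omega;\C^{3\times 3})} + C\|\vect u\|_{L^2(\Omega;\C^3)} \big),
\end{equation*}
and collecting the two $\|\vect u\|_{L^2}$ terms gives \eqref{korninequality5} with a constant depending only on $\Omega$ (the dependence on $C_{\text{symrk1}}$ and on the diameter of $Y'$ being harmless absolute constants).

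There is essentially no obstacle here — the statement is a routine corollary. The only point requiring a moment's care is making sure the constant is genuinely uniform in $\chi$, which is exactly what boundedness of $Y'$ and the bound \eqref{eqn:Xchi_est} guarantee; one should also note that \eqref{korninequality1} is stated for $H^1(\Omega;\C^3)$ with a constant depending only on $\Omega$, so no periodicity or zero-mean hypothesis is needed. Alternatively, one could absorb $iX_\chi \vect u$ on the left via the identity $\|\nabla(e^{i\chi y}\vect u)\|_{L^2} = \|\nabla \vect u + \vect u \otimes i\chi\|_{L^2}$ from \eqref{equivalenceofnorms} together with $\simgrad$-version of it, but the direct triangle-inequality argument above is cleaner and avoids invoking the exponential conjugation.
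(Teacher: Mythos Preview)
Your proof is correct, but the paper takes the alternative route you mention at the end: it sets $\vect w = e^{i\chi y}\vect u$, observes that $\simgrad \vect w = e^{i\chi y}(\simgrad + iX_\chi)\vect u$ so that $\|\simgrad \vect w\|_{L^2} = \|(\simgrad + iX_\chi)\vect u\|_{L^2}$ and $\|\vect w\|_{L^2} = \|\vect u\|_{L^2}$, applies the second Korn inequality \eqref{korninequality1} to $\vect w$, and then uses the norm equivalence \eqref{equivalenceofnorms} to replace $\|\vect w\|_{H^1}$ by $\|\vect u\|_{H^1}$.

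Your direct triangle-inequality argument is arguably more elementary, since it bypasses both the exponential conjugation and the auxiliary Proposition establishing \eqref{equivalenceofnorms}; it only needs the pointwise bound $\|X_\chi \vect u\|_{L^2} \le C|\chi|\|\vect u\|_{L^2}$ and the boundedness of $Y'$. The paper's route, on the other hand, makes transparent why the $\chi$-shifted Korn inequality holds with the \emph{same} structural constant as the classical one (up to the norm-equivalence factor): the shifted symmetric gradient is literally the ordinary symmetric gradient conjugated by the unitary multiplication $e^{i\chi y}$. Either way the result is routine, and you have correctly identified that uniformity in $\chi$ is the only thing to check.
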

\begin{proof}
By plugging $\vect w = e^{i \chi y} \vect u$, $\vect u\in H^1(\Omega;\C^3)$, in the inequality \eqref{korninequality1}, we obtain: 
\begin{equation}
\begin{split}
        \left\lVert \vect u\right\rVert_{H^1(\Omega;\C^3)} \leq \left\lVert \vect w\right\rVert_{H^1(\Omega;\C^3)} & \leq  C\left( \lVert \simgrad \vect w \rVert_{L^2(\Omega;\C^{3 \times 3})} + \lVert  \vect w \rVert_{L^2(\Omega;\C^3)} \right) \\
        & =  C\left( \left\lVert \left(\simgrad + i X_\chi \right) \vect u \right\rVert_{L^2(\Omega;\C^{3 \times 3})} + \lVert  \vect u \rVert_{L^2(\Omega;\C^3)} \right),
\end{split}
\end{equation}
where we have also used \eqref{equivalenceofnorms}.
\end{proof}

\begin{lemma}
    There exists $C>0$ such that for all $\vect a, \vect b \in \C^3$ we have:
    \begin{equation}
    \label{rankonesymformula}
         \lvert \vect a \otimes \vect b\rvert 
         \leq C \lvert \sym( \vect a \otimes \vect b )\rvert.
    \end{equation}
\end{lemma}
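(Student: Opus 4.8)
The plan is to establish the inequality directly by computing $|\sym(\vect a \otimes \vect b)|^2$ in coordinates; this will in fact yield the sharp constant $C=\sqrt{2}$. First I would unwind the conventions: since $\vect a \otimes \vect b = \vect a\,\overline{\vect b}^{\top}$ has entries $a_i\overline{b_j}$ whereas $\sym$ uses the plain transpose, one has $\bigl(\sym(\vect a \otimes \vect b)\bigr)^i_j = \tfrac12\bigl(a_i\overline{b_j} + a_j\overline{b_i}\bigr)$. Squaring and summing over $i,j$, the ``diagonal'' contributions $|a_i\overline{b_j}|^2 + |a_j\overline{b_i}|^2$ sum to $2|\vect a|^2|\vect b|^2 = 2|\vect a\otimes\vect b|^2$, while the two remaining mixed terms each collapse, upon separating the $i$- and $j$-sums, to $\bigl|\sum_i a_i b_i\bigr|^2$. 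Dividing by $4$ gives the identity
\[
    |\sym(\vect a \otimes \vect b)|^2 = \tfrac12\,|\vect a \otimes \vect b|^2 + \tfrac12\,\Bigl|\sum_i a_i b_i\Bigr|^2 ,
\]
and since the last term is nonnegative this yields $|\vect a\otimes\vect b| \le \sqrt{2}\,|\sym(\vect a\otimes\vect b)|$, as required; the choice $\vect a = \vect e_1$, $\vect b = \vect e_2$ gives equality, so $\sqrt{2}$ is optimal.

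As a fallback (and sanity check), a softer argument is available: both sides are bilinear in $(\vect a,\vect b)$, so by scaling it suffices to treat $|\vect a| = |\vect b| = 1$; on this compact set $\vect b \mapsto |\sym(\vect a\otimes\vect b)|$ is continuous and nowhere zero — vanishing would force $a_i\overline{b_i} = 0$ for every $i$ and hence one of $\vect a,\vect b$ to be zero — so it attains a strictly positive minimum $m$, and $C = 1/m$ works; the degenerate case $\vect a = 0$ or $\vect b = 0$ is trivial.

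The only real care needed is the bookkeeping with complex conjugation: the $\otimes$ of this paper carries a bar on the second factor while $\sym(A) = \tfrac12(A + A^{\top})$ does not, so the conjugated and non-conjugated entries must be kept straight when expanding the Frobenius norm. Once that is handled the computation is entirely routine, so I do not anticipate any genuine obstacle.
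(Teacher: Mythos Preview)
Your main argument is correct and follows essentially the same approach as the paper --- a direct coordinate expansion of the Frobenius norm --- but your execution is cleaner: you obtain the exact identity
\[
|\sym(\vect a\otimes\vect b)|^2=\tfrac12|\vect a\otimes\vect b|^2+\tfrac12\Bigl|\sum_i a_ib_i\Bigr|^2,
\]
hence the sharp constant $C=\sqrt2$, and you handle the complex conjugates carefully (the paper's proof writes $(\cdot)^2$ rather than $|\cdot|^2$ and omits the bar from $\vect a\otimes\vect b$, so as written it treats the real case). One small point on your fallback argument: the implication ``$a_i\overline{b_i}=0$ for all $i$ $\Rightarrow$ $\vect a=0$ or $\vect b=0$'' is false ($\vect a=\vect e_1$, $\vect b=\vect e_2$), so the nonvanishing of $\sym(\vect a\otimes\vect b)$ on the unit sphere needs the off-diagonal entries as well; this does not affect your principal proof.
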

\begin{proof}
   Let $\vect a = (a_1,a_2,a_3), \vect b=(b_1,b_2,b_3) \in \C^3$. The following calculation proves the claim: 
   \begin{align*}
       \lvert \sym(\vect a \otimes \vect b )\rvert^2 
       &= \sum_{i,j}\left( \frac{a_ib_j+a_jb_i}{2}\right)^2 = \sum_{i}a_i^2 b_i^2 + \sum_{i \neq j} \left( \frac{a_ib_j+a_jb_i}{2}\right)^2 \\ 
       &= \sum_i a_i^2b_i^2 + \frac{1}{2} \sum_{i \neq j}a_i^2 b_j^2 + \sum_{i \neq j}a_ib_ja_jb_i \\
       &= \frac{1}{2}\sum_{i} a_i^2b_i^2 + \frac{1}{4}\sum_{i \neq j}a_i^2 b_j^2 + \frac{1}{8} \sum_{i \neq j}(a_ib_j + a_jb_i)^2 + \frac{1}{8}\sum_{i \neq j}(a_ib_i + a_jb_j)^2 \\
       &\geq \frac{1}{2}\sum_{i} a_i^2b_i^2 + \frac{1}{4}\sum_{i \neq j}a_i^2 b_j^2 \geq C \sum_{i,j}a_i^2 b_j^2 = C \lvert \vect a \otimes \vect b\rvert^2. \qedhere
   \end{align*}
\end{proof}

\begin{proposition}\label{prop:coercive_est}
    There exists a constant $C_{\text{fourier}}>0$ such that for every $\vect u \in H_\#^1(Y;\C^3)$, $\chi \in Y'\setminus\{0\}$ we have the following estimates: 
    \begin{equation}
    \label{estimate1}
        \left\lVert \vect u \right\rVert_{L^2(Y;\C^3)} \leq \frac{C_{\text{fourier}}}{|\chi|}\left\lVert \left(\simgrad + iX_{\chi}\right) \vect u \right\rVert_{L^2(Y;\C^{3 \times 3})}.
    \end{equation}
    \begin{equation}
    \label{estimate11}
        \left\lVert \nabla \vect u \right\rVert_{L^2(Y;\C^{3 \times 3})} \leq C_{\text{fourier}}\left\lVert \left(\simgrad + iX_{\chi}\right) \vect u \right\rVert_{L^2(Y;\C^{3 \times 3})}.
    \end{equation}
    \begin{equation}
    \label{estimate12}
        \left\lVert \vect u - \fint_Y \vect u\right\rVert_{L^2(Y;\C^3)} \leq C_{\text{fourier}}\left\lVert \left(\simgrad + iX_{\chi}\right) \vect u \right\rVert_{L^2(Y;\C^{3 \times 3})}.
    \end{equation}
\end{proposition}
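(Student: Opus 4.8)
The plan is to diagonalise the operator $\simgrad + iX_\chi$ on the torus by Fourier series, and to reduce each of the three estimates to an elementary lower bound on the associated Fourier multipliers combined with the rank-one inequality \eqref{rankonesymformula}. Since both sides of \eqref{estimate1}--\eqref{estimate12} are continuous in the $H^1(Y;\C^3)$ norm, it suffices to prove them for $\vect u \in C_\#^\infty(Y;\C^3)$ and then pass to the limit; for such $\vect u$ we may write $\vect u = \sum_{k\in\Z^3} \vect a_k e^{2\pi i k\cdot y}$ with $\vect a_k \in \C^3$, and term-by-term differentiation is legitimate.

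First I would compute, using $X_\chi\vect u = \sym(\vect u\otimes\chi)$,
\[
    \nabla\vect u + i\,\vect u\otimes\chi \;=\; \sum_{k\in\Z^3} i\, e^{2\pi i k\cdot y}\,\bigl(\vect a_k\otimes(2\pi k+\chi)\bigr),
\]
so that, applying $\sym$ and using orthonormality of $\{e^{2\pi i k\cdot y}\}_{k}$ in $L^2(Y)$ (Plancherel, recalling $|Y|=1$),
\[
    \bigl\|(\simgrad + iX_\chi)\vect u\bigr\|_{L^2(Y;\C^{3\times3})}^2 \;=\; \sum_{k\in\Z^3} \bigl|\sym\bigl(\vect a_k\otimes(2\pi k+\chi)\bigr)\bigr|^2 .
\]
By \eqref{rankonesymformula} (with $\vect a=\vect a_k$ and $\vect b=2\pi k+\chi\in\R^3$) and the identity $|\vect a_k\otimes(2\pi k+\chi)|^2=|\vect a_k|^2\,|2\pi k+\chi|^2$, this yields the master estimate
\[
    \sum_{k\in\Z^3} |\vect a_k|^2\,|2\pi k+\chi|^2 \;\le\; C\,\bigl\|(\simgrad + iX_\chi)\vect u\bigr\|_{L^2(Y;\C^{3\times3})}^2 .
\]

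Next I would record, for $\chi\in Y'=[-\pi,\pi)^3$, the multiplier bounds: for each coordinate, $|2\pi k_\ell+\chi_\ell|\ge 2\pi|k_\ell|-\pi\ge \pi|k_\ell|$ when $k_\ell\neq0$ (and this bound is trivial when $k_\ell=0$), hence $|2\pi k+\chi|^2\ge\pi^2|k|^2$ for all $k$; in particular $|2\pi k+\chi|^2\ge\pi^2$ for $k\neq0$, and since $|\chi|^2\le3\pi^2$, while for $k=0$ one has $|2\pi k+\chi|^2=|\chi|^2$, it follows that $|2\pi k+\chi|^2\ge\tfrac{1}{3}|\chi|^2$ for every $k$. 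Feeding these into the master estimate, together with Plancherel in the forms $\|\vect u\|_{L^2}^2=\sum_k|\vect a_k|^2$, $\|\nabla\vect u\|_{L^2}^2=4\pi^2\sum_k|k|^2|\vect a_k|^2$, and $\|\vect u-\fint_Y\vect u\|_{L^2}^2=\sum_{k\neq0}|\vect a_k|^2$, gives \eqref{estimate1}, \eqref{estimate11}, and \eqref{estimate12} respectively; one then takes $C_{\text{fourier}}$ to be the largest of the resulting constants, which is manifestly independent of $\chi$.

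There is no genuine obstacle: the rank-one inequality \eqref{rankonesymformula} is the only nontrivial input, and it is already proved in the appendix, while the remainder is the Plancherel decoupling and a short case analysis of $|2\pi k+\chi|^2$. The one point demanding a little care is the density/limiting argument that licenses the Fourier computation on all of $H_\#^1(Y;\C^3)$ — equivalently, identifying $H_\#^1(Y;\C^3)$ with $\{\vect u\in L^2(Y;\C^3) : \sum_k(1+|k|^2)|\vect a_k|^2<\infty\}$ and the Fourier coefficients of $\partial_{y_j}\vect u$ with $2\pi i k_j\vect a_k$ — which is standard and is handled by first establishing the inequalities for $\vect u\in C_\#^\infty(Y;\C^3)$ and then invoking $H^1$-continuity of both sides.
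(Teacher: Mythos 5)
Your proof is correct and takes essentially the same route as the paper's: Fourier decomposition on the torus, Plancherel, the rank-one symmetrisation inequality \eqref{rankonesymformula}, and the elementary lower bounds $|2\pi k+\chi|^2\ge \pi^2|k|^2$, $\ge \pi^2$ for $k\neq 0$, and $\ge \tfrac13|\chi|^2$. The paper presents the steps in a slightly different order (first bounding $\|\nabla\vect u+\vect u\otimes i\chi\|_{L^2}^2$ from below by each target quantity, then applying \eqref{rankonesymformula} at the end), but the argument is the same.
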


\begin{proof}
    We begin by taking a function $\vect u \in H_\#^1(Y;\C^3)$ and considering its Fourier series decomposition.
    \begin{equation}
        \vect u = \sum_{k \in \Z^3} a_k e^{2\pi i k \cdot y}, \quad  \nabla \vect u = \sum_{k\in \Z^3} e^{2\pi i k \cdot y} a_k \otimes \left(2 \pi i k \right), \quad \vect u - \fint_Y \vect u= \sum_{k \in \Z^3 \setminus \{0\}} a_k e^{2\pi i k \cdot y}.
    \end{equation}
    Plancherel's formula yields: 
    \begin{equation}
        \left\lVert \vect u\right\rVert_{L^2(Y;\C^3)}^2 = \sum_{k\in \Z^3} |a_k|^2, \quad \left\lVert \nabla \vect u\right\rVert_{L^2(Y;\C^{3 \times 3})}^2 = \sum_{k\in \Z^3} |2 \pi |^2|a_k |^2| k|^2, \quad \left\lVert \vect u- \fint_Y \vect u\right\rVert_{L^2(Y;\C^3)}^2 = \sum_{k\in \Z^3 \setminus \{0\}} |a_k|^2.
    \end{equation}
    Furthermore, we have: 
    \begin{equation}
    \begin{split}
            \nabla \vect u + \vect u \otimes i\chi= \sum_{k\in \Z^3} e^{2\pi i k \cdot y} a_k \otimes \left(2 \pi i k + i\chi \right).
    \end{split}
    \end{equation}
    Now we calculate:
    \begin{equation}
        \begin{split}
            \left\lVert \nabla \vect u + \vect u \otimes (i\chi)\right\rVert_{L^2(Y;\C^{3 \times 3})}^2 = \sum_{k\in \Z^3} |a_k \otimes \left(2 \pi i k + i\chi \right)|^2 & = \sum_{k\in \Z^3 \setminus \{0\}} |a_k \otimes \left(2 \pi i k + i\chi \right)|^2 + |a_0 \otimes  i\chi |^2 \\
            & = \sum_{k\in \Z^3 \setminus \{0\}} |a_k |^2| 2 \pi i k + i\chi |^2 + |a_0 |^2|  \chi |^2.
        \end{split}
    \end{equation}
    Now, since $\chi \in Y' = [-\pi,\pi)^3$, if at least one $(k)_j \geq 1$, it is clear that 
    \begin{equation}
        |2 \pi i k + i \chi|^2 \geq C,
    \end{equation}
    where the constant $C>0$ does not depend on $\chi$ and $k \in \Z^3\setminus \{0\}$. Now, we obtain:

    \begin{equation}\label{estimate1_working}
         \left\lVert \nabla \vect u + \vect u \otimes (i\chi)\right\rVert_{L^2(Y;\C^{3 \times 3})}^2  =  \sum_{k\in \Z^3 \setminus \{0\}} |a_k |^2| 2 \pi i k + i\chi |^2 + |a_0 |^2|  \chi |^2 \geq  \sum_{k\in \Z^3 \setminus \{0\}} C|a_k |^2 = C\left\lVert \vect u- \fint_Y \vect u\right\rVert_{L^2(Y;\C^3)}^2.
    \end{equation}
    Furthermore: 
    \begin{equation}\label{estimate11_working}
         \left\lVert \nabla \vect u + \vect u \otimes (i\chi)\right\rVert_{L^2(Y;\C^{3 \times 3})}^2  =  \sum_{k\in \Z^3 \setminus \{0\}} |a_k |^2| 2 \pi i k + i\chi |^2 + |a_0 |^2|  \chi |^2 \geq  \sum_{k\in \Z^3} C|\chi|^2|a_k |^2 = C|\chi|^2\left\lVert \vect u\right\rVert_{L^2(Y;\C^3)}^2.
    \end{equation}
    Also:
    \begin{equation}\label{estimate12_working}
    \begin{split}
             \left\lVert \nabla \vect u + \vect u \otimes (i\chi)\right\rVert_{L^2(Y;\C^{3 \times 3})}^2     \geq  \sum_{k\in \Z^3 \setminus \{0\}} C|a_k |^2 |2 \pi|^2 |k|^2 = C\left\lVert \nabla \vect u\right\rVert_{L^2(Y;\C^3)}^2 .
    \end{split}
    \end{equation}
    On the other hand, we have
    \begin{alignat*}{3}
   \left\lVert \nabla \vect u + \vect u \otimes (i\chi)\right\rVert_{L^2(Y;\C^{3 \times 3})}^2
    &= \sum_{k\in \Z^3} |a_k \otimes \left(2 \pi i k + i\chi \right)|^2 
    && \text{By Plancherel's formula.}\\
    & \leq C\sum_{k\in \Z^3} |\sym (a_k \otimes \left(2 \pi i k + i\chi \right))|^2 \quad \quad \quad 
    &&\text{By the estimate \eqref{rankonesymformula}.}\\
    &= C \left\lVert \sym \left(\nabla \vect u + \vect u \otimes (i\chi) \right)\right\rVert_{L^2(Y;\C^{3 \times 3})}^2
    &&\parbox{30em}{Again by the Plancherel's formula.}\\
    &= C \left\lVert \simgrad \vect u + iX_\chi \vect u \right\rVert_{L^2(Y;\C^{3 \times 3})}^2.
    &&\text{By the definition of the operator $iX_\chi$.}
\end{alignat*}
This concludes the proof.
\end{proof}

\section{Estimates for Section \ref{sect:the_asymp_method}}\label{sect:routine_estimates}

In this appendix we justify the asymptotic procedure by verifying the claimed estimates in Section \ref{sect:the_asymp_method_cycle1est} (cycle 1) and Section \ref{sect:the_asymp_method_cycle2est} (cycle 2). We remind the reader the following notation from \eqref{eqn:dist_to_spec}
\begin{equation}
    D_{\text{hom}}(z) := \text{dist} (z, \sigma(\tfrac{1}{|\chi|^2} \mathcal{A}_\chi^{\text{hom}} ) ), \qquad
    D(z) := \text{dist} (z, \sigma(\tfrac{1}{|\chi|^2} \mathcal{A}_\chi ) ).
\end{equation}

\subsection*{Cycle 1}

\begin{proof}[Proof of \eqref{bound11}.]
    Since $\vect u_0$ is a constant, $\| \vect u_0 \|_{H^1} = \| \vect u_0 \|_{L^2} = |\vect u_0|$. Since $z \in \rho(\frac{1}{|\chi|^2} \mathcal{A}_\chi^{\text{hom}})$, we have
    \begin{equation*}
        \| \vect u_0 \|_{H^1} \stackrel{\text{\eqref{leadingorderterm}}}{=} \left\| \left( \frac{1}{|\chi|^2} \mathcal{A}_\chi^{\text{hom}} - zI_{\C^3} \right)^{-1} S \vect f \right\|_{L^2}
        \leq \frac{1}{D_\text{hom}(z)} \| \vect f \|_{L^2}.
    \end{equation*}
    This gives us \eqref{bound11}.
\end{proof}

\begin{proof}[Proof of \eqref{bound12}.]
    By testing \eqref{corr2} with $\vect u_1$, applying Assumption \ref{coffassumption}, \eqref{eqn:Xchi_est}, and H\"older's inequality, we have
    \begin{align}
        &\| \simgrad \vect u_1 \|_{L^2}^2 \leq C |\chi| \| \vect u_0 \|_{L^2} \| \simgrad \vect u_1 \|_{L^2} \nonumber\\
        \Longleftrightarrow &\| \simgrad \vect u_1 \|_{L^2} \leq C |\chi| \| \vect u_0 \|_{L^2}, \label{eqn:simgrad_u1}
    \end{align}
    where the constant $C>0$ depends on $\nu$, $\|\mathbb{A}_{jl}^{ik}\|_{L^\infty}$, and $C_{\text{symrk1}}$. Now $\| \vect u_0 \|_{L^2}$ can be bounded above by $\text{dist} (z, \sigma(\frac{1}{|\chi|^2} \mathcal{A}_\chi^{\text{hom}}) )^{-1} \| \vect f \|_{L^2}$ (by \eqref{bound11}), and $\| \simgrad \vect u_1 \|_{L^2}$ can be bounded below by $C \| \vect u_1 \|_{H^1}$ (by \eqref{korninequality33}, using $\int \vect u_1 = 0$.) This gives us the desired inequality, \eqref{bound12}.
\end{proof}

\begin{proof}[Proof of \eqref{bound13}.]
    By testing \eqref{correctoru2} with $\vect u_2$, applying Assumption \ref{coffassumption}, \eqref{eqn:Xchi_est}, and H\"older's inequality, we have
    \begin{align*}
        \nu \| \simgrad \vect u_2 \|_{L^2}^2 &\leq C \bigg[ |\chi| \| \vect u_1 \|_{L^2} \| \simgrad \vect u_2 \|_{L^2} + |\chi| \| \simgrad \vect u_1 \|_{L^2} \| \vect u_2 \|_{L^2} + |\chi|^2 \| \vect u_0 \|_{L^2} \| \vect u_2 \|_{L^2} \bigg] \\
        & \qquad + |\chi|^2 \bigg[ |z| \|\vect u_0 \|_{L^2} + \| \vect f \|_{L^2} \bigg] \| \vect u_2 \|_{L^2}.
    \end{align*}
    On the right-hand side, we apply the inequalities \eqref{bound11} to $\vect u_0$, \eqref{bound12} to $\vect u_1$, \eqref{eqn:simgrad_u1} to $\simgrad \vect u_1$, and \eqref{korninequality33} to $\vect u_2$, giving us
    \begin{align}\label{eqn:simgrad_u2}
        \nu \| \simgrad \vect u_2 \|_{L^2}^{\cancel{2}} 
        &\leq C \left[\frac{\max \{1,|z|\}}{D_\text{hom}(z)} + 1 \right] |\chi|^2 \| \vect f \|_{L^2} \cancel{\| \simgrad \vect u_2 \|_{L^2}.}
    \end{align}
    Finally, we note that the left-hand side can be bounded below by $C \| \vect u_2 \|_{H^1}$ (by \eqref{korninequality33}, using $\int \vect u_2 = 0$.) This gives us \eqref{bound13}.
\end{proof}


\begin{proof}[Proof of \eqref{bounderror1}.]
    By H\"older's inequality, Assumption \ref{coffassumption}, and \eqref{eqn:Xchi_est},
    \begin{align*}
        |\mathcal{R}_\text{err} (\vect v)|
        &\leq C \bigg[ |\chi|^2 \| \vect u_1 \|_{L^2} \| \vect v \|_{L^2}
        + |\chi| \| \simgrad \vect u_2 \|_{L^2} \| \vect v \|_{L^2}
        + |\chi| \| \vect u_2 \|_{L^2} \| \simgrad \vect v \|_{L^2}
        + |\chi|^2 \| \vect u_2 \|_{L^2} \| \vect v \|_{L^2}
        \bigg] \\
        &\qquad + |\chi|^2 \bigg( |z| \| \vect u_1 \|_{L^2} 
        + |z| \| \vect u_2 \|_{L^2} \bigg) \| \vect v \|_{L^2}
    \end{align*}
    Now apply \eqref{bound12} to $\| \vect u_1 \|_{L^2}$, \eqref{bound13} to $\| \vect u_2 \|_{L^2}$, and \eqref{eqn:simgrad_u2} in the above inequality, and we obtain
    \begin{align*}
        |\mathcal{R}_\text{err} (\vect v)| 
        &\leq C \left[ \frac{\max \{1,|z|^2\}}{D_\text{hom}(z)} + \max \{1,|z|\} \right] |\chi|^3 \| \vect f \|_{L^2} \bigg[ \| \vect v \|_{L^2} + \| \simgrad \vect v \|_{L^2} \bigg] \\
        &\leq C \left[ \frac{\max \{1,|z|^2\}}{D_\text{hom}(z)} + \max \{1,|z|\} \right] |\chi|^3 \| \vect f \|_{L^2} \| \vect v \|_{H^1}.  \qedhere
    \end{align*}
\end{proof}

\subsection*{Cycle 2}

\begin{proof}[Proof of \eqref{bound21}.]
    Since $\vect u_0^{(1)}$ is a constant, $\| \vect u_0^{(1)} \|_{H^1} = \| \vect u_0^{(1)} \|_{L^2} = |\vect u_0^{(1)}|$. Note that \eqref{newconstantcorrector} may be written as
    \begin{align}
        \left( \frac{1}{|\chi|^2} \mathcal{A}_\chi^{\text{hom}} - z \right) \vect u_0^{(1)} = -\frac{1}{|\chi|^2} S (iX_\chi)^* \A (\simgrad \vect u_2 + iX_\chi \vect u_1)
    \end{align}
    Since $z \in \rho(\frac{1}{|\chi|^2} \mathcal{A}_\chi^{\text{hom}} )$, we compute
    \begin{alignat*}{2}
        \| \vect u_0^{(1)} \|_{H^1} 
        &\leq \frac{1}{D_\text{hom}(z)} \frac{1}{|\chi|^2} \| (iX_\chi)^* \A (\simgrad \vect u_2 + iX_\chi \vect u_1) \|_{L^2} \qquad\\
        &\leq \frac{C}{D_\text{hom}(z)} \frac{1}{|\chi|} \bigg[ \| \simgrad \vect u_2 \|_{L^2} + |\chi| \| \vect u_1 \|_{L^2} \bigg] \quad
        &&\text{By Assumption \ref{coffassumption} and \eqref{eqn:Xchi_est}.} \\
        & \leq C \left[ \frac{\max \{1,|z|\}}{D_\text{hom}(z)^2} + \frac{1}{D_\text{hom}(z)} \right] |\chi| \| \vect f \|_{L^2}
        &&\parbox{15em}{By applying \eqref{eqn:simgrad_u2} to $\| \simgrad \vect u_2 \|$, \\ and \eqref{bound12} to $\vect u_1$.}
    \end{alignat*}
    This gives us \eqref{bound21}, as required.
\end{proof}

\begin{proof}[Proof of \eqref{bound22}.] 
    The proof proceeds in exactly the same way as that of \eqref{bound12}. Testing \eqref{corr2new} with $\vect u_1^{(1)}$, applying Assumption \ref{coffassumption}, \eqref{eqn:Xchi_est}, H\"older's inequality, we have
    \begin{equation}
        \| \simgrad \vect u_1^{(1)} \|_{L^2} \leq C |\chi| \| \vect u_0^{(1)} \|_{L^2} \stackrel{\text{\eqref{bound21}}}{\leq} C \left[ \frac{\max \{1,|z|\}}{D_\text{hom}(z)^2} + \frac{1}{D_\text{hom}(z)} \right] |\chi|^2 \| \vect f \|_{L^2}. \label{eqn:simgrad_u11}
    \end{equation}
    Now apply \eqref{korninequality33} (as $\int \vect u_1^{(1)} = 0$) to bound $\| \simgrad \vect u_1^{(1)} \|_{L^2}$ below in terms of $\| \vect u_1^{(1)} \|_{H^1}$.
\end{proof}

\begin{proof}[Proof of \eqref{bound23}.]
    By testing \eqref{correctoru3new} with $\vect u_2^{(1)}$, applying Assumption \ref{coffassumption}, \eqref{eqn:Xchi_est}, and H\"older's inequality,
    \begin{align*}
        &\nu \| \simgrad \vect u_2^{(1)} \|_{L^2}^2 
        \leq C \bigg[ 
        |\chi| \left( \| \vect u_1^{(1)} \|_{L^2} + \| \vect u_2 \|_{L^2} \right) \| \simgrad \vect u_2^{(1)} \|_{L^2} \\
        &\qquad + |\chi| \left( \| \simgrad \vect u_1^{(1)} \|_{L^2} + \| \simgrad \vect u_2 \|_{L^2} \right) \| \vect u_2^{(1)} \|_{L^2}
        + |\chi|^2 \left( \| \vect u_0^{(1)} \|_{L^2} + \| \vect u_1 \|_{L^2} \right) \| \vect u_2^{(1)} \|_{L^2} \bigg] \\
        &\qquad + |z| |\chi|^2 \left( \| \vect u_0^{(1)} \|_{L^2} + \| \vect u_1 \|_{L^2}  \right) \| \vect u_2^{(1)} \|_{L^2}.
    \end{align*}
    Now apply to the above, the following four inequalities:
    \begin{alignat*}{2}
        \| \vect u_2^{(1)} \|_{L^2} 
        &\leq C \| \simgrad \vect u_2^{(1)} \|_{L^2}.
        \span\span \qquad\qquad\qquad \text{By \eqref{korninequality33}, since $\int_Y \vect u_2^{(1)} = 0$.} \\
        \| \vect u_1^{(1)} \|_{L^2} + \| \vect u_2 \|_{L^2}
        &\leq C \left[ \frac{\max \{1,|z|\}}{D_\text{hom}(z)^2} + \frac{\max \{1,|z|\}}{D_\text{hom}(z)} + 1 \right] |\chi|^2 \| \vect f \|_{L^2}.
        \qquad &&\text{By \eqref{bound22} and \eqref{bound13}.} \\
        \| \simgrad \vect u_1^{(1)} \|_{L^2} + \| \simgrad \vect u_2 \|_{L^2}
        &\leq C \left[ \frac{\max \{1,|z|\}}{D_\text{hom}(z)^2} + \frac{\max \{1,|z|\}}{D_\text{hom}(z)} + 1 \right] |\chi|^2 \| \vect f \|_{L^2}.
        \quad &&\text{By \eqref{eqn:simgrad_u11} and \eqref{eqn:simgrad_u2}.} \\
        \| \vect u_0^{(1)} \|_{L^2} + \| \vect u_1 \|_{L^2}
        &\leq C \left[ \frac{\max \{1,|z|\}}{D_\text{hom}(z)^2} + \frac{1}{D_\text{hom}(z)} \right] |\chi| \| \vect f \|_{L^2}.
        &&\text{By \eqref{bound21} and \eqref{bound12}.}
    \end{alignat*}
    Altogether, this gives us 
    \begin{align}\label{eqn:simgrad_u21}
        \| \simgrad \vect u_2^{(1)} \|_{L^2} \leq C 
        \left[ \frac{\max \{1,|z|^2\}}{D_\text{hom}(z)^2} 
        + \frac{\max \{1,|z|\}}{D_\text{hom}(z)} + 1 \right] |\chi|^3 \| \vect f \|_{L^2}.
    \end{align}
    Another application of \eqref{korninequality33} to the left-hand side of \eqref{eqn:simgrad_u21} completes the proof.
\end{proof}

\begin{proof}[Proof of \eqref{bounderror2}.]
    By H\"older's inequality, Assumption \ref{coffassumption}, and \eqref{eqn:Xchi_est}, 
    \begin{align*}
        |\mathcal{R}_{\text{err}}^{(1)} (\vect v)| 
        &\leq C \bigg[
        |\chi|^2 \| \vect u_2 \|_{L^2} \| \vect v \|_{L^2}
        + |\chi|^2 \| \vect u_1^{(1)} \|_{L^2} \| \vect v \|_{L^2}
        + |\chi| \| \simgrad \vect u_2^{(1)} \|_{L^2} \| \vect v \|_{L^2}
        + |\chi| \| \vect u_2^{(1)} \|_{L^2} \| \simgrad \vect v \|_{L^2} \\
        &\qquad + |\chi|^2 \| \vect u_2^{(1)} \|_{L^2} \| \vect v \|_{L^2}
        + |z| |\chi|^2 \| \vect u_1^{(1)} \|_{L^2} \| \vect v \|_{L^2}
        + |z| |\chi|^2 \| \vect u_2^{(1)} \|_{L^2} \| \vect v \|_{L^2}
        + |z| |\chi|^2 \| \vect u_2 \|_{L^2} \| \vect v \|_{L^2}
        \bigg].
    \end{align*}
    Now apply to the above, the following three inequalities,
    \begin{alignat*}{2}
        \| \vect u_2 \|_{L^2} + \| \vect u_1^{(1)} \|_{L^2} 
        &\leq C \left[ \frac{\max \{1,|z|\}}{D_\text{hom}(z)^2} 
        + \frac{\max \{1,|z|\}}{D_\text{hom}(z)} + 1 \right] |\chi|^2 \| \vect f \|_{L^2}. \qquad
        &&\text{By \eqref{bound13} and \eqref{bound22}.} \\
        \| \simgrad \vect u_2^{(1)} \|_{L^2} 
        &\leq C \left[ \frac{\max \{1,|z|^2\}}{D_\text{hom}(z)^2} 
        + \frac{\max \{1,|z|\}}{D_\text{hom}(z)} + 1 \right] |\chi|^3 \| \vect f \|_{L^2}.
        &&\text{By \eqref{eqn:simgrad_u21}.} \\
        \| \vect u_2^{(1)} \|_{L^2} 
        &\leq C \left[ \frac{\max \{1,|z|^2\}}{D_\text{hom}(z)^2} 
        + \frac{\max \{1,|z|\}}{D_\text{hom}(z)} + 1 \right] |\chi|^3 \| \vect f \|_{L^2}.
        &&\text{By \eqref{bound23}.}
    \end{alignat*}
    Altogether, this gives us
    \begin{align*}
        |\mathcal{R}_\text{err}^{(1)} (\vect v)| 
        &\leq C \left[ \frac{\max \{1,|z|^3\}}{D_\text{hom}(z)^2} 
        + \frac{\max \{1,|z|^2\}}{D_\text{hom}(z)} 
        + \max \{1,|z|\} \right] 
        |\chi|^4 \| \vect f \|_{L^2} 
        \bigg[ \| \vect v \|_{L^2} + \| \simgrad \vect v \|_{L^2} \bigg] \\
        &\leq C \left[ \frac{\max \{1,|z|^3\}}{D_\text{hom}(z)^2} 
        + \frac{\max \{1,|z|^2\}}{D_\text{hom}(z)} 
        + \max \{1,|z|\} \right]
        |\chi|^4 \| \vect f \|_{L^2} 
        \| \vect v \|_{H^1}.  \qedhere
    \end{align*}
\end{proof}

\section{An identity regarding the smoothing operator}
The following lemma relates the scaled Gelfand transform $\mathcal{G}_\eps$ with the Fourier transform, in the special case where the function, after applying $\mathcal{G}_\eps$, is constant in $y\in Y$:

\begin{lemma}\label{lem:fourier_vs_gelfand}
    For all $\theta \in \eps^{-1} Y'$, the following identity holds
    \begin{align}
        \int_Y (\mathcal{G}_\eps \vect u)(y, \eps\theta) dy = \left(\frac{1}{2\pi\eps}\right)^{3/2}\mathcal{F}(\vect u)\left( \frac{\theta}{2\pi} \right).
    \end{align}
    Here, $\mathcal{F}(\vect u)$ denotes the Fourier transform of $\vect u \in L^2(\R^3;\R^3)$, i.e. $\mathcal{F}(\vect u)(\theta) = \int_{\R^3} \vect u (y) e^{-2\pi i \theta \cdot y} dy$.
\end{lemma}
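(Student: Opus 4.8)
The plan is to compute $\int_Y (\mathcal{G}_\eps \vect u)(y,\eps\theta)\, dy$ directly from the definition \eqref{gelfand} of the scaled Gelfand transform, and to recognize the resulting sum-over-$\Z^3$ as a single integral over $\R^3$, which up to constants is the Fourier transform evaluated at $\theta/(2\pi)$. First I would substitute $\chi = \eps\theta$ into \eqref{gelfand} to get
\[
    (\mathcal{G}_\eps \vect u)(y,\eps\theta) = \left(\frac{\eps}{2\pi}\right)^{3/2} \sum_{n\in\Z^3} e^{-i\eps\theta\cdot(y+n)}\,\vect u(\eps(y+n)),
\]
then integrate over $y\in Y = [0,1)^3$. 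The key step is the ``unfolding'' observation: as $n$ ranges over $\Z^3$ and $y$ over $[0,1)^3$, the point $y+n$ ranges bijectively (up to null sets) over $\R^3$, so
\[
    \int_Y \sum_{n\in\Z^3} e^{-i\eps\theta\cdot(y+n)}\,\vect u(\eps(y+n))\, dy = \int_{\R^3} e^{-i\eps\theta\cdot w}\,\vect u(\eps w)\, dw,
\]
where I have written $w = y+n$ and used the translation-invariance of Lebesgue measure on each cube $n + [0,1)^3$; interchanging sum and integral is justified since the sum is, for fixed $y$, essentially a rearrangement and $\vect u \in L^2(\R^3;\R^3) \subset L^1_{\mathrm{loc}}$ with the dominating-sum argument being the standard one for the Gelfand transform.

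Next I would change variables $x = \eps w$, so $dw = \eps^{-3}\, dx$, giving
\[
    \int_{\R^3} e^{-i\eps\theta\cdot w}\,\vect u(\eps w)\, dw = \eps^{-3}\int_{\R^3} e^{-i\theta\cdot x}\,\vect u(x)\, dx.
\]
Finally I would match this against the paper's normalization of the Fourier transform, $\mathcal{F}(\vect u)(\theta) = \int_{\R^3}\vect u(x)\,e^{-2\pi i\theta\cdot x}\, dx$, by noting $e^{-i\theta\cdot x} = e^{-2\pi i (\theta/(2\pi))\cdot x}$, so the integral equals $\mathcal{F}(\vect u)(\theta/(2\pi))$. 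Collecting the prefactors $\left(\frac{\eps}{2\pi}\right)^{3/2}\cdot \eps^{-3} = \left(\frac{1}{2\pi\eps}\right)^{3/2}$ yields exactly the claimed identity.

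The main (mild) obstacle is the rigorous justification of interchanging the infinite sum with the integral over $Y$ and the subsequent reassembly into an integral over $\R^3$; this is routine for $\vect u$ in a dense class (say $C_c^\infty$) by absolute convergence, and then extends to all of $L^2(\R^3;\R^3)$ by the continuity of both sides in $\vect u$ — the left side because $\mathcal{G}_\eps$ is unitary (hence bounded into $L^2(Y\times Y')$, and integration over $Y$ is bounded), the right side because $\mathcal{F}$ is bounded on $L^2$ and evaluation is controlled after the $\eps^{-3}$ rescaling when restricted to the relevant frequency range $\theta\in\eps^{-1}Y'$. One should also remark that the identity is stated for $\theta\in\eps^{-1}Y'$ precisely so that $\eps\theta\in Y'$, making $(\mathcal{G}_\eps\vect u)(\cdot,\eps\theta)$ well-defined by the Gelfand transform; the computation itself does not otherwise use this restriction. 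I would present the argument first for Schwartz (or compactly supported smooth) $\vect u$ and then invoke density, keeping the write-up short since every step is a standard manipulation.
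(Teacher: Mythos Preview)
Your proof is correct and follows essentially the same approach as the paper: substitute $\chi=\eps\theta$ into \eqref{gelfand}, unfold the sum over $\Z^3$ and the integral over $Y$ into a single integral over $\R^3$, rescale, and identify the Fourier transform. The only cosmetic difference is that the paper performs the change of variables $y\mapsto \eps(y+n)$ in one step (landing directly on $\eps^{-3}\sum_n\int_{\eps(Y+n)}$) rather than your two-step $w=y+n$ followed by $x=\eps w$; your added remark on justifying the sum--integral interchange by density is a welcome bit of extra care that the paper omits.
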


\begin{proof}
    First, let us express the action of the Gelfand transform (\ref{gelfand}), in terms of the scaled quasimomentum $\theta = \eps^{-1}\chi$:
    \begin{equation}
        (\mathcal{G}_\varepsilon \vect u)(y,\varepsilon\theta):= \left(\frac{\varepsilon}{2\pi}\right)^{3/2} \sum_{n\in \Z^3}e^{-i\eps\theta \cdot (y+n)}\vect u(\varepsilon(y+n)), \quad y \in Y, \quad \theta \in \eps^{-1} Y',
    \end{equation}
    The result now follows from the following computation: 
    \begin{equation}
    \begin{split}
            \int_Y (\mathcal{G}_\eps \vect u)(y, \eps\theta) dy 
            &= \int_Y \left(\frac{\eps}{2\pi}\right)^{3/2} \sum_{n\in \Z^3}e^{-i\eps \theta \cdot (y+n)}\vect u(\eps(y+n))dy \\
            &= \left(\frac{\eps}{2\pi}\right)^{3/2} \sum_{n\in \Z^3}\int_Ye^{-i\theta \cdot \eps(y+n)}\vect u(\eps(y+n))dy\\
            &= \left(\frac{\eps}{2\pi}\right)^{3/2} \frac{1}{\eps^3} \sum_{n\in \Z^3}\int_{\eps(Y+ n)} e^{-i\theta \cdot y}\vect u( y)dy \\
            &= \left(\frac{1}{2\pi\eps}\right)^{3/2} \int_{\R^3}e^{-i\theta \cdot y}\vect u( y)dy 
            = \left(\frac{1}{2\pi\eps}\right)^{3/2}\mathcal{F}(\vect u)\left( \frac{\theta}{2\pi} \right) \quad \forall \theta \in \eps^{-1}Y'. \qedhere
    \end{split}
    \end{equation}
\end{proof}

\section{An estimate for the weak solution of resolvent problem}

In this appendix, we will assume the following setup: $\mathcal{A}$ is a non-negative self-adjoint operator on a Hilbert space $(H,\langle \cdot, \cdot \rangle_H)$. Let $a$ be the form associated with $\mathcal{A}$. Denote by $\mathcal{D}(\mathcal{A})$ the domain of $\mathcal{A}$, and $\mathcal{D}(a) = X$ the domain of $a$. Since $a$ is closed, $(X, \langle \cdot, \cdot \rangle_a)$ is a Hilbert space, where
\begin{equation}
    \langle \vect u, \vect v \rangle_a := a(\vect u, \vect v) + \langle \vect u, \vect v \rangle_H
    = \langle \mathcal{A}^{1/2}\vect u , \mathcal{A}^{1/2}\vect v\rangle_H + \langle \vect u, \vect v \rangle_H, \qquad \forall \vect u, \vect v \in \mathcal{D}(\mathcal{A}^{1/2}) = \mathcal{D}(a) = X.
\end{equation}

On $X$, we also consider the inner product
\begin{equation}
    \langle \vect u, \vect v \rangle_X:= \langle(\mathcal{A}^{1/2} + I)\vect u, (\mathcal{A}^{1/2} + I) \vect v \rangle_{H}, \quad \forall \vect u, \vect v \in X.
\end{equation}

Denote by $\| \cdot \|_{H}$ the norm corresponding to the inner product $\langle \cdot, \cdot \rangle_H$, and similarly for $\| \cdot \|_{a}$ and $\| \cdot \|_{X}$.

\begin{lemma}\label{lem:norm_equiv_X_a}
    The norm $\| \cdot \|_{X}$ is equivalent to $\| \cdot \|_a$. In particular, $(X, \langle \cdot, \cdot \rangle_X)$ is a Hilbert space.
\end{lemma}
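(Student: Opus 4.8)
\textbf{Proof plan for Lemma~\ref{lem:norm_equiv_X_a}.}
The plan is to show the two-sided estimate $c\,\|\vect u\|_a \le \|\vect u\|_X \le C\,\|\vect u\|_a$ for all $\vect u \in X$, with $c, C>0$ absolute constants; once this is in hand, the equivalence of norms is immediate, and the completeness of $(X,\langle\cdot,\cdot\rangle_X)$ follows because $(X,\langle\cdot,\cdot\rangle_a)$ is already known to be a Hilbert space (the form $a$ being closed). The whole computation should be carried out through the spectral calculus for the non-negative self-adjoint operator $\mathcal{A}$: writing $\mathcal{A} = \int_0^\infty \lambda\, dE_\lambda$, we have $\mathcal{A}^{1/2} = \int_0^\infty \lambda^{1/2}\,dE_\lambda$, so that for $\vect u \in X = \mathcal{D}(\mathcal{A}^{1/2})$,
\begin{equation*}
    \|\vect u\|_a^2 = \int_0^\infty (\lambda + 1)\, d\langle E_\lambda \vect u, \vect u\rangle_H, \qquad
    \|\vect u\|_X^2 = \int_0^\infty (\lambda^{1/2} + 1)^2\, d\langle E_\lambda \vect u, \vect u\rangle_H.
\end{equation*}

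Thus the lemma reduces to the elementary scalar inequality: there exist $c, C>0$ such that
\begin{equation*}
    c\,(\lambda + 1) \le (\lambda^{1/2} + 1)^2 \le C\,(\lambda + 1), \qquad \forall\, \lambda \ge 0.
\end{equation*}
Expanding $(\lambda^{1/2}+1)^2 = \lambda + 2\lambda^{1/2} + 1$, the upper bound follows from $2\lambda^{1/2} \le \lambda + 1$ (AM--GM), giving $C = 2$, and the lower bound is trivial with $c = 1$ since $2\lambda^{1/2} \ge 0$. Feeding these pointwise bounds back into the spectral integrals yields $\|\vect u\|_a^2 \le \|\vect u\|_X^2 \le 2\,\|\vect u\|_a^2$ for every $\vect u \in X$. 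Alternatively, and perhaps more cleanly for exposition, one can avoid explicit spectral integrals and argue directly: set $T = \mathcal{A}^{1/2}$, note $\|\vect u\|_a^2 = \|T\vect u\|_H^2 + \|\vect u\|_H^2$ and $\|\vect u\|_X^2 = \|T\vect u\|_H^2 + 2\,\mathfrak{R}\langle T\vect u, \vect u\rangle_H + \|\vect u\|_H^2$, then estimate the cross term by Cauchy--Schwarz and Young's inequality: $|2\langle T\vect u,\vect u\rangle_H| \le 2\|T\vect u\|_H\|\vect u\|_H \le \|T\vect u\|_H^2 + \|\vect u\|_H^2$, which immediately gives $0 \le \|\vect u\|_X^2 \le 2\|\vect u\|_a^2$, while non-negativity of the cross term (or, if $T$ is not sign-definite, the same Young bound with the opposite sign) gives $\|\vect u\|_X^2 \ge \tfrac12\|\vect u\|_a^2$ — one must be slightly careful here since $\mathfrak{R}\langle T\vect u,\vect u\rangle_H \ge 0$ as $T$ is non-negative self-adjoint, so in fact $\|\vect u\|_X^2 \ge \|\vect u\|_a^2$ directly.

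I do not anticipate a genuine obstacle here; the only point requiring a moment's care is making sure the domains match up, i.e.\ that $\mathcal{D}(\mathcal{A}^{1/2}) = \mathcal{D}(a) = X$ so that both inner products are defined on the same space (this is the standard identification for the form associated to a non-negative self-adjoint operator, and is already invoked in the setup of this appendix). Once the norm equivalence is established, $(X, \|\cdot\|_X)$ is complete because a space that is complete in one norm is complete in any equivalent norm, and $(X,\|\cdot\|_a)$ is complete by the closedness of $a$; hence $(X,\langle\cdot,\cdot\rangle_X)$ is a Hilbert space, as claimed.
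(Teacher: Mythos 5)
Your proof is correct and, in its ``direct'' formulation (expand $\|(\mathcal{A}^{1/2}+I)\vect u\|_H^2$, drop the non-negative cross term for the lower bound, and use Young's inequality for the upper), is exactly the argument given in the paper; the spectral-calculus version is just an equivalent reformulation of the same estimate. One minor caution: the parenthetical fallback ``if $T$ is not sign-definite, the same Young bound with the opposite sign gives $\|\vect u\|_X^2 \ge \tfrac12\|\vect u\|_a^2$'' is not actually valid — without non-negativity of $T$ the cross term can annihilate the whole expression (consider $T=-I$) — but you immediately discard this in favour of the correct observation $\langle T\vect u,\vect u\rangle_H\ge 0$, so the final argument is sound.
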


\begin{proof}
    Let $\vect u \in X$. Note that $\| \cdot \|_X = \| (\mathcal{A}^{1/2}+I) \cdot \|_H$. Then,
    \begin{equation*}
        \| \vect u \|_{a}^2 = a(\vect u, \vect u) + \| \vect u\|_H^2 
        \leq \langle \mathcal{A} \vect u, \vect u \rangle_H + \langle \vect u, \vect u \rangle_H + 2 \underbrace{\langle \mathcal{A}^{1/2} \vect u, \vect u \rangle_H}_{\geq 0}
        = \| (\mathcal{A}^{1/2}+I) \vect u \|_H^2
        = \| \vect u \|_X^2,
    \end{equation*}
    and
    \begin{equation*}
        \| (\mathcal{A}^{1/2}+I) \vect u \|_H^2 
        \leq C \left( \| \mathcal{A}^{1/2} \vect u \|_H^2 + \| \vect u \|_H^2 \right)
        = C \left( a(\vect u,\vect u) + \| \vect u \|_H^2 \right) = C \| \vect u \|_a^2. \qedhere
    \end{equation*}
\end{proof}

\begin{proposition}
    \label{prop:abstract_ineq}
    Assume that $\lambda \in \rho(\mathcal{A})$, $\mathcal{R}$ is a bounded linear functional on $(X, \langle \cdot, \cdot \rangle_X)$, and $\vect u \in X$ solves the problem
    \begin{equation}
    \label{abstractweakresolventproblem}
        a(\vect u, \vect v) - \lambda \langle \vect u,\vect v \rangle_H = \mathcal{R}(\vect v), \quad \forall \vect v \in X.
    \end{equation}
    Then the following inequality holds for some $C>0$:
    \begin{equation}
    \label{eqn:absract_ineq}
        \lVert \vect u\rVert_X \leq C \max \left\{1, \frac{|\lambda + 1|}{\rm{dist}(\lambda, \sigma(\mathcal{A}))} \right\} \| \mathcal{R} \|_{X^*}.
    \end{equation}
\end{proposition}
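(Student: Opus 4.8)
The plan is to solve \eqref{abstractweakresolventproblem} explicitly through the Borel functional calculus for the self-adjoint operator $\mathcal{A}$, and then to read off \eqref{eqn:absract_ineq} from a scalar estimate on $\sigma(\mathcal{A})\subseteq[0,\infty)$. First I would represent the right-hand side as an element of $H$: since $(X,\langle\cdot,\cdot\rangle_X)$ is a Hilbert space (Lemma \ref{lem:norm_equiv_X_a}) and $\|\cdot\|_X = \|(\mathcal{A}^{1/2}+I)\cdot\|_H$, the Riesz representation theorem gives a unique $\vect g\in X$ with $\mathcal{R}(\vect v)=\langle\vect g,\vect v\rangle_X$ for all $\vect v\in X$ and $\|\vect g\|_X=\|\mathcal{R}\|_{X^\ast}$. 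Setting $\vect h:=(\mathcal{A}^{1/2}+I)\vect g\in H$, we get $\|\vect h\|_H=\|\mathcal{R}\|_{X^\ast}$ and $\mathcal{R}(\vect v)=\langle\vect h,(\mathcal{A}^{1/2}+I)\vect v\rangle_H$ for every $\vect v\in X$.

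Next I would write down the candidate solution. Let $d:=\mathrm{dist}(\lambda,\sigma(\mathcal{A}))>0$, let $\{E_t\}$ denote the spectral resolution of $\mathcal{A}$, and put $\vect u_\ast:=f(\mathcal{A})\vect h$ with $f(t):=(t^{1/2}+1)(t-\lambda)^{-1}$. Note that $f$, $t^{1/2}f(t)$ and $g(t):=(t^{1/2}+1)f(t)=(t^{1/2}+1)^2(t-\lambda)^{-1}$ are all bounded on $\sigma(\mathcal{A})$ (each has a finite limit as $t\to\infty$ and $|t-\lambda|\ge d$). Boundedness of $t^{1/2}f$ gives $\vect u_\ast\in\mathcal{D}(\mathcal{A}^{1/2})=X$ with $(\mathcal{A}^{1/2}+I)\vect u_\ast=g(\mathcal{A})\vect h$, and a computation with the spectral resolution, using the defining identity $(t-\lambda)f(t)=t^{1/2}+1$, shows that for all $\vect v\in X$,
\[
    a(\vect u_\ast,\vect v)-\lambda\langle\vect u_\ast,\vect v\rangle_H = \int (t-\lambda)\,f(t)\,d\langle E_t\vect h,\vect v\rangle = \int (t^{1/2}+1)\,d\langle E_t\vect h,\vect v\rangle = \langle\vect h,(\mathcal{A}^{1/2}+I)\vect v\rangle_H = \mathcal{R}(\vect v),
\]
so $\vect u_\ast$ solves \eqref{abstractweakresolventproblem}. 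For uniqueness, if $\vect u$ is any solution then $\vect w:=\vect u-\vect u_\ast\in X$ satisfies $a(\vect w,\vect v)=\langle\lambda\vect w,\vect v\rangle_H$ for all $\vect v\in X$ with $\lambda\vect w\in H$; by the characterisation of the operator associated with the form $a$, this forces $\vect w\in\mathcal{D}(\mathcal{A})$ and $\mathcal{A}\vect w=\lambda\vect w$, hence $\vect w=0$ because $\lambda\in\rho(\mathcal{A})$. Thus $\vect u=\vect u_\ast$.

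It then remains to extract the bound. We have
\[
    \|\vect u\|_X = \|(\mathcal{A}^{1/2}+I)\vect u_\ast\|_H = \|g(\mathcal{A})\vect h\|_H \le \Big(\sup_{t\in\sigma(\mathcal{A})}|g(t)|\Big)\,\|\vect h\|_H ,
\]
and for $t\ge 0$ one has $(t^{1/2}+1)^2\le 2(t+1)$ and $t+1=(t-\lambda)+(1+\lambda)$, so for $t\in\sigma(\mathcal{A})$,
\[
    |g(t)| = \frac{(t^{1/2}+1)^2}{|t-\lambda|} \le 2\left(1+\frac{|\lambda+1|}{|t-\lambda|}\right) \le 2\left(1+\frac{|\lambda+1|}{d}\right) \le 4\max\left\{1,\frac{|\lambda+1|}{d}\right\}.
\]
Together with $\|\vect h\|_H=\|\mathcal{R}\|_{X^\ast}$ this yields \eqref{eqn:absract_ineq}, e.g.\ with $C=4$.

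\emph{Expected main difficulty.} The delicate point is that $\mathcal{R}$ lives only in $X^\ast$, not in the canonical image of $H$, so the solution $\vect u$ need not lie in $\mathcal{D}(\mathcal{A})$ and one cannot simply invert $\mathcal{A}-\lambda I$. The remedy is to apply the functional calculus to the ``half-power-corrected'' symbol $f$ acting on the $H$-representative $\vect h$ of $\mathcal{R}$, and then to keep careful track, through the scalar measures $\langle E_t\vect h,\vect v\rangle$, that the symbols $f$, $t^{1/2}f$, $(t^{1/2}+1)f$ all remain bounded on $\sigma(\mathcal{A})$; this is exactly what legitimises both $\vect u_\ast\in X$ and the weak identity above, and it is the only place where more than bookkeeping is needed.
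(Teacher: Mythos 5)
Your proof is correct and follows essentially the same route as the paper: both use the spectral theorem for the non-negative self-adjoint $\mathcal{A}$ to solve \eqref{abstractweakresolventproblem} explicitly via the symbol $(\sqrt{t}+1)/(t-\lambda)$ applied to the $H$-representative of $\mathcal{R}$, and then reduce \eqref{eqn:absract_ineq} to the scalar bound on $(\sqrt{t}+1)^2/(t-\lambda)$ over $\sigma(\mathcal{A})$. The only difference is cosmetic — you work with the projection-valued measure form of the spectral theorem and add an explicit uniqueness argument, whereas the paper uses the multiplication-operator form on $L^2(M,d\mu)$, where the same algebra happens pointwise.
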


\begin{remark}\label{rmk:equiv_h1_a_X}
    Let us explain how Proposition \ref{prop:abstract_ineq} will be used. We are interested in the case $H = L^2(Y;\C^3)$, $X = H^1_\#(Y;\C^3)$, $\mathcal{A} = \frac{1}{|\chi|^2}  \mathcal{A}_\chi$. (Note the scaling factor $\frac{1}{|\chi|^2}$.) In this case, we have
    \begin{equation}\label{eqn:norm_H1_vs_a_claim}
        \| \cdot \|_{H^1} \leq C_\text{kornper} \| \cdot \|_a \text{ for some $C_{\text{kornper}}>0$ independent of $\chi$,} 
        \quad\text{ and }\quad 
        \| \cdot \|_a \stackrel{\text{Lemma \ref{lem:norm_equiv_X_a}}}{\sim} \| \cdot \|_X.
    \end{equation}
    The inequality in \eqref{eqn:norm_H1_vs_a_claim} follows by applying Assumption \ref{coffassumption} and Proposition \ref{prop:coercive_est}. We refer the reader to Lemma \ref{lem:equiv_h1_a} below for its proof. As for the functional $\mathcal{R}$, we are interested in the case $\mathcal{R} = \frac{1}{|\chi|^2} \mathcal{R}_\text{err}$. We will demonstrate that (see \eqref{bounderror1})
    \begin{align*}
        \left| \mathcal{R} (\vect v) \right| \leq C_{\text{op}} \| \vect v \|_{H^1}, \quad \text{for some constant $C_{\text{op}}$ that may depend on $z$ and $\chi$.}
    \end{align*}
    That is, $\mathcal{R}$ is a bounded linear functional on $(H_{\#}^1, \| \cdot \|_{H^1})$, with operator norm not exceeding $C_\text{op}$. But by \eqref{eqn:norm_H1_vs_a_claim}, $\mathcal{R}$ is a bounded linear functional on $(H_{\#}^1, \| \cdot \|_a)$, with norm not exceeding $C_\text{op}C_\text{kornper}$.

    We are now in a position to apply Proposition \ref{prop:abstract_ineq}. Indeed,
    \begin{align*}
        \| \vect u \|_{H^1} \stackrel{\text{\eqref{eqn:norm_H1_vs_a_claim}}}{\leq} 
        C_{\text{kornper}} \| \vect u \|_a \stackrel{\text{Prop \ref{prop:abstract_ineq}}}{\leq} 
        (C_\text{kornper})^2 C \max \left\{ 1, \frac{|\lambda + 1|}{\text{dist}(\lambda, \sigma(\mathcal{A}))} \right\} C_\text{op}.
    \end{align*}
    
    This is the desired inequality \eqref{bounduerr1}. A similar remark applies to $\frac{1}{|\chi|^2} \mathcal{R}_\text{err}^{(1)}$, by using \eqref{bounderror2}.
\end{remark}

\begin{remark}
    We will eventually restrict our choice of $\lambda \in \rho(\mathcal{A})$ to $\lambda \in \Gamma \subset \rho(\mathcal{A})$, where $\Gamma$ is a compact subset that is uniformly bounded away from $\sigma(\mathcal{A})$. (See Lemma \ref{lemma:contour}.) As a consequence we may further bound \eqref{eqn:absract_ineq} by $C \| \mathcal{R} \|_{X^*}$, where $C>0$ depends on the spectral parameter through $\Gamma$.
\end{remark}

\begin{proof}[Proof of Proposition \ref{prop:abstract_ineq}.]
    Since $(X, \langle \cdot, \cdot \rangle_X)$ is a Hilbert space (Lemma \ref{lem:norm_equiv_X_a}), an application of the Riesz representation theorem to $\mathcal{R}$ implies that there exists an $\vect r \in X$ such that
    \begin{equation}
        \mathcal{R}(\vect v) = \langle \vect r, \vect v \rangle_X = \langle(\mathcal{A}^{1/2} + I)\vect r, (\mathcal{A}^{1/2} + I) \vect v \rangle_{H},
    \end{equation}
    where $\| \mathcal{R} \|_{X^*} = \| r \|_X$. Then the problem \eqref{abstractweakresolventproblem} becomes: Find $\vect u \in X$ such that
    \begin{equation}
    \label{abstractweakresolventproblem2}
         \langle \mathcal{A}^{1/2}\vect u , \mathcal{A}^{1/2}\vect v\rangle_H - \lambda \langle \vect u,\vect v \rangle_H = \langle(\mathcal{A}^{1/2} + I)\vect r, (\mathcal{A}^{1/2} + I) \vect v \rangle_{H}, \quad \forall \vect v \in X. 
    \end{equation}
    Furthermore, by \cite[Section VIII.3, Theorem VIII.4 and Proposition 3]{reed_simon1}, there exists a measure space $(M,\mu)$, unitary operator $\mathcal{U}:H \to L^2(M,d\mu)$ and a real-valued function $f \in L^2(M,d\mu)$ such that 
    \begin{itemize}
        \item $\vect \psi \in \mathcal{D}(\mathcal{A}) \iff f(\cdot) \left(\mathcal{U} \vect \psi \right)(\cdot) \in L^2(M, d\mu)$.
        \item For all $\vect \varphi \in \mathcal{U}(\mathcal{D}(\mathcal{A}))$, we have $(\mathcal{U} \mathcal{A} \mathcal{U}^{-1})\vect \varphi (\cdot) = f(\cdot) \vect \varphi(\cdot)$.
    \end{itemize}
    The equation \eqref{abstractweakresolventproblem2} is now equivalent to: 
    \begin{equation}
    \label{abstractweakresolventproblem3}
         \langle \mathcal{U}\mathcal{A}^{1/2}\mathcal{U}^{-1}\widetilde{\vect u} , \mathcal{U}\mathcal{A}^{1/2}\mathcal{U}^{-1}\widetilde{\vect v}\rangle_{L^2(M,d \mu)} - \lambda \langle \widetilde{\vect u},\widetilde{\vect v} \rangle_{L^2(M,d\mu)} = \langle \widetilde{\vect g}, (\mathcal{U}\mathcal{A}^{1/2}\mathcal{U}^{-1} + I_{L^2(M)}) \widetilde{\vect v} \rangle_{L^2(M,d\mu)}, 
    \end{equation}
    where we have used the notation: $\widetilde{\vect u}:= \mathcal{U} \vect u$, $\widetilde{\vect v}:= \mathcal{U} \vect v$, $\widetilde{\vect g}:= (\mathcal{U}\mathcal{A}^{1/2}\mathcal{U}^{-1} + I_{L^2(M)})\mathcal{U}\vect r$. Notice that
    \begin{equation}\label{eqn:nice_norms}
        \lVert \widetilde{\vect g}\rVert_{L^2(M,d\mu)} = \lVert (\mathcal{U}\mathcal{A}^{1/2}\mathcal{U}^{-1} + I)\mathcal{U}\vect r \rVert_{L^2(M,d\mu)}  = \lVert (\mathcal{A}^{1/2} + I) \vect r \rVert_{H} = \lVert \vect r \rVert_{X} =  \lVert \mathcal{R}\rVert_{X^*}.
    \end{equation}
    Also, the following holds:
    \begin{itemize}
        \item $\forall \vect \varphi \in \mathcal{U}(\mathcal{D}(\mathcal{A}^{1/2}))$  we have $(\mathcal{U} \mathcal{A}^{1/2} \mathcal{U}^{-1}) \vect \varphi (\cdot) = \sqrt{f(\cdot)}  \vect \varphi(\cdot).$
    \end{itemize}
    Thus \eqref{abstractweakresolventproblem3} translates to 
    \begin{equation}
    \label{abstractproblem4}
         \langle \sqrt{ f}\widetilde{\vect u} , \sqrt{ f}\widetilde{\vect v}\rangle_{L^2(M,d \mu)} - \lambda \langle \widetilde{\vect u},\widetilde{\vect v} \rangle_{L^2(M,d\mu)} = \langle \widetilde{\vect g}, (\sqrt{ f} + 1) \widetilde{\vect v} \rangle_{L^2(M,d\mu)}.
    \end{equation}
    Notice that 
    \begin{equation}
        \lVert \sqrt{ f} \widetilde{\vect v}\rVert_{L^2(M,d\mu)} = \lVert \mathcal{A}^{1/2} \vect v \rVert_{H} < \infty, \quad \forall \vect v \in X.
    \end{equation}
    Now, by rewriting \eqref{abstractproblem4} we arrive at
    \begin{equation}
        \int_M f \widetilde{\vect u} \cdot \widetilde{\vect v} \, d\mu - \lambda \int_M \widetilde{\vect u} \cdot \widetilde{\vect v} d\mu = \int_M (\sqrt{ f} + 1) \widetilde{\vect g} \cdot \widetilde{\vect v} \, d\mu, \quad \forall \widetilde{\vect v} \in L^2(M,d\mu), \text{ such that } \lVert \sqrt{ f} \widetilde{\vect v}\rVert_{L^2(M,d\mu)}< \infty,
    \end{equation}
    from which it follows that 
    \begin{equation}
        ( f  - \lambda )  \widetilde{\vect u} = (\sqrt{ f} + 1) \widetilde{\vect g}, \quad \text{ a.e. on } M,
    \end{equation}
    and consequently 
    \begin{equation}
        (\sqrt{ f} + 1)  \widetilde{\vect u} = \frac{(\sqrt{ f} + 1)^2}{( f  - \lambda )} \widetilde{\vect g}, \quad \text{ a.e. on } M.
    \end{equation}
    Now we calculate: 
    \begin{equation}
        \begin{split}
            |(\sqrt{ f} + 1)  \widetilde{\vect u}| & = \frac{|\sqrt{ f} + 1 |^2}{| f  - \lambda |} | \widetilde{\vect g} | \leq 2\frac{|f+1 |}{|f-\lambda|}|\widetilde{\vect g}|
            \leq 2\left( \frac{|f-\lambda|}{|f-\lambda|} + \frac{|\lambda +1 |}{|f-\lambda|}\right) |\widetilde{\vect g}| \\
            &\leq C \max \left\{1, \frac{|\lambda +1|}{\rm{dist}(\lambda, \rm{essRange}(f))} \right\} |\widetilde{\vect g}|, 
            \quad \text{a.e. on } M.
        \end{split}
    \end{equation}
    Since $\rm{essRange}(f) = \sigma(\mathcal{A})$, we see that
    \begin{equation}
        \lVert \vect u\rVert_X  
        = \left\lVert (\sqrt{ f} + 1)  \widetilde{\vect u}\right\rVert_{L^2(M,d\mu)} 
        \leq C \max \left\{1, \frac{|\lambda +1|}{\rm{dist}(\lambda, \sigma(\mathcal{A}))} \right\} \lVert \mathcal{\widetilde{\vect g}}\rVert_{L^2(M,d\mu)}.
    \end{equation}
    Finally, we recall from \eqref{eqn:nice_norms} that $\| \widetilde{\vect g} \|_{L^2(M,d\mu)} = \| \mathcal{R} \|_{X^\ast}$, giving us the desired inequality.
\end{proof}

\begin{lemma}\label{lem:equiv_h1_a}
    In the setting of Remark \ref{rmk:equiv_h1_a_X}, there exist a constant $C_\text{kornper}>0$, independent of $\chi$, such that the inequality $\| \vect u \|_{H^1} \leq C_\text{kornper} \| \vect u \|_a$ holds for all $\vect u \in H_{\#}^1$.
\end{lemma}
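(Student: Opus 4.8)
The plan is to prove the claimed equivalence $\| \vect u \|_{H^1} \leq C_\text{kornper} \| \vect u \|_a$ directly from the definitions, by unpacking $\| \vect u \|_a^2 = a_\chi(\vect u, \vect u) + \| \vect u \|_{L^2}^2$ (recall that in the setting of Remark \ref{rmk:equiv_h1_a_X} we have $H = L^2(Y;\C^3)$, $\mathcal{A} = \frac{1}{|\chi|^2} \mathcal{A}_\chi$, so the associated form is $\vect u \mapsto \frac{1}{|\chi|^2} a_\chi(\vect u, \vect u) + \| \vect u \|_{L^2}^2$). The key input is the uniform Korn inequality \eqref{korninequality5}, which controls $\| \vect u \|_{H^1}$ by $\| (\simgrad + i X_\chi) \vect u \|_{L^2}$ together with $\| \vect u \|_{L^2}$, and the ellipticity bound \eqref{assump:elliptic} from Assumption \ref{coffassumption}, which controls $\| (\simgrad + i X_\chi) \vect u \|_{L^2}^2$ by $\nu^{-1} a_\chi(\vect u, \vect u)$.

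First I would write, for $\vect u \in H^1_\#(Y;\C^3)$,
\begin{align*}
    \| \vect u \|_{H^1(Y;\C^3)}^2
    &\leq C \left( \| (\simgrad + i X_\chi) \vect u \|_{L^2(Y;\C^{3\times 3})}^2 + \| \vect u \|_{L^2(Y;\C^3)}^2 \right)
    \qquad \text{by \eqref{korninequality5},} \\
    &\leq C \left( \frac{1}{\nu} a_\chi(\vect u, \vect u) + \| \vect u \|_{L^2(Y;\C^3)}^2 \right)
    \qquad \text{by \eqref{assump:elliptic},}
\end{align*}
where the constant $C>0$ from Korn's inequality depends only on $Y$ and in particular is independent of $\chi$. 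It remains only to match this against $\| \vect u \|_a^2 = \frac{1}{|\chi|^2} a_\chi(\vect u, \vect u) + \| \vect u \|_{L^2}^2$. Since $\chi \in Y' = [-\pi,\pi)^3$ we have $|\chi|^2 \leq 3\pi^2$, hence $a_\chi(\vect u,\vect u) \leq 3\pi^2 \cdot \frac{1}{|\chi|^2} a_\chi(\vect u,\vect u)$, so both terms on the right-hand side above are bounded by a $\chi$-independent multiple of $\| \vect u \|_a^2$. This yields $\| \vect u \|_{H^1}^2 \leq C_\text{kornper}^2 \| \vect u \|_a^2$ with $C_\text{kornper}$ depending only on $\nu$ and $Y$, as required.

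The main subtlety — rather than obstacle — is that this argument is valid only for $\chi \neq 0$, which is exactly the regime in which the abstract Proposition \ref{prop:abstract_ineq} is applied in Section \ref{sect:the_asymp_method} (there one always assumes $\chi \neq 0$, and the operator $\mathcal{A}$ in the abstract lemma is $\frac{1}{|\chi|^2}\mathcal{A}_\chi$); so no separate treatment of $\chi = 0$ is needed. One should also take care that \eqref{korninequality5} is stated for a general bounded Lipschitz domain $\Omega$ and then specialized to $\Omega = Y$, and that \eqref{assump:elliptic} is applied pointwise inside the integral defining $a_\chi$; both are routine. I would close by noting explicitly that $C_\text{kornper}$ is independent of $\chi$, since this independence is the whole point of the lemma and is what makes the error estimates \eqref{bounduerr1} and \eqref{estim81} uniform in $\chi$.
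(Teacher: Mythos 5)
Your argument is correct, and it takes a slightly different route through Appendix A than the paper's proof. The paper bounds $\| \vect u \|_{L^2}$ and $\| \nabla \vect u \|_{L^2}$ separately, using the $\chi$-dependent Fourier-Poincar\'e estimates \eqref{estimate1} and \eqref{estimate11} from Proposition \ref{prop:coercive_est}: first multiply \eqref{eqn:norm_H1_vs_a_step1} through by $|\chi|^2$ (absorbing $|\chi|^2 \leq 3\pi^2$) to control $\| \nabla \vect u \|_{L^2}$, then combine with the $L^2$ bound. You instead invoke the uniform Korn inequality \eqref{korninequality5}, which directly gives $\| \vect u \|_{H^1} \lesssim \| (\simgrad + iX_\chi)\vect u \|_{L^2} + \| \vect u \|_{L^2}$ with a $\chi$-independent constant, and then fold the $\chi$-normalization in via $|\chi|^2 \leq 3\pi^2$ exactly once. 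Both routes draw on the same core ingredients (ellipticity from Assumption \ref{coffassumption}, some quasiperiodic Korn/Poincar\'e statement, and the trivial bound $|\chi|^2 \leq 3\pi^2$), but your version is a bit shorter because \eqref{korninequality5} packages the $H^1$ control in one shot, whereas the paper's Fourier-based estimates give slightly sharper information (e.g.\ \eqref{estimate1} shows $\| \vect u \|_{L^2}$ is controlled by $\| (\simgrad + iX_\chi)\vect u \|_{L^2}/|\chi|$ alone, without needing the $\| \vect u \|_{L^2}$ term on the right), which is why they appear elsewhere in Section \ref{sect:spectral_analysis}. Your remark that $\chi = 0$ needs no separate treatment is correct and matches how Proposition \ref{prop:abstract_ineq} is applied in Section \ref{sect:the_asymp_method}.

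One very minor point of care: since you square \eqref{korninequality5}, you implicitly use $(a+b)^2 \leq 2(a^2+b^2)$ to get the form $\| \vect u \|_{H^1}^2 \lesssim \| (\simgrad + iX_\chi)\vect u \|_{L^2}^2 + \| \vect u \|_{L^2}^2$; this is of course harmless but worth stating if you want the constant tracked precisely. Also, \eqref{assump:elliptic} is stated for real symmetric matrices, while $(\simgrad + iX_\chi)\vect u$ is complex; the extension to $\C^{3\times3}_{\text{sym}}$ by splitting into real and imaginary parts is routine and the paper itself relies on it (see \eqref{normbounds}), but you should be aware you are using it.
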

\begin{proof}
    Let $\vect u \in H_{\#}^1$. Then,
    \begin{align}\label{eqn:norm_H1_vs_a_step1}
        \frac{\nu}{|\chi|^2} \| (\simgrad + iX_\chi ) \vect u \|_{L^2}^2 + \| \vect u \|_{L^2}^2 \stackrel{\text{Assumption \ref{coffassumption}}}{\leq}
        \| \vect u \|_a^2.
    \end{align}
    By applying \eqref{estimate1} to the left-hand side of \eqref{eqn:norm_H1_vs_a_step1}, we have
    \begin{align}\label{eqn:norm_H1_vs_a_l2bit}
        \left( \frac{(C_{\text{fourier}})^2}{\nu} + 1 \right) \| \vect u \|_{L^2}^2 \leq \| \vect u \|_a^2.
    \end{align}
    Meanwhile, we multiply \eqref{eqn:norm_H1_vs_a_step1} throughout by $|\chi|^2$,
    \begin{align}\label{eqn:norm_H1_vs_a_step2}
        \nu \| (\simgrad + iX_\chi ) \vect u \|_{L^2}^2 + |\chi|^2 \| \vect u \|_{L^2}^2 \leq |\chi|^2 \| \vect u \|_a^2 \leq C \| \vect u \|_a^2
    \end{align}
    By applying \eqref{estimate11} to the left-hand side of \eqref{eqn:norm_H1_vs_a_step2}, we have
    \begin{align}
        \frac{(C_\text{fourier})^2}{\nu} \| \nabla \vect u \|_{L^2}^2
        \leq \frac{(C_\text{fourier})^2}{\nu} \| \nabla \vect u \|_{L^2}^2 + |\chi|^2 \| \vect u \|_{L^2}^2
        \stackrel{\text{\eqref{estimate11} and \eqref{eqn:norm_H1_vs_a_step2}}}{\leq} C \| \vect u \|_a^2. \label{eqn:norm_H1_vs_a_h1bit}
    \end{align}
    Absorbing the constants $\frac{(C_\text{fourier})^2}{\nu} + 1$ and $\frac{(C_\text{fourier})^2}{\nu}$ over to the right-hand sides of \eqref{eqn:norm_H1_vs_a_l2bit} and \eqref{eqn:norm_H1_vs_a_h1bit} respectively, we may now combine \eqref{eqn:norm_H1_vs_a_l2bit} and \eqref{eqn:norm_H1_vs_a_h1bit} to obtain
    \begin{align}
        \| \vect u \|_{H^1} \leq C \| \vect u \|_{a},
    \end{align}
    where the constant $C>0$ does not depend on $\chi$, verifying the desired inequality in \eqref{eqn:norm_H1_vs_a_claim}.
\end{proof}

\nocite{*} 
\renewcommand{\bibname}{References} 
\printbibliography
\addcontentsline{toc}{section}{\refname} 

\end{document}